\documentclass[11pt]{amsart}

\usepackage{soul}
\usepackage{etoolbox}
\usepackage{amsbsy}
\usepackage{amsmath}
\usepackage[alphabetic]{amsrefs}
\usepackage{amssymb}
\usepackage{amsthm}
\usepackage{amsxtra}
\usepackage{cancel}
\usepackage{comment}
\usepackage{fullpage}
\usepackage{graphicx}
\usepackage{hyperref}
\usepackage{paralist}
\usepackage{mathrsfs}
\usepackage{stmaryrd}
\usepackage{textcomp}
\usepackage[all,2cell]{xy}
\usepackage{enumitem}
\usepackage{pxfonts}
\usepackage{fancyhdr}
\usepackage{tikz}
\usepackage{tikz-qtree}
\usetikzlibrary{cd,matrix,arrows,decorations.pathmorphing}
\usepackage{bbm}
% \makeatletter
% \pgfqkeys{/tikz/cs}{
% 	latitude/.store in=\tikz@cs@latitude,% not needed with '3d' library
% 	longitude/.style={angle={#1}},% not needed with '3d' library
% 	theta/.style={latitude={#1}},
% 	rho/.style={angle={#1}}
% }
% \tikzdeclarecoordinatesystem{xyz spherical}{% needed even with '3d' library!
% 	\pgfqkeys{/tikz/cs}{angle=0,radius=0,latitude=0,#1}%
% 	\pgfpointspherical{\tikz@cs@angle}{\tikz@cs@latitude}{\tikz@cs@xradius}% fix \tikz@cs@radius to \tikz@cs@xradius
% }
% \makeatother

% \tikzset{my color/.code=\pgfmathparse{(#1+90)/180*100}\pgfkeysalso{every path/.style={color=red!\pgfmathresult!blue}}}

\DeclareRobustCommand{\SkipTocEntry}[5]{}

\setcounter{tocdepth}{2}

\linespread{1.2}
\newtheorem{prop}{Proposition}[section]

\newtheorem{thm}[prop]{Theorem}
\newtheorem{cor}[prop]{Corollary}
\newtheorem{lem}[prop]{Lemma}

\theoremstyle{definition}
\newtheorem{defn}[prop]{Definition}

\newtheorem{cons}[prop]{Construction}

\newtheorem{notn}[prop]{Notation}

\theoremstyle{remark}

\newtheorem{app}[prop]{Application}
\newtheorem{rem}[prop]{Remark}

% \renewcommand{\AA}{{\mathbb A}}
% \newcommand{\bbf}{{\bf f}}
% \newcommand{\bbg}{{\bf g}}

% \newcommand{\DeclareMyOperator}[1]{%
% 	\expandafter\DeclareMathOperator\csname #1\endcsname{#1}
% }
% \newcommand{\DeclareMathOperators}{\forcsvlist{\DeclareMyOperator}}

% \DeclareMathOperators{Spec, Sp, St, Str, Sch, Sets}
% \DeclareMathOperators{UnSt, Un}

\numberwithin{equation}{prop}
\DeclareMathOperator{\Ab}{Ab}

\DeclareMathOperator{\Ad}{Ad}
\DeclareMathOperator{\Aff}{Aff}
\DeclareMathOperator{\Alg}{Alg}
\DeclareMathOperator{\Ann}{Ann}

\DeclareMathOperator{\Assoc}{Assoc}
\DeclareMathOperator{\bAssoc}{\textbf{Assoc}}
\DeclareMathOperator{\Aut}{Aut}
\DeclareMathOperator{\Az}{Az}
\DeclareMathOperator{\B}{B}

\DeclareMathOperator{\bfCAlg}{\mathbf{CAlg}}
\DeclareMathOperator{\bfMod}{\mathbf{Mod}}
\DeclareMathOperator{\BHom}{\mathbf{Hom}}

\DeclareMathOperator{\BM}{BM}
\DeclareMathOperator{\BMod}{BMod}
\DeclareMathOperator{\BPr}{\mathbf{Pr}}

\DeclareMathOperator{\Br}{Br}

\DeclareMathOperator{\CAlg}{CAlg}
\DeclareMathOperator{\Cart}{Cart}
\DeclareMathOperator{\Cat}{Cat}

\DeclareMathOperator{\coCart}{coCart}

\DeclareMathOperator{\Coh}{Coh}

\DeclareMathOperator{\Comm}{Comm}

\DeclareMathOperator{\const}{const}

\DeclareMathOperator{\DAff}{DAff}
\DeclareMathOperator{\dBr}{dBr}

\DeclareMathOperator{\Deraz}{Deraz}
\DeclareMathOperator{\Dg}{Dg}
\DeclareMathOperator{\Disc}{Disc}

\DeclareMathOperator{\DMod}{D-mod}
\DeclareMathOperator{\dR}{dR}
\DeclareMathOperator{\End}{End}

\DeclareMathOperator{\ev}{ev}

\DeclareMathOperator{\Exc}{Exc}

\DeclareMathOperator{\Fin}{Fin}

\DeclareMathOperator{\Fun}{Fun}

\DeclareMathOperator{\GL}{GL}

\DeclareMathOperator{\hocolim}{hocolim}

\DeclareMathOperator{\holim}{holim}

\DeclareMathOperator{\Ho}{Ho}
\DeclareMathOperator{\Hom}{Hom}

\DeclareMathOperator{\id}{id}
\DeclareMathOperator{\Ind}{Ind}

\DeclareMathOperator{\Isom}{Isom}

\DeclareMathOperator{\LFun}{LFun}

\DeclareMathOperator{\LinFun}{LinFun}
\DeclareMathOperator{\LKE}{LKE}
\DeclareMathOperator{\LM}{LM}
\DeclareMathOperator{\LMod}{\mathrm{LMod}}

\DeclareMathOperator{\LTens}{LTens}
\DeclareMathOperator{\Map}{Map}

\DeclareMathOperator{\Mod}{Mod}
\DeclareMathOperator{\Mon}{Mon}
\DeclareMathOperator{\Mor}{Mor}

\DeclareMathOperator{\Perf}{Perf}
\DeclareMathOperator{\PGL}{PGL}
\DeclareMathOperator{\Pic}{Pic}

\DeclareMathOperator{\PreStk}{PreStk}
\DeclareMathOperator{\PrL}{Pr^{\mathrm L}}

\DeclareMathOperator{\PrSt}{Pr^{\mathrm{St}}}

\DeclareMathOperator{\QC}{QC}
\DeclareMathOperator{\QCoh}{QCoh}

\DeclareMathOperator{\res}{res}

\DeclareMathOperator{\REnd}{\mathbf REnd}

\DeclareMathOperator{\RKE}{RKE}
\DeclareMathOperator{\RM}{RM}
\DeclareMathOperator{\RMod}{RMod}

\DeclareMathOperator{\SAff}{SAff}
\DeclareMathOperator{\Sch}{Sch}

\DeclareMathOperator{\Set}{Set}

\DeclareMathOperator{\sign}{sign}

\DeclareMathOperator{\SL}{SL}

\DeclareMathOperator{\Sp}{Sp}

\DeclareMathOperator{\Spec}{Spec}

\DeclareMathOperator{\St}{St}
\DeclareMathOperator{\Stk}{Stk}

\DeclareMathOperator{\Supp}{Supp}
\DeclareMathOperator{\Sys}{Sys}

\DeclareMathOperator{\TCoh}{TCoh}

\DeclareMathOperator{\Tens}{Tens}

\DeclareMathOperator{\TPerf}{TPerf}

\DeclareMathOperator{\VB}{VB}
\DeclareMathOperator{\WCat}{WCat}
\newcommand{\natmap}[5]{\xymatrix{
		#1 \ar@/^/[r]^{#3} \ar@/_/[r]_{#4} \ar@{}[r]|{\Downarrow_{#5}} & #2
}}
\newcommand{\adjoints}[4]{\xymatrix{#1 \ar@/^/[rr]^{#3} & \perp & #2 \ar@/^/[ll]^{#4}}}
\newcommand{\smalladjoints}[4]{\xymatrix@C=5pt{#1 \ar@/^/[rr]^{#3} & \perp & #2 \ar@/^/[ll]^{#4}}}
\newcommand{\xtworightarrows}[4]{\xymatrix{#1 \ar@<.5ex>[r]^{#3} \ar@<-.5ex>[r]_{#4} & #2}}
\newcommand{\xtwoleftarrows}[4]{\xymatrix{#1 &  \ar@<-.5ex>[l]_{#3} \ar@<.5ex>[l]^{#4}  #2}}
\newcommand{\xrightleftarrows}[4]{\xymatrix{#1 \ar@/^/[r]^{#3} & #2 \ar@/^/[l]^{#4}}}

\newcommand{\aft}[0]{\mathrm{aft}}

\newcommand{\cl}[0]{\mathrm{cl}}
\newcommand{\coC}[0]{\mathrm{coC}}

\newcommand{\cont}[0]{\mathrm{cont}}

\newcommand{\desc}[0]{\mathrm{desc}}
\newcommand{\dg}[0]{\mathrm{dg}}
\newcommand{\dgen}[0]{\mathrm{dgen}}
\newcommand{\dgproj}[0]{\mathrm{dg-proj}}

\newcommand{\fin}[0]{\mathrm{fin}}

\newcommand{\fpqc}[0]{\mathrm{fpqc}}
\newcommand{\fppf}[0]{\mathrm{fppf}}
\newcommand{\ft}[0]{\mathrm{ft}}

\newcommand{\gen}[0]{\mathrm{gen}}
\newcommand{\idem}[0]{\mathrm{idem}}

\newcommand{\laft}[0]{\mathrm{laft}}
\newcommand{\lft}[0]{\mathrm{lft}}
\newcommand{\op}[0]{\mathrm{op}}

\newcommand{\pr}[0]{\mathrm{pr}}

\newcommand{\qis}[0]{\mathrm{qis}}
\newcommand{\red}[0]{\mathrm{red}}

\newcommand{\univ}[0]{\mathrm{univ}}

\newcommand{\weq}[0]{\mathrm{weq}}

%sagemathcloud={"zoom_width":115}

% \usepackage{marginnote}
%\usepackage{showkeys}

\theoremstyle{remark}

\theoremstyle{definition}

\theoremstyle{plain}

\DeclareMathOperator{\Crys}{Crys}

\newenvironment{customthm}[1]
  {\innercustomthm}
  {\endinnercustomthm}

\newenvironment{customprop}[1]
  {\innercustomprop}
  {\endinnercustomprop}

\newenvironment{customcor}[1]
  {\innercustomcor}
  {\endinnercustomcor}

\begin{document}

\title{A homotopical Skolem--Noether theorem}

\author{Ajneet Dhillon}
\email{adhill3@uwo.ca}
\address{Western University, Canada}

\author{P\'al Zs\'amboki}
\email{zsamboki@renyi.hu}
\address{R\'enyi Institute, Hungary}

\begin{abstract}
The classical Skolem--Noether Theorem \cite{giraud1971cohomologie} shows us (1) how we can assign to an Azumaya algebra $A$ on a scheme $X$ a cohomological Brauer class in $H^2(X,\mathbf G_m)$ and (2) how Azumaya algebras correspond to twisted vector bundles. The Derived Skolem--Noether Theorem \cite{lieblich2009compactified} generalizes this result to weak algebras in the derived 1-category locally quasi-isomorphic to derived endomorphism algebras of perfect complexes. We show that in general for a co-family $\mathscr C^\otimes$ of presentable monoidal quasi-categories with descent over a quasi-category with a Grothendieck topology, there is a fibre sequence giving in particular the above correspondences. For a totally supported perfect complex $E$ over a quasi-compact and quasi-separated scheme $X$, the long exact sequence on homotopy splits, thus showing that the adjoint action induces isomorphisms $\pi_i(\Aut_{\Perf} E,\id_E)\to\pi_i\Aut_{\Alg\Perf}({\REnd E},\id_{\REnd E})$ for $i\ge1$. Further applications include complexes in Derived Algebraic Geometry, module spectra in Spectral Algebraic Geometry and ind-coherent sheaves and crystals in Derived Algebraic Geometry in characteristic 0.

\end{abstract}

\maketitle
\tableofcontents

\section{Introduction}

\subsection{The classical theory}

\subsubsection{Skolem--Noether Theorem} \cite{giraud1971cohomologie}*{V, Lemme 4.1} Let $X$ be a scheme (or a locally ringed topos), and $n$ a positive integer. Then there exists a short exact sequence of group sheaves on $X$:
$$
1\to\mathbf G_m\to\mathrm{GL}_n\xrightarrow{\mathrm{Ad}}\mathrm{Aut}_{\mathrm{Alg}}\mathrm{End}_{\mathscr O}\mathscr O^{\oplus n}\to1.
$$
Taking deloopings, we get a fibre sequence of stacks on $X$:
\begin{equation}\tag{1}\label{eq:1-HSN}
\mathrm{LB}\to\VB_n\xrightarrow{\mathrm{End}}\mathrm{Az}_n.
\end{equation}
Here, $\mathrm{LB}$ is the stack of line bundles, $\VB_n$ is the stack of rank-$n$ vector bundles and $\mathrm{Az}_n$ is the stack of \emph{rank-$n$ Azumaya algebras on $X$}, that is forms of the endomorphism algebra $\End_{\mathscr O}\mathscr O^{\oplus n}$.

\subsubsection{From Azumaya algebras to cohomological Brauer classes} Let $A$ be a rank-$n$ Azumaya algebra on an $X$-scheme $U$. Then it is classified by a map $U\xrightarrow{\mathbf c_A}\mathrm{Az}_n$. We can take the homotopy pullback square
\begin{center}
 \begin{tikzpicture}[xscale=3,yscale=2]
	\node (C') at (0,0.5) {$\mathscr X(A)$};
	\node (D') at (1,0.5) {$\VB_n$};
	\node (C) at (0,-0.5) {$U$};
	\node (D) at (1,-0.5) {$\mathrm{Az}_n.$};
	\node at (0.5,0) {$\lrcorner^\mathrm h$};
	\path[->,font=\scriptsize,>=angle 90]
	(C') edge node [above] {$F$} (D')
	(C') edge (C)
	(D') edge node [right] {$\mathrm{End}$} (D)
	(C) edge node [above] {$\mathbf c_A$} (D);
	\end{tikzpicture}
\end{center}

By the fibre sequence (\ref{eq:1-HSN}), we get that $\mathscr X(A)\to U$ is a $\mathbf G_m$-gerbe on $U$, that is a $\mathrm B\mathbf G_m$-bundle. We call it the \emph{gerbe of trivializations}.

\begin{cor} The Azumaya algebra $A$ induces the \emph{cohomological Brauer class} $[\mathscr X(A)]\in H^2(U,\mathbf G_m)$.

\end{cor}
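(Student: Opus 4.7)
The plan is to set $[\mathscr{X}(A)]$ equal to the homotopy class of the classifying map of the $\mathbf{G}_m$-gerbe $\mathscr{X}(A)\to U$ constructed in the preceding paragraph. First, I would invoke the standard classification result: for the Grothendieck topology fixed on $X$, isomorphism classes of $\mathbf{G}_m$-gerbes---equivalently, $\mathrm{B}\mathbf{G}_m$-bundles---over $U$ are in bijection with $\pi_0\Map(U,\mathrm{B}^2\mathbf{G}_m)$. Since $\mathbf{G}_m$ is an abelian sheaf of groups, the second delooping $\mathrm{B}^2\mathbf{G}_m$ exists and represents $H^2(-,\mathbf{G}_m)$, so passing to $\pi_0$ of the mapping stack produces exactly $H^2(U,\mathbf{G}_m)$.

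Second, the homotopy pullback square defining $\mathscr{X}(A)$, combined with the fibre sequence \eqref{eq:1-HSN}, directly exhibits $\mathscr{X}(A)\to U$ as a $\mathrm{B}\mathbf{G}_m$-bundle; hence its classifying map lies in $\pi_0\Map(U,\mathrm{B}^2\mathbf{G}_m)\cong H^2(U,\mathbf{G}_m)$, and we define $[\mathscr{X}(A)]$ to be this element. No serious obstacle arises here---the corollary is essentially an unpacking of a definition, packaged through two standard equivalences. The only point requiring care is the consistency of site conventions: the stacks $\VB_n$, $\mathrm{Az}_n$, and the group $H^2(U,\mathbf{G}_m)$ must all be computed in the same Grothendieck topology on $X$ (étale or fppf in the classical setting). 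Given this, the statement follows at once from the fibre sequence and the delooping identification.
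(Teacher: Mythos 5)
Your proposal is correct and matches the paper's (implicit) argument: the fibre sequence \eqref{eq:1-HSN} exhibits $\mathscr X(A)\to U$ as a $\mathbf G_m$-gerbe, i.e.\ a $\mathrm B\mathbf G_m$-bundle, and since $\mathbf G_m$ is abelian such gerbes are classified by $\pi_0\Map(U,\mathrm B^2\mathbf G_m)\cong H^2(U,\mathbf G_m)$. The corollary is indeed just this unpacking, and your caveat about using a consistent topology is the only point of care.
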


\subsubsection{Azumaya algebras and twisted sheaves} We have seen previously that Azumaya algebras give $\mathbf G_m$-gerbes. The most succint way to express this is to notice that $\mathbf G_m$ is commutative (or $\mathbf E_2$) and thus we can deloop further to get a fibre sequence of 2-stacks on $X$:
\begin{equation}\tag{2}\label{eq:Azumaya and twisted}
\VB\xrightarrow{\mathrm{End}}\mathrm{Az}\xrightarrow{\mathscr X}\mathrm B^2\,\mathbf G_m.
\end{equation}
That is, the map of stacks $\VB\xrightarrow{\mathrm{End}}\mathrm{Az}$ is a $\mathbf G_m$-gerbe
\begin{cor} Let $\mathscr X$ be a $\mathbf G_m$-gerbe on $U$. Then Azumaya algebras on $U$ with Brauer class $[\mathscr X]$ corres\-pond to \emph{$\mathscr X$-twisted vector bundles on $U$}, that is $\B\,\mathbf G_m$-equivariant maps $\mathscr X\xrightarrow F{\VB}$.

\end{cor}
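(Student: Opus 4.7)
The plan is to apply the fibre sequence $\VB\xrightarrow{\mathrm{End}}\Az\xrightarrow{\mathscr X}\B^2\mathbf G_m$ of (\ref{eq:Azumaya and twisted}) to rewrite both sides of the claimed correspondence as sections of a single bundle over $U$. The key observation is that this fibre sequence exhibits $\Az\to\B^2\mathbf G_m$ as the $\VB$-bundle associated to the natural $\B\mathbf G_m$-action on $\VB$ by tensor product with line bundles.

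First, I would unpack the fibre sequence (\ref{eq:Azumaya and twisted}). Since $\B^2\mathbf G_m=\B(\B\mathbf G_m)$, a pointed fibre sequence $\VB\to E\to\B^2\mathbf G_m$ is the same datum as a $\B\mathbf G_m$-action on $\VB$, with total space the homotopy quotient $\VB/\!\!/\B\mathbf G_m$. Thus (\ref{eq:Azumaya and twisted}) endows $\VB$ with a canonical $\B\mathbf G_m$-action together with an equivalence $\Az\simeq\VB/\!\!/\B\mathbf G_m$ of stacks over $\B^2\mathbf G_m$. Unwinding two deloopings of the short exact sequence $1\to\mathbf G_m\to\mathrm{GL}_n\xrightarrow{\Ad}\Aut_{\Alg}\End_{\mathscr O}\mathscr O^{\oplus n}\to1$ rank by rank, one checks that this action is tensoring a vector bundle with a line bundle, and that $\End\colon\VB\to\Az$ is the quotient map.

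Second, by the earlier corollary, Azumaya algebras $A$ on $U$ with Brauer class $[\mathscr X]$ are precisely homotopy lifts of the classifying map $[\mathscr X]\colon U\to\B^2\mathbf G_m$ through $\Az\to\B^2\mathbf G_m$. The space of such lifts is the space of sections of the homotopy pullback $\Az\times_{\B^2\mathbf G_m}^{\mathrm h}U\to U$, which by the first step is the associated bundle $\mathscr X\times^{\B\mathbf G_m}\VB\to U$, where $\mathscr X\to U$ is viewed as the $\B\mathbf G_m$-torsor classified by $[\mathscr X]$.

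Third, for any stack $V$ with a $\B\mathbf G_m$-action and any $\B\mathbf G_m$-torsor $\mathscr X\to U$, sections of the associated bundle $\mathscr X\times^{\B\mathbf G_m}V\to U$ are naturally equivalent to $\B\mathbf G_m$-equivariant maps $\mathscr X\to V$. Applying this with $V=\VB$ yields the claimed correspondence between Azumaya algebras on $U$ with Brauer class $[\mathscr X]$ and $\mathscr X$-twisted vector bundles. The main obstacle is the first step: pinning down the equivalence $\Az\simeq\VB/\!\!/\B\mathbf G_m$ and verifying that the resulting $\B\mathbf G_m$-action on $\VB$ really is tensoring with line bundles, which amounts to tracking the scalar inclusion $\mathbf G_m\hookrightarrow\mathrm{GL}_n$ through two successive deloopings and checking naturality in the rank $n$.
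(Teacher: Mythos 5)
Your argument is correct and is exactly the unpacking the paper intends: the paper offers no separate proof of this corollary, presenting it as an immediate consequence of the fibre sequence $\VB\xrightarrow{\End}\Az\to\B^2\,\mathbf G_m$, which is precisely your identification of $\Az$ as the homotopy quotient of $\VB$ by the tensoring $\B\,\mathbf G_m$-action followed by the standard equivalence between sections of an associated bundle and equivariant maps out of the torsor. The point you flag as the main obstacle (that the action is tensoring by line bundles) is indeed the only substantive check, and it follows from the scalar inclusion $\mathbf G_m\hookrightarrow\GL_n$ as you describe.
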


\subsection{Derived Skolem--Noether Theorem} Let $X\to S$ be a morphism of schemes. By the classical Skolem--Noether Theorem, we see that the stack $\B\PGL_n(X/S)$ classifies families of twisted vector bundles of rank $n$. Lieblich has compatified this stack by letting twisted vector bundles degenerate into twisted totally supported perfect coherent sheaves \cite{lieblich2009compactified}*{Theorem 6.2.4}. To get a more algebraic description of these objects, Lieblich has proven the following result:

\begin{thm}[\cite{lieblich2009compactified}*{Theorem 5.1.5}] Let $E$ be a perfect complex on a scheme $X$. Let $\mathscr O_E=\mathrm{Coker}(\mathscr O_X\to\mathbf R\mathrm{End} E)$. Then we have a short exact sequence of sheaves of groups on $X$
$$
1\to\mathscr O_E^\times\to\mathop{\mathrm{Aut}}_{\mathrm h\,\mathrm{Perf}}E\xrightarrow{\mathrm{Ad}}\mathop{\mathrm{Aut}}_{\mathop{\mathrm{Alg}}\,{\mathrm h\,\mathrm{Perf}}}\mathbf R\mathrm{End}E\to1.
$$

\end{thm}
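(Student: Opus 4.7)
The plan is to verify the short exact sequence locally and then emulate the classical Skolem--Noether argument via derived Morita theory. Since the claim is the exactness of a sequence of sheaves of groups on $X$, it suffices to check it stalkwise, so I reduce to $X = \mathrm{Spec}\,R$ for $R$ a strictly Henselian local ring and $E$ a perfect complex over $R$. Over such $R$, $E$ admits a minimal projective model: a bounded complex of finitely generated free $R$-modules with differentials in the maximal ideal, so that $\mathbf R\mathrm{End}\,E$ is computed as the honest endomorphism complex of that model and is amenable to direct analysis.

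Injectivity of $\mathscr O_E^\times \hookrightarrow \mathrm{Aut}_{\mathrm{h\,Perf}}E$ and the vanishing of the composite $\mathrm{Ad}\circ\iota$ are immediate from the definitions: scalar multiplication is faithful on a nonzero perfect complex, and scalars commute with every endomorphism. For exactness at the middle term, I suppose $\phi \in \mathrm{Aut}_{\mathrm{h\,Perf}}E$ lies in the kernel of $\mathrm{Ad}$, so $\phi\circ f = f\circ\phi$ in the homotopy category for every endomorphism $f$ of $E$. Then the class of $\phi$ lies in the center of $H^0\mathbf R\mathrm{End}\,E$, and a direct computation on the minimal model identifies this center with $\mathscr O_E$ via the unit map $\mathscr O_X \to \mathbf R\mathrm{End}\,E$, exhibiting $\phi$ as a scalar.

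The main step is surjectivity of $\mathrm{Ad}$: every algebra automorphism $\alpha$ of $\mathbf R\mathrm{End}\,E$ in the derived category is realized as conjugation by some $\phi \in \mathrm{Aut}(E)$. I would reinterpret $\alpha$ as a twist of the module structure. Form the twisted left $\mathbf R\mathrm{End}\,E$-module ${}^{\alpha}E$ whose underlying perfect $R$-complex is $E$ but whose left action is precomposed with $\alpha$. Its right endomorphism ring over $\mathbf R\mathrm{End}\,E$ is still $R$, and ${}^{\alpha}E$ remains a compact generator of the same derived category of modules. An isomorphism $\phi\colon{}^{\alpha}E\xrightarrow{\sim}E$ of left $\mathbf R\mathrm{End}\,E$-modules, read as an automorphism of the underlying perfect complex $E$, then satisfies $\alpha = \mathrm{Ad}(\phi)$ up to convention.

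The principal obstacle is establishing ${}^{\alpha}E \simeq E$ as left $\mathbf R\mathrm{End}\,E$-modules over the strictly Henselian local $R$. Classically, for $E = \mathscr O^{\oplus n}$, this reduces to $\mathrm{Aut}_{\mathrm{Alg}}(M_n R) = \mathrm{PGL}_n(R)$; the derived analogue requires controlling the set of isomorphism classes of left $\mathbf R\mathrm{End}\,E$-modules whose underlying perfect complex is abstractly equivalent to $E$, a set which is a torsor under an appropriate twisting sheaf (morally a derived Picard group) whose relevant first cohomology vanishes over a strictly Henselian local base, trivializing the torsor. Sheafifying the resulting exact sequence of groups of sections over strictly Henselian local rings then recovers the short exact sequence on $X$.
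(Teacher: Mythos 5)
Your strategy (check exactness on stalks at strictly Henselian points, then run derived Morita theory) is genuinely different from the paper's, which extracts this statement from the long exact sequence of homotopy sheaves attached to the fibre sequence $(\B\mathbf G_m\times\mathbf Z,\mathscr O)\xrightarrow{\otimes E}({}^\op\TPerf,E)\xrightarrow{\End}({}^\op\Deraz,\REnd E)$ of Corollary \ref{cor:Skolem--Noether for schemes}, together with a pushforward from $\Supp E$ in the non--totally-supported case. A direct local proof is possible (it is essentially Lieblich's route), but your argument for exactness at the middle term fails. You claim that an element of $\ker(\Ad)$ is central in $H^0\REnd E$ and that this center is $\mathscr O_E$. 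The first inclusion is correct but far too weak, and the second identification is false: for $E=R\oplus R[1]$ over a local ring $R$ (a perfect, totally supported complex), $H^0\REnd E\cong R\times R$ is commutative, so its center is all of $R\times R$, whereas $\mathscr O_E=R$ sits diagonally. The theorem still holds there, but only because $\Ad(u,v)$ acts by $uv^{-1}$ and $vu^{-1}$ on the nonzero groups $\Ext^{\pm1}(E,E)\cong R$, so triviality of $\Ad(\phi)$ as an endomorphism of the \emph{whole complex} $\REnd E$ forces $u=v$. Your reduction to the center of $H^0$ discards exactly the information (the action on $\Ext^n(E,E)$ for $n\neq0$, and the requirement that conjugation be chain homotopic to the identity, not merely trivial on cohomology) that makes the kernel small. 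This is the delicate heart of the derived Skolem--Noether statement and it is not addressed.

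The surjectivity step is the right idea but also incomplete as stated. Since $\alpha$ is only an automorphism of the weak algebra $\REnd E$ in the derived $1$-category, forming the twisted module ${}^{\alpha}E$ and invoking derived Morita/Eilenberg--Watts theory requires either lifting $\alpha$ to a coherent ($A_\infty$, or $\mathbf E_1$) automorphism or working quasi-categorically as the paper does; that lift is not free and is precisely the obstruction Lieblich's truncated setting has to fight. Also, the ``torsor whose first cohomology vanishes over a strictly Henselian base'' framing is misplaced: over a single local ring there is no cohomology to kill. The actual content is that the derived Picard group of $R$ is $R^\times\times\mathbf Z$, so ${}^{\alpha}E\simeq E[n]$ as left $\REnd E$-modules for some $n$, and $n=0$ because the underlying complex of ${}^{\alpha}E$ is $E$ itself and $E\not\simeq E[n]$ for $n\neq 0$. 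Finally, for $E$ not totally supported you still need the pushforward from $\Supp E$ to land on $\mathscr O_E^\times$ rather than $\mathbf G_m$.
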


Let $A$ be a weak algebra object in the monoidal 1-category $\mathbf D(X)^{\otimes^{\mathbf L}}$. Then $A$ is a \emph{pre-generalized Azumaya algebra} if there exists:
\begin{enumerate}
 \item an \'etale covering $U\to X$,
 \item a totally supported perfect coherent sheaf $F$ on $U$ and
 \item a quasi-isomorphism of weak algebras $A|^{\mathbf L}U\simeq\mathbf R\End(F)$.
\end{enumerate}

We denote by $\mathscr {PR}_X$ the category fibred in groupoids of pre-generalized Azumaya algebras on $X$. The \emph{stack of generalized Azumaya algebras $\mathrm{GAz}_X$} is the 1-stackification of $\mathscr {PR}_X$. In other words, a \emph{generalized Azumaya algebra} is 1-descent data of pre-generalized Azumaya algebras.

\begin{cor}[\cite{lieblich2009compactified}*{Corollary 5.2.1.9 and \S5.2.4}] Let $U\to X$ be a morphism of schemes. Then the following assertions hold:
 \begin{enumerate}
  \item Let $A$ be a generalized Azumaya algebra on $U$. Then it induces a gerbe of trivializations $\mathscr X(A)\in\B^2\mathbf G_m(U)$ and in particular a cohomological Brauer class $[\mathscr X(A)]\in H^2(U,\mathbf G_m)$.
  \item Let $\mathscr X$ be a $\mathbf G_m$-gerbe on $U$. Then generalized Azumaya algebras with Brauer class $[\mathscr X]$ correspond to $\mathscr X$-twisted totally supported perfect coherent sheaves on $U$.
 \end{enumerate}
\end{cor}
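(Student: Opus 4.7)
The plan is to imitate the classical Skolem--Noether argument, using the Derived Skolem--Noether Theorem of Lieblich in place of its classical counterpart. First I would specialize that sequence to a \emph{totally supported} perfect complex $F$. Total support forces $\mathscr O_F^\times \cong \mathbf G_m$, exactly as in the classical case where the center of $\End\mathscr O^{\oplus n}$ is $\mathbf G_m$, so the Derived Skolem--Noether sequence reads
\[
1 \to \mathbf G_m \to \Aut_{\mathrm h\,\Perf} F \xrightarrow{\Ad} \Aut_{\Alg\,\mathrm h\,\Perf}(\mathbf R\End F) \to 1.
\]

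Delooping once and taking \'etale 1-stackification on the right (which by construction yields $\mathrm{GAz}$) produces a fibre sequence of 1-stacks $\B\mathbf G_m \to \TPerf \xrightarrow{\mathbf R\End} \mathrm{GAz}$. Because $\mathbf G_m$ is commutative (in fact $\mathbf E_2$), one may deloop a second time, extending this to a fibre sequence of 2-stacks on $X$
\[
\TPerf \xrightarrow{\mathbf R\End} \mathrm{GAz} \xrightarrow{\mathscr X} \B^2\mathbf G_m,
\]
exactly parallel to the classical fibre sequence (\ref{eq:Azumaya and twisted}).

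Both assertions then follow formally. For (1), a generalized Azumaya algebra $A$ on $U$ is by definition a morphism $\mathbf c_A\colon U \to \mathrm{GAz}$; composing with $\mathscr X$ classifies the gerbe of trivializations $\mathscr X(A) \in \B^2\mathbf G_m(U)$, whose equivalence class is the cohomological Brauer class $[\mathscr X(A)] \in H^2(U, \mathbf G_m)$. For (2), a $\mathbf G_m$-gerbe $\mathscr X$ on $U$ is a map $\mathbf c_{\mathscr X}\colon U \to \B^2\mathbf G_m$, and the homotopy fibre of $\mathrm{GAz} \to \B^2\mathbf G_m$ over $\mathbf c_{\mathscr X}$ is by definition the groupoid of generalized Azumaya algebras with Brauer class $[\mathscr X]$; the fibre sequence identifies this fibre with the groupoid of $\B\mathbf G_m$-equivariant maps $\mathscr X \to \TPerf$, i.e.\ $\mathscr X$-twisted totally supported perfect coherent sheaves on $U$.

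The main obstacle will be verifying that the fibre sequence actually descends through the 1-stackification $\mathscr{PR}_X \to \mathrm{GAz}_X$: the pre-stack $\mathscr{PR}_X$ is not a priori a stack, and one must check that the homotopy fibre of the stackified map $\TPerf \to \mathrm{GAz}$ remains $\B\mathbf G_m$ rather than some stackification thereof. This reduces to the facts that $\B^2\mathbf G_m$ is already a 2-stack for the \'etale topology, and that total support is an \'etale-local condition so that $\TPerf$ is itself a stack. Once this descent statement is secured, the delooping step and the two corollaries are purely formal consequences of the fibre sequence.
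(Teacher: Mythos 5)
The paper does not actually prove this corollary: it is quoted verbatim from Lieblich (\cite{lieblich2009compactified}, Corollary 5.2.1.9 and \S5.2.4) as motivation, so there is no in-paper proof to compare against. Your reconstruction — deloop the Derived Skolem--Noether exact sequence and read off both assertions from the resulting fibre sequence of 2-stacks — is the intended argument; it is exactly parallel to how the paper derives the two classical corollaries from sequence (\ref{eq:Azumaya and twisted}), and it is what Lieblich's construction of the gerbe of trivializations amounts to.

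Two points need more care than you give them. First, the fibre of $\mathbf R\mathrm{End}$ on totally supported perfect \emph{complexes} is not $\B\mathbf G_m$ but $\mathrm{Pic}\simeq\B\mathbf G_m\times\mathbf Z$, since $E$ and $E\otimes L[n]$ have equivalent endomorphism algebras; this is precisely what the paper's own Corollary \ref{cor:Skolem--Noether for schemes}(2) records. Your claimed fibre sequence $\B\mathbf G_m\to\TPerf\to\mathrm{GAz}$ is therefore only correct once you restrict to totally supported perfect coherent \emph{sheaves} (as the corollary's statement does), where the shift is forced to vanish; with $\TPerf$ meaning complexes, the second delooping would land in $\B^2\mathbf G_m\times\B\mathbf Z$ and the band of the gerbe would come out wrong. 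Second, the short exact sequence you start from only controls the automorphisms of a \emph{fixed} $F$; to promote the delooped sequence $\B\mathbf G_m\to\B\mathrm{Aut}\,F\to\B\mathrm{Aut}(\mathbf R\mathrm{End}\,F)$ to a fibre sequence of the full stacks you also need the $\pi_0$-level statement that any equivalence $\mathbf R\mathrm{End}\,F\simeq\mathbf R\mathrm{End}\,F'$ locally lifts to an isomorphism $F\otimes L\simeq F'$ for some line bundle $L$ (local connectivity of the gerbe of trivializations). That is part of Lieblich's Theorem 5.1.5 in its full form (the statement about $\mathrm{Isom}(E[n],F)\to\mathrm{Isom}(\mathbf R\mathrm{End}\,E,\mathbf R\mathrm{End}\,F)$ being surjective with $\mathscr O_E^\times$-torsor fibres), but it is not contained in the short exact sequence as you quote it. With those two repairs, the stackification step you flag is handled as you suggest — the map $\mathscr{PR}\to\B^2\mathbf G_m$ factors through $\mathrm{GAz}$ because the target is a stack, and left-exactness of stackification preserves the fibre — and the two assertions follow formally.
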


Note how these statements are about the derived 1-categories. Therefore, in this setting, one cannot detect higher descent conditions, which is the reason why 1-stackification is needed in the construction of the stack of generalized Azumaya algebras \cite{lieblich2009compactified}*{\S5.2} unless higher homotopy groups vanish, for example when $X$ is a surface \cite{lieblich2009compactified}*{Proposition 6.4.1}. Using our Homotopical Skolem--Noether Theorem, we can show that the higher homotopy groups always vanish, so stackification is not needed in general:

\begin{customprop}{\ref{prop:pre-generalized Azumaya is stack}} Let $X$ be a quasi-compact and quasi-separated scheme. Then the category fibred in groupoids of pre-generalized Azumaya algebras $\mathscr{PR}_X$ is a 1-stack.

\end{customprop}

\subsection{Derived Azumaya algebras}

\subsubsection{$\Br\stackrel{?}{=}\Br'$}

Let $X$ be a scheme. Two Azumaya algebras $A$ and $B$ on $X$ are \emph{Morita equivalent} if there exists an equivalence of categories $\mathrm{Mod}_A\simeq\mathrm{Mod}_B$ between their categories of modules. The Morita equivalence class of an Azumaya algebra is its \emph{Brauer class}. The set $\mathrm{Br}$ of Morita equivalence classes of Azumaya algebras admits a group structure via tensor product, thus we get the \emph{Brauer group}.
 
One can see that taking cohomological Brauer classes induces an injection
$$
\mathrm{Br}(X)\xrightarrow{A\mapsto[\mathscr X(A)]} H^2(X,\mathbf G_m)_{\mathrm{tors}}=:\mathrm{Br}'(X).
$$
One can ask whether this is an isomorphism. A general positive result is the following:
\begin{thm}[\cite{deJong2003result}*{Theorem 1.1}] Let $X$ be a scheme equipped with an ample invertible sheaf. Then we have $\mathrm{Br}(X)=\mathrm{Br}'(X)$.

\end{thm}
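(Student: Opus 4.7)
The plan is to exploit the correspondence recalled in Corollary~(2) (or equivalently the fibre sequence \eqref{eq:Azumaya and twisted}): given a cohomological Brauer class $\alpha\in\Br'(X)=H^2(X,\mathbf G_m)_{\mathrm{tors}}$, choose a $\mathbf G_m$-gerbe $\mathscr X\to X$ representing $\alpha$, and note that Azumaya algebras on $X$ with Brauer class $\alpha$ correspond bijectively to $\mathscr X$-twisted locally free sheaves of finite positive rank (from such a twisted vector bundle $V$ one recovers the Azumaya algebra as the descent to $X$ of $\mathbf R\End V$, whose Brauer class on $\mathscr X$ is trivial). Thus the theorem reduces to showing that \emph{every $\mathbf G_m$-gerbe $\mathscr X$ on a scheme $X$ with an ample line bundle admits a twisted vector bundle of positive rank}.

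The first step is to reduce to the Noetherian case. A scheme admitting an ample invertible sheaf is quasi-compact and separated, so it can be written as a cofiltered limit of finite-type $\mathbf Z$-schemes, each still carrying an ample line bundle. Since $\alpha$ is torsion it is defined at a finite level (as a class in $H^2(-,\mu_n)$ for some $n$), and $\mathscr X$-twisted coherent sheaves together with their property of being locally free spread along such limits. The second step uses the ample line bundle $\mathscr L$ to construct a twisted coherent sheaf $F$ of positive rank on $\mathscr X$: the pullback of $\mathscr L$ to $\mathscr X$ is ample on the coarse space, so a Serre-type gluing (locally trivializing $\alpha$ on an affine cover, then gluing the structure sheaves after tensoring with a sufficiently high power of $\mathscr L$ to kill the Čech obstruction coming from the $\mu_n$-valued cocycle) yields such an $F$.

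The main obstacle is the third step: turning the twisted coherent sheaf $F$ into a twisted \emph{locally free} sheaf of positive rank. This is the technical core of de Jong's argument. Ampleness provides a finite resolution of $F$ by twisted locally free sheaves, which produces a class in the twisted Grothendieck group, but not \emph{a priori} an honest twisted vector bundle of positive rank. De Jong's strategy is a careful induction on $\dim X$: choose a hyperplane section cut out by a general section of a high power of $\mathscr L$, so that the class of $F$ restricts well and the Brauer class is controlled by Bertini-type arguments; close the induction using the base cases of curves (where Tsen's theorem and period-index results show $\Br=\Br'$) and Noetherian affine schemes (Gabber's original theorem). The delicate points are (i) showing that the induction hypothesis can be applied to the restriction, which requires preserving positivity of rank and compatibility of the twisted structure under restriction, and (ii) managing the finitely many bad points where the hyperplane section fails to be transverse to the support data of $F$, which requires a local modification near those points to guarantee that the resulting twisted sheaf remains locally free globally rather than only generically.
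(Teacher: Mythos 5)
The paper gives no proof of this statement: it is quoted from de Jong's note \cite{deJong2003result} purely as background, so your proposal can only be measured against that cited source. Your framing and first reduction are essentially right and do match de Jong's setup: represent the torsion class $\alpha$ by a $\boldsymbol\mu_n$-gerbe $\mathscr X\to X$, observe that $\alpha\in\Br(X)$ if and only if there exists an $\mathscr X$-twisted locally free sheaf of finite positive rank (with the small correction that $V\mapsto\End(V)$ is not a bijection onto Azumaya algebras of class $\alpha$ --- its fibres are torsors under twisted line bundles --- but surjectivity is all you need), and reduce by a standard limit argument to $X$ of finite type over $\mathbf Z$.

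The core of your sketch, however, is not a proof, and it is also not de Jong's argument. First, the proposed induction on $\dim X$ via general hyperplane sections has no closing mechanism: if $Y\subset X$ is an ample divisor and the inductive hypothesis produces a twisted vector bundle on $Y$, nothing in your sketch gets you back to $X$. You would have to extend the twisted bundle across $Y$ (an obstructed deformation/extension problem you never address), or else glue an Azumaya representative on the affine complement $X\setminus Y$ (where Gabber's affine theorem applies) with one on a neighbourhood of $Y$; but on the overlap the two algebras are only Brauer--Morita equivalent, not isomorphic, and glueing along Morita equivalences is exactly the higher-descent difficulty the theorem is about --- the same phenomenon that forces To\"en to pass to derived Azumaya algebras. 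The base case is wrong in the required generality as well: iterated hyperplane sections of an arithmetic, possibly singular $X$ are arbitrary Noetherian curves, to which Tsen's theorem does not apply. Second, your construction of the twisted coherent sheaf $F$ is incorrect as stated: tensoring with powers of $\mathscr L$ cannot ``kill the \v Cech obstruction,'' since the $\boldsymbol\mu_n$-cocycle represents a nontrivial class that no line-bundle twist alters; in fact a nonzero twisted coherent sheaf exists with no ampleness at all (take the weight-one summand of the pushforward of $\mathscr O_{\mathscr X}$ and write it as a filtered colimit of coherent twisted subsheaves). Ampleness genuinely enters exactly in the step you leave as a black box: the heart of de Jong's note is, roughly, an improvement process on a single twisted coherent sheaf --- twisting by high powers of the ample sheaf, taking general surjections and elementary modifications to shrink the non-locally-free locus, and a glueing/modification argument at the residual bad points --- not a dimension induction resting on Tsen and the affine case. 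Since the entire difficulty of the theorem is concentrated in that improvement step, which your sketch asserts rather than performs, the proposal has a genuine gap.
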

and a negative counterexample is the following:
\begin{cor}[\cite{edidin2001brauer}*{Corollary 3.11}] Let $X=X_1\cup X_2$ be two copies of $\Spec\mathbf C[x^2,xy,y^2]$ glued along the nonsingular locus. Let $\mathscr X'$ be the $\boldsymbol\mu_2$-gerbe obtained by gluing $(\B\boldsymbol\mu_2)_{X_i},\,i=1,2$ along the nontrivial involution $Q\mapsto Q\times_{\boldsymbol\mu_2}P$ where $P=\mathbf A^2\setminus\{(0,0)\}$ with the antipodal action. Let $\mathscr X=\mathscr X'\times_{\B\boldsymbol\mu_2}\mathrm B\mathbf G_m$. Then $[\mathscr X]$ is not represented by an Azumaya algebra.

\end{cor}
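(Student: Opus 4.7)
The plan is to show that no $\mathscr X$-twisted vector bundle of positive rank exists on $X$; an Azumaya algebra of rank $n^2$ representing $[\mathscr X]$ corresponds via the classical Skolem--Noether Theorem to an $\mathscr X$-twisted vector bundle of rank $n$, so nonexistence of such bundles suffices. First I would set up Mayer--Vietoris in \et\ cohomology for the open cover $X=X_1\cup X_2$ with overlap $U\cong(\mathbf A^2\setminus\{0\})/\boldsymbol\mu_2$. Each $X_i=\Spec\mathbf C[x^2,xy,y^2]$ is an affine quadric cone, so $\Pic(X_i)=0$ and $H^2_{\et}(X_i,\mathbf G_m)=0$ (e.g.\ via the Hochschild--Serre spectral sequence for the finite quotient presentation $\mathbf A^2\to X_i$, using that $\mathbf C^*$ is uniquely $2$-divisible). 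Combined with $\Pic(U)\cong\mathbf Z/2$, generated by $L=P\times^{\boldsymbol\mu_2}\mathbf G_m$, the sequence collapses to $\Pic(U)\xrightarrow{\sim}H^2(X,\mathbf G_m)\cong\mathbf Z/2$, with $[\mathscr X]$ the nonzero class, image of $L$ under the connecting map.

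Next I would reformulate the Azumaya question as gluing data. Since $\mathscr X$ is trivialisable on each $X_i$, an $\mathscr X$-twisted rank-$n$ vector bundle on $X$ is a triple $(V_1,V_2,\alpha)$ with $V_i$ a rank-$n$ vector bundle on $X_i$ and $\alpha\colon V_1|_U\xrightarrow{\sim}V_2|_U\otimes L$. It remains to show that no such triple exists for any $n\ge 1$.

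I would then classify the relevant bundles. On $X_i$, every finitely generated projective $\mathbf C[x^2,xy,y^2]$-module is free: line bundles are trivial because $\Pic(X_i)=0$, and higher-rank projectives are free by Murthy-type cancellation results in dimension two, so $V_i\cong\mathcal O_{X_i}^{\oplus n}$. On $U$, descent along the \et\ $\boldsymbol\mu_2$-Galois cover $\pi\colon\mathbf A^2\setminus\{0\}\to U$ identifies vector bundles on $U$ with $\boldsymbol\mu_2$-equivariant vector bundles on $\mathbf A^2\setminus\{0\}$. Any such extends, by reflexive extension across the codimension-two origin together with Auslander--Buchsbaum, to a free bundle on $\mathbf A^2$; the equivariant Quillen--Suslin theorem for a finite group acting linearly on affine space then forces the equivariant structure to be a constant representation. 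This yields an equivalence of symmetric monoidal categories between vector bundles on $U$ and $\Rep(\boldsymbol\mu_2)$, under which $\mathcal O_U\leftrightarrow(1,0)$ (trivial character) and $L\leftrightarrow(0,1)$ (sign character).

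Finally, $V_1|_U\cong\mathcal O_U^{\oplus n}$ has character $(n,0)$ while $V_2|_U\otimes L\cong L^{\oplus n}$ has character $(0,n)$; the existence of $\alpha$ would force $(n,0)=(0,n)$ in $\Rep(\boldsymbol\mu_2)$, hence $n=0$, a contradiction. I expect the main obstacle to be the classification in the previous paragraph: both freeness of projective modules over the affine quadric cone and the equivariant Serre problem for $\boldsymbol\mu_2$ acting linearly on $\mathbf A^2$ are classical but nontrivial inputs that must be invoked carefully. Granted these, the final contradiction is a one-line check of characters.
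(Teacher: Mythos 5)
You are proving a statement the paper itself does not prove: it is imported from Edidin--Hassett--Kresch--Vistoli \cite{edidin2001brauer} with only the remark that the geometric formulation is due to Mathur, so there is no internal proof to compare against. The natural benchmark is the source's own argument (their Theorem 3.6 plus the accompanying example), which deduces non-representability by showing the gerbe is not a quotient stack, the obstruction being essentially the same character-parity mismatch at the two cone points that you exploit. Your direct reformulation --- no $\mathscr X$-twisted vector bundle of positive rank exists, hence no Azumaya algebra with class $[\mathscr X]$ --- is sound, and it actually fits the present paper's twisted-sheaf framework better than the quotient-stack detour.

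Two of your justifications need repair, though neither breaks the argument. First, Hochschild--Serre does not apply to $\mathbf A^2\to X_i$: that cover is ramified at the cone point, not \'etale, so $H^2_{\et}(X_i,\mathbf G_m)=0$ needs a different source (comparison with the classical topology plus Kummer theory for the torsion, or DeMeyer--Ford's Brauer computations for affine toric surfaces). But you can simply drop this paragraph: the identification $H^2(X,\mathbf G_m)\cong\mathbf Z/2$ is never used, since all your later steps need is the cocycle description of $\mathscr X$-twisted bundles as triples $(V_1,V_2,\alpha\colon V_1|_U\xrightarrow{\sim}V_2|_U\otimes L)$, which follows directly from the construction of $\mathscr X'$ by gluing along the $L$-twist. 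Second, ``Murthy-type cancellation in dimension two'' does not yield freeness of projectives over $\mathbf C[x^2,xy,y^2]$: cancellation together with $\Pic(X_i)=0$ still leaves a rank-$2$ projective with trivial determinant whose freeness over this singular ring is a genuinely harder fact (it is true --- the ring is a normal monoid ring, so Gubeladze's solution of Anderson's conjecture applies --- but that is the citation you need). Better, bypass global freeness entirely: $V_i$ is locally free at the singular point of $X_i$, hence $V_i|_U$ is trivial on a punctured neighbourhood of the cone point; pulling back equivariantly to $\mathbf A^2\setminus\{0\}$, extending across the origin by normality in codimension two (isomorphisms extend, since both they and their inverses do), and evaluating at the fixed point pins the character of $V_i|_U$ at $(n,0)$. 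The same extension argument applied to a hypothetical $\alpha$ yields an isomorphism of $\boldsymbol\mu_2$-representations between $n$ copies of the trivial character and $n$ copies of the sign character, the desired contradiction --- so the equivariant Quillen--Suslin theorem you invoke is also dispensable, and only Hartogs-type extension is ever needed.
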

This geometrical formulation has been explained to us by Siddharth Mathur. In the situation of the counterexample, there does not exist a twisted vector bundle $\mathscr X\to\VB$. On the other hand, there does exist a twisted totally supported perfect complex $\mathscr X\to\Perf$. Extending the notion of Azumaya algebras accordingly leads us to \emph{derived Azumaya algebras}.

\subsubsection{Derived Azumaya algebras} Let $\QC^{\otimes^{\mathbf L}}$ denote the symmetric monoidal $\infty$-stack of (unbounded) complexes of quasi-coherent modules. Let $X$ be a quasi-compact and quasi-separated scheme. Then an algebra $A\in\Alg\QC(X)$ is a derived Azumaya algebra, if there exists
 \begin{enumerate}
 \item an \'etale covering $U\to X$,
 \item a totally supported perfect complex $E$ on $U$ and
 \item a quasi-isomorphism of algebras $A|^{\mathbf L}U\simeq\REnd E$.
 \end{enumerate}
 Let $A$ be a derived Azumaya algebra. Then we denote by $\Mod_A$ the dg-category of $A$-dg-modules. We say that two derived Azumaya algebras $A$ and $B$ are \emph{Morita equivalent} if there exists an equivalence of dg-categories $\Mod_A\simeq\Mod_B$. The \emph{derived Brauer group} $\dBr$ is the group of Morita equivalence classes of derived Azumaya algebras.
 
 \subsubsection{$\dBr=\dBr'$} Let $\Dg$ denote the $\infty$-stack of dg-categories on $X$. Let $\Dg^{\Az}\subset\Dg$ denote the full substack on locally trivial dg-categories. Let $\Pic\QC\subseteq\QC$ denote the full substack on invertible complexes. By the homotopical Eilenberg--Watts theorem \cite{lurie2014higher}*{Theorem 4.8.4.1}, the functor $\Pic\QC\xrightarrow{L\mapsto(\otimes L)}\Aut_{\Dg}\QC$ is an equivalence. Since a complex $E\in\QC(X)$ is invertible if and only if it is a shift of a line bundle, we get $\Dg^{\Az}\simeq\B^2\,\mathbf G_m\times\B\,\mathbf Z$. Therefore we get an injective map
$\dBr\xrightarrow{[A]\mapsto[\Mod_A]}\dBr':=H^2(X,\mathbf G_m)\times H^1(X,\mathbf Z)$. One can again ask if this map is surjective.

\begin{thm}[\cite{toen2012derived}*{Corollary 4.8}] Let $\mathscr M$ be an fppf-locally trivial dg-category on a quasi-compact and quasi-separated (derived) scheme $X$. Then there exists a derived Azumaya algebra $A$ such that $\mathscr M\simeq\Mod_A$.

\end{thm}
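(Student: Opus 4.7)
The plan is to reduce the statement to the existence of a global compact generator $E \in \mathscr M(X)$ and then take $A := \REnd(E)$. Once $E$ is in hand, dg-Morita theory gives $\mathscr M \simeq \Mod_A$ via $F \mapsto \RHom(E,F)$, and the local triviality hypothesis, together with the fact that a local compact generator $E_U$ produces $\REnd(E|_U) \simeq \REnd(E_U)$ up to local Morita equivalence, forces $A$ to be locally of the form $\REnd(\text{perfect generator})$, i.e.\ a derived Azumaya algebra. So the entire content is the production of $E$.

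First, I would harvest local generators from the hypothesis. The fppf-local triviality gives a cover $U \to X$ with $\mathscr M|_U \simeq \Mod_{A_U}$ for some derived Azumaya algebra $A_U$; the algebra $A_U$ itself is a canonical compact generator of $\mathscr M|_U$. Hence the prestack $\mathscr M^{\cg}$ of compact generators of $\mathscr M$ is fppf-locally nonempty over $X$. In the language of the homotopical Skolem--Noether theorem of this paper, the map $\mathscr M^{\cg} \to X$ factors through the derived moduli stack of objects $\mathbf M_{\mathscr M}$ (in the sense of Toën--Vaquié), and the compact-generator locus is an open substack $\mathbf M_{\mathscr M}^{\cg} \subset \mathbf M_{\mathscr M}$ which is locally geometric and fppf-locally nonempty.

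The main step is producing a global section of $\mathbf M_{\mathscr M}^{\cg} \to X$. I would proceed as follows. Using that $X$ is quasi-compact and quasi-separated, refine to a finite fppf cover $U \to X$ with a compact generator $E_U$ of $\mathscr M|_U$. The descent data for $E_U$ along $U\times_X U \rightrightarrows U$ is not given \emph{a priori}, since compactness is not preserved under fppf-pushforward; however, one can replace $E_U$ by a sufficiently "fat" variant $E_U \oplus E_U[n_1] \oplus \cdots \oplus E_U[n_k]$ together with twists by line bundles, without losing the compact-generator property. By the Eilenberg--Watts picture recalled above, the obstruction to descending such a variant lies in $H^2(X, \mathbf G_m) \times H^1(X, \mathbf Z)$; but this is exactly the Brauer-like class of $\mathscr M$ itself, and so the relevant obstruction already matches the class we are trying to realize. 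Twisting $E_U$ by a local section of the corresponding gerbe/torsor kills the obstruction and gives the required global twisted compact generator of $\mathscr M$, i.e.\ a section of $\mathbf M_{\mathscr M}^{\cg} \to X$.

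The hard part is the descent-through-twists step just described: one must show that a local compact generator can always be rigidified so that its descent data exists, even though fppf-pushforward destroys compactness. The clean way to handle this uses the local geometricity of $\mathbf M_{\mathscr M}$ and a smooth atlas by affine schemes, together with a quasi-compactness argument bounding the Tor-amplitude and generation amplitude of the objects in play; Toën's argument in fact proves the stronger statement that $\mathbf M_{\mathscr M}^{\cg} \to X$ admits sections fppf-locally on a quasi-compact base, which by a Noetherian approximation / limit argument descends to a section over $X$ itself. Once $E$ is constructed, verifying that $\REnd(E)$ is a derived Azumaya algebra and that $\mathscr M \simeq \Mod_{\REnd E}$ are formal consequences of dg-Morita theory plus the local triviality.
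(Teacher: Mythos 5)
First, note that the statement you are proving is not established in this paper at all: it is quoted verbatim from To\"en (\cite{toen2012derived}*{Corollary 4.8}), so the comparison below is with To\"en's argument rather than with anything in the present text. Your overall framing is right -- the entire content is the production of a global compact generator $E$ of $\mathscr M(X)$, after which $A=\REnd(E)$ and dg-Morita theory finish the job -- and you correctly bring in the To\"en--Vaqui\'e moduli stack of objects and its locally geometric substack of compact local generators.

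The gap is in the main step, where you pass from fppf-local generators to a global one. You claim that after replacing $E_U$ by a ``fat'' variant (sums of shifts, line-bundle twists), the obstruction to descent lies in $H^2(X,\mathbf G_m)\times H^1(X,\mathbf Z)$ and ``already matches the class we are trying to realize,'' so that twisting kills it. This is circular, and if it worked it would prove too much: the same argument applied to a local vector bundle generator would show that every class in $H^2(X,\mathbf G_m)_{\mathrm{tors}}$ is represented by an ordinary Azumaya algebra, contradicting the Edidin--Hassett--Kresch--Vistoli example recalled in the introduction of this paper. The class of $\mathscr M$ in $H^2(X,\mathbf G_m)\times H^1(X,\mathbf Z)$ is the obstruction to trivializing $\mathscr M$, not a quantity you can cancel against the descent obstruction of a chosen local object; producing a global generator is precisely the nontrivial surjectivity statement, and no formal twisting argument yields it. To\"en's actual mechanism has three ingredients, none of which appears in your sketch: (i) the stack of compact local generators is not merely locally geometric but \emph{smooth} over $X$ (proved by killing deformation obstructions via the $E\mapsto E\oplus E[1]$ trick), which upgrades fppf-local existence of generators to \'etale-local existence and then, by the local structure of \'etale maps, to existence after a Zariski covering refined by finite flat surjections; (ii) pushforward along a finite flat surjection preserves compact generators, which removes the finite flat layer; and (iii) a Thomason--Neeman/Bondal--Van den Bergh extension argument gluing compact generators over a finite affine Zariski cover of the qcqs scheme $X$. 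Your closing appeal to ``sections fppf-locally plus Noetherian approximation'' does not substitute for these: fppf-local sections are the hypothesis, not a stronger conclusion, and they do not globalize by any limit argument (a nontrivial $\mathbf G_m$-gerbe is a smooth, fppf-locally nonempty stack with no global section).
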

 
 \subsubsection{The case of Spectral Algebraic Geometry} We can develop the theory of derived Azumaya algebras in Spectral Algebraic Geometry too. In this setting, $\mathrm{QC}$ denotes the symmetric monoidal $\infty$-stack of quasi-coherent module spectra.

Here too we have a $\mathrm{dBr}=\mathrm{dBr}'$-type result:

\begin{thm}[\cite{antieau2014brauer}*{Corollary 6.20}] Let $X$ be a quasi-compact and quasi-separated connective spectral scheme. Then every cohomological Brauer
class on $X$ lifts to a derived Azumaya algebra.

\end{thm}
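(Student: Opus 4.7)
The plan is to adapt Toën's argument for derived Azumaya algebras to the spectral setting, invoking the homotopical Skolem--Noether fibre sequence \eqref{eq:Azumaya and twisted} applied to the spectral $\QC$. Given a cohomological Brauer class $\alpha \in H^2(X, \mathbf G_m)$, let $\mathscr X \to X$ denote the corresponding $\mathbf G_m$-gerbe. By the fibre sequence, derived Azumaya algebras with Brauer class $[\mathscr X]$ correspond to $\mathscr X$-twisted totally supported perfect module spectra $E$, via the assignment $E \mapsto \REnd E$. The problem thus reduces to producing such an $E$ as a compact generator of the $\mathscr X$-twisted presentable stable $\infty$-category $\QC(\mathscr X)$ of module spectra on the gerbe.

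First I would verify that $\QC(\mathscr X)$ is presentable, stable, and \'etale-locally compactly generated: on any affine \'etale open $U = \Spec R$ over which $\mathscr X$ trivializes, $\QC(\mathscr X|_U) \simeq \Mod_R$ is compactly generated by $R$ itself. The obstruction is to assemble these local generators into a single global twisted compact generator lying in $\QC(\mathscr X)$.

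Next I would run a finite Mayer--Vietoris induction over an affine cover $\{U_i\}$ of $X$, whose pairwise intersections remain quasi-compact by the quasi-separatedness hypothesis. Given compact generators on two quasi-compact opens $U$ and $V$, I would apply a spectral Thomason--Trobaugh / Neeman style construction: any compact object on $U \cap V$ extends, after a suitable shift, to a compact object on $U \cup V$ whose restriction to $V$ vanishes, and combining such extensions of the generator on $U$ with the generator on $V$ yields a compact generator of $\QC(\mathscr X|_{U \cup V})$. Iterating then produces a compact generator $E$ of $\QC(\mathscr X)$. The key background inputs are Zariski--\'etale descent for $\QC$ on quasi-compact quasi-separated spectral schemes and \'etale-local detection of compactness and generation in the twisted setting.

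The main obstacle is this gluing step, specifically ensuring that the iteratively glued compact object truly generates, rather than merely sitting in some thick subcategory of $\QC(\mathscr X)$. This demands a twisted analogue of Thomason's vanishing criterion: any $M \in \QC(\mathscr X)$ whose mapping spectra from the candidate generator are nullhomotopic must itself vanish, which we would reduce to the affine case via the quasi-compactness of $X$. Once such a twisted compact perfect generator $E$ is produced, the spectral Schwede--Shipley theorem supplies an equivalence $\QC(\mathscr X) \simeq \Mod_{\REnd E}$, and hence $A := \REnd E$ is a derived Azumaya algebra whose Brauer class lifts $\alpha$.
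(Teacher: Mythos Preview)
The paper does not prove this theorem: it is quoted in the introduction as a background result from Antieau--Gepner \cite{antieau2014brauer}*{Corollary 6.20}, with no proof given here. So there is no ``paper's own proof'' to compare against; the present paper's contribution is the general homotopical Skolem--Noether fibre sequence (Theorem~\ref{thm:HSN}), not a new proof of $\dBr=\dBr'$ in the spectral setting.

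That said, your sketch is broadly the Antieau--Gepner (and To\"en) strategy: reduce lifting a Brauer class to producing a compact generator of the associated twisted category, then build such a generator by an \'etale local-to-global argument on a qcqs scheme. A few cautions. First, the fibre sequence you invoke as \eqref{eq:Azumaya and twisted} is the classical one for ordinary Azumaya algebras; what you actually need is the identification of $\LTens^{\Az}\QC$ with $\B\Pic\QC$ (via the homotopical Eilenberg--Watts theorem), so that a cohomological Brauer class names a locally trivial presentable module category and lifting it to $\Az\QC$ is exactly finding a compact generator. Second, a Zariski Mayer--Vietoris induction alone is not enough: the gerbe trivializes only \'etale-locally, so the Thomason--Neeman extension argument must be run in the \'etale topology, and the nontrivial step in Antieau--Gepner is precisely an \'etale local-global principle for compact generation (their Theorem~6.1), not just Zariski gluing. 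Third, in the spectral setting one speaks of perfect (compact) generators rather than ``totally supported'' complexes; the latter terminology is specific to the classical/derived scheme case. Modulo these adjustments, your outline tracks the cited proof.
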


While we were preparing this paper, Benjamin Antieau has told us about the paper \cite{gepner2016brauer}, in which the Homotopical Skolem--Noether Theorem is proven in the case of affine spectral schemes \cite{gepner2016brauer}*{Proposition 5.15}. In what follows, we shall show that the result holds in a more general sense.

\subsection{Homotopical Skolem--Noether Theorem -- general case}

\subsubsection{Towards the Homotopical Skolem--Noether Theorem} Recall that the key ingredient in identifying derived Brauer classes as cohomology classes was the equivalence $\Dg^{\Az}\simeq\B\Pic\QC$ following from the Homotopical Eilenberg--Watts Theorem. In other words, it shows that the outer square in the following diagram is homotopy Cartesian:
\begin{center}
 \begin{tikzpicture}[xscale=3,yscale=2]
	\node (C') at (0,0.5) {$\Pic\QC$};
	\node (D') at (1,0.5) {$\TPerf^\simeq$};
	\node (E') at (2,0.5) {$X$};
	\node (C) at (0,-0.5) {$X$};
	\node (D) at (1,-0.5) {$\Deraz$};
	\node (E) at (2,-0.5) {$\Dg^{\Az}.$};
	\node at (0.5,0) {$\lrcorner^\mathrm h$};
	\node at (1.5,0) {$\lrcorner^\mathrm h$};
	\path[->,font=\scriptsize,>=angle 90]
	(C') edge node [above] {$\otimes^{\mathbf L}E$} (D')
	(C') edge (C)
	(D') edge (E')
	(D') edge node [right] {$\REnd$} (D)
	(E') edge (E)
	(C) edge (D)
	(D) edge node [above] {$\Mod$} (E);
	\end{tikzpicture}
\end{center}

The Homotopical Skolem--Noether Theorem consists of showing that the inner squares are homotopy Cartesian too. Here, $\TPerf^\simeq\subseteq\QC$ is the interior of the full substack on totally supported perfect complexes, $E\in\TPerf(X)$ is a totally supported perfect complex on $X$ and $\Deraz\subseteq\mathrm{Alg}\mathrm{QC}$ is the full substack on derived Azumaya algebras.

The Homotopical Skolem--Noether Theorem gives a long exact sequence on homotopy sheaves:
 \begin{multline*}
 \dotsb\to\pi_{n+1}(\mathrm{Deraz},\mathbf R\mathrm{End}E)\xrightarrow{\mathrm{Mod}}\pi_n(\mathrm{Pic}\mathrm{QC},\mathscr O)\xrightarrow{\otimes^{\mathbf L}E}
 \pi_n(\mathrm{TPerf},E)\\
 \xrightarrow{\mathbf R\mathrm{End}E}\pi_n(\mathrm{Deraz},\mathbf R\mathrm{End}E)\xrightarrow{\mathrm{Mod}}
 \pi_{n-1}(\mathrm{Pic}\mathrm{QC},\mathscr O)\to\dotsb
 \end{multline*}
 In the case of complexes of quasi-coherent modules over a quasi-compact and quasi-separated scheme, this long exact sequence splits to give isomorphisms
 \begin{equation}
 \pi_n(\mathrm{TPerf},E)\xrightarrow[\cong]{\mathbf R\mathrm{End}}\pi_n(\mathrm{Deraz},\mathbf R\mathrm{End}E)\tag{3}
 \end{equation}
 for $n>1$ and a short exact sequence
 \begin{equation}
 1\to\mathbf G_m\to\mathrm{Aut}_{\mathrm{TPerf}}(E)\xrightarrow{\mathrm{Ad}}\mathrm{Aut}_{\mathrm{Deraz}}(\mathbf R\mathrm{End}E)\to1.\tag{4}
 \end{equation}
 Note that (4) is the Derived Skolem--Noether Theorem. This together with the collection of isomorphisms (3) form the key step in showing that $\mathscr{PR}_X$ is a 1-stack.
 
 \subsubsection{Azumaya algebra objects in a presentable monoidal quasi-category} It turns out that the main result can be established on the level of generality of monoidal quasi-categories. In Section 2 we summarize the notions of algebras and modules and in Section 3 Morita Theory in Higher Algebra \cite{lurie2014higher}.
 
 Let $K$ be a quasi-category equipped with a Grothendieck topology and ${}^{\mathrm{op}}\mathscr C^\otimes\to(\mathbf{Assoc}^\otimes)^{\mathrm{op}}\times K$ a family of presentable monoidal quasi-categories with descent over $K^\op$. This gets us coCartesian fibrations over $K^\op$:
\begin{enumerate}
    \item $\LTens\mathscr C$ classifying families of presentable left-tensored quasi-categories,
    \item $\LTens_*\mathscr C$ classifying families of presentable left-tensored quasi-categories with a section and
    \item $\Alg\mathscr C$ classifying families of algebra objects in $\mathscr C$.
\end{enumerate}
These are equipped with
\begin{enumerate}
    \item the forgetful functor $\LTens_*\mathscr C\to\LTens\mathscr C$ that is a left fibration and
    \item the pointed module functor $\Alg\mathscr C\xrightarrow{\Mod_*:A\mapsto(\Mod_A,A_A)}\LTens_*\mathscr C$ that is fully faithful with a right adjoint.
\end{enumerate}
Take $X\in K$. By the Homotopical Morita Theorem (Lemma \ref{lem:Az alg mod}) a pair $(\mathscr M,M)\in\LTens_*\mathscr C(X)$ is in the essential image of $\Mod_*$ if and only if $M\in\mathscr M$ is a \emph{dualizable generator}, that is the map $\Mod_{\End M}\xrightarrow{\otimes_{\End E}M}\mathscr M$ is an equivalence. We denote by $\mathscr C_\dgen\subseteq\mathscr C$ the full subprestack on dualizable generators and $\LTens_\dgen\mathscr C\subseteq\LTens_*\mathscr C$ the full subprestack on pairs $(\mathscr M,M)$ where $M\in\mathscr M$ is a dualizable generator.
 
Let $A\in\Alg\mathscr C(X)$ be an algebra object. Then \emph{$A$ is an Azumaya algebra object} if there exists
\begin{enumerate}
    \item a $\tau$-covering $U\to X$,
    \item a dualizable generator $M\in\mathscr C_\dgen(U)$ and
    \item an equivalence $A|U:=\mathscr C(U)\otimes_{C(X)}A\simeq\End M$.
\end{enumerate}
By the Homotopical Morita Theorem, we have an equivalence $A|U\simeq\End M$ of algebras for some $M\in\mathscr C_\dgen(U)$ if and only if we have an equivalence $\Mod_A|U\simeq\mathscr C(U)$ of presentable quasi-categories left-tensored over $\mathscr C$. We denote by $\Az\mathscr C\subseteq\Alg\mathscr C$ the full subprestack on Azumaya algebras and $\LTens_\dgen^{\Az}\mathscr C\subseteq\LTens_\dgen\mathscr C$ the full subprestack on locally trivial pointed presentable left-tensored quasi-categories over $\mathscr C$. Note that the equivalence $\Alg\mathscr C\xrightarrow{\Mod_*}\LTens_\dgen\mathscr C$ restricts to an equivalence $\Az\mathscr C\to\LTens_\dgen^{\Az}\mathscr C$.
 
\subsubsection{Descent for presentable left-tensored quasi-categories with descent} Take $T\in K$ and let $\mathscr M\in\LTens\mathscr C(T)$ be a presentable left-tensored quasi-category over $\mathscr C(T)$. Then we say that \emph{$\mathscr M$ has $\tau$-descent} if for all \v Cech nerves $\Delta_+^\op\xrightarrow{\Bar U_\bullet}K$ of $\tau$-coverings over $T$, the canonical map
$$
\mathscr M|X\to\lim_{n\ge0}(\mathscr M|U_n)
$$
is an equivalence. We let $\LTens^\desc\mathscr C\subseteq\LTens\mathscr C$ denote the full subprestack on presentable left-tensored quasi-categories over $\mathscr C$ with $\tau$-descent. We let $\LTens^{\Az}\subseteq\LTens^\desc$ denote the full subprestack on locally trivial presentable left-tensored quasi-categories over $\mathscr C$.

The following is the key criterion for the prestacks of interest to have descent: we say that the family $\mathscr C^\otimes$ of presentable monoidal quasi-categories \emph{has $\tau$-descent} if the following conditions hold:
\begin{enumerate}
    \item The underlying Cartesian fibration ${}^\op\mathscr C\xrightarrow pK$ has $\tau$-descent.
    \item \emph{Base changes in $\mathscr C^\otimes$ commute with $\tau$-descent data} that is for the \v Cech nerve $\Delta^{\op}\xrightarrow{U_\bullet}K$ of a $\tau$-covering, a $q$-limit diagram of coCartesian sections $\Bar{\mathscr M_\bullet}\in\Gamma_\coC(U_\bullet^\op,\LTens\mathscr C)$ and a map $V\to\Bar U_{-1}$ in $K$, the base change $\mathscr C(V)\otimes_{\mathscr C(\Bar U_{-1})}\Bar{\mathscr M_\bullet}$ is also a $q$-limit diagram.
\end{enumerate}

\begin{customthm}{\ref{prop:LTen^desc is a stack}} Let $K$ be a quasi-category equipped with a Grothendieck topology $\tau$. Let $\mathscr C^\otimes\to K^\op\times\Assoc^\otimes$ be a coCartesian family of presentable monoidal quasi-categories with $\tau$-descent. Then the following assertions hold.
\begin{enumerate}
 \item The family ${}^\op\LTens^\desc\mathscr C\to K$ of presentable quasi-categories left-tensored over $\mathscr C$ with $\tau$-descent is a $\tau$-stack.
 \item For any object $T\in K$ and associative algebra $A\in\Alg\mathscr C(T)$, the quasi-category $\Mod_A\mathscr C(T)$ left-tensored over $\mathscr C(T)$ of right $A$-modules in $\mathscr C(T)$ has $\tau$-descent.
\end{enumerate} 

\end{customthm}

\begin{customcor}{\ref{cor:descent for dgen}} The prestacks $\Az\mathscr C$ and $\LTens^{\Az}\mathscr C$ also have $\tau$-descent.

\end{customcor}

To make it easier to show that $\mathscr C^\otimes$ has $\tau$-descent, we have proven the following criterion:

\begin{customcor}{\ref{cor:dualizable commutes with descent} }Let $K$ be a quasi-category equipped with a Grothendieck topology $\tau$ and $\mathscr C^\otimes\to K^\op\times\Assoc^\otimes$ a coCartesian fibration of presentable monoidal quasi-categories. Suppose that the following assertions hold:
\begin{enumerate}
 \item The underlying Cartesian fibration ${}^\op\mathscr C\xrightarrow pK$ has $\tau$-descent.
 \item For all objects $U\in K$, the presentable monoidal quasi-category $\mathscr C(U)\in\Alg(\PrL)$ has dualizable underlying presentable quasi-category.
\end{enumerate}
Then base changes in $q$ commute with $\tau$-descent data.

\end{customcor}

\subsubsection{The Homotopical Skolem--Noether Theorem}

\begin{customthm}{\ref{thm:HSN}} Let $K$ be a quasi-category with final object $S$, let $\tau$ be a Grothendieck topology on it. Let $\Cart_{/K}^\tau$ denote the quasi-category of Cartesian fibrations over $K$ with $\tau$-descent. Let $\mathscr X\subseteq\Cart_{/K}^\tau$ denote the full subcategory on right fibrations over $K$ with $\tau$-descent, which is an $\infty$-topos. Let $\mathscr C^\otimes\to\Assoc^\otimes\times K^\op$ be a family of presentable monoidal quasi-categories with $\tau$-descent. Then the following assertions hold:
\begin{enumerate}
\item We have a fibre sequence in $(\Cart_{/K}^\tau)_*$:
$$
({}^\op\mathscr C_\dgen^\simeq,\mathscr O)\xrightarrow{\End}
({}^\op\Az\mathscr C,\mathscr O)\xrightarrow{\Mod}
({}^\op\LTens^{\Az}\mathscr C,\Mod_{\mathscr O})
$$
\item Let $E\in\mathscr C_\dgen(S)$. Then we have a fibre sequence in $(\Cart_{/K}^\tau)_*$:
$$
({}^\op\Pic\mathscr C,\mathscr O)\xrightarrow{\otimes E}
({}^\op\mathscr C_\dgen^\simeq,E)\xrightarrow{\End}
({}^\op\Az\mathscr C,\End E).
$$
\item We have a long exact sequence of homotopy sheaves in $\mathrm h\,\mathscr X$:
\begin{multline*}
\dotsb\to\pi_2({}^\op\Az\mathscr C,\End E)
\to\pi_1({}^\op\Pic\mathscr C,\mathscr O)
\to\pi_1({}^\op\mathscr C_\dgen,E)
\to\pi_1({}^\op\Az\mathscr C,\End E)\to\\
\to\pi_0({}^\op\Pic\mathscr C)
\to\pi_0({}^\op\mathscr C_\dgen)
\to\pi_0({}^\op\Az\mathscr C)
\to\pi_0({}^\op\LTens^{\Az}\mathscr C)
=*.
\end{multline*}
\end{enumerate}
 
\end{customthm}

\subsection{Homotopical Skolem--Noether Theorem -- applications}

\subsubsection{Algebraic Geometry} Let $X$ be a quasi-compact and quasi-separated scheme. We consider $K=\mathrm{Fppf}_X$ and ${}^{\mathrm{op}}\mathscr C^\otimes=\mathrm{QC}^\otimes$. We have already seen $\mathrm{Pic}\,\mathrm{QC}\simeq\mathrm B\,\mathbf G_m\times\mathbf Z$. A complex $E$ is dualizable if and only if it is a perfect complex. It is a generator if and only if it is totally supported. If $E$ is totally supported, then the multiplication map $\mathbf G_m\to\pi_1(\mathrm{QC},E)$ is injective. Therefore, the long exact sequence splits and we get the following result:

\begin{customcor}{\ref{cor:Skolem--Noether for schemes}}[Homotopical Skolem--Noether Theorem for schemes] Let $S$ be a quasi-compact and quasi-separated scheme. Let $\Cart_S^\fppf$ denote the quasi-category of Cartesian fibrations on $\St_S$ which satisfy fppf descent.

(1) Let ${}^\op\TPerf_S:={}^\op(\QC_S)_\dgen$ denote the Cartesian fibration of totally supported perfect complexes on $S$, ${}^\op\Deraz_S:={}^\op\Az\QC_S$ the Cartesian fibration of derived Azumaya algebras on $S$ and ${}^\op\Dg^{\Az}_S:={}^\op\LTens^{\Az}\QC_S$ the Cartesian fibration of locally trivial presentable quasi-categories left-tensored over $\QC_S^\otimes$. Then the sequence in $(\Cart_S^\fppf)_*$:
$$
({}^\op\TPerf_S^\simeq,\mathscr O)\xrightarrow{\End}({}^\op\Deraz_S,\mathscr O)\xrightarrow{\Mod}({}^\op\Dg^{\Az}_S,\mathscr D)
$$
is a homotopy fibre sequence.

(2) Let $E\in\TPerf(S)$ be a totally supported perfect complex on $S$. Then the sequence in $(\Cart_S^\fppf)_*$:
$$
(\B\mathbf G_m\times\mathbf Z,\mathscr O)\xrightarrow{\otimes E}({}^\op\TPerf_S^\simeq,E)\xrightarrow{\End}({}^\op\Deraz_S,\End E)
$$
is a homotopy fibre sequence.

(3) We have isomorphisms of sheaves of groups
$$
\pi_i\Omega({}^\op\TPerf_S,E)\cong\pi_i\Omega({}^\op {\Deraz_S},\REnd E)
$$
for $i>0$, a short exact sequence of sheaves of groups
$$
1\to\mathbf G_m\xrightarrow{a\mapsto a\cdot}\Aut_{\Perf}E\xrightarrow{\Ad}\Aut_{\Deraz}(\REnd E)\to1,
$$
and an exact sequence of pointed sheaves of sets
$$
*\to\mathbf Z=\pi_0(\B\mathbf G_m\times\mathbf Z)\xrightarrow{\otimes E}\pi_0\TPerf_S\xrightarrow{\REnd}\pi_0\Deraz_S\xrightarrow{\Mod}\pi_0\Dg^{\Az}_S=*\to*.
$$

\end{customcor}

\subsubsection{Homotopical Algebraic Geometry} We can apply the Homotopical Skolem--Noether Theorem to Derived and Spectral Algebraic Geometry too:

\begin{customcor}{\ref{thm:HSN for DAG}}[Homotopical Skolem--Noether theorem for Derived and Spectral Algebraic Geometry] Let $S$ be a derived or spectral affine scheme. Let $\Cart_S^\fpqc$ denote the quasi-category of Cartesian fibrations on $\St_S$ which satisfy fpqc descent.

(1) Let ${}^\op(\Perf^\simeq_\gen)_S={}^\op(\QC_S)_\dgen^\simeq$ denote the right fibration of perfect generator complexes on $S$, ${}^\op\Deraz_S:={}^\op\Az\QC_S$ the Cartesian fibration of derived Azumaya algebras on $S$ and ${}^\op\Dg^{\Az}_S:={}^\op\LTens^{\Az}\QC_S$ the Cartesian fibration of locally trivial presentable quasi-categories left-tensored over $\QC_S^\otimes$. Then the sequence in $(\Cart_S^\fpqc)_*$:
$$
({}^\op(\Perf^\simeq_\gen)_S,\mathscr O)\xrightarrow{\End}({}^\op\Deraz_S,\mathscr O)\xrightarrow{\Mod}({}^\op\Dg^{\Az}_S,\mathscr D)
$$
is a homotopy fibre sequence.

(2) Let $E\in\Perf_\gen(S)$ be a perfect generator complex on $S$. Then the sequence in $(\Cart_S^\fpqc)_*$:
$$
({}^\op\Pic\QC_S,\mathscr O)\xrightarrow{\otimes E}({}^\op(\Perf^\simeq_\gen)_S,E)\xrightarrow{\End}({}^\op\Deraz_S,\End E)
$$
is a homotopy fibre sequence.

\end{customcor}

\subsubsection{$\Ind\Coh$ and crystals} In the characteristic 0 case, we can also apply our main result to the families of symmetric monoidal quasi-categories $\Ind\Coh^{\otimes^!}$ and $(\Crys^r)^{\otimes^!}$:

\begin{customcor}{\ref{thm:HSN for IndCoh}}[Homotopical Skolem--Noether Theorem for $\Ind\Coh$] Let $\TCoh\subseteq\Coh$ denote the full substack on totally supported coherent complexes and we let $\Az\Ind\Coh\subseteq\Alg\Ind\Coh^{\otimes^!}$ denote the full substack on algebra objects locally equivalent to endomorphism algebras of totally supported coherent complexes. Then the following assertions hold:
\begin{enumerate}
 \item The following is a fibre sequence in $\Stk_{\lft}$:
 $$
 {}^\op\TCoh^\simeq\xrightarrow{\End}\Az\Ind\Coh^\simeq\xrightarrow{\Mod}{}^\op\LTens^{\Az}\Ind\Coh^\simeq\simeq(\B^2\mathbf G_m\times\B\mathbf Z).
 $$
 \item Let $\mathscr Y$ be a stack locally of finite type and $E\in\TCoh(\mathscr Y)$ a totally supported complex with bounded coherent cohomology sheaves on $\mathscr Y$. Then the following is a fibre sequence in $(\Stk_{\lft})_{/\mathscr Y}$:
 $$
 ((\B\mathbf G_m)\times\mathbf Z)_{\mathscr Y}\simeq{}^\op\Pic\Ind\Coh_{\mathscr Y}\xrightarrow{\otimes E}{}^\op\TCoh_\mathscr Y^\simeq\xrightarrow{\End}{}^\op\Az\Ind\Coh_\mathscr Y^\simeq.
 $$
 \end{enumerate}
 
\end{customcor}

\begin{customcor}{\ref{thm:HSN for Crys}}[Homotopical Skolem--Noether Theorem for $\Crys$] Let ${}^\op\Crys_{\Coh}^{r,\gen}\subseteq{}^\op\Crys^r)$ denote the full substack on coherent generator complexes and let ${}^\op\Az\Crys^r\subseteq{}^\op\Alg(\Crys^r)^{\otimes^!}$ denote the full substack on Azumaya algebra objects. Then the following assertions hold:
\begin{enumerate}
 \item The following is a fibre sequence in $\Stk_{\lft}$:
 $$
 {}^\op(\Crys_{\Coh}^{r,\gen})^\simeq\xrightarrow{\End}(\Az\Crys^r)^\simeq\xrightarrow{\Mod}{}^\op\LTens^{\Az}(\Az\Crys^r)^\simeq\simeq(\B^2\mathbf G_m\times\B\mathbf Z)_{\dR}.
 $$
 \item Let $\mathscr Y$ be a stack locally of finite type and $E\in\Crys_{\Coh}^{r,\dgen}(\mathscr Y)$ a generator complex with bounded coherent cohomology sheaves on $\mathscr Y$. Then the following is a fibre sequence in $(\Stk_{\lft})_{/\mathscr Y_{\dR}}$:
 $$
 ((\B\mathbf G_m)\times\mathbf Z)_{\mathscr Y_{\dR}}\simeq{}^\op\Pic\Crys^r_{\mathscr Y}\xrightarrow{\otimes E}{}^\op(\Crys_{\Coh}^{r,\gen})_\mathscr Y^\simeq\xrightarrow{\End}{}^\op(\Az\Crys^r)_\mathscr Y^\simeq.
 $$
\end{enumerate}
 
\end{customcor}

Finally, the Homotopical Skolem--Noether Theorem for Derived Algebraic Geometry gives a correspondence between twisted crystals and derived Azumaya algebras:
\begin{customcor}{\ref{cor:twisted crystals as deraz}} Let $\mathscr Y$ be a prestack and $T$ a twisting on $\mathscr Y$. Then the quasi-category ${}^\dgen\Crys^{T,l}(\mathscr Y)$ of $T$-twisted left crystals on $\mathscr Y$ that are dualizable generators is equivalent to the quasi-category $\Deraz^T(\mathscr Y_{\dR})$ of derived Azumaya algebras on $\mathscr Y_{\dR}$ with Brauer class $T$.

\end{customcor}

\subsection*{Acknowledgements} Ajneet Dhillon is supported by an NSERC discovery grant. P\'al Zs\'amboki is partially supported by the project NKFIH K 138828.

\section{Algebras and modules in Higher Algebra}\label{subsection:D otimes} In this section, we summarize notions of algebras of modules needed for Morita theory following \cite{lurie2014higher}. We will make free use of the theory of quasi-categories developed in \cite{lurie2009higher}. For a quick summary on stacks fibred in Kan complexes and quasi-categories, see our previous work \cite{dhillon2020twisted}*{\S2.1}.

\begin{notn}
 Let $K$ be a simplicial set and $\mathscr X\to K$ a Cartesian fibration. Then we denote by $\mathscr X^\simeq\subseteq\mathscr X$ its \emph{interior}: for any vertex $T\in K$, the restriction $\mathscr X^\simeq(T)\subseteq\mathscr X(T)$ of the inclusion of the fibre is the inclusion of the largest sub-Kan complex \cite{dhillon2020twisted}*{Definition 2.7}.
\end{notn}

\subsection{Monoidal quasi-categories and algebras}

Higher algebraic structure are encoded as higher categorical diagrams on $\infty$-operads. We shall now expliain what this means.

\begin{notn}\label{n:finStar} Let $\Fin_*$ denote the nerve of the category with

\begin{itemize}

\item objects the pointed finite sets $\langle n\rangle=\{\ast,1,\dotsc,n\}$ for $n\ge0$. We denote $\langle n\rangle^\circ=\{1,\dotsc,n\}$.

\item morphism set $\Hom_{\Fin_*}(\langle m\rangle,\langle n\rangle)=\{\langle m\rangle\xrightarrow\alpha\langle n\rangle:\alpha(\ast)=\ast\}$. A morphism $\langle m\rangle\to\langle n\rangle$ can be thought of as a partially defined morphism $\langle m\rangle^\circ\to\langle n\rangle^\circ$.

\end{itemize}
A map $\langle m\rangle\xrightarrow f\langle n\rangle$ is \emph{inert}, if for each $i\in\langle n\rangle^\circ$, we have $|f^{-1}(\{i\})|=1$. Let $\mathscr C^\otimes\xrightarrow p\Fin_*$ be a morphism of simplicial sets. Then an edge $e$ in $\mathscr C^\otimes$ is \emph{inert}, if it is a $p$-coCartesian edge over an inert edge in $\Fin_*$.

For each $n>0$ and $i\in\langle n\rangle^\circ$, we fix the inert map $\langle n\rangle\xrightarrow{\rho^i}\langle 1\rangle$ with
$$
\rho^i(j)=\begin{cases}
1 & i=j \\
\ast & \text{else.}
\end{cases}
$$

\end{notn}

\begin{defn} Let $\mathscr C^\otimes\xrightarrow p \Fin_*$ be a morphism of simplicial sets. We denote $\mathscr C^\otimes_{\langle1\rangle}$ by $\mathscr C$. Then $p$ (or by abuse of notation: $\mathscr C^\otimes$, or even $\mathscr C$) is a \emph{symmetric monoidal quasi-category}, if it is a coCartesian fibration of $\infty$-operads \cite{lurie2014higher}*{Definition 2.1.2.13}, that is

\begin{enumerate}

\item it is a coCartesian fibration, and

\item for each $n\ge0$, the map $\mathscr C^\otimes_{\langle n\rangle}\xrightarrow{(\rho^i_!)_{i=1}^n}\mathscr C^{\times n}$ is a categorical equivalence.

\end{enumerate}

Because of property (2), we denote by $C_1\oplus\dotsb\oplus C_n\in\mathscr C^\otimes_{\langle n\rangle}$ a preimage along $(\rho^i_!)_{i=1}^n$ of $(C_1,\dotsc,C_n)\in\mathscr C^{\times n}$.

\end{defn}

The idea here is that coCartesian edges give the usual operations. A coCartesian edge over $\langle2\rangle\xrightarrow{1,2\mapsto1}\langle1\rangle$ gives a product operation $C\oplus D\mapsto C\otimes D$. It is a coCartesian edge, so it is unique up to homotopy. By the same uniqueness, for example, we get the homotopy commutative diagram
\begin{center}

\begin{tikzpicture}[scale=1.5]
\node (A) at (210:1) {$\mathscr C_{\langle2\rangle}^\otimes$};
\node (B) at (90:1) {$\mathscr C_{\langle2\rangle}^\otimes$};
\node (C) at (330:1) {$\mathscr C$};
\path[->,font=\scriptsize,>=angle 90]
(A) edge node [above left] {$(1\mapsto2,\,2\mapsto1)_!$} (B)
(A) edge node [above] {$(1,2\mapsto1)_!$} (C)
(B) edge node [above right] {$(1,2\mapsto1)_!$} (C);
\end{tikzpicture}

\end{center}
giving homotopies $C\otimes D\simeq D\otimes C$ natural in $C,D\in\mathscr C$.

\begin{defn} Let $\mathscr C^\otimes\xrightarrow p\Fin_*$ be a symmetric monoidal quasi-category. Then a \emph{commutative algebra object in $\mathscr C$} is a morphism of $\infty$-operads $\Fin_*\xrightarrow A\mathscr C^\otimes$ \cite{lurie2014higher}*{Definition 2.1.2.7}, that is, it is a section of $p$ which takes inert maps to inert maps. The \emph{quasi-category of commutative algebras in $\mathscr C$} is the full subcategory $\CAlg(\mathscr C)\subseteq\Fun_{\Fin_*}(\Fin_*,\mathscr C^\otimes)$ on commutative algebras. We also denote $\Fin_*$ by $\Comm^\otimes$.

\end{defn}

\begin{defn}
	\label{d:assoc}
	The associative $\infty$-operad denoted by $\Assoc^\otimes$ is the $\infty$-operad $\Assoc^\otimes\rightarrow \Fin_*$ constructed
	by taking the nerve of a functor of ordinary categories $\bAssoc^\otimes \rightarrow \Fin_*$.
	 We abuse notation and denote by $\Fin_*$ the ordinary category
	that produces the simplicial set $\Fin_*$. The data of $\bAssoc^\otimes$ is :
	\begin{enumerate}
		\item objects $\langle n\rangle$ for $n\ge0$,
		\item a morphism $\langle m\rangle\to\langle n\rangle$ is given by a map $\langle m\rangle\xrightarrow\alpha\langle n\rangle$ in $\Fin_*$, and for each $i\in\langle n\rangle^\circ$, a linear ordering on the finite set $\alpha^{-1}(i)$, and
		\item composition is given by lexicographical ordering.
	\end{enumerate}
The proof that this produces an $\infty$-operad can be found in \cite{lurie2014higher}*{Example 2.1.1.21 and Remark 4.1.1.4}.
\end{defn}

\begin{defn} A \emph{monoidal quasi-category} is a coCartesian fibration of $\infty$-operads $\mathscr C^\otimes\to\Assoc^\otimes$ \cite{lurie2014higher}*{Definition 4.1.10}.
Oberve that a coCartesian edge $\mathscr C^\otimes_{\langle2\rangle}\xrightarrow m\mathscr C$ over the map $\{1<2\}\to\{1\}$ is a product map
$$
\{C,D\}\mapsto C\otimes D,
$$
and a coCartesian edge over the map $\langle0\rangle\to\langle1\rangle$ gives a unit object $\mathbf 1\in\mathscr C$ for tensor product, where that this is a unit object on the left is shown by the homotopy commutative diagram
\begin{center}

\begin{tikzpicture}[scale=1.5]
\node (A) at (90:1) {$\mathscr C_{\langle2\rangle}^\otimes$};
\node (B) at (210:1) {$\mathscr C$};
\node (C) at (330:1) {$\mathscr C$};
\path[->,font=\scriptsize,>=angle 90]
(B) edge node [above left] {$(\langle1\rangle\xrightarrow{1\mapsto2}\langle2\rangle)_!$} (A)
(A) edge node [above right] {$m$} (C)
(B) edge node [above] {$\id$} (C);
\end{tikzpicture}

\end{center}
which follows from that coCartesian edges are unique up to homotopy. That is, we have a homotopy $\mathbf 1\otimes C\simeq C$ natural in $C\in\mathscr C$.

\end{defn}

\begin{defn} \label{d:picard}
	Let $\mathscr C$ be a monoidal quasi-category. Then $C\in\mathscr C$ is \emph{invertible}, if the endofunctor $\mathscr C\xrightarrow{\otimes C}\mathscr C$ is an equivalence. The \emph{Picard quasi-category} $\Pic(\mathscr C)\subseteq\mathscr C$ is the full subcategory on invertible objects.
 
\end{defn}

\begin{defn} Let $\mathscr C$ be a monoidal quasi-category. Then an \emph{algebra object in $\mathscr C$} is a morphism of $\infty$-operads $\Assoc^\otimes\xrightarrow A\mathscr C^\otimes$. The \emph{quasi-category of algebras in $\mathscr C$} is the full subcategory $\Alg(\mathscr C)\subseteq\Fun_{\Fin_*}(\Assoc^\otimes,\mathscr C^\otimes)$ on algebras.
 
\end{defn}

\begin{defn}\label{d:lm}
	Following the procedure in \ref{d:assoc} we construct an $\infty$-operad $\LM^\otimes$ with a forgetful map
	$LM^\otimes\rightarrow \Assoc^\otimes$. The simplicial set $\LM^\infty$ is obtained be taking the nerve of the category with
	\begin{enumerate}
		\item objects $(\langle n\rangle,S)$ where $S\subseteq\langle n\rangle^\circ$, and
		\item a map $(\langle n'\rangle,S')\to(\langle n\rangle,S)$ is given by a map $\langle n'\rangle\xrightarrow\alpha\langle n\rangle$ in $\Assoc^\otimes$ such that
		\begin{enumerate}
			\item we have $\alpha(S'\cup\{*\})\subseteq S\cup\{*\}$, and
			\item for $s\in S$, we have $\alpha^{-1}(\{s\})\cap S'=\{s'\}$, where $s'=\max\alpha^{-1}(\{s\})$.
		\end{enumerate}
	\end{enumerate}
\end{defn}

We denote $\mathfrak a=(\langle1\rangle,\emptyset),\,\mathfrak m=(\langle1\rangle,\{1\})\in\LM^\otimes$.

\subsection{Left tensored quasi-categories and left modules}

\begin{defn} Let $\mathscr C'$ be a monoidal quasi-category and $\mathscr M$ a quasi-category. A \emph{left-tensored struture of $\mathscr M$ over $\mathscr C'$} is a coCartesian fibration of $\infty$-operads $\mathscr C^\otimes\to\LM^\otimes$ \cite{lurie2014higher}*{Definition 4.2.1.19} such that we have an equivalence of monoidal quasi-categories $\mathscr C'\simeq\mathscr C_{\mathfrak a}$, and an equivalence of quasi-categories $\mathscr M\simeq\mathscr C_{\mathfrak m}$.
\end{defn} 

 We have a monomorphism
$$
\Assoc^\otimes\xrightarrow{\langle n\rangle\mapsto(\langle n\rangle,\emptyset)}\LM^\otimes
$$
restricting along which we get the monoidal quasi-category structure $\mathscr C_{\mathfrak a}^\otimes:=\mathscr C\times_{\LM^\otimes}\Assoc^\otimes$.

We also have to forgetful map
$$
\LM^\otimes\xrightarrow{(\langle n\rangle,S)\mapsto\langle n\rangle}\Assoc^\otimes
$$
pulling back a monoidal quasi-category $\mathscr D$ along which we get the \emph{left-tensored structure of $\mathscr D$ over itself}.

Let $C_1,\dotsc,C_n\in\mathscr C_{\mathfrak a}$ and $M,N\in\mathscr M$. Then we let
$$
\Map_{\mathscr C}(\{C_1,\dotsc,C_n\}\otimes M,N)\subseteq\Map_{\mathscr C}(\{C_1,\dotsc,C_n,M\},N)
$$
denote the full subgroupoid over the map $(\langle n+1\rangle,\{n+1\})\xrightarrow{1<\dotsb<n}(\langle 1\rangle,\{1\})$ in $\LM^\otimes$.

\begin{defn} Let $\mathscr C\to\LM^\otimes$ be a coCartesian fibration of $\infty$-operads which equips $\mathscr M\simeq\mathscr C_{\mathfrak m}$ with a left tensored structure over $\mathscr C_{\mathfrak a}$. Let $M,N\in\mathscr M$. Then a \emph{morphism object $\Mor_{\mathscr C}(M,N)\in\mathscr C_{\mathfrak a}$} is a representing object for the presheaf on $\mathscr C_{\mathfrak a}$:
$$
C\mapsto\Map_{\mathscr C}(\{C\}\otimes M,N).
$$
We say that \emph{$\mathscr M$ is enriched over $\mathscr C_{\mathfrak a}$}, if it has a morphism object for all $M,N\in\mathscr M$.

Let's show how to get an enriched composition map in a quasi-category $\mathscr M$ enriched over $\mathscr C_{\mathfrak a}$. Let $L,M,N\in\mathscr M$. Then we have universal maps
\begin{align*}
\alpha_{LM}\in&\Map_{\mathscr C}({\Mor_{\mathscr C}(L,M)}\otimes L,M),\\
\alpha_{MN}\in&\Map_{\mathscr C}({\Mor_{\mathscr C}(M,N)}\otimes M,N),\\
\alpha_{LN}\in&\Map_{\mathscr C}({\Mor_{\mathscr C}(L,N)}\otimes L,N).
\end{align*}
By the universal property of $\alpha_{LN}$, there exists a map $\Mor_{\mathscr C}(M,N)\otimes\Mor_{\mathscr C}(L,M)\xrightarrow c\Mor_{\mathscr C}(L,N)$ making the diagram

\begin{center}
 
\begin{tikzpicture}[xscale=6, yscale=2]
\node (A) at (-1,0) {$\Mor_{\mathscr C}(M,N)\otimes\Mor_{\mathscr C}(L,M)\otimes L$};
\node (B) at (0,1) {$\Mor_{\mathscr C}(L,N)\otimes L$};
\node (B') at (0,0) {$\Mor_{\mathscr C}(M,N)\otimes M$};
\node (C) at (1,0) {$N.$};
\path[->,font=\scriptsize,>=angle 90]
(A) edge [dashed, bend left] node [above left] {$c\otimes\id$} (B)
(A) edge node [above] {$\id\otimes\alpha_{LM}$} (B')
(B) edge [bend left] node [above right] {$\alpha_{LN}$} (C)
(B') edge node [above] {$\alpha_{MN}$} (C);
\end{tikzpicture}

\end{center}
commutative.
 
\end{defn}

\begin{defn}\label{d:linear} Let $\mathscr M^\otimes\xrightarrow p\LM^\otimes$ and $\mathscr N\otimes\xrightarrow q\LM^\otimes$ be coCartesian fibrations of $\infty$-operads, and
$$
\mathscr M^\otimes\times_{\LM^\otimes}\Assoc^\otimes\xrightarrow\alpha\mathscr C^\otimes\xrightarrow\beta\mathscr N^\otimes\times_{\LM^\otimes}\Assoc^\otimes
$$
equivalences of $\infty$-operads. That is, $\mathscr M$ and $\mathscr N$ are left-tensored over the monoidal quasi-category $\mathscr C$. Then an $\LM^\otimes$-functor $\mathscr M^\otimes\xrightarrow F\mathscr N^\otimes$ is \emph{($\mathscr C$-)linear}, if
\begin{enumerate}
 \item it takes $p$-coCartesian edges to $q$-coCartesian edges, and
 \item we have $F|\mathscr M^\otimes\times_{\LM^\otimes}\Assoc^\otimes=\beta\circ\alpha$.
\end{enumerate}
We let $\LinFun(\mathscr M,\mathscr N)=\LinFun_{\mathscr C}(\mathscr M,\mathscr N)\subseteq\Fun_{\LM^\otimes}(\mathscr M^\otimes,\mathscr N^\otimes)$ denote the full subcategory on linear functors.

\end{defn}

\begin{defn} \label{d:modules}
	Let $\mathscr C^\otimes\xrightarrow p\LM^\otimes$ exhibit $\mathscr C_{\mathfrak m}=\mathscr M$ as a quasi-category left-tensored over the monoidal quasi-category $\mathscr C_{\mathfrak a}^\otimes$. Then a \emph{left module in $\mathscr C$} is a morphism of $\infty$-operads $\LM^\otimes\xrightarrow M\mathscr C^\otimes$. Let $\Assoc^\otimes\xrightarrow A\mathscr C^\otimes$ be an algebra in $\mathscr C_{\mathfrak a}$. Then a \emph{left $A$-module in $\mathscr M$} is a left module $\LM^\otimes\xrightarrow M\mathscr C^\otimes$ such that $M|\Assoc^\otimes=A$. The \emph{quasi-category of left modules in $\mathscr C$} is the full subcategory $\LMod(\mathscr C)\subseteq\Fun_{\Fin_*}(\LM^\otimes,\mathscr C^\otimes)$ on left modules. The \emph{quasi-category of left $A$-modules in $\mathscr M$} is the full subcategory $\LMod_A(\mathscr M)\subseteq\LMod(\mathscr C)$ on left $A$-modules. By abuse of notation, when $\mathscr C^\otimes$ is clear from context, we will also denote this by ${}_A\Mod$ or $\LMod_A$.

Similarly, via the $\infty$-operad $\RM^\otimes$ we can define right modules \cite{lurie2014higher}*{Variant 4.2.1.36}.

\end{defn}

\subsection{Bi-tensored quasi-categories and bimodules}

\begin{defn}\label{defn:BM} Let $\mathscr M$ be a quasi-category, and $\mathscr C_-$, $\mathscr C_+$ monoidal quasi-categories. Then a \emph{bitensored structure of $\mathscr M$ over $\mathscr C_-$ on the left and $\mathscr C_+$ on the right} is a coCartesian fibration of $\infty$-operads $\mathscr C^\otimes\to\BM^\otimes$ \cite{lurie2014higher}*{Definition 4.3.1.17} such that we have an equivalence of quasi-categories $\mathscr C_{\mathfrak m}\simeq\mathscr M$, and equivalences of monoidal quasi-categories $\mathscr C_{\mathfrak a_-}\simeq\mathscr C_-$, $\mathscr C_{\mathfrak a_+}\simeq\mathscr C_+$. Here, the $\infty$-operad $\BM^\otimes$ has
\begin{enumerate}
 \item objects $(\langle n\rangle,c_-,c_+)$ where $c_-,c_+$ are maps $\langle n\rangle^\circ\to[1]$, and
 \item a morphism $(\langle n'\rangle,c_-',c_+')\xrightarrow\alpha(\langle n\rangle,c_-,c_+)$ is a morphism $\langle n'\rangle\xrightarrow\alpha\langle n\rangle$ in $\Assoc^\otimes$ such that for $j\in\langle n\rangle^\circ$ and $\alpha^{-1}\{j\}=\{i_1\succ\dotsb\succ i_k\}$ we have
 \begin{enumerate}
  \item $c_-'(i_1)=c_-(j)$,
  \item $c_+'(i_\ell)=c_-'(i_{\ell+1})$ for $\ell=1,\dotsc,k-1$, and
  \item $c_+'(i_k)=c_+(j)$.
 \end{enumerate}

\end{enumerate}
We let $\mathfrak a_-=(\langle1\rangle,0,0)$, $\mathfrak m=(\langle1\rangle,0,1)$, and $\mathfrak a_+=(\langle1\rangle,1,1)$. Unless specified otherwise, we let $\mathscr C_-=\mathscr C_{\mathfrak a_-}$, and $\mathscr C_+=\mathscr C_{\mathfrak a_+}$.

\end{defn}

\begin{rem} Let $(\langle n\rangle,c_-,c_+)$ and $i\in[1,n]$. Then one can say that $c_-(i)$ says which algebra we're acting with on the left, and $c_+(i)$ says which algebra we're acting with on the right.
 
\end{rem}

\begin{defn}\label{d:bimodules}

We let $\BMod(\mathscr M)=\Alg_{\BM}(\mathscr M)$. Let $A\in\Alg(\mathscr C_-)$ and $B\in\Alg(\mathscr C_+)$. Then an \emph{$(A,B)$-bimodule object (in $\mathscr M$)} is $M\in\BMod(\mathscr M)$ such that $M|\Alg(\mathscr C_-)=A$ and $M|\Alg(\mathscr C_+)=B$. We let ${}_A\Mod_B\subseteq\BMod(\mathscr M)$ denote the full subcategory of $(A,B)$-bimodule objects.

\end{defn}

\begin{cons}\label{cons:Pr} Let $\mathscr C^\otimes\xrightarrow q\BM^\otimes$ be a coCartesian fibration of $\infty$-operads. Then the quasi-category $\LMod(\mathscr C_{\mathfrak m})$ of left module objects can be equipped with the structure of a quasi-category right-tensored over $\mathscr C_+$ \cite{lurie2014higher}*{\S4.3.2}. Heuristically, for $M\in\LMod_A$ and $C\in\mathscr C_+$, the left $A$-module structure on $M\otimes C$ is given by
$$
A\otimes(M\otimes C)\simeq(A\otimes M)\otimes C\xrightarrow{\alpha\otimes C}M\otimes C.
$$
More precisely, we have a map $\LM^\otimes\times\RM^\otimes\xrightarrow{\BPr}\BM^\otimes$ defined as follows.
\begin{itemize}
 \item For objects $(\langle m\rangle,S)\in\LM^\otimes$ and $(\langle n\rangle,T)\in\RM^\otimes$, we have
 $$
 \BPr((\langle m\rangle,S),(\langle n\rangle,T))=(X_*,c_-,c_+),
 $$
 where
 $$
 X=(\langle m\rangle\times T)\cup (S\times\langle n\rangle)\subseteq\langle m\rangle^\circ\times\langle n\rangle^\circ\cong\langle mn\rangle^\circ,
 $$
 and for $(i,j)\in X$, we have
 $$
 c_-(i,j)=\begin{cases}
         0 & j\in T,\\
         1 & j\notin T
        \end{cases},
 \text{ and }
 c_+(i,j)=\begin{cases}
           0 & i\notin S,\\
           1 & i\in S.
          \end{cases}
 $$
 \item Consider morphisms $(\langle m\rangle,S)\xrightarrow\alpha(\langle m'\rangle,S')$ in $\LM^\otimes$ and $(\langle n\rangle,T)\xrightarrow\beta(\langle n'\rangle,T')$ in $\RM^\otimes$. Let $(X_*,c_-,c_+)=\BPr((\langle m\rangle,S),(\langle n\rangle,T))$ and $(X'_*,c'_-,c'_+)=\BPr((\langle m'\rangle,S'),(\langle n'\rangle,T'))$.
 \begin{enumerate}
  \item The image of $\BPr(\alpha,\beta)$ in $\Fin_*$ is the map $X_*\xrightarrow\gamma X'_*$ such that for $(i,j)\in X$, we have
  $$
  \gamma(i,j)=\begin{cases}
                             (\alpha(i),\beta(j)) & \alpha(i)\in\langle m'\rangle^\circ\text{ and }\beta(j)\in\langle n'\rangle^\circ,\\
                             * & \text{else.}
                            \end{cases}
  $$
  \item Let $(i',j')\in X'$. We need to give
  $$
  \gamma^{-1}(i',j')=(\alpha^{-1}(i')\times\beta^{-1}(j'))\cap X
  $$
  a linear ordering satisfying the conditions in Definition \ref{defn:BM}.
  \begin{itemize}
   \item Suppose that $i'\notin S'$. Then we have $\alpha^{-1}(i')\cap S=\emptyset$. As we have $X=(\langle m\rangle\times T)\cup(S\times\langle n\rangle)$, we get $j'\in T'$. Therefore, there exists a unique $j\in T$ such that $\beta(j)=j'$, and we get $\gamma^{-1}(i',j')=\alpha^{-1}(i')\times\{j\}$. We can give this the linear ordering induced by that on $\alpha^{-1}(i')$.
   \item Similarly, if $j'\notin T'$, then we have $\gamma^{-1}(i',j')=\{i\}\times\beta^{-1}(j')$, which we can give the linear ordering induced by that on $\beta^{-1}(j')$.
   \item Suppose that $i'\in S'$ and $j'\in T'$. Then there exists a unique $i\in S$ resp.~$j\in T$ such that $\alpha(i)=i'$ resp.~$\beta(j)=j'$. Thus, by definition of $X$ we get 
   $$
   \gamma^{-1}(i',j')=\alpha^{-1}(i')\times\{j\}\bigsqcup_{\{(i,j)\}}\{i\}\times\beta^{-1}(j').
   $$
   We can give this the unique linear ordering such that
   \begin{enumerate}
    \item on $\alpha^{-1}(i')\times\{j\}$ it is induced by that on $\alpha^{-1}(i')$,
    \item on $\{i\}\times\beta^{-1}(j')$ it is induced by that on $\beta^{-1}(j')$, and
    \item for $i''\in\langle m\rangle^\circ$ and $j''\in\langle n\rangle^\circ$, we have $(i'',j)\preceq(i,j'')$.
   \end{enumerate}

  \end{itemize}

 \end{enumerate}
 
\end{itemize}

With this, we can define a quasi-category  $\overline{\LMod}(\mathscr C_{\mathfrak m})^\otimes$ and  map $\overline{\LMod}(\mathscr C_{\mathfrak m})^\otimes\xrightarrow p\RM^\otimes$ as follows. For a map of simplicial sets $K\xrightarrow f\RM^\otimes$, we can regard $\LM^\otimes\times K$ as a simplicial set over $\BM^\otimes$ as the composite $\LM^\otimes\times K\xrightarrow{\id\times f}\LM^\otimes\times\RM^\otimes\xrightarrow{\BPr}\BM^\otimes$. Therefore, we can let
$$
\Hom_{\RM^\otimes}(K,\overline{\LMod}(\mathscr C_{\mathfrak m})^\otimes)=\Hom_{\BM^\otimes}(\LM^\otimes\times K,\mathscr C^\otimes).
$$
Let $\LMod(\mathscr C_{\mathfrak m})^\otimes\subseteq\overline{\LMod}(\mathscr C_{\mathfrak m})^\otimes$ denote the full subcategory on maps $\LM^\otimes\times\{X\}\to\mathscr C^\otimes$ taking inert edges of $\LM^\otimes$ to inert edges of $\mathscr C^\otimes$.

Note that the postcomposite of the canonical inclusion $\LM^\otimes\times\{\mathfrak m\}\to\LM^\otimes\times\RM^\otimes$ by $\BPr$ is the canonical inclusion $\LM^\otimes\times\{\mathfrak m\}\cong\LM^\otimes\to\BM^\otimes$. This gives an isomorphism $\LMod(\mathscr C_{\mathfrak m})^\otimes\times_{\RM^\otimes}\{\mathfrak m\}\cong\LMod(\mathscr C_{\mathfrak m})$.

Let $K\to\RM^\otimes$ be a map of simplicial sets. Then a map of simplicial sets $K\to\LMod(\mathscr C_{\mathfrak m})$ over $\RM^\otimes$ is a map of simplicial sets $\LM^\otimes\times K\xrightarrow f\mathscr C^\otimes$ over $\BM^\otimes$. Precomposing this by the canonical inclusion $\{\mathfrak m\}\times\RM^\otimes\to\LM^\otimes\times\RM^\otimes$, we get a map of simplicial sets $K\to\mathscr C^\otimes\times_{\BM^\otimes}\RM^\otimes$ over $\RM^\otimes$. In sum, this gives a map of simplicial sets $\LMod(\mathscr C_{\mathfrak m})^\otimes\to\mathscr C^\otimes\times_{\BM^\otimes}\RM^\otimes$ over $\RM^\otimes$. The preimage
$$
\LMod(\mathscr C_{\mathfrak m})^\otimes_{\mathfrak a}\to\mathscr C^\otimes_+
$$
of this map along the canonical inclusion $\mathscr C_+^\otimes=\mathscr C^\otimes\times_{\BM^\otimes}\Assoc^\otimes\to\mathscr C^\otimes\times_{\BM^\otimes}\RM^\otimes$ is a trivial Kan fibration \cite{lurie2014higher}*{Proposition 4.3.2.6}.

The map $\LMod(\mathscr C_{\mathfrak m})^\otimes\xrightarrow p\RM^\otimes$ is a coCartesian fibration of $\infty$-operads \cite{lurie2014higher}*{Proposition 4.3.2.5.~1)}. Therefore, it gives the quasi-category $\LMod(\mathscr C_{\mathfrak m})$ a left-tensored structure over the monoidal quasi-category $\mathscr C_+^\otimes$.

Moreover, precomposition by $\BPr$ gives an equivalence of categories \cite{lurie2014higher}*{Theorem 4.3.2.7}
$$
\BMod(\mathscr C_{\mathfrak m})\to\RMod(\LMod(\mathscr C_{\mathfrak m})).
$$

\end{cons}

\begin{thm}\cite{lurie2014higher}*{Theorem 4.8.4.1}
 Let $\mathscr L$ be a collection of simplicial sets containing $N(\Delta)^\op$. Let $\mathscr C$ a monoidal quasi-category compatible with $\mathscr L$-indexed colimits, $\mathscr M$ a quasi-category left-tensored over $\mathscr C$ compatible with $\mathscr L$-indexed colimits, and $\Assoc^\otimes\xrightarrow A\mathscr C^\otimes$ an associative algebra object. Then the composite
 $$
 \LinFun^{\mathscr L}_{\mathscr C}(\RMod_A\mathscr C,\mathscr M)\xrightarrow{F\mapsto(F\circ)}
 \Fun(\LMod_A\RMod_A\mathscr C,\LMod_A\mathscr M)\xrightarrow{\ev_A}\LMod_A\mathscr M
 $$
 is an equivalence, with quasi-inverse mapping $M\in\LMod_A\mathscr M$ to $\RMod_A\mathscr C\xrightarrow{E\mapsto E\otimes_AM}\mathscr M$.
\end{thm}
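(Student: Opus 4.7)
The plan is to produce an explicit quasi-inverse using the two-sided bar construction, then verify that both composites are equivalences. For $E \in \RMod_A\mathscr C$ and $M \in \LMod_A\mathscr M$, one defines the relative tensor product $E \otimes_A M \in \mathscr M$ as the geometric realization of the simplicial object
\[
[n] \longmapsto E \otimes A^{\otimes n} \otimes M,
\]
whose face maps use right multiplication $E \otimes A \to E$, internal multiplication of $A$, and the left action $A \otimes M \to M$, and whose degeneracies insert the unit $\mathbf 1 \to A$. Because $N(\Delta)^{\op} \in \mathscr L$ and both $\mathscr C$ and $\mathscr M$ are compatible with $\mathscr L$-indexed colimits, this realization exists; fixing $M$ and varying $E$ gives a functor which is $\mathscr C$-linear and preserves $\mathscr L$-indexed colimits. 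This defines the candidate inverse $\Phi: M \mapsto (E \mapsto E \otimes_A M)$.

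The easy direction is to see that $\ev_A \circ (-\circ \Phi)$ is the identity on $\LMod_A\mathscr M$. This amounts to $A \otimes_A M \simeq M$, which follows from the observation that the augmented simplicial object $A \otimes A^{\otimes \bullet} \otimes M \to M$ admits an extra degeneracy produced from the unit of $A$, so its augmentation is a split colimit diagram; naturality in $M$ upgrades this to an equivalence of functors.

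For the other composite, fix $F \in \LinFun^{\mathscr L}_{\mathscr C}(\RMod_A\mathscr C, \mathscr M)$ and set $M := F(A)$, endowed with its natural left $A$-action coming from the $(A,A)$-bimodule structure of $A$. The task is to produce a natural equivalence $F(E) \simeq E \otimes_A M$. The core input is the bar resolution: every $E \in \RMod_A\mathscr C$ is canonically the geometric realization of the simplicial right $A$-module $E \otimes A^{\otimes \bullet} \otimes A$, whose $n$-simplices are \emph{free} right $A$-modules of the form $X_n \otimes A$ with $X_n = E \otimes A^{\otimes n} \in \mathscr C$. For such free modules, $\mathscr C$-linearity of $F$ yields $F(X_n \otimes A) \simeq X_n \otimes F(A) = X_n \otimes M$, so applying $F$ to the bar resolution produces the same simplicial diagram that computes $E \otimes_A M$. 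Since $F$ preserves $N(\Delta)^{\op}$-indexed colimits, we conclude $F(E) \simeq E \otimes_A M$, naturally in $E$.

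The main obstacle will be assembling these resolutions, actions, and equivalences coherently inside the quasi-categorical framework. Concretely, the bar resolution must be produced as a genuine simplicial object in $\RMod_A\mathscr C$ with all higher coherences; the cleanest route is to invoke the $\infty$-categorical monadicity theorem for the free/forgetful adjunction $\mathscr C \rightleftarrows \RMod_A\mathscr C$ and recognise the bar construction as the canonical monadic resolution of $E$. One must also verify $\mathscr C$-linearity of the comparison map fibrewise over $\LM^\otimes$, which is the usual operadic book-keeping: straightforward in principle, technical in execution.
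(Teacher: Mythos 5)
The paper does not prove this statement; it is quoted from Lurie's \emph{Higher Algebra} (Theorem 4.8.4.1), and your sketch reproduces exactly the argument given there: the relative tensor product $E\otimes_AM$ computed by the two-sided bar construction furnishes the inverse, the composite through $\ev_A$ is handled by the split augmented simplicial object exhibiting $A\otimes_AM\simeq M$, and the other composite by the fact that the bar resolution writes every right $A$-module as a geometric realization of free modules $X_n\otimes A$, on which a $\mathscr C$-linear, realization-preserving $F$ is determined by $F(A)$. Your proposal is correct and takes essentially the same route as the cited source.
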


Observe that the right hand of the equivalence does not depend on ${\mathscr L}$, hence neither does the left.

\begin{cor}[Homotopical Eilenberg--Watts theorem]
 Let $\Assoc^\otimes\xrightarrow B\mathscr C^\otimes$ be another associative algebra object. Then the map
 $$
 \LinFun^{\mathscr L}_{\mathscr C}(\RMod_A\mathscr C,\RMod_B\mathscr C)\xrightarrow{F\mapsto F(A)}{}_A\BMod_B
 $$
 is an equivalence, with quasi-inverse mapping $M\in {}_A\BMod_B$ to $\RMod_A\mathscr C\xrightarrow{E\mapsto E\otimes_AM}\RMod_B\mathscr C$.
\end{cor}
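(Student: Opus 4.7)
The plan is to deduce this corollary from the preceding theorem by specializing the left-tensored quasi-category $\mathscr M$ to $\RMod_B\mathscr C$ and then identifying $\LMod_A(\RMod_B\mathscr C)$ with ${}_A\BMod_B$.

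First, I would observe that Construction \ref{cons:Pr} (applied with the roles of left and right swapped, via $\RM$) equips $\RMod_B\mathscr C$ with the structure of a quasi-category left-tensored over $\mathscr C$, where the tensor product is given heuristically by $C \otimes M$ with right $B$-action coming from that on $M$. Next I would check that this left-tensored structure is compatible with $\mathscr L$-indexed colimits; the point is that colimits in $\RMod_B\mathscr C$ are computed underlying in $\mathscr C$ (at the level of right $B$-modules the forgetful functor to $\mathscr C$ preserves and detects $\mathscr L$-colimits by the standard module-category arguments in \cite{lurie2014higher}*{\S4.2.3}), and tensoring with a fixed $C \in \mathscr C$ on the left preserves them because $\otimes$ on $\mathscr C$ does.

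With these two observations, the preceding theorem applies and gives an equivalence
$$
\LinFun^{\mathscr L}_{\mathscr C}(\RMod_A\mathscr C,\RMod_B\mathscr C) \xrightarrow{\simeq} \LMod_A(\RMod_B\mathscr C),
$$
with the forward map induced by $F \mapsto F \circ (-)$ followed by evaluation at $A$, and the quasi-inverse sending $M$ to $E \mapsto E \otimes_A M$. To finish it suffices to identify $\LMod_A(\RMod_B\mathscr C)$ with ${}_A\BMod_B$. This is exactly the content of the equivalence $\BMod(\mathscr C_{\mathfrak m}) \simeq \RMod(\LMod(\mathscr C_{\mathfrak m}))$ recorded at the end of Construction \ref{cons:Pr} (using \cite{lurie2014higher}*{Theorem 4.3.2.7}): the same argument, applied with left and right interchanged, gives $\LMod_A(\RMod_B\mathscr C) \simeq {}_A\BMod_B$ via precomposition by the map $\LM^\otimes \times \RM^\otimes \to \BM^\otimes$.

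Under this last identification, evaluation of a left $A$-module in $\RMod_B\mathscr C$ at the object $\mathfrak m \in \LM^\otimes$ returns precisely the object $F(A) \in \mathscr C_{\mathfrak m}$ with its induced $(A,B)$-bimodule structure; this matches the evaluation-at-$A$ map of the preceding theorem after the identification. Tracing the quasi-inverse through the same identification produces the functor $E \mapsto E \otimes_A M$ valued in $\RMod_B\mathscr C$, the right $B$-action being inherited from that on $M$ because the relative tensor product $\otimes_A$ is $\mathscr C$-linear on both sides. The main bookkeeping obstacle I anticipate is making the identification $\LMod_A(\RMod_B\mathscr C) \simeq {}_A\BMod_B$ compatible with the evaluation map of the preceding theorem; this is essentially the content of unwinding the combinatorics of $\BPr$ in Construction \ref{cons:Pr}, and reduces to the cited theorem of Lurie once the identification of tensored structures is in place.
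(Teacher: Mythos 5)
Your proposal is correct and follows exactly the route the paper intends: the corollary is stated as an immediate consequence of the preceding theorem, obtained by taking $\mathscr M=\RMod_B\mathscr C$ (left-tensored over $\mathscr C$ compatibly with $\mathscr L$-indexed colimits) and identifying $\LMod_A(\RMod_B\mathscr C)$ with ${}_A\BMod_B$ via the mirror of the equivalence $\BMod(\mathscr C)\simeq\RMod(\LMod(\mathscr C))$ from Construction \ref{cons:Pr}. The paper omits these details entirely, so your write-up simply makes the implicit argument explicit.
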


\subsection{Relative tensor product}

\begin{defn} We let $\Tens^\otimes$ denote the generalized $\infty$-operad with
\begin{enumerate}
 \item objects tuples $(\langle n\rangle,[k],c_-,c_+)$ where $\langle n\rangle\in\Assoc^\otimes$, $[k]\in\Delta^\op$, and $c_-,c_+$ are maps of sets $[1,n]\to[k]$ such that
 $$
 c_-(i)\le c_+(i)\le c_-(i)+1\text{ for all }i\in[1,n],
 $$
 and
 \item a morphism $(\langle n\rangle,[k],c_-,c_+)\to(\langle n'\rangle,[k'],c_-',c_+')$ is a pair of a morphism $\langle n\rangle\xrightarrow\alpha\langle n'\rangle$ in $\Assoc^\otimes$ and a morphism $[k']\xrightarrow\lambda[k]$ in $\Delta^\op$ such that for $j\in[1,n']$ with $\alpha^{-1}\{j\}=\{i_1,\prec\dotsb\prec i_m\}$, we have
 \begin{enumerate}
  \item $c_-(i_1)=\lambda(c_-'(j))$,
  \item $c_+(i_\ell)=c_-(i_{\ell+1})$ for $\ell\in[1,m-1]$, and
  \item $c_+(i_m)=\lambda(c_+'(j))$.
 \end{enumerate}

\end{enumerate}
The forgetful functor $\Tens^\otimes\to\Fin_*^\otimes\times\Delta^\op$ is a family of $\infty$-operads \cite{lurie2014higher}*{Definition 2.3.2.10}. For $k\ge0$, the fibre $\Tens^\otimes_{[k]}$ is the $\infty$-operadic colimit of the diagram

\begin{center}
 
\begin{tikzpicture}[xscale=2, yscale=2]
\node (0) at (0,1) {$\Tens^\otimes_{\{0\}}$};
\node (01) at (1,0) {$\Tens^\otimes_{\{0,1\}}$};
\node (1) at (2,1) {$\Tens^\otimes_{\{1\}}$};
\node (dots) at (3,0) {$\dotsb$};
\node (k-1) at (4,1) {$\Tens^\otimes_{\{k-1\}}$};
\node (k-1k) at (5,0) {$\Tens^\otimes_{\{k-1,k\}}$};
\node (k) at (6,1) {$\Tens^\otimes_{\{k\}}$};
\path[->,font=\scriptsize,>=angle 90]
(0) edge (01)
(1) edge (01)
(1) edge (dots)
(k-1) edge (dots)
(k-1) edge (k-1k)
(k) edge (k-1k);
\end{tikzpicture}

\end{center}

\cite{lurie2014higher}*{Proposition 4.4.1.11}. In particular, for a monoidal quasi-category $\mathscr C^\otimes\xrightarrow q\Assoc^\otimes$, we have canonical equivalences
$$
\Alg(\mathscr C)\to\Alg_{\Tens_{[0]}}(\mathscr C),\quad\BMod(\mathscr C)\to\Alg_{\Tens_{[1]}}(\mathscr C),\quad\BMod(\mathscr C)\times_{\Alg(\mathscr C)}\BMod(\mathscr C)\to\Alg_{\Tens_{[2]}}(\mathscr C)
$$
where in the fibre product on the right, the left projection map is the algebra on the right and the right projection map is the algebra on the left.

We let $\Tens^\otimes_\succ$ be the strict pull-back of $\Tens^\otimes\to\Delta^\op$ along the map $\Delta^1\xrightarrow{\{0,2\}\hookrightarrow[2]}\Delta^\op$. Let $A,B,C\in\Alg(\mathscr C)$ and $M\in{}_A\Mod_B,N\in{}_B\Mod_C$. This data determines $F_0\in\Alg_{\Tens_{[2]}}(\mathscr C)$. Let $K\in{}_A\Mod_C$. We say that \emph{$F\in\Alg_{\Tens_\succ}(\mathscr C)$ exhibits $K$ as the relative tensor product $M\otimes_BN$}, if
\begin{enumerate}
 \item we have $F|\Tens_{[2]}=F_0$,
 \item we have $F|\Tens_{[1]}=K$, and
 \item the diagram $\Tens^\otimes_\succ\xrightarrow F\mathscr C^\otimes$ is a $q$-operadic colimit \cite{lurie2014higher}*{Definition 3.1.1.2}.
\end{enumerate}

Let $A$ be an algebra in $\mathscr C$. Then the quasi-category ${}_A\BMod_A$ can be equipped with a monoidal structure given by relative tensor product \cite{lurie2014higher}*{Proposition 4.4.3.12}.
 
\end{defn}

\subsection{Endomorphism algebras and dualizable generators}

\begin{rem} An element $(\langle n\rangle,[k],c_-,c_+)\in\Tens^\otimes$ indexes an $n$-term expression of action of algebras $A_0,\dotsc,A_k$. For $i\in\langle n\rangle^\circ$, we can act on the $i$-th element by $A_{c_-(i)}$ on the left, and $A_{c_+(i)}$ on the right.
 
\end{rem}

\begin{defn} Let $\mathscr C$ be a monoidal quasi-category, $\mathscr M$ a quasi-category left-tensored over $\mathscr C$, and $M\in\mathscr M$ an object. Then an object $C\in\mathscr C$ equipped with a map $C\otimes M\xrightarrow\alpha M$ is an \emph{endomorphism object of $M$}, denoted by $\End_{\mathscr M}(M)$ or $\End(M)$, if it represents the presheaf on $\mathscr C$:
$$
C'\mapsto\Map_{\mathscr M}(C'\otimes M,M).
$$
Let $A$ be an algebra object in $\mathscr C$. Then we say that a left $A$-module structure on $M$ \emph{exhibits $A$ as an endomorphism algebra of $M$}, if $M\in\LMod\mathscr M$ with this left module structure represents the right fibration \cite{lurie2014higher}*{Corollary 4.7.1.42}
$$
\LMod\mathscr M\times_{\mathscr M}\{M\}\to\Alg\mathscr C
$$
given by restriction.

It can be shown that if $C\otimes M\xrightarrow\alpha M$ exhibits $C\in\mathscr C$ as an endomorphism object of $M$, then $\alpha$ lifts to a module structure $M\in\LMod_A\mathscr M$, which exhibits $A\in\Alg\mathscr C$ as an endomorphism algebra of $M$ \cite{lurie2014higher}*{\S4.7.1}, in a way that is unique up to homotopy.
 
\end{defn}

\begin{defn}\label{defn:dual} Let $\mathscr C$ be a monoidal quasi-category with neutral object $\mathscr O$, and $C\in\mathscr C$ an object. Then an object $C^\vee$ equipped with a map $C^\vee\otimes C\xrightarrow{\ev}\mathscr O$ is a \emph{right dual of $C$}, if this data induce an adjunction
$$
\adjoints{\mathscr C}{\mathscr C}{\otimes C}{\otimes C^\vee}.
$$
In this case, the map $C\otimes C^\vee\otimes C\xrightarrow{C\otimes\ev}C$ shows that $A:=C\otimes C^\vee$ is an endomorphism object of $C$. This equips $A$ with an algebra structure, and $C$ with a left $A$-module structure. We say that $C$ is a \emph{dualizable generator}, if the functor
$$
{}_A\Mod\xrightarrow{\otimes_AC}\mathscr C
$$
is essentially surjective. We denote by $\mathscr C_{\dgen}\subseteq\mathscr C$ the full subcategory of dualizable generators.
 
\end{defn}

\begin{prop} Let $\mathscr C$ be a monoidal quasi-category. Suppose that $\mathscr C$ admits geometrical realizations, and the tensor product $\mathscr C\times\mathscr C\xrightarrow\otimes\mathscr C$ commutes with geometrical realizations. Let $C\in\mathscr C$ be a right dualizable object, and $A=\End C$. Then the following assertions hold.
\begin{enumerate}
 \item The right $A$-module $C^\vee$ is a right dual to the left $A$-module $C$.
 \item The functor ${}_A\Mod\xrightarrow{\otimes_AC}\mathscr C$ is fully faithful.
\end{enumerate}
 
\end{prop}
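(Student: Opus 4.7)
The plan is to exploit the adjunction $(-) \otimes C \dashv (-) \otimes C^\vee$ on $\mathscr C$ given by the right dualizability of $C$. First I would unpack how the evaluation $\ev\colon C^\vee \otimes C \to \mathscr O$ and the unit $\eta\colon \mathscr O \to C \otimes C^\vee = A$ fit together. The algebra structure on $A$ is the one inherited from the monad $RL = (-)\otimes A$ associated to this adjunction, so the $A$-module structures on $C$ and $C^\vee$ are the canonical ones: $A \otimes C = C\otimes C^\vee \otimes C \xrightarrow{\,C\otimes\ev\,} C$ on the left, and $C^\vee \otimes A = C^\vee \otimes C \otimes C^\vee \xrightarrow{\,\ev\otimes C^\vee\,} C^\vee$ on the right. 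By construction $\ev$ is $A$-balanced, so it descends to a morphism $\overline{\ev}\colon C^\vee \otimes_A C \to \mathscr O$ in $\mathscr C$.

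For (1), I would check the snake identities for the pair $(\overline{\ev}, \eta)$ inside the bicategory of algebras--bimodules attached to $\mathscr C$. After identifying the composite 1-morphism $C \circ C^\vee$ with the regular $(A,A)$-bimodule $A$, the would-be counit becomes the identity and the two triangle relations reduce to maps $C \to C$ and $C^\vee \to C^\vee$ obtained by interpreting the original triangle identities for $(-)\otimes C \dashv (-)\otimes C^\vee$ through the canonical equivalences $A \otimes_A C \simeq C$ and $C^\vee \otimes_A A \simeq C^\vee$. Since the original adjunction already satisfies these, the bimodule adjunction does too, giving (1).

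For (2), the key remark is that the adjunction $(-)\otimes C \dashv (-)\otimes C^\vee$ lifts, by monadicity of the free--forgetful adjunction $\mathscr C \rightleftarrows \RMod_A$, to an adjunction $\widetilde L\colon \RMod_A \rightleftarrows \mathscr C : \widetilde R$, with $\widetilde L(M) = M \otimes_A C$ and $\widetilde R(X) = X \otimes C^\vee$ equipped with the $A$-action above. Full faithfulness of $\widetilde L$ is equivalent to the unit $M \to \widetilde R\widetilde L(M) = (M\otimes_A C)\otimes C^\vee$ being an equivalence for every $M \in \RMod_A$. On a free right $A$-module $M = X\otimes A$ one computes $\widetilde L\widetilde R(X\otimes A) = (X\otimes A)\otimes_A C \otimes C^\vee = X\otimes C\otimes C^\vee = X\otimes A$, and the unit is an equivalence by direct inspection. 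For a general $M$ I would use the bar resolution $M \simeq |M\otimes A^{\otimes \bullet+1}|$, noting that $\widetilde L$ preserves geometric realizations (as a left adjoint) and that $\widetilde R$ preserves them because the forgetful functor $\RMod_A \to \mathscr C$ creates geometric realizations and $(-)\otimes C^\vee$ preserves them by the standing hypothesis that $\otimes$ on $\mathscr C$ commutes with geometric realizations.

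The main obstacle, and where hypothesis (1)--(2) of the proposition is actually spent, is precisely this last step: one needs the bar simplicial object to converge to $M$ in $\RMod_A$ and the relevant tensor functors to commute with its geometric realization. Both facts are routine given Lurie's monadicity theorem together with the compatibility of $\otimes$ with $|\text{-}|$-indexed colimits, so once (1) has set up the bimodule adjunction and the relevant $A$-module structures have been verified to fit into the bar construction, (2) follows by the standard ``colimit of equivalences'' argument.
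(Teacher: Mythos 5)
Your argument for (1) is essentially the paper's: you descend $\ev$ to $\ev_A\colon C^\vee\otimes_AC\to\mathscr O$, take the identity $A\to C\otimes C^\vee$ as the coevaluation bimodule map, and verify the triangle identities by transporting the ones already known for the adjunction $\otimes C\dashv\otimes C^\vee$ in $\mathscr C$ along $A\otimes_AC\simeq C$ and $C^\vee\otimes_AA\simeq C^\vee$; the paper makes the word ``reduce'' precise by invoking conservativity of the forgetful functors on module categories and then writing down the homotopy-commutative ladder relating $C\otimes C^\vee\otimes C\to C$ to $C\otimes(C^\vee\otimes_AC)\to C$ through the quotient map $C^\vee\otimes C\to C^\vee\otimes_AC$. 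For (2) you genuinely diverge: you check the unit on free modules $X\otimes A$ (note your $\widetilde L\widetilde R$ should read $\widetilde R\widetilde L$) and then propagate to all of $\RMod_A$ by the bar resolution, using that $\otimes_AC$ and $\otimes C^\vee$ preserve geometric realizations. This is correct, but the paper gets (2) for free from (1): since the coevaluation of the bimodule duality is the identity of $A$, the unit of the adjunction $\otimes_AC\dashv\otimes C^\vee$ is the composite $M\simeq M\otimes_AA\to M\otimes_A(C\otimes C^\vee)\simeq(M\otimes_AC)\otimes C^\vee$ induced by $\id_A$, hence an equivalence for every $M$ simultaneously, and a left adjoint with invertible unit is fully faithful. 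Your route buys nothing extra here and costs the bar-resolution bookkeeping (existence of the realization in $\RMod_A$, its creation by the forgetful functor, and the verification that the unit on a free module really is the equivalence you name, which itself requires chasing the mate of the original unit through the free--forgetful adjunction); the paper's one-line version is the one to prefer, but there is no gap in yours.
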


\begin{proof} We can assume $A=C\otimes C^\vee$. Then the map $C^\vee\otimes C\xrightarrow\ev\mathscr O$ factors through the canonical map $C^\vee\otimes C\xrightarrow t C^\vee\otimes_AC$ as $C^\vee\otimes_AC\xrightarrow{\ev_A}\mathscr O$. We claim that the morphisms
\begin{align*}
C^\vee\otimes_AC\xrightarrow{\ev_A}\mathscr O&\text{ in }\mathscr C,\text{ and }\\
A\xrightarrow{\id}C\otimes C^\vee&\text{ in }{}_A\Mod_A
\end{align*}
exhibit $C^\vee\in\Mod_A$ as a right dual to $C\in{}_A\Mod$. That is, we need to show that the composites
\begin{align*}
C\xrightarrow{\simeq}A\otimes_AC\xrightarrow{\id\otimes_AC}&C\otimes C^\vee\otimes_AC\xrightarrow{C\otimes\ev_A}C\otimes_AA\xrightarrow\simeq C,\text{ and }\\
C^\vee\xrightarrow\simeq C^\vee\otimes_AA\xrightarrow{C^\vee\otimes_A\id}&C^\vee\otimes_AC\otimes C^\vee\xrightarrow{\ev_A\otimes C^\vee}\mathscr O\otimes C^\vee\xrightarrow\simeq C^\vee
\end{align*}
are homotopic to $\id_C$ and $\id_{C^\vee}$, respectively \cite{lurie2014higher}*{Proposition 4.6.2.1}. Since the forgetful functors on module categories are conservative \cite{lurie2014higher}*{Corollary 4.3.3.3}, it is enough to prove these assertions in $\mathscr C$. We'll show the first; the second is similar.

Let $\mathscr O\xrightarrow1C^\vee\otimes C$ denote the coevaluation map of the duality $(C,C^\vee)$ in $\mathscr C$. We have the homotopy commutative diagram
\begin{center}

\begin{tikzpicture}[xscale=3, yscale=1.5]
\node (A) at (0,0) {$C$};
\node (B) at (1,1) {$\mathscr O\otimes C$};
\node (B') at (1,-1) {$A\otimes_AC$};
\node (C) at (2,1) {$C\otimes C^\vee\otimes C$};
\node (C') at (2,-1) {$C\otimes C^\vee\otimes_AC$};
\node (D) at (3,1) {$C\otimes\mathscr O$};
\node (D') at (3,-1) {$C\otimes_AA.$};
\node (E) at (4,0) {$C$};
\path[->,font=\scriptsize,>=angle 90]
(A) edge node [above] {$\simeq$} (B)
(A) edge node [above] {$\simeq$} (B')
(B) edge node [above] {$1\otimes C$} (C)
(B') edge node [above] {$\id\otimes_AC$} (C')
(C) edge node [right] {$C\otimes t$} (C')
(C) edge node [above] {$C\otimes\ev$} (D)
(C') edge node [above] {$C\otimes\ev_A$} (D')
(D) edge node [above] {$\simeq$} (E)
(D') edge node [above] {$\simeq$} (E);
\end{tikzpicture}

\end{center}
The top composite is homotopical to $\id_C$ by assumption. Therefore so is the bottom composite, as required.

Therefore, we have an adjunction $\smalladjoints{\Mod_A}{\mathscr C}{\otimes_AC}{\otimes C^\vee}$ with unit map $\id\xrightarrow{\id}\otimes_AC\otimes C^\vee$ and counit map $\otimes C^\vee\otimes_AC\xrightarrow{\otimes\ev_A}\id$. Since the unit map is an equivalence, the left adjoint $\otimes_AC$ is fully faithful, as claimed.

\end{proof}

\begin{cor}\label{cor:dgen equiv} Let $\mathscr C$ be a monoidal quasi-category. Suppose that $\mathscr C$ has geometric realizations, and the tensor product map $\mathscr C\times\mathscr C\to\mathscr C$ preserves geometric realizations. Let $C\in\mathscr C_\dgen$ be a dualizable generator. Then the functors $\Mod_A\xrightarrow{\otimes_AC}\mathscr C$ and $\mathscr C\xrightarrow{\otimes C^\vee}\Mod_A$ are mutually inverse equivalences.
 
\end{cor}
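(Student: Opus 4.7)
The plan is to assemble the corollary directly from two ingredients already in place. The preceding proposition exhibits $\Mod_A \xrightarrow{\otimes_A C} \mathscr C$ as the left adjoint in an adjunction with right adjoint $\mathscr C \xrightarrow{\otimes C^\vee} \Mod_A$, and it shows that the unit of this adjunction is an equivalence; equivalently, $\otimes_A C$ is fully faithful. On the other hand, by Definition \ref{defn:dual}, the hypothesis that $C$ is a dualizable generator is precisely the statement that the very same functor $\otimes_A C$ is essentially surjective.

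Given these two properties, the first step is to invoke the standard fact that a fully faithful and essentially surjective functor of quasi-categories is a categorical equivalence. This identifies $\otimes_A C$ as an equivalence of quasi-categories. The second step is to upgrade this to an adjoint equivalence, so that the right adjoint $\otimes C^\vee$ is itself an equivalence and a quasi-inverse. This is a formal consequence of the adjunction: since $\otimes_A C$ is an equivalence and the unit is already an equivalence, one checks by the triangle identities (or by uniqueness of adjoints) that the counit $(\otimes_A C)\circ(\otimes C^\vee) \to \id_{\mathscr C}$ is also an equivalence. One could alternatively argue that any functor admitting an inverse is both left and right adjoint to that inverse, and then use uniqueness of right adjoints to identify $\otimes C^\vee$ with $(\otimes_A C)^{-1}$.

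There is no real obstacle to this argument; the entire corollary is a formal packaging of the proposition above together with the definition of dualizable generator. The only point to keep in mind is that the unit equivalence in the proposition must be the unit of the named adjunction in the appropriate homotopy-coherent sense, which is built into its construction there.
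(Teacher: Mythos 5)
Your proposal is correct and is exactly the intended argument: the paper leaves this corollary without proof precisely because it is the formal combination of the preceding proposition (the adjunction $\otimes_A C \dashv \otimes C^\vee$ with invertible unit, hence fully faithfulness) with the definition of dualizable generator (essential surjectivity of $\otimes_A C$). Nothing further is needed.
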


\section{The Morita functor in Higher Algebra}

To set up the Morita functor $A\mapsto\Mod_A$ sending an algebra $A$ in a monoidal quasi-category $\mathscr C$ to the quasi-category $\Mod_A$ of right $A$-modules that is left-tensored over $\mathscr C$, we use classifying objects for monoidal quasi-categories and quasi-categories left-tensored by them. The endomorphism algebra functor will be the right adjoint of the pointed version $A\mapsto(\Mod_A,A)$ of the above functor. To make sure that this right adjoint exists, we will need to restrict to presentable underlying quasi-categories and make sure that the algebra and module structure maps respect colimits. In our main application, $\mathscr C$ will be the monoidal quasi-category of cochain complexes over an $S$-scheme $X$, and we will be interested in the endomorphism algebras $A=\REnd(E)$ for perfect complexes $E\in\mathscr C$, and the dg-categories $\Mod_A$ of right $A$-modules. In this section, we shall follow \cite{lurie2014higher}*{\S4.8}

\subsection{Cartesian monoidal structures and monoid objects}\label{sss:Cartesian monoidal structures}

To get started, we will equip the quasi-category of quasi-categories $\Cat_\infty$ with the symmetric monoidal structure given by taking products. This is a Cartesian monoidal structure, and thus unique up to equivalence.

Let $\mathscr C^\otimes\xrightarrow p N(\Fin_*)$ be an $\infty$-operad. A \emph{lax Cartesian structure on $\mathscr C$} is a functor $\mathscr C^\otimes\xrightarrow\pi\mathscr D$ into a quasi-category such that for all $n\ge0$ and $X\in\mathscr C_n^\otimes$, the collection of canonical maps $\{\pi(C)\xrightarrow{\pi((\rho_i)_!)}\pi(C_i)\}_{i=1}^n$ is a product diagram in $\mathscr D$.

The lax Cartesian structure $\pi$ is a \emph{weak Cartesian structure}, if
\begin{enumerate}
 \item the $\infty$-operad $p$ is a symmetric monoidal quasi-category, and
 \item for any $p$-coCartesian edge $f$ over an active map of the form $\langle n\rangle\to\langle 1\rangle$ in $\Fin_*$, its image $\pi(f)$ is an equivalence in $\mathscr D$.
\end{enumerate}
The weak Cartesian structure $\pi$ is a \emph{Cartesian structure}, if its restriction $\mathscr C\to\mathscr D$ is an equivalence of quasi-categories.

Let $\mathscr C$ be a quasi-category with finite products. Then there exists a Cartesian structure $\mathscr C^\times\to\mathscr C$ \cite{lurie2014higher}*{Proposition 2.4.1.5}. Moreover, this is the unique Cartesian symmetric monoidal structure on $\mathscr C$ up to equivalence \cite{lurie2014higher}*{Corollary 2.4.1.8}. A symmetric monoidal structure $\mathscr C^\otimes$ on $\mathscr C$ is \emph{Cartesian}, if
\begin{enumerate}
 \item the unit object $1\in\mathscr C$ is final, and
 \item for any objects $C,D\in\mathscr C$, the diagram of canonical maps
 $$
 C\xleftarrow\simeq C\otimes1\leftarrow C\otimes D\rightarrow1\otimes D\xrightarrow\simeq D
 $$
 is a product diagram in $\mathscr C$.
\end{enumerate}

Let $\mathscr D$ be a quasi-category, and $\mathscr O^\otimes$ an $\infty$-operad. Then an \emph{$\mathscr O^\otimes$-monoid in $\mathscr D$} is a lax Cartesian structure of the form $\mathscr O^\otimes\to\mathscr D$. We let $\Mon_{\mathscr O}\mathscr D\subseteq\Fun(\mathscr O^\otimes,\mathscr D)$ denote the full subcategory of monoid objects.

Monoid objects give an alternative description of algebra objects in Cartesian symmetric monoi\-dal quasi-categories: let $\mathscr C^\otimes\xrightarrow\pi\mathscr D$ be a Cartesian structure. Then the postcomposition map $\Alg_{\mathscr O}\mathscr C\xrightarrow{\pi\circ}\Mon_{\mathscr O}\mathscr D$ is an equivalence \cite{lurie2014higher}*{Proposition 2.4.2.5}.

\subsection{Families of monoidal quasi-categories}

In this subsection we will construct classifying objects for families of algebras.
Let $\mathscr D=\Cat_\infty$ and $\pi$ be the canonical Cartesian structure $\Cat_\infty^\times\to\Cat_\infty$. Let $\mathscr C^\otimes\xrightarrow p\mathscr O^\otimes$ be a coCartesian fibration. Then it is classified by a functor $\mathscr O^\otimes\xrightarrow{\mathbf c_p}\Cat_\infty$. The coCartesian fibration $p$ is an $\mathscr O$-monoidal quasi-category if and only if the classifying map $\mathbf c_p$ is a monoid object \cite{lurie2014higher}*{Example 2.4.2.4}. Moreover, the map $\Alg_{\mathscr O}\Cat_\infty\xrightarrow{\pi\circ}\Mon_{\mathscr O}\Cat_\infty$ is an equivalence. Thus we see that $\mathscr O$-monoidal quasi-categories are also classified by $\mathscr O$-algebra objects in $\Cat_\infty^\times$.

Let $K$ be a quasi-category, $\mathscr O^\otimes$ an $\infty$-operad, and $\mathscr C^\otimes\xrightarrow p\mathscr O^\otimes\times K$ a coCartesian fibration. Then it is classified by a map $K\xrightarrow{\mathbf c_p}\Fun(\mathscr O^\otimes,\Cat_\infty)$. By construction, the map $\mathbf c_p$ maps into $\Mon_{\mathscr O}\Cat_\infty\subseteq\Fun(\mathscr O^\otimes,\Cat_\infty)$ if and only if $p$ is a \emph{coCartesian $K$-family of $\mathscr O$-monoidal categories}, that is
\begin{enumerate}
 \item in addition to $p$ being a coCartesian fibration,
 \item for all $k\in K$, the fibre $\mathscr C^\otimes_k\xrightarrow{p_k}\mathscr O^\otimes$ is an $\mathscr O$-monoidal quasi-category.
\end{enumerate}
We let $\Cat_\infty^{\Mon}=\Mon_{\Assoc}\Cat_\infty$. It classifies coCartesian families of monoidal quasi-categories.

\subsection{Families of associative algebra objects}\label{sss:families of associative algebra objects}

Let $K$ be a quasi-category, and $\mathscr C^\otimes\xrightarrow p\Assoc^\otimes\times K$ a coCartesian family of monoidal quasi-categories classified. Then a section $A$ of $p$ is a \emph{family of associative algebra objects of $p$}, if for every $k\in K$, the fibre $A_k$ is an associative algebra object of the monoidal quasi-category $p_k$, i.e a morphism of $\infty$-operads.

\begin{notn}\label{notn: partial sections}
 Let $X\xrightarrow qB\times C$ be a map of simplicial sets. Then the \emph{simplicial set of partial sections of $q$ over $C$} is the {simplicial sets} $\Gamma_C(q)$ over $C$ defined by letting for a map of simplicial sets $L\to C$:
 $$
 \Hom_C(L,\Gamma_C(q))=\Hom_{B\times C}(B\times L,X).
 $$
 Sometimes we will let $\Gamma_C(X)=\Gamma_C(q)$.
\end{notn}

\begin{rem} 1) Note that we have an adjunction
$$
\adjoints{(\Set_{\Delta})_{/C}}{(\Set_\Delta)_{/(B\times C)}}{B\times}{\Gamma_C}.
$$
2) In case $C=*$, we get the absolute section object $\Gamma_*X=\Gamma X$.
 
\end{rem}

Let $\Alg\mathscr C\subseteq\Gamma_K\mathscr C^\otimes$ be the full subcategory on associative algebra objects.Then by construction $\Alg\mathscr C$ classifies associative algebra objects in $p$. The map $\Alg\mathscr C\to K$ is a coCartesian fibration \cite{lurie2014higher}*{Lemma 4.8.3.13.~1)}.

{Now we want to classify pairs $(\mathscr C^\otimes,A)$ where $\mathscr C^\otimes$ is a monoidal quasi-category, and $A$ is an associative algebra object in $\mathscr C^\otimes$. Note that the identity map of $\Mon_{\Assoc}\Cat_\infty$ classifies the universal coCartesian family of monoidal quasi-categories $\widetilde{\Mon}_{\Assoc}\Cat_\infty\xrightarrow{p_0}\Assoc^\otimes\times\Cat_\infty$. Therefore, we have a strict fibre product diagram of simplicial sets}
\begin{center}

\begin{tikzpicture}[xscale=5,yscale=2]
\node (B') at (0,1) {$\mathscr C^\otimes$};
\node (A') at (1,1) {$\widetilde{\Mon}_{\Assoc}\Cat_\infty$};
\node (B) at (0,0) {$\Assoc^\otimes\times K$};
\node (A) at (1,0) {$\Assoc^\otimes\times\Mon_{\Assoc}\Cat_\infty.$};
\node at (.5,.5) {$\lrcorner$};
\path[->,font=\scriptsize,>=angle 90]
(B') edge (A')
(B') edge node [right] {$p$} (B)
(A') edge node [right] {$p_0$} (A)
(B) edge node [above] {$\id\times\mathbf c_p$} (A);
\end{tikzpicture}

\end{center}
It follows that, sections $A$ of $p$ correspond to maps 
$$\Assoc^\otimes\times K\to\widetilde{\Mon}_{\Assoc}\Cat_\infty\quad \text{over}\quad \Assoc^\otimes\times\Mon_{\Assoc}\Cat_\infty.$$ 
A map $\Assoc^\otimes\times K\xrightarrow{A'}\widetilde{\Mon}_{\Assoc}\Cat_\infty$ corresponds to a map $K\xrightarrow{A''}\Fun(\Assoc^\otimes,\widetilde{\Mon}_{\Assoc}\Cat_\infty)$. The map $A'$ is over $\Assoc^\otimes\times\Mon_{\Assoc}\Cat_\infty$ if and only if the postcomposite of $A''$ with 
$$
\Fun(\Assoc^\otimes,\widetilde{\Mon}_{\Assoc}\Cat_\infty)\xrightarrow{p_0\circ}\Fun(\Assoc^\otimes,\Assoc^\otimes\times\Mon_{\Assoc}\Cat_\infty)
$$ factors through the product of partially constant maps
$$
\Mon_{\Assoc}\Cat_\infty\xrightarrow{c'(\{\mathscr D^\otimes\})=(\id_{\Assoc^\otimes},\const_{\{\mathscr D^\otimes\}})}\Fun(\Assoc^\otimes,\Assoc^\otimes\times\Mon_{\Assoc}\Cat_\infty).
$$
Therefore, pairs $(\mathscr C^\otimes,A)$ of coCartesian families of monoidal quasi-categories and a section are classified by the strict fibre product of simplicial sets
\begin{center}

\begin{tikzpicture}[xscale=7,yscale=2]
\node (B') at (0,1) {$\widetilde{\Cat}_\infty^{\Alg}$};
\node (A') at (1,1) {$\Fun(\Assoc^\otimes,\widetilde{\Mon}_{\Assoc}\Cat_\infty)$};
\node (B) at (0,0) {$\Mon_{\Assoc}\Cat_\infty$};
\node (A) at (1,0) {$\Fun(\Assoc^\otimes,\Assoc^\otimes\times\Mon_{\Assoc}\Cat_\infty).$};
\node at (.5,.5) {$\lrcorner$};
\path[->,font=\scriptsize,>=angle 90]
(B') edge (A')
(B') edge (B)
(A') edge node [right] {$p_0\circ$} (A)
(B) edge node [above] {$c'$} (A);
\end{tikzpicture}

\end{center}
Most importantly, pairs $(\mathscr C^\otimes,A)$ of coCartesian families of monoidal quasi-categories and families of associative algebra objects are classified by the full subcategory $\Cat_\infty^{\Alg}\subseteq\widetilde{\Cat}_\infty^{\Alg}$ on pairs $(\mathscr C^\otimes,A)$ of monoidal quasi-categories and associative algebra objects.

\subsection{Compatibility with colimits}\label{ss:compatibility with colimits}

Let $K,L$ be simplicial sets, and $\mathscr C^\otimes\xrightarrow p\Assoc^\otimes\times K$ a coCartesian family of monoidal quasi-categories. Then we say that \emph{$p$ is compatible with $L$-indexed colimits}, if the following conditions are satisfied.
\begin{enumerate}
 \item Let $k\in K$ be a vertex. Then the fibre monoidal quasi-category $\mathscr C^\otimes\xrightarrow{p_k}\Assoc^\otimes$ commutes with $L$-indexed colimits. That is,
 \begin{enumerate}
  \item the underlying quasi-category $\mathscr C_k$ has $L$-indexed colimits, and
  \item the tensor product functor $\mathscr C_k\times\mathscr C_k\to\mathscr C_k$ commutes with $L$-indexed colimits componentwise.
 \end{enumerate}
 \item Let $k\xrightarrow ek'$ be an edge in $K$. Then the induced functor on the underlying quasi-categories $\mathscr C_k\to\mathscr C_{k'}$ commutes with $L$-indexed colimits.

\end{enumerate}
Let $\mathscr L$ be a collection of simplicial sets. Then we say that \emph{$p$ commutes with $\mathscr L$-indexed colimits}, if for all $L\in\mathscr L$, $p$ commutes with $L$-indexed colimits.

Let $K\xrightarrow{\mathbf c_p}\Mon_{\Assoc}\Cat_\infty$ classify $p$. Then we have the following.
\begin{enumerate}
 \item For a vertex $k\in K$, the fibre $\mathscr C^\otimes_k\xrightarrow{p_k}\Assoc^\otimes$ is equivalent to the pullback of $\widetilde{\Mon}_{\Assoc}\Cat_\infty\xrightarrow{p_0}\Assoc^\otimes\times\Mon_{\Assoc}\Cat_\infty$ along the composite $$\Assoc^\otimes\times\{k\}\hookrightarrow\Assoc^\otimes\times K\xrightarrow{\id\times\mathbf c_p}\Assoc^\otimes\times\Mon_{\Assoc}\Cat_\infty.$$
 \item For an edge $k\xrightarrow ek'$ in $K$, the induced map $\mathscr C_k\to\mathscr C_{k'}$ on the underlying quasi-categories, as an edge $\Delta^1\xrightarrow{e_*}\Cat_\infty$, classifies the pullback of $\mathscr C^\otimes\xrightarrow p\Assoc^\otimes\times K$ along the inclusion $\Delta^{\{\mathfrak a\}}\times\Delta^e\to\Assoc^\otimes\times K$. Therefore, the functor $e_*$ is naturally equivalent to the composite
 $$
 \Delta^e\hookrightarrow K\xrightarrow{\mathbf c_p}\Mon_{\Assoc}\Cat_\infty\xrightarrow{\circ(\Delta^{\{\langle1\rangle\}}\hookrightarrow\Assoc^\otimes)}\Cat_\infty.
 $$
\end{enumerate}
This shows the following for a collection of simplicial sets $\mathscr L$.
\begin{enumerate}
 \item Let $\Cat_\infty^{\Mon}(\mathscr L)=\Mon_{\Assoc}^{\mathscr L}\Cat_\infty\subseteq\Mon_{\Assoc}\Cat_\infty$ be the largest subcategory with
 \begin{enumerate}
  \item vertices classifying monoidal quasi-categories compatible with $\mathscr L$-indexed colimits, and
  \item edges classifying monoidal functors $\mathscr C^\otimes\to\mathscr D^\otimes$ such that the restriction $\mathscr C\to\mathscr D$ to underlying quasi-categories commutes with $\mathscr L$-indexed colimits.
 \end{enumerate}
 Then $\Mon_{\Assoc}^{\mathscr L}\Cat_\infty$ classifies families of monoidal quasi-categories compatible with $\mathscr L$-indexed colimits.
 \item Let
 $$
 \widetilde{\Mon}_{\Assoc}^{\mathscr L}\Cat_\infty=\Mon_{\Assoc}^{\mathscr L}\Cat_\infty\varprod_{\Mon_{\Assoc}\Cat_\infty}\widetilde{\Mon}_{\Assoc}\Cat_\infty.
 $$
 Then the projection map $\widetilde{\Mon}_{\Assoc}^{\mathscr L}\Cat_\infty\to\Mon_{\Assoc}^{\mathscr L}\Cat_\infty$ is the universal family of monoi\-dal quasi-categories compatible with $\mathscr L$-indexed colimits, that is it is classified by the identity map of $\Mon_{\Assoc}^{\mathscr L}\Cat_\infty$.
 \item Let
 $$
 \Cat_\infty^{\Alg}(\mathscr L)=\Cat_\infty^{\Alg}\varprod_{\Cat_\infty^{\Mon}}\Cat_\infty^{\Mon}(\mathscr L).
 $$
 Then it classifies pairs $(\mathscr C^\otimes,A)$ of families of monoidal quasi-categories compatible with $\mathscr L$-indexed colimits, and families of associative algebras on them.
 
\end{enumerate}

\subsection{Families of left-tensored quasi-categories}\label{ss:families of left-tensored quasi-categories}

As in the case of monoidal quasi-categories, we can define and classify families of left tensored quasi-categories. Let $K$ be a simplicial set. Then a \emph{coCartesian family of left-tensored quasi-categories over $K$} is a
\begin{enumerate}
 \item coCartesian fibration $\mathscr M^\otimes\xrightarrow q\LM^\otimes\times K$ such that
 \item for all $k\in K$, the fibre $\mathscr M^\otimes_k\xrightarrow{q_k}\LM^\otimes$ is a left-tensored quasi-category.
\end{enumerate}
Note that the restriction
$$
\mathscr M^\otimes_{\mathfrak a}:=\mathscr M^\otimes\varprod_{\LM^\otimes}\Assoc^\otimes
$$
is a coCartesian family of monoidal quasi-categories. We say that $\mathscr M^\otimes$ is a \emph{coCartesian family of quasi-categories over $K$ left-tensored over $\mathscr M^\otimes_{\mathfrak a}$}.

Let $\mathscr L$ be a collection of simplicial sets. Then we say that \emph{$q$ commutes with $\mathscr L$-indexed colimits}, if
\begin{enumerate}
 \item for each $k\in K$, the fibre left-tensored quasi-category $\mathscr M^\otimes_k\xrightarrow{q_k}\LM^\otimes$ commutes with $\mathscr L$-indexed colimits, that is
 \begin{enumerate}
  \item the underlying quasi-categories $\mathscr M_{\mathfrak m}$ and $\mathscr M_{\mathfrak a}$ admit $\mathscr L$-indexed colimits, and
  \item the tensor product $\mathscr M_{\mathfrak a}\times\mathscr M_{\mathfrak a}\to\mathscr M_{\mathfrak a}$ and left action $\mathscr M_{\mathfrak a}\times\mathscr M_{\mathfrak m}\to\mathscr M_{\mathfrak m}$ functors commute with $\mathscr L$-indexed colimits, and
 \end{enumerate}
 \item for each edge $k\xrightarrow ek'$ in $K$, the induced maps on underlying quasi-categories $\mathscr M_{k,\mathfrak m}\to\mathscr M_{k',\mathfrak m}$ and $\mathscr M_{k,\mathfrak a}\to\mathscr M_{k',\mathfrak a}$ commute with $\mathscr L$-indexed colimits.
\end{enumerate}
Just as in the case of monoidal quasi-categories, left-tensored quasi-categories are classified by
$$
\Cat_{\infty}^{\Mod}:=\Mon_{\LM}\Cat_\infty,
$$
and left-tensored quasi-categories compatible with $\mathscr L$-indexed colimits are classified by the largest subcategory $\Cat_\infty^{\mathscr L}=\Mon_{\LM}^{\mathscr L}\Cat_\infty\subseteq\Mon_{\LM}\Cat_\infty$ such that
\begin{enumerate}
 \item its vertices classify left-tensored quasi-categories compatible with $\mathscr L$-indexed colimits, and
 \item its edges classify equivariant functors $\mathscr M^\otimes\xrightarrow{f}\mathscr N^\otimes$ such that the restrictions to the underlying quasi-categories $\mathscr M^\otimes_{\mathfrak m}\xrightarrow{f_{\mathfrak m}}\mathscr M^\otimes_{\mathfrak m}$ and $\mathscr M^\otimes_{\mathfrak a}\xrightarrow{f_{\mathfrak a}}\mathscr N^\otimes_{\mathfrak a}$.
\end{enumerate}

\subsection{Families of right module objects}\label{sss:families of right module objects}

Let $K$ be a simplicial set, and $\mathscr M^\otimes\xrightarrow q\RM^\otimes\times K$ a coCartesian family of right-tensored quasi-categories. Then a section $\RM^\otimes\times K\xrightarrow M\mathscr M^\otimes$ is a \emph{family of right module objects}, if for each $k\in K$, the restriction $\RM^\otimes\xrightarrow{M_k}\mathscr M^\otimes_k$ is a right module object of $q_k$.

To classify right module objets in $\mathscr M^\otimes$ we can apply the same construction as in the case of associative algebra objects (\ref{sss:families of associative algebra objects}): we let $\RMod\mathscr M\subseteq\Gamma_K(q)$ be the full subcategory on right module objects. Then the induced map $\RMod(\mathscr M)\to K$ is a coCartesian fibration \cite{lurie2014higher}*{Lemma 4.8.3.13.~3)}.

Let $\mathscr C^\otimes\xrightarrow{q'}\Assoc^\otimes\times K$ denote the restriction $q|\Assoc^\otimes\times K$. Then we get a restriction map
$$
\RMod\mathscr M\xrightarrow r\Alg\mathscr C.
$$
In case $q$ commutes with $N(\Delta^\op)$-indexed colimits, the map $r$ is a coCartesian fibration \cite{lurie2014higher}*{Lemma 4.8.3.15}.

Let $\Assoc^\otimes\times K\xrightarrow A\mathscr C^\otimes$ be an associative algebra object. Then a \emph{family of right $A$-modules} is a family of right modules $M$ such that $M|(\Assoc^\otimes\times K)=A$. Therefore, families of right $A$-modules are classified by
$$
\RMod_A\mathscr M_{\mathfrak m}=\Mod_A\mathscr M_{\mathfrak m}=\RMod\mathscr M\varprod_{\Alg\mathscr C}K
$$
where $K\to\Alg\mathscr C$ is the map classifying $A$. Note that by construction the projection map $\RMod_A\mathscr M_{\mathfrak m}\to K$ is a coCartesian fibration.

Let $\mathscr C^\otimes\xrightarrow q\Assoc^\otimes\times K$ be a coCartesian family of monoidal quasi-categories compatible with $N(\Delta^\op)$-indexed colimits, and $\Assoc^\otimes\times K\xrightarrow A\mathscr C^\otimes$ be a family of associative algebra objects. Now we will relativize the construction of \ref{cons:Pr} to give the coCartesian family $\RMod_A\mathscr C$ of right $A$-module objects in $q$ a left-tensored structure over $q$. Let $\BPr_0$ denote the composite $\LM^\otimes\times\RM^\otimes\xrightarrow{\BPr}\BM^\otimes\xrightarrow U\Assoc^\otimes$ where $U$ is the forgetful functor. Then we can take the commutative diagram with strict Cartesian squares
\begin{center}

\begin{tikzpicture}[xscale=4,yscale=2]
\node (BCa) at (0,1) {$\Bar{\mathscr C}^\otimes_{\mathfrak a}$};
\node (AL) at (0,0) {$\Assoc^\otimes\times\LM^\otimes\times K$};
\node (BC) at (1,1) {$\Bar{\mathscr C}^\otimes$};
\node (RL) at (1,0) {$\RM^\otimes\times\LM^\otimes\times K$};
\node (C) at (2,1) {$\mathscr C^\otimes$};
\node (A) at (2,0) {$\Assoc^\otimes\times K.$};
\path[->,font=\scriptsize,>=angle 90]
(BCa) edge (BC)
(BCa) edge node [right] {$\Bar q_a$} (AL)
(BC) edge (C)
(BC) edge node [right] {$\Bar q$} (RL)
(C) edge node [right] {$q$} (A)
(AL) edge [bend right] node [right] {$\Bar A$} (BCa)
(AL) edge (RL)
(RL) edge node [above] {$\BPr_0$} (A)
(A) edge [bend right] node [right] {$A$} (C);
\end{tikzpicture}

\end{center}
With this, we can let
$$
\RMod_A(\mathscr C)^\otimes=\RMod_{\Bar A}\Bar{\mathscr C}.
$$
Since we assumed that $q$ commutes with $N(\Delta^\op)$-indexed colimits, so does $\Bar q$, and therefore $\RMod_A(\mathscr C)^\otimes\xrightarrow p\LM^\otimes\times K$ is a coCartesian fibration. Note that for $k\in K$, the fibre $\RMod_A(\mathscr C)^\otimes_k=\RMod_{A_k}(\mathscr C_k)^\otimes$ where the latter is the left-tensored quasi-category of right $A_k$-modules as defined in (\ref{cons:Pr}). Therefore, the map $p$ is a coCartesian family of right-tensored quasi-categories. Let $\Tilde{\mathscr C}^\otimes\xrightarrow p_a\Assoc^\otimes\times K$ denote the pullback $p|(\Assoc^\otimes\times K)$. Then the inclusion $\{\mathfrak m\}\to\RM^\otimes$ induces a morphism $\Tilde{\mathscr C}^\otimes\to\mathscr C^\otimes$ of coCartesian fibrations over $\Assoc^\otimes\times K$. Since its fibre over each $k\in K$ is a trivial Kan fibration (\ref{cons:Pr}), it is a categorical equivalence.

\subsection{The Morita functor}

Note that this construction is natural in $K$. That is, let $K'\xrightarrow fK$ be a morphism of simplicial sets. Then we have
$$
\RMod_{A|f}(\mathscr C|f)^\otimes=\RMod_A(\mathscr C)^\otimes|(\id_{\LM^\otimes}\times f).
$$
In particular, we can give these left-tensored quasi-categories of right module objects as pullbacks along the classifying maps of algebras of the universal left-tensored quasi-category of right module objects. More precisely, let $\mathscr L$ be a collection of simplicial sets containing $N(\Delta^\op)$, and recall (\ref{sss:families of associative algebra objects}) that
$$
\Cat_\infty^{\Alg}(\mathscr L)\subseteq\Mon_{\Assoc^\otimes}^{\mathscr L}\Cat_\infty\varprod_{\Fun(\Assoc^\otimes,\Assoc^\otimes\times\Mon_{\Assoc}^{\mathscr L}\Cat_\infty)}\Fun(\Assoc^\otimes,\widetilde{\Mon}_{\Assoc}^{\mathscr L}\Cat_\infty).
$$
Therefore, it admits maps $\Cat_\infty^{\Alg}(\mathscr L)\xrightarrow{\pi_1}\Mon_{\Assoc}^\otimes(\mathscr L)$ and $\Cat_\infty^{\Alg}(\mathscr L)\times\Assoc^\otimes\xrightarrow{\pi_2}\widetilde{\Mon}_{\Assoc}^{\mathscr L}\Cat_\infty$ induced by the projection maps. Using these, we can construct the universal pair $(\widetilde{\Cat}_\infty^{\Alg}(\mathscr L),A_\univ^{\mathscr L})$ of a coCartesian family of monoidal quasi-categories compatible with $\mathscr L$-indexed colimits, and an associative algebra object in it, as given in the commutative diagram with strict Cartesian square
\begin{center}

\begin{tikzpicture}[xscale=5,yscale=2]
\node (TC) at (0,1) {$\widetilde{\Cat}_\infty^{\Alg}(\mathscr L)$};
\node (TM) at (1,1) {$\widetilde{\Mon}_{\Assoc}^{\mathscr L}\Cat_\infty$};
\node (C) at (0,0) {$\Cat_\infty^{\Alg}(\mathscr L)\times\Assoc^\otimes$};
\node (M) at (1,0) {$\Mon_{\Assoc}^{\mathscr L}\Cat_\infty\times\Assoc^\otimes.$};
\path[->,font=\scriptsize,>=angle 90]
(TC) edge (TM)
(TC) edge (C)
(TM) edge (M)
(C) edge [bend left] node [left] {$A_\univ^{\mathscr L}$} (TC)
(C) edge node [above left] {$\pi_2$} (TM)
(C) edge node [above] {$\pi_1\times\id$} (M);
\end{tikzpicture}

\end{center}
Now we can form the universal coCartesian family
$$
\RMod_{A_\univ^{\mathscr L}}(\widetilde{\Cat}_\infty^{\Alg}(\mathscr L))^\otimes\to\LM^\otimes\times\Cat_\infty^{\Alg}(\mathscr L)
$$
of left-tensored quasi-categories of right module objects compatible with $\mathscr L$-indexed colimits. It is classified by the \emph{Morita functor}
$$
\Cat_\infty^{\Alg}(\mathscr L)\xrightarrow{\Theta}\Mon_{\LM}^{\mathscr L}\Cat_\infty.
$$
The reason we call this functor the Morita functor is the following: Let $K$ be a simplicial set, $\mathscr C^\otimes\xrightarrow q\Assoc^\otimes\times K$ a coCartesian family of monoidal quasi-categories compatible with $\mathscr L$, and $\Assoc^\otimes\times K\xrightarrow A\mathscr C^\otimes$ an associative algebra object. Then the pair $(q,A)$ is classified by a map $K\xrightarrow{\mathbf c_{q,A}}\Cat_\infty^{\Alg}(\mathscr L)$. Since as we said above, the formation of the coCartesian family of left-tensored quasi-categories of right module objects is natural, we get a diagram of homotopy Cartesian squares
\begin{center}

\begin{tikzpicture}[xscale=5,yscale=2]
\node (RC) at (0,1) {$\RMod_A(\mathscr C)^\otimes$};
\node (K) at (0,0) {$\LM^\otimes\times K$};
\node (RU) at (1,1) {$\RMod_{A_\univ^{\mathscr L}}(\widetilde{\Cat}_\infty^{\Alg}(\mathscr L))^\otimes$};
\node (U) at (1,0) {$\LM^\otimes\times\Cat_\infty^{\Alg}(\mathscr L)$};
\node (TM) at (2,1) {$\widetilde{\Mon}_{\LM}^{\mathscr L}\Cat_\infty$};
\node (M) at (2,0) {$\LM^\otimes\times\Mon_{\LM}^{\mathscr L}\Cat_\infty.$};
\path[->,font=\scriptsize,>=angle 90]
(RC) edge (RU)
(RC) edge (K)
(RU) edge (TM)
(RU) edge (U)
(TM) edge (M)
(K) edge node [above] {$\id\times\mathbf c_{q,A}$} (U)
(U) edge node [above] {$\id\times\Theta$} (M);
\end{tikzpicture}

\end{center}
That is, the composite $K\xrightarrow{\mathbf c_{q,A}}\Cat_\infty^{\Alg}(\mathscr L)\xrightarrow\Theta\Mon_{\LM}^{\mathscr L}\Cat_\infty$ classifies $\RMod_A(\mathscr C)^\otimes$. In other words, we have
$$
\Theta(\mathscr C^\otimes,A)=\RMod_A(\mathscr C)^\otimes
$$
As we have seen (\ref{sss:families of right module objects}), we have a canonical equivalence $\RMod_A(\mathscr C)^\otimes|\Assoc^\otimes\times K\to\mathscr C^\otimes$ of coCartesian families of monoidal quasi-categories. Therefore, we get a homotopy commutative diagram of quasi-categories
\begin{center}

\begin{tikzpicture}[scale=2]
\node (A) at (150:1) {$\Cat_\infty^{\Alg}(\mathscr L)$};
\node (B) at (30:1) {$\Cat_\infty^{\Mod}(\mathscr L)$};
\node (C) at (270:1) {$\Cat_\infty^{\Mon}(\mathscr L).$};
\path[->,font=\scriptsize,>=angle 90]
(A) edge node [above] {$\Theta$} (B)
(A) edge node [below left] {$\phi$} (C)
(B) edge node [below right] {$\psi$} (C);
\end{tikzpicture}

\end{center}
It can be shown that \cite{lurie2014higher}*{Proposition 4.8.5.1}
\begin{enumerate}
 \item the forgetful maps $\phi$ and $\psi$ are coCartesian fibrations, and
 \item the Morita functor $\Theta$ carries $\phi$-coCartesian edges to $\psi$-coCartesian edges.
\end{enumerate}

\subsection{The endomorphism algebra functor}

Since we would like to study Azumaya algebra objects, we need an endomorphism algebra functor $E\mapsto\End E$. As we will want this in the form of a morphism of stacks, we need a construction natural in the choice of the left-tensored quasi-category (in our main application, these will be the dg-categories of cochain complexes over schemes). Therefore, in the first place, we need a classifying object for triples $(\mathscr C^\otimes,\mathscr M,M)$, where $\mathscr C^\otimes$ is a monoidal quasi-category, $\mathscr M^\otimes$ is a quasi-category left-tensored over $\mathscr C$, and $M\in\mathscr M$.

Let $\mathscr L$ be a collection of simplicial sets containing $N(\Delta^\op)$. Let $\mathscr S(\mathscr L)\subseteq\mathscr S$ denote the smallest subcategory containing $\Delta^0$, which has $\mathscr L$-indexed colimits. As in $\mathscr S$ finite products commute with small colimits, the subcategory $\mathscr S(\mathscr L)$ has finite products. Therefore, we can equip it with the Cartesian symmetric monoidal structure $\mathscr S(\mathscr L)^\times$ (\ref{sss:Cartesian monoidal structures}). In particular, we have the trivial algebra $\mathbf 1\in\Alg\mathscr S(\mathscr L)$ on $\Delta^0$. One can show that $(\mathscr S(\mathscr L)^\times,\mathbf 1)\in\Cat_\infty^{\Alg}(\mathscr L)$ is an initial object \cite{lurie2014higher}*{Lemma 4.8.5.3}.

We let $\mathfrak M=\Theta(\mathscr S(\mathscr L)^\times,\mathbf1)\in\Cat_\infty^{\Mod}(\mathscr L)$. Let $\mathscr M^\otimes$ be a left-tensored quasi-category compatible with $\mathscr L$-indexed colimits. Then the restriction map
$$
\Map_{\Cat_\infty^{\Mod}(\mathscr L)}(\mathfrak M,\mathscr M^\otimes)\xrightarrow{F\mapsto F(\Delta^0)}\mathscr M^\simeq
$$
is a trivial Kan fibration \cite{lurie2014higher}*{Remark 4.8.5.4}. Thus, objects in the undercategory $\Cat_\infty^{\Mod}(\mathscr L)_{\mathfrak M/}$ correspond to triples $(\mathscr C^\otimes,\mathscr M^\otimes,M)$ of a monoidal quasi-category $\mathscr C^\otimes$ compatible with $\mathscr L$-indexed colimits, a quasi-category $\mathscr M^\otimes$ left-tensored over $\mathscr C$ compatible with $\mathscr L$-indexed colimits, and an object $M\in\mathscr M$, as required.

Since $(\mathscr S(\mathscr L)^\times,\mathbf1)\in\Cat_\infty^{\Alg}(\mathscr L)$ is an initial object, the forgetful functor $\Cat_\infty^{\Alg}(\mathscr L)_{(\mathscr S(\mathscr L)^\times,\mathbf1)/}\xrightarrow U\Cat_\infty^{\Alg}(\mathscr L)$ is a trivial Kan fibration. Therefore, we can let $\Cat_\infty^{\Alg}(\mathscr L)\xrightarrow{\Theta_*}\Cat_\infty^{\Mod}(\mathscr L)_{\mathfrak M/}$ denote the composite
$$
\Cat_\infty^{\Alg}(\mathscr L)\xleftarrow[\simeq]{U}\Cat_\infty^{\Alg}(\mathscr L)_{(\mathscr S(\mathscr L)^\times,\mathbf1)/}
\xrightarrow{\Theta_{(\mathscr S(\mathscr L)^\times,\mathbf1)/}}\Cat_\infty^{\Mod}(\mathscr L)_{\mathfrak M/}.
$$
Informally, it carries $(\mathscr C^\otimes,A)\in\Cat_\infty^{\Alg}(\mathscr L)$ to $(\mathscr C^\otimes,\RMod_A\mathscr C,A_A)\in\Cat_\infty^{\Mod}(\mathscr L)_{\mathfrak M/}$. One can show that the functor $\Theta_*$ is fully faithful \cite{lurie2014higher}*{Theorem 4.8.5.5}.

Then endomorphism functor will be a right adjoint to $\Theta_*$. To get it, we will need to assume that the underlying quasi-categories we're dealing with are presentable. Let $\widehat{\Cat}_\infty$ denote the quasi-category of big quasi-categories. Then we can use the same constructions as above to get classifying objects for higher algebraic structures with big underlying quasi-categories. We let $\mathscr L$ denote the collection of all small simplicial sets. We let
\begin{enumerate}
 \item $\Pr^{\Mon}\subseteq\widehat{\Cat}_\infty^{\Mon}(\mathscr L)$ denote the full subcategory on monoidal quasi-categories with presentable underlying quasi-category,
 \item $\Pr^{\Alg}=\widehat{\Cat}_\infty^{\Alg}(\mathscr L)\varprod_{\widehat{\Cat}_\infty^{\Mon}(\mathscr L)}\Pr^{\Mon}$, and
 \item $\Pr^{\Mod}\subseteq\widehat{\Cat}_\infty^{\Mod}(\mathscr L)$ the full subcategory on presentable quasi-categories left-tensored over a presentable monoidal quasi-category.
\end{enumerate}
Then the Morita functor $\widehat{\Cat}_\infty^{\Alg}\xrightarrow{\Hat\Theta}\widehat{\Cat}_\infty^{\Mod}$ restricts to a functor $\Pr^{\Alg}\xrightarrow{\Hat\Theta}\Pr^{\Mod}$ \cite{lurie2014higher}*{Corollary 4.2.3.7}. We also get the pointed version $\Pr^{\Alg}\xrightarrow{\Hat\Theta_*}\Pr^{\Mod}_{\mathfrak M/}$ the same way. One can show that this functor $\Hat\Theta_*$ is fully faithful, and it admits a right adjoint \cite{lurie2014higher}*{Theorem 4.8.5.11}. The right adjoint maps $(\mathscr C^\otimes,\mathscr M^\otimes,M)\in\Pr^{\Mod}$ to $(\mathscr C^\otimes,\End_{\mathscr M}M)\in\Pr^{\Alg}$.

\subsection{Tensor products of quasi-categories}\label{ss:tensor products of quasi-categories}

Let $\mathscr O^\otimes$ be an $\infty$-operad, and $\mathscr K$ a collection of simplicial sets. To finish, let us discuss another way to classify coCartesian families of $\mathscr O$-monoidal quasi-categories compatible with $\mathscr K$-indexed colimits, which will be useful when we study descent in the next section.

In \S\ref{sss:Cartesian monoidal structures}, we have seen that the quasi-category of quasi-categories $\Cat_\infty$ can be equipped with the Cartesian symmetric monoidal structure, using which we can take the quasi-category of monoid objects $\Mon_{\mathscr O}\Cat_\infty$, and that
\begin{enumerate}
 \item by straightening-unstraightening classifies coCartesian families of $\mathscr O$-monoidal quasi-cate\-gories, and
 \item is equipped with an equivalence $\Mon_{\mathscr O}\Cat_\infty\to\Alg_{\mathscr O}\Cat_\infty$.
\end{enumerate}

Then we have seen in \S\ref{ss:compatibility with colimits}-\ref{ss:families of left-tensored quasi-categories} that the 2-full subcategory $\Mon_{\mathscr O}^{\mathscr K}\Cat_\infty$ on objects classifying $\mathscr O$-monoidal quasi-categories compatible with $\mathscr K$-indexed colimits and edges classifying morphisms of $\mathscr O$-monoidal quasi-categories compatible with $\mathscr K$-indexed colimits by construction classifies coCartesian families of $\mathscr O$-monoidal quasi-categories compatible with $\mathscr K$-indexed colimits.

The alternative approach for the latter which we will now explain is to equip the full subcategory $\Cat_\infty(\mathscr K)\subseteq\Cat_\infty$ on quasi-categories with $\mathscr K$-indexed colimits with a symmetric monoidal structure so that the equivalence $\Alg_{\mathscr O}\Cat_\infty\to\Mon_{\mathscr O}\Cat_\infty$ restricts to an equivalence $\Alg_{\mathscr O}\Cat_\infty(\mathscr K)\to\Mon_{\mathscr O}^{\mathscr K}\Cat_\infty$.

First of all, consider the following explicit construction $\Cat_\infty^\otimes$ of the Cartesian symmetric monoidal structure on $\Cat_\infty$:
\begin{enumerate}
 \item An object over $\langle n\rangle\in\Fin_*$ is an $n$-tuple $[X_1,\dotsc,X_n]$ of quasi-categories.
 \item A morphism $[X_1,\dotsc,X_n]\to[Y_1,\dotsc,Y_m]$ over a morphism $\langle n\rangle\xrightarrow\alpha\langle m\rangle$ is a collection of morphisms of quasi-categories $\prod_{i:\alpha(i)=j}X_i\xrightarrow{\eta_j}Y_j$ for all $1\le j\le m$.
\end{enumerate}
Let $P$ be the collection of all small simplicial sets partially ordered by inclusion. Then we let $\mathscr M\subseteq\Cat_\infty^\otimes\times N(P)$ be the 2-full subcategory on
\begin{enumerate}
 \item pairs $([X_1,\dotsc,X_n],\mathscr K)$ such that $X_i$ has $\mathscr K$-indexed colimits for all $1\le i\le n$, and
 \item morphisms $([X_1,\dotsc,X_n],\mathscr K)\xrightarrow{\{\eta_j\}}([Y_1,\dotsc,Y_m],\mathscr K')$ such that $\prod_{i:\alpha(i)=j}X_i\xrightarrow{\eta_j}Y_j$ is compatible with $\mathscr K$-indexed colimits for all $1\le j\le m$.
\end{enumerate}
Then one can show that the map $\mathscr M\to N(\Fin_*)\times N(P)$ is a coCartesian fibration of symmetric monoidal quasi-categories \cite{lurie2014higher}*{Proposition 4.8.1.3}. For a collection of simplicial sets $\mathscr K$, we let $\Cat_\infty(\mathscr K)^\otimes=\mathscr M\varprod_{N(P)}\{\mathscr K\}$. Then one can show that the inclusion map $\Cat_\infty(\mathscr K)^\otimes\to\Cat_\infty^\otimes$ is a lax monoidal functor \cite{lurie2014higher}*{Proposition 4.8.1.4}. This implies that the equivalence $\Alg_{\mathscr O}\Cat_\infty\to\Mon_{\mathscr O}\Cat_\infty$ restricts to an equivalence $\Alg_{\mathscr O}\Cat_\infty(\mathscr K)\to\Mon_{\mathscr O}^{\mathscr K}\Cat_\infty$.

In particular, let $\mathscr K$ denote the collection of all small simplicial sets. Then we have the symmetric monoidal quasi-category of big quasi-categories compatible with all small colimits $\widehat{\Cat}_\infty(\mathscr K)^\otimes$. One can show that the full subcategory $\PrL\subseteq\widehat{\Cat}_\infty(\mathscr K)$ is closed under tensor product \cite{lurie2014higher}*{Proposition 4.8.1.15}. Therefore, the inclusion $\PrL\to\widehat{\Cat}_\infty(\mathscr K)$ can be lifted to a symmetric monoidal functor $(\PrL)^\otimes\to\widehat{\Cat}_{\infty}(\mathscr K)^\otimes$. This in turn gives the equivalence $\Alg_{\mathscr O}\PrL\to\Mon_{\mathscr O}\PrL$ we wanted.

\section{Homotopical Skolem--Noether theorem} Let $K$ be a quasi-category equipped with a Grothendieck topology $\tau$. Let $\mathscr C^\otimes\xrightarrow q\Assoc^\otimes\times K^\op$ be a coCartesian family of presentable monoidal quasi-categories. In Subsection 1 we introduce Azumaya algebra objects in $\mathscr C^\otimes$ and give a criterion on $\mathscr C^\otimes$ so that the prestack $\Az\mathscr C$ of Azumaya algebra objects in $\mathscr C^\otimes$ an the prestack $\LTens^{\Az}\mathscr C$ of locally trivial presentable left-tensored quasi-categories over $\mathscr C^\otimes$ have $\tau$-descent. In Subsection 2 we prove our main result.

\subsection{Descent for presentable left-tensored quasi-categories with descent}

\begin{notn}

Let $K$ denote a quasi-category equipped with a Grothendieck topology $\tau$ \cite{lurie2009higher}*{\S6.2.2}. 

Let $\mathscr C^\otimes\xrightarrow q\Assoc^\otimes\times K^\op$ be a coCartesian family of presentable monoidal quasi-categories. Then it is classified by a map $K^\op\xrightarrow{\mathbf c_{\mathscr C}}\Pr^{\Mon}$. In our main example, the site $K$ is the fppf site over a scheme, and $\mathscr C^\otimes$ is the family of monoidal quasi-categories of cochain complexes of quasi-coherent sheaves.

Let $\Alg\mathscr C=\Pr^{\Alg}\varprod_{\Pr^{\Mon}}K^\op$. In our main example, it classifies derived associative algebras over cochain complexes of quasi-coherent sheaves.

\end{notn}

\begin{defn}
    Take $X\in K$ and let $A\in\Alg\mathscr C(X)$ be an algebra object. Then \emph{$A$ is an Azumaya algebra object} if there exists
    \begin{enumerate}
        \item a $\tau$-covering $U\to X$,
        \item a dualizable generator $M\in\mathscr C_\dgen(U)$ and
        \item an equivalence $A|U:=\mathscr C(U)\otimes_{C(X)}A\simeq\End M$.
    \end{enumerate}
    We denote by $\Az\mathscr C\subseteq\Alg\mathscr C$ the full subprestack on Azumaya algebras.
\end{defn}

\begin{notn}

Let $\LTens\mathscr C=\Pr^{\Mod}\varprod_{\Pr^{\Mon}}K^\op$.  In our main example, it classifies presentable quasi-categories left-tensored over a monoidal quasi-category of cochain complexes of quasi-coherent sheaves. That is, it classifies lax dg-categories.

Let $\LTens_*\mathscr C=\Pr^{\Mod}_{\mathfrak M/}\varprod_{\Pr^{\Mon}}K^\op$. Let $\LTens_\dgen\mathscr C\subseteq\LTens_*\mathscr C$ denote the full subprestack on triples $(U,\mathscr M,M)$ of an object $U\in K$, a quasi-category $\mathscr M$ left-tensored over $\mathscr C(U)$, and an object $M\in\mathscr M$ such that $M$ is a dualizable generator in $\mathscr M$. Let $\LTens_\dgen^{\Az}\mathscr C\subseteq\LTens_\dgen\mathscr C$ denote the full subprestack on locally trivial pointed presentable left-tensored quasi-categories over $\mathscr C$.

\end{notn}

\begin{rem}
    By the Homotopical Morita Theorem, we have an equivalence $A|U\simeq\End M$ of algebras for some $M\in\mathscr C_\dgen(U)$ if and only if we have an equivalence $\Mod_A|U\simeq\mathscr C(U)$ of presentable quasi-categories left-tensored over $\mathscr C$. Therefore, the equivalence $\Alg\mathscr C\xrightarrow{\Mod_*}\LTens_\dgen\mathscr C$ restricts to an equivalence $\Az\mathscr C\to\LTens_\dgen^{\Az}\mathscr C$.
\end{rem}

Let ${}^\op\LTens\mathscr C\xrightarrow{q^\op}K$ denote the structure map. Let $U\xrightarrow fX$ be a $\tau$-covering in $K$. Let $\Delta_+^\op\xrightarrow{\Bar U_\bullet}K$ denote its \v Cech nerve \cite{lurie2009higher}*{below Proposition 6.1.2.11}. Then \emph{${}^\op\LTens\mathscr C$ has descent along $f$}, if the restriction map
$$
\Gamma_{\Cart}(\Bar U_\bullet,{}^\op\LTens\mathscr C)\xrightarrow{r_{\Cart}}\Gamma_{\Cart}(U_\bullet,{}^\op\LTens\mathscr C)
$$
is an equivalence of quasi-categories. This will not hold in general. But we can select a full subcategory ${}^\op\LTens^\desc\mathscr C\subseteq{}^\op\LTens\mathscr C$ of presentable quasi-categories left-tensored over $\mathscr C$ \emph{with $\tau$-descent}, and we can show that
\begin{enumerate}
 \item the restriction ${}^\op\LTens^\desc\mathscr C\to K$ is a biCartesian fibration which satisfies $\tau$-descent, and
 \item it has all the objects needed for Morita theory: for all $U\in K$ and $A\in\Alg\mathscr C_U$, the right module quasi-category $\Mod_A$ has $\tau$-descent.
\end{enumerate}
First of all, consider the restriction map on the section quasi-categories
$$
\Gamma(\Bar U_\bullet,{}^\op\LTens\mathscr C)\xrightarrow r\Gamma(U_\bullet,{}^\op\LTens\mathscr C).
$$
Suppose that every section $\Delta^\op\xrightarrow k{}^\op\LTens\mathscr C$ has a $q^\op$-colimit $\Delta_+^\op\xrightarrow{\Bar k}{}^\op\LTens\mathscr C$. Then $r$ has a fully faithful left adjoint, which takes $k$ to $\Bar k$ \cite{lurie2009higher}*{Corollary 4.3.2.16 and Proposition 4.3.2.17}.

\begin{prop}\label{prop:LTen has relative limits} Let $K$ be a quasi-category, $\mathscr C^\otimes\to K^\op\times\Assoc^\otimes$ a coCartesian family of presentable monoidal quasi-categories, and $L$ a simplicial set. Then the family $\LTens\mathscr C\xrightarrow qK^\op$ of presentable quasi-categories left-tensored over $\mathscr C$ has all $L$-indexed $q$-limits.

\end{prop}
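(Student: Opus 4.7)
The plan is to reduce the claim to the universal case and then transport it by base change. Recall from Section \ref{ss:families of left-tensored quasi-categories} that
$$
\LTens\mathscr C = \Pr^{\Mod}\times_{\Pr^{\Mon}}K^\op
$$
is the strict pullback of the universal forgetful map $\Pr^{\Mod}\xrightarrow\psi\Pr^{\Mon}$ along the classifying map $K^\op\xrightarrow{\mathbf c_{\mathscr C}}\Pr^{\Mon}$ of $\mathscr C^\otimes$. By \cite{lurie2009higher}*{Proposition 4.3.1.15}, the class of functors admitting $L$-indexed relative limits is stable under base change, so it is enough to prove the proposition in the universal case, i.e.\ that $\psi$ admits $L$-indexed relative limits.

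For this I would invoke \cite{lurie2009higher}*{Proposition 4.3.1.10}, which says that a functor $\psi$ admits $L$-indexed relative limits as soon as the source and target both admit $L$-indexed limits and $\psi$ preserves them. The existence of small limits in $\Pr^{\Mon}$ follows from the existence of small limits in $\Pr^{\mathrm L}$ \cite{lurie2009higher}*{Proposition 5.5.3.13} combined with the fact that $\Pr^{\Mon}=\Mon_{\Assoc}\Pr^{\mathrm L}$ is a full subcategory of a functor quasi-category, whose limits are computed pointwise; the same argument applied to $\LM^\otimes$ in place of $\Assoc^\otimes$ yields limits in $\Pr^{\Mod}$. Preservation of these limits by $\psi$ is then automatic, since $\psi$ is induced by the restriction of $\infty$-operads $\Assoc^\otimes\hookrightarrow\LM^\otimes$, i.e.\ by precomposition, which preserves limits componentwise.

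The main obstacle is verifying that $\Pr^{\Mon}$ and $\Pr^{\Mod}$ are closed under limits inside the ambient functor quasi-categories $\Fun(\Assoc^\otimes,\widehat{\Cat}_\infty)$ and $\Fun(\LM^\otimes,\widehat{\Cat}_\infty)$. Concretely, one must show that a pointwise limit of presentable monoidal (resp.\ presentable left-tensored) quasi-categories whose structure functors preserve small colimits remains presentable with colimit-preserving structure functors. For the monoidal case this reduces to the closure of $\Pr^{\mathrm L}\subseteq\widehat{\Cat}_\infty$ under limits combined with the fact that the Cartesian monoidal structure is compatible with this inclusion (cf.\ \S\ref{ss:tensor products of quasi-categories}); for the left-tensored case, the analogous closure statement for modules is furnished by \cite{lurie2014higher}*{Corollary 4.2.3.3}, which gives an explicit description of limits in the relevant module $\infty$-categories.
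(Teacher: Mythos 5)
Your reduction to the universal case is fine in principle: relative limit diagrams are stable under pullback, so it would indeed suffice to show that $\psi\colon\Pr^{\Mod}\to\Pr^{\Mon}$ admits $L$-indexed relative limits. The problem is the criterion you then invoke. For a general functor (even an inner fibration) whose source and target admit $L$-indexed limits and which preserves them, $L$-indexed \emph{relative} limits need not exist: consider the inclusion $\{1\}\hookrightarrow\Delta^1$, which preserves all limits between quasi-categories having all limits, and a cone on a nonempty diagram in $\{1\}$ whose cone point is $0$ --- that cone admits no lift at all, hence no relative limit over it. What makes the ``total space has limits and the functor preserves them'' criterion work is the additional hypothesis that the functor is a \emph{Cartesian fibration}: one forms the absolute limit upstairs (which lies over the limit of the projected diagram because limits are preserved) and then pulls it back along a Cartesian edge over the comparison map from the prescribed cone point. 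Your write-up never states or verifies that $\psi$ is Cartesian; this is exactly (half of) Lemma \ref{lem:psi is biCartesian}, which follows from \cite{lurie2014higher}*{Corollary 4.2.3.2} together with the observation that the Cartesian restriction of a presentable left-tensored quasi-category has the same underlying quasi-category and hence stays in $\Pr^{\Mod}$. Without that ingredient the key step rests on a statement that is simply false.

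Even once repaired, your route is genuinely different from the paper's and somewhat more expensive. The paper applies the fibrewise criterion of \cite{lurie2009higher}*{Corollary 4.3.1.11} (in its dual form) directly to $q$: the fibres are $\LMod_{\mathscr C(U)}\PrL$, which admit limits by \cite{lurie2009higher}*{Proposition 5.5.3.13} and \cite{lurie2014higher}*{Corollary 4.2.3.3}, and the Cartesian transition functors (restriction of scalars) preserve limits because biCartesianness of $\psi$ provides them with left adjoints. That approach never needs limits in $\Pr^{\Mod}$ or $\Pr^{\Mon}$ themselves, whereas yours does, and your justification there is also too quick: $\Pr^{\Mon}$ and $\Pr^{\Mod}$ are $2$-full (not full) subcategories of the relevant functor quasi-categories, since only colimit-compatible morphisms are retained, so ``limits are computed pointwise'' is not automatic --- one must rerun the argument that $\PrL\subseteq\widehat{\Cat}_\infty$ is closed under limits and check that the Segal and presentability conditions persist. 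Both issues are fixable, but as written the proof has a genuine gap at its central step.
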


\begin{cor}\label{cor:RAd of r} Over any diagram $L^\vartriangleleft\to K^\op$, the restriction map
$$
\Gamma(L^\vartriangleleft,\LTens\mathscr C)\xrightarrow r\Gamma(L,\LTens\mathscr C)
$$
admits a fully faithful right adjoint.

\end{cor}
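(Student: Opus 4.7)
The plan is to deduce this directly from the dual of \cite{lurie2009higher}*{Corollary 4.3.2.16 and Proposition 4.3.2.17}, which the paper has already invoked in the preceding paragraph (in its original form) to produce a fully faithful left adjoint to restriction along $L \hookrightarrow L^\vartriangleright$ whenever the ambient fibration admits the relevant relative colimits. The statement we want is the formal dual: replace coCartesian with Cartesian, colimits with limits, and the right cone $L^\vartriangleright$ with the left cone $L^\vartriangleleft$. Equivalently, one may apply the cited results verbatim to the opposite fibration ${}^\op(\LTens \mathscr{C}) \to (K^\op)^\op = K$ and translate back.

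The single hypothesis required by the (dualized) cited results is that every diagram $L \to \LTens\mathscr{C}$ lying over a cone $L^\vartriangleleft \to K^\op$ admits a $q$-limit extension to $L^\vartriangleleft$. This is precisely the content of Proposition \ref{prop:LTen has relative limits}. Thus the right adjoint $\rho$ to $r$ exists and is described explicitly: it sends a section $s \in \Gamma(L, \LTens\mathscr{C})$ to the (essentially unique) $q$-limit diagram $\bar{s} \in \Gamma(L^\vartriangleleft, \LTens\mathscr{C})$ extending $s$ along the chosen cone.

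Full faithfulness of $\rho$ is equivalent to the counit $r \rho \to \id$ being an equivalence. This is immediate from the construction: restricting the $q$-limit extension $\bar{s}$ of $s$ back along $L \hookrightarrow L^\vartriangleleft$ recovers $s$ by definition, so the counit is the identity. I do not anticipate a genuine obstacle; the only care needed is in phrasing the opposite-category translation so that coCartesian-fibration hypotheses in Lurie's statement correspond correctly to the Cartesian-fibration setup here, but this is a purely formal bookkeeping step.
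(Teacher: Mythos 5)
Your proposal matches the paper's argument: the corollary is obtained by feeding Proposition \ref{prop:LTen has relative limits} (existence of all $L$-indexed $q$-limits in $\LTens\mathscr C\to K^\op$) into \cite{lurie2009higher}*{Corollary 4.3.2.16 and Proposition 4.3.2.17}, applied in the dual/opposite-fibration form exactly as you describe, with the right adjoint given by $q$-limit extension and full faithfulness coming from the counit being an equivalence. This is essentially the same proof, so nothing further is needed.
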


\begin{proof}[Proof of Proposition \ref{prop:LTen has relative limits}]

It is enough to show \cite{lurie2009higher}*{Corollary 4.3.1.11} that
\begin{enumerate}
 \item for all $U\in K$, the fibre $\LTens\mathscr C(U)$ has $L$-indexed limits, and
 \item for all maps $U\to V$ in $K^\op$, the restriction map $\LTens\mathscr C(V)\xrightarrow{\mathscr M\mapsto{}_{\mathscr C(U)}\mathscr M}{}^\op\LTens\mathscr C(U)$ preserves $L$-indexed limits.
\end{enumerate}
We have $\LTens\mathscr C(U)\simeq\LMod_{\mathscr C(U)}\PrL$ \cite{lurie2009higher}*{Remark 4.8.3.6}. As $\PrL$ admits all limits \cite{lurie2009higher}*{Proposition 5.5.3.13}, the quasi-category $\LMod_{\mathscr C(U)}\PrL$ also has limits \cite{lurie2014higher}*{Corollary 4.2.3.3}, which shows (1).

Since $\Pr^{\Mod}\xrightarrow\psi\Pr^{\Mon}$ is biCartesian by Lemma \ref{lem:psi is biCartesian}, the restriction map $\LTens\mathscr C(V)\to\LTens\mathscr C(U)$ admits a left adjoint \cite{lurie2009higher}*{Corollary 5.2.2.5}, and thus it preserves limits \cite{lurie2009higher}*{Proposition 5.2.3.5}, which shows (2).

\end{proof}

\begin{lem}\label{lem:psi is biCartesian} The universal family $\Pr^{\Mod}\xrightarrow\psi\Pr^{\Mon}$ of presentable left-tensored quasi-categories is biCartesian.

\end{lem}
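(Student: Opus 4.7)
The lemma has two parts -- coCartesianity and Cartesianity of $\psi$ -- and my plan is to argue them in sequence, reducing each to a general statement about algebra-object functors between $\infty$-operads applied to the symmetric monoidal quasi-category $(\PrL)^\otimes$ from Subsection~\ref{ss:tensor products of quasi-categories}.

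For the coCartesian direction, I would appeal to \cite{lurie2014higher}*{Proposition 4.8.5.1}, already invoked in Section~3, which establishes that the forgetful map $\Cat_\infty^{\Mod}(\mathscr L) \to \Cat_\infty^{\Mon}(\mathscr L)$ is coCartesian with coCartesian edges given by extension of scalars. Given $(\mathscr C^\otimes,\mathscr M) \in \Pr^{\Mod}$ and $f : \mathscr C^\otimes \to \mathscr D^\otimes$ in $\Pr^{\Mon}$, the edge has codomain $(\mathscr D^\otimes, \mathscr D \otimes_{\mathscr C} \mathscr M)$. The only thing to verify is that this target remains in the presentable subcategory, and for that I would invoke closure of $\PrL$ under the relevant tensor product from \cite{lurie2014higher}*{\S4.8.1}, together with the fact that $\RMod$-categories over a presentable algebra object are again presentable.

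For the Cartesian direction, my plan is to exhibit Cartesian edges as restriction of scalars. Via the symmetric monoidal structure on $\PrL$ from Subsection~\ref{ss:tensor products of quasi-categories}, we have identifications $\Pr^{\Mon} \simeq \Alg_{\Assoc}((\PrL)^\otimes)$ and $\Pr^{\Mod} \simeq \Alg_{\LM}((\PrL)^\otimes)$, under which $\psi$ becomes the forgetful functor induced by the inclusion of $\infty$-operads $\Assoc^\otimes \hookrightarrow \LM^\otimes$ of Definition~\ref{d:lm}. Given $f : \mathscr C^\otimes \to \mathscr D^\otimes$ and $\mathscr N \in \Pr^{\Mod}_{\mathscr D^\otimes}$, I would construct $f^*\mathscr N \in \Pr^{\Mod}_{\mathscr C^\otimes}$ with the same underlying presentable quasi-category as $\mathscr N$, so that presentability is automatic, and with $\mathscr C$-action obtained by composing the $\mathscr D$-action with $f$. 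The edge $(f, \id) : (\mathscr C^\otimes, f^*\mathscr N) \to (\mathscr D^\otimes, \mathscr N)$ would then be Cartesian by a direct universal property: an equivariant map $\mathscr P \to \mathscr N$ over $g : \mathscr E^\otimes \to \mathscr C^\otimes$ whose action factors as $f \circ g$ is the same datum as an equivariant map $\mathscr P \to f^*\mathscr N$ over $g$.

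The main obstacle will be making the restriction-of-scalars construction precise at the $\infty$-operadic level; concretely, producing $f^*$ as a functor on $\LM^\otimes$-algebras, not merely on objects, requires a careful straightening/unstraightening argument. My preferred route to finessing this is to quote a general biCartesianity result for forgetful functors between algebra categories, namely \cite{lurie2014higher}*{Corollary 4.2.3.2}, applied to the inclusion $\Assoc^\otimes \hookrightarrow \LM^\otimes$ with coefficients in $(\PrL)^\otimes$. This would give both coCartesianity and Cartesianity uniformly, bypass the case-by-case verification, and reduce the lemma to checking that the relevant closure properties of $\PrL$ ensure the ambient algebra-object forgetful map restricts to the full subcategories $\Pr^{\Mod} \subseteq \Alg_{\LM}(\widehat{\Cat}_\infty(\mathscr L)^\otimes)$ and $\Pr^{\Mon} \subseteq \Alg_{\Assoc}(\widehat{\Cat}_\infty(\mathscr L)^\otimes)$.
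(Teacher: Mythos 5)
Your proposal is correct and follows essentially the same route as the paper: the paper likewise deduces biCartesianity of the ambient forgetful functor $\widehat{\Cat}_\infty^{\Mod}(\mathscr L)\to\widehat{\Cat}_\infty^{\Alg}(\mathscr L)$ from \cite{lurie2014higher}*{Proposition 4.8.5.1} (coCartesian edges, given by relative tensor product) and \cite{lurie2014higher}*{Corollary 4.2.3.2} (Cartesian edges, given by restriction of scalars), and then checks that both kinds of edges stay inside the presentable subcategories --- trivially for restriction of scalars, since the underlying quasi-category is unchanged, and for base change because the bar construction computing $\mathscr D'\otimes_{\mathscr D}\mathscr M$ is a colimit under which $\PrL\subseteq\widehat{\Cat}_\infty$ is closed. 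One caution about your closing paragraph: \cite{lurie2014higher}*{Corollary 4.2.3.2} yields only the Cartesian direction, not both ``uniformly''; the existence of coCartesian edges genuinely requires the relative tensor product, and hence the compatibility with geometric realizations that enters through \cite{lurie2014higher}*{Proposition 4.8.5.1}, so you should keep the two directions separate as in your first two paragraphs.
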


\begin{proof} Let $\mathscr L$ denote the collection of all small simplicial sets. Then the forgetful map $\widehat{\Cat}_\infty^{\Mod}(\mathscr L)\xrightarrow{\Hat\psi}\widehat{\Cat}_\infty^{\Alg}(\mathscr L)$ is both Cartesian \cite{lurie2014higher}*{Corollary 4.2.3.2} and coCartesian \cite{lurie2014higher}*{Proposition 4.8.5.1}. Take a morphism $\mathscr D\xrightarrow f\mathscr D'$ in $\Pr^{\Alg}$, $\mathscr M\in\LMod_{\mathscr D}\PrL$, and $\mathscr M'\in\LMod_{\mathscr D'}\PrL$. The $\Hat\psi$-Cartesian edge $\mathscr M'\to{}_{\mathscr D}\mathscr M'$ is an equivalence on the underlying quasi-categories \cite{lurie2009higher}*{Corollary 4.2.3.2}, therefore ${}_{\mathscr D}\mathscr M'$ has a presentable underlying quasi-category, and thus is an object of $\LMod_{\mathscr D}\PrL$. By Lemma \ref{lem:base change of presentable left-tensored quasi-category is presentable}, the base change $\mathscr D'\otimes_{\mathscr D}\mathscr M$ is also presentable.

\end{proof}

\begin{lem}\label{lem:base change of presentable left-tensored quasi-category is presentable} Let $\mathscr D^\otimes\to(\mathscr D')^\otimes$ be a morphism of presentable monoidal quasi-categories. Let $\mathscr M^\otimes$ be a presentable quasi-category left-tensored over $\mathscr D^\otimes$. Then the base change $\mathscr D'\otimes_{\mathscr D}\mathscr M$ is also presentable.

\end{lem}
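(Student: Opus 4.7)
The plan is to identify $\mathscr D'\otimes_{\mathscr D}\mathscr M$ with a relative tensor product formed inside the symmetric monoidal $\infty$-category $(\PrL)^\otimes$, and then deduce presentability from the closure properties of $\PrL$ recalled in Subsection \ref{ss:tensor products of quasi-categories}.

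First, I would reinterpret the data module-theoretically. The morphism $\mathscr D^\otimes\to(\mathscr D')^\otimes$ of associative algebras in $(\PrL)^\otimes$ canonically equips $\mathscr D'$ with the structure of a right $\mathscr D$-module in $\PrL$, and by hypothesis $\mathscr M$ is a left $\mathscr D$-module in $\PrL$. The base change $\mathscr D'\otimes_{\mathscr D}\mathscr M$ is then by definition the relative tensor product of these two modules over $\mathscr D$ in the ambient monoidal $\infty$-category.

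Next, I would invoke \cite{lurie2014higher}*{Theorem 4.4.2.8}, which guarantees that relative tensor products over an associative algebra exist in any monoidal $\infty$-category that admits geometric realizations of simplicial objects and whose tensor product preserves such realizations separately in each variable. Both hypotheses hold for $(\PrL)^\otimes$: the quasi-category $\PrL$ admits all small colimits by \cite{lurie2009higher}*{Proposition 5.5.3.13}, and by \cite{lurie2014higher}*{Proposition 4.8.1.15} the tensor product in $(\PrL)^\otimes$ preserves small colimits separately in each variable. Concretely, the relative tensor product is presented by the geometric realization of the two-sided bar construction $B_\bullet(\mathscr D',\mathscr D,\mathscr M)$ with $B_n=\mathscr D'\otimes\mathscr D^{\otimes n}\otimes\mathscr M$, and each $B_n$ lies in $\PrL$ by the same closure result.

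To finish, I would check that this intrinsic relative tensor product in $\PrL$ agrees with the ambient base change appearing in the statement. This follows from the universal property: both objects corepresent the same functor of $\mathscr D$-balanced bilinear maps out of $(\mathscr D',\mathscr M)$ landing in $\PrL$, so they are canonically equivalent; in particular $\mathscr D'\otimes_{\mathscr D}\mathscr M$ is presentable. The main obstacle is not conceptual but bookkeeping, namely the verification that the inclusion $(\PrL)^\otimes\hookrightarrow\widehat{\Cat}_\infty(\mathscr K)^\otimes$ commutes with the formation of the bar construction, so that the two candidate relative tensor products match; this is exactly the content of the symmetric monoidality statement recalled in Subsection \ref{ss:tensor products of quasi-categories}.
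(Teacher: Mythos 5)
Your argument is correct and takes essentially the same route as the paper: both exhibit the base change as the geometric realization of the bar construction $B_\bullet(\mathscr D',\mathscr D,\mathscr M)$ via \cite{lurie2014higher}*{Theorem 4.4.2.8} and conclude presentability from the fact that $\PrL$ admits small colimits and the inclusion $\PrL\to\widehat{\Cat}_\infty$ preserves them. One minor correction: the relevant closure statement for colimits is \cite{lurie2009higher}*{Theorem 5.5.3.18} (which the paper cites), not Proposition 5.5.3.13, which concerns limits.
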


\begin{proof} On the underlying quasi-categories, the $\Hat\psi$-coCartesian edge $\mathscr M\to\mathscr D'\otimes_{\mathscr D}\mathscr M$ can be given by the bar construction, which is a colimit diagram \cite{lurie2014higher}*{Theorem 4.4.2.8}. Since $\PrL$ has colimits and the inclusion $\PrL\to\widehat{\Cat}_\infty$ preserves colimits \cite{lurie2009higher}*{Theorem 5.5.3.18}, the base change $\mathscr D'\otimes_{\mathscr D}\mathscr M$ has presentable underlying quasi-category too.

\end{proof}

\begin{cor}\label{cor:r_coc has RAd} Over any diagram $L^\vartriangleleft\to K^\op$, the restriction map on the coCartesian section quasi-categories
$$
\Gamma_{\coCart}(L^\vartriangleleft,\LTens\mathscr C)\xrightarrow{r_{\coCart}}\Gamma_{\coCart}(L,\LTens\mathscr C).
$$
admits a fully faithful right adjoint.

\end{cor}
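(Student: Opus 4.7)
The plan is to restrict the fully faithful right adjoint from Corollary~\ref{cor:RAd of r} to coCartesian sections. By that corollary, the unrestricted restriction $r$ admits a fully faithful right adjoint $R$, constructed as the right $q$-Kan extension along $L \hookrightarrow L^\vartriangleleft$, where $q\colon \LTens\mathscr C \to K^\op$ is the structure map; the requisite $q$-limits exist by Proposition~\ref{prop:LTen has relative limits}. Since the inclusions $\Gamma_{\coCart}(-,\LTens\mathscr C) \subseteq \Gamma(-,\LTens\mathscr C)$ are full on both sides and $r_{\coCart}$ is simply the restriction of $r$ (coCartesian sections restrict to coCartesian sections), it suffices to show that $R$ carries coCartesian sections to coCartesian sections. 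Granted this, $R_{\coCart} := R|_{\Gamma_{\coCart}}$ inherits full faithfulness as the restriction of a fully faithful functor between full subcategories, and the adjunction $r \dashv R$ descends to $r_{\coCart} \dashv R_{\coCart}$.

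The central step is thus the preservation of coCartesian sections by $R$. Given a coCartesian section $s\colon L \to \LTens\mathscr C$ over $\gamma\colon L \to K^\op$ and an extension $\gamma'\colon L^\vartriangleleft \to K^\op$, the image $\bar s = R(s)$ agrees with $s$ on $L$ by construction, so it remains to verify that the cone edges $\bar s(*) \to s(\ell)$ over $\gamma'(*) \to \gamma(\ell)$ are $q$-coCartesian for every $\ell \in L$. I will exploit the biCartesian structure of $q$ from Lemma~\ref{lem:psi is biCartesian} together with the straightening classification of coCartesian sections: under straightening, a coCartesian section over $\gamma$ corresponds to an object in $\lim_L F \circ \gamma$, where $F\colon K^\op \to \Pr^{\Mod}$ is the classifying functor of $q$, and the $q$-limit of $\bar s$ at the cone point should produce the corresponding object of $F(\gamma'(*))$ whose base-change images along the cone maps recover the $s(\ell)$, which is precisely the coCartesian condition on the new edges.

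The main obstacle will be making this last identification rigorous. In a biCartesian fibration, $q$-limits are naturally computed via Cartesian lifts in the fibers—so the cone edges of $\bar s$ are a priori only Cartesian—and one must show that, when the input $s$ is coCartesian, the universal property of the $q$-limit combined with the cocycle of base-change equivalences packaged in $s$ forces these Cartesian cone edges to also be coCartesian. Concretely, this reduces to a compatibility between $\PrL$-limits and base change, both of which are governed by left-adjoint (i.e.~colimit-preserving) maps in $\Pr^{\Mod}$, so that the coCartesian cocycle structure of $s$ propagates through the limit to $\bar s$.
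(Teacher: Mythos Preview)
Your approach has a genuine gap at precisely the point you flag as the ``main obstacle.'' The right $q$-Kan extension $R$ from Corollary~\ref{cor:RAd of r} does \emph{not} carry coCartesian sections to coCartesian sections, and the resolution you sketch cannot work.

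Take the simplest case $L=\Delta^0$, $L^\vartriangleleft=\Delta^1$, with $\gamma'\colon\Delta^1\to K^\op$ an edge corresponding to a morphism $V\to U$ in $K$. A coCartesian section over $L$ is just an object $\mathscr M\in\LTens\mathscr C(V)$. The $q$-limit at the cone point, computed via the biCartesian structure of Lemma~\ref{lem:psi is biCartesian}, is the restriction of scalars ${}_{\mathscr C(U)}\mathscr M\in\LTens\mathscr C(U)$, and the cone edge ${}_{\mathscr C(U)}\mathscr M\to\mathscr M$ is the $q$-\emph{Cartesian} lift. For this edge to be $q$-coCartesian as well, the counit $\mathscr C(V)\otimes_{\mathscr C(U)}({}_{\mathscr C(U)}\mathscr M)\to\mathscr M$ of the base-change/restriction adjunction would have to be an equivalence---but that fails whenever base change along $V\to U$ is not a localization. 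There is no ``coCartesian cocycle structure of $s$'' to invoke here, since $L$ has no edges at all; the obstruction lives entirely on the new cone edge.

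The paper avoids this by \emph{not} asking the relative limit to land in coCartesian sections. Instead it uses that the full inclusions $i\colon\Gamma_\coC\hookrightarrow\Gamma$ and $\Bar i\colon\Bar\Gamma_\coC\hookrightarrow\Bar\Gamma$ themselves admit right adjoints $R$, $\Bar R$ (via \cite{lurie2009higher}*{Lemma 5.5.3.16}, using that $q$ is biCartesian with presentable fibres), and takes the composite $\Bar R\circ\lim\circ i$ as the right adjoint to $r_\coC$. The extra application of $\Bar R$ is exactly the ``coCartesian-ification'' step that your argument is missing; without it one is stuck with the Cartesian cone edges you yourself anticipated.
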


\begin{proof}

Let us denote $\Bar\Gamma_\coC=\Gamma_{\coCart}(L^\vartriangleleft,\LTens\mathscr C)$, $\Gamma_\coC=\Gamma_{\coCart}(L,\LTens\mathscr C)$, $\Bar\Gamma=\Gamma(L^\vartriangleleft,\LTens\mathscr C)$ and $\Gamma=\Gamma(L,\LTens\mathscr C)$. Then the inclusions $\Gamma_\coC\xrightarrow i\Gamma$ resp.~$\Bar\Gamma_\coC\xrightarrow{\Bar i}\Bar\Gamma$ admit right adjoints $\Gamma\xrightarrow R\Gamma_\coC$ resp.~$\Bar\Gamma\xrightarrow{\Bar R}\Bar\Gamma_\coC$ \cite{lurie2009higher}*{Lemma 5.5.3.16}. Let us denote the right adjoint to the restriction map by $\Gamma\xrightarrow{\lim}\Bar\Gamma$. We claim that the composite $\Gamma_\coC\subseteq\Gamma\xrightarrow\lim\Bar\Gamma\xrightarrow{\Bar R}$ is shown to be the right adjoint to $r_\coC$ by the composite $\mathscr M\xrightarrow u\Bar{\mathscr M}\xrightarrow{\Bar Ru}\Bar R\lim r\mathscr M$, where the two $u$ denote unit maps for the adjunctions $(\Bar i,\Bar R)$ resp.~$(r,\lim)$ applied to $\mathscr M\in\Bar\Gamma_\coC$. The claim can be checked on the commutative diagram
\begin{center}

\begin{tikzpicture}[xscale=5,yscale=2]
\node (0) at (0,0) {$\Map_{\Gamma_\coC}(r\mathscr M,\mathscr N)$};
\node (1) at (1,0) {$\Map_\Gamma(r\mathscr M,\mathscr N)$};
\node (2) at (2,0) {$\Map_{\Bar\Gamma}(\mathscr M,\lim\mathscr N)$};
\node (3) at (3,0) {$\Map_{\Bar\Gamma_\coC}(\mathscr M,\Bar R\lim\mathscr N)$};
\node (12) at (1.5,1) {$\Map_{\Bar\Gamma}(\lim r\mathscr M,\lim\mathscr N)$};
\node (23) at (2.5,1) {$\Map_{\Bar\Gamma_\coC}(\Bar R\mathscr M,\Bar R\lim\mathscr N)$};
\node (123) at (2,2) {$\Map_{\Bar\Gamma_\coC}(\Bar R\lim r\mathscr M,\Bar R\lim\mathscr N),$};
\path[->,font=\scriptsize,>=angle 90]
(0) edge node [above] {$i$} node [below] {$\simeq$} (1)
(1) edge node [above left] {$\lim$} (12)
(1) edge node [below] {$\simeq$} (2)
(12) edge node [above left] {$\Bar R$} (123)
(12) edge node [above right] {$\circ u$} (2)
(123) edge node [above right] {$\circ\Bar Ru$} (23)
(2) edge node [above left] {$\Bar R$} (23)
(2) edge node [below] {$\simeq$} (3)
(23) edge node [above right] {$\circ u$} (3);
\end{tikzpicture}

\end{center}
where $\mathscr N\in\Gamma_\coC$.

\end{proof}

\begin{notn} Recall that the restriction to $U_{-1}=X$-map $\Gamma_{\coCart}(\Bar U_\bullet^\op,\LTens\mathscr C)\to\LTens\mathscr C(X)$ is an equivalence of quasi-categories \cite{dhillon2018stack}*{Lemma 2.5}. Take $\mathscr M_\bullet\in\Gamma_{\coCart}(U_\bullet^\op,\LTens\mathscr C)$. We denote its image by the right adjoint by $\lim\mathscr M_\bullet\in\LTens\mathscr C(X)$.

If we start out from $\mathscr M\in\LTens\mathscr C(X)$, then we denote its image by the right adjoint by $\mathscr C(U_\bullet^\op)\otimes_{\mathscr C(X)}\mathscr M$. Note that we get a unit map
$$
\mathscr M\to\lim(\mathscr C(U_\bullet^\op)\otimes_{\mathscr C(X)}\mathscr M).
$$

\end{notn}

\begin{defn} Let $K$ be a quasi-category equipped with a Grothendieck topology $\tau$ and $\mathscr C^\otimes\to K^\op\times\Assoc^\otimes$ a coCartesian fibration of presentable monoidal quasi-categories. Let $\LTens\mathscr C\xrightarrow qK^\op$ be the corresponding family of presentable quasi-categories left-tensored over $\mathscr C^\otimes$. We say that \emph{base changes in $q$ commute with $\tau$-descent data} if for the \v Cech nerve $\Delta^{\op}\xrightarrow{U_\bullet}K$ of a $\tau$-covering, a $q$-limit diagram $\Bar{\mathscr M_\bullet}\in\Gamma_\coC(U_\bullet^\op,\LTens\mathscr C)$ and a map $V\to\Bar U_{-1}$ in $K$, the base change $\mathscr C(V)\otimes_{\mathscr C(\Bar U_{-1})}\Bar{\mathscr M_\bullet}$ is also a $q$-limit diagram.

We say that the family $\mathscr C^\otimes$ \emph{has $\tau$-descent} if the following conditions hold:
\begin{enumerate}
 \item The underlying Cartesian fibration ${}^\op\mathscr C\xrightarrow pK$ has $\tau$-descent.
 \item Base changes in $q$ commute with $\tau$-descent data.
\end{enumerate}

Let  $T\in K$ be an object and $\mathscr M\in\LTens\mathscr C(T)$ a presentable quasi-category left-tensored over $\mathscr C(T)$. We say that \emph{$\mathscr M$ has $\tau$-descent}, if for all $\tau$-coverings $U\xrightarrow fX$ over $T$ in $K$, letting $\Delta^\op_+\xrightarrow{\Bar U_\bullet}K$ denote the \v Cech nerve of $f$, the canonical map
$$
\mathscr C(X)\otimes_{\mathscr C(T)}\mathscr M\to\lim(\mathscr C(U_\bullet^\op)\otimes_{\mathscr C(X)}\mathscr M)
$$
is an equivalence. We denote by $\LTens^\desc\mathscr C\subseteq\LTens\mathscr C$ the full subcategory on pairs $(T,\mathscr M)$ of objects $T\in K$ and presentable quasi-categories $\mathscr M$ left-tensored over $\mathscr C(T)$ with $\tau$-descent.

\end{defn}

\begin{prop}\label{prop:dualizable commutes with descent} Let $K$ be a quasi-category equipped with a Grothendieck topology $\tau$ and $\mathscr C^\otimes\to K^\op\times\Assoc^\otimes$ a coCartesian fibration of presentable monoidal quasi-categories. Suppose that the underlying Cartesian fibration ${}^\op\mathscr C\xrightarrow pK$ has $\tau$-descent. Then the following assertions hold:
\begin{enumerate}
 \item Suppose that for all morphisms $V\to U$ in $K$, the bimodule $\mathscr C(V)\in{}_{\mathscr C(V)}\Mod_{\mathscr C(U)}$ is left dualizable. Then base changes in $q$ commute with $\tau$-descent data.
 \item Suppose that base changes in $q$ commute with $\tau$-descent data. Let $T\in K$ be an object and $\mathscr M\in\LTens\mathscr C(T)$ a presentable quasi-category left-tensored over $\mathscr C(T)$. Suppose that $\mathscr M\in\LTens\mathscr C(T)$ is left dualizable. Then $\mathscr M$ has $\tau$-descent.

\end{enumerate}

\end{prop}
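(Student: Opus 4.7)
My plan is to present the relevant tensor functor as a right adjoint in each part, so that preservation of $q$-limit diagrams becomes automatic. In each case the required left adjoint is produced from the dualizability hypothesis via the Homotopical Eilenberg--Watts theorem recalled in Section~\ref{subsection:D otimes}.

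For part~(1), the base-change functor attached to a morphism $f\colon V \to \bar U_{-1}$ in $K$ is
\[
f^{*} \;=\; \mathscr C(V)\otimes_{\mathscr C(\bar U_{-1})}(-) : \LMod_{\mathscr C(\bar U_{-1})}\PrL \longrightarrow \LMod_{\mathscr C(V)}\PrL,
\]
that is, tensoring with the $(\mathscr C(V),\mathscr C(\bar U_{-1}))$-bimodule $\mathscr C(V)$; this identification comes from the universal property of the coCartesian edges in $\LTens\mathscr C \to K^\op$ constructed in \S\ref{ss:families of left-tensored quasi-categories}. By the assumed left-dualizability of this bimodule, the Homotopical Eilenberg--Watts theorem supplies a left adjoint to $f^{*}$ given by tensoring with the left dual; hence $f^{*}$ preserves all small limits. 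Under the equivalence $\Gamma_{\coCart}(\bar U_\bullet^\op,\LTens\mathscr C) \simeq \LTens\mathscr C(\bar U_{-1})$ and Proposition~\ref{prop:LTen has relative limits}, a $q$-limit of coCartesian sections is determined by, and equals, an ordinary limit in the fibre at $\bar U_{-1}$, so componentwise application of $f^{*}$ transports $q$-limit diagrams to $q$-limit diagrams, which is precisely the compatibility with $\tau$-descent data.

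For part~(2), the same mechanism is applied with $\mathscr M$ itself playing the role of the dualizable object. Left-dualizability of $\mathscr M\in\LTens\mathscr C(T)$ produces a right $\mathscr C(T)$-module ${}^\vee\mathscr M$ exhibiting $\mathscr M$ as a left adjoint in the Morita $(\infty,2)$-category, so that $(-)\otimes_{\mathscr C(T)}\mathscr M$ is a right adjoint and preserves limits. The hypothesis that ${}^\op\mathscr C \to K$ has $\tau$-descent yields the equivalence $\mathscr C(X)\xrightarrow{\sim}\lim_{[n]\in\Delta}\mathscr C(U_n)$ in the relevant category of $\mathscr C(T)$-algebras; tensoring with $\mathscr M$ and using limit preservation then gives the descent equivalence
\[
\mathscr C(X)\otimes_{\mathscr C(T)}\mathscr M \;\xrightarrow{\ \sim\ }\; \lim_{[n]\in\Delta}\bigl(\mathscr C(U_n)\otimes_{\mathscr C(T)}\mathscr M\bigr).
\]
The hypothesis that base changes in $q$ commute with $\tau$-descent data is invoked to identify this fibrewise limit with the $q$-limit of the resulting coCartesian section in $\LTens\mathscr C \to K^\op$, ensuring that the equivalence above really captures $\tau$-descent in the sense of the definition.

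The principal technical obstacle is the translation between $q$-limits of coCartesian sections and ordinary limits in the fibres $\LMod_{\mathscr C(\bar U_{-1})}\PrL$, in particular verifying that componentwise application of a right-adjoint tensor functor lifts to coCartesian sections and commutes with passage to the $q$-limit. This rests on the biCartesian structure of $\LTens\mathscr C \to K^\op$ established in Lemma~\ref{lem:psi is biCartesian}, the relative-limits formalism of Proposition~\ref{prop:LTen has relative limits}, and the construction of $\overline{\LMod}$ and its pointed variant in \S\ref{ss:families of left-tensored quasi-categories}; once these identifications are carefully set up, the limit-preservation argument above applies without further modification.
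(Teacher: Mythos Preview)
Your overall strategy is the same as the paper's: in both parts, left dualizability yields a left adjoint to the relevant tensor functor (by \cite{lurie2014higher}*{Proposition 4.6.2.1}), so that functor preserves small limits, and this is then leveraged to show the desired diagram remains a $q$-limit. The difference lies in how you pass between $q$-limits and ordinary limits, and there your argument is more roundabout than necessary.

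The paper's reduction is the single observation that since ${}^\op\mathscr C\to K$ has $\tau$-descent, the base diagram $\Delta_+\xrightarrow{\mathscr C^\otimes(\bar U_\bullet)}\Pr^{\Mon}$ is already a limit diagram; hence a section over $\bar U_\bullet^\op$ is a $q$-limit diagram if and only if it is an absolute limit diagram in $\Pr^{\Mod}$. With that in hand, both parts reduce immediately to ``right adjoint preserves limits''. You instead route the comparison through the equivalence $\Gamma_{\coCart}(\bar U_\bullet^\op,\LTens\mathscr C)\simeq\LTens\mathscr C(\bar U_{-1})$ and speak of a limit ``in the fibre at $\bar U_{-1}$'', which is workable but obscures the point and requires more bookkeeping with Proposition~\ref{prop:LTen has relative limits} and Lemma~\ref{lem:psi is biCartesian} than the paper actually uses.

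More substantively, in part~(2) you invoke the hypothesis that base changes in $q$ commute with $\tau$-descent data ``to identify this fibrewise limit with the $q$-limit''. This is not what that hypothesis says (it asserts preservation of $q$-limit diagrams under base change, not a comparison between fibrewise and relative limits), and the paper's proof of~(2) does not use that hypothesis at all: once one knows $\mathscr C(\bar U_\bullet)$ is a limit diagram, the $q$-limit condition is equivalent to being a limit diagram, and left dualizability of $\mathscr M$ finishes. So your argument for~(2) is essentially correct but carries an irrelevant (and mis-described) appeal to the base-change hypothesis; dropping it and using the $q$-limit~$\Leftrightarrow$~limit reduction directly would align your proof with the paper's and make it shorter.
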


\begin{proof} (1) Since ${}^\op\mathscr C\to K$ has $\tau$-descent and $\Bar U_\bullet$ is the \v Cech nerve of a $\tau$-covering, the diagram $\Delta_+\xrightarrow{\mathscr C^\otimes(\Bar U_\bullet)}\Pr^{\Mon}$ is a limit diagram. Therefore, the base change $\mathscr C(V)\otimes_{\mathscr C(\Bar U_{-1})}\Bar{\mathscr M_\bullet}$ is a $q$-limit diagram if and only if it is a limit diagram. Since $\mathscr C(V)\in{}_{\mathscr C(V)}\Mod_{\mathscr C(U)}$ is left dualizable, the functor $\LTens\mathscr C(U)\xrightarrow{\mathscr C(V)\otimes_{\mathscr C(U)}}\LTens\mathscr C(V)$ has a left adjoint and thus commutes with small limits \cite{lurie2014higher}*{Proposition 4.6.2.1}.

(2) Let $\Delta_+^\op\xrightarrow{\Bar U_\bullet}K$ be the \v Cech nerve of a $\tau$-covering over $T$. We need to show that the augmented simplicial diagram $\mathscr C(\Bar U_\bullet)\otimes_{\mathscr C(T)}\mathscr M$ is a $q$-limit. Since $p$ is a $\tau$-stack, the augmented simplicial diagram $\mathscr C(\Bar U_\bullet)$ of presentable monoidal quasi-categories is a limit diagram. Therefore, it is enough to show that $\mathscr C(\Bar U_\bullet)\otimes_{\mathscr C(T)}\mathscr M$ is a limit diagram. This follows from that $\mathscr M\in\LTens\mathscr C(T)$ is left dualizable.

\end{proof}

\begin{cor}\label{cor:dualizable commutes with descent} Let $K$ be a quasi-category equipped with a Grothendieck topology $\tau$ and $\mathscr C^\otimes\to K^\op\times\Assoc^\otimes$ a coCartesian fibration of presentable monoidal quasi-categories. Suppose that the following assertions hold:
\begin{enumerate}
 \item The underlying Cartesian fibration ${}^\op\mathscr C\xrightarrow pK$ has $\tau$-descent.
 \item For all objects $U\in K$, the presentable monoidal quasi-category $\mathscr C(U)\in\Alg(\PrL)$ has dualizable underlying presentable quasi-category.
\end{enumerate}
Then base changes in $q$ commute with $\tau$-descent data.

\end{cor}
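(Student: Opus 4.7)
The plan is to apply Proposition \ref{prop:dualizable commutes with descent}(1), which requires, for every morphism $V\to U$ in $K$, that the bimodule ${}_{\mathscr C(V)}\mathscr C(V)_{\mathscr C(U)}$---with right action induced by the monoidal functor $f^*\colon\mathscr C(U)\to\mathscr C(V)$ classifying $V\to U$---be left dualizable in the Morita $(\infty,2)$-category of $\PrL$. Thus the task is to deduce left dualizability of this bimodule from the hypothesis that $\mathscr C(U)\in\Alg(\PrL)$ is dualizable in $\PrL$ for each $U\in K$.

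First, via the Homotopical Eilenberg--Watts Theorem recalled earlier, the bimodule corresponds to the extension-of-scalars functor
\[
\mathscr C(V)\otimes_{\mathscr C(U)}(-)\colon\LMod_{\mathscr C(U)}(\PrL)\to\LMod_{\mathscr C(V)}(\PrL),
\]
and left dualizability of the bimodule amounts to this functor admitting a further left adjoint, equivalently to its preservation of small limits (the reverse direction is automatic, as any left adjoint coming from a bimodule already preserves colimits). So the reduced task is to verify this limit-preservation.

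The main step is to identify $(\mathscr C(V),\mathscr C(U))$-bimodules with left modules over $B:=\mathscr C(V)\otimes\mathscr C(U)^\op$. Dualizability in $\PrL$ is preserved under tensor products and by passage to opposite monoidal structures, so $B$ is dualizable in $\PrL$; the free rank-one $B$-module $B$ is thus dualizable in $\LMod_B(\PrL)$; and the bimodule $\mathscr C(V)$ arises as the image of this free module under a $B$-linear multiplication map $B\to\mathscr C(V)$. I would then transfer dualizability along this map, using that dualizability of $B$ in $\PrL$ makes the multiplication a morphism in $\LMod_B(\PrL)$ whose right adjoint is also $B$-linear, giving evaluation and coevaluation pairings for $\mathscr C(V)$ as a $B$-module, and hence as a bimodule.

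The main obstacle will be making this last transfer step rigorous: while the free rank-one $B$-module is dualizable once $B$ is dualizable in $\PrL$, showing that dualizability is inherited by the target of a $B$-linear map requires care. A cleaner fallback is to verify limit-preservation of $\mathscr C(V)\otimes_{\mathscr C(U)}(-)$ directly via its two-sided bar description, using that each $\mathscr C(V)\otimes\mathscr C(U)^{\otimes n}$ is dualizable in $\PrL$ (so tensoring with it has both adjoints and in particular preserves limits) and that the resulting simplicial object is split by the dualizability of $\mathscr C(U)$, allowing the geometric realization to commute with limits. Either route, combined with Proposition \ref{prop:dualizable commutes with descent}(1), yields the corollary.
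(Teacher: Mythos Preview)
Both routes you sketch have genuine gaps, and the paper's argument avoids them by taking a much more direct path.

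In your fallback, the claim that the bar construction is split is wrong. The augmented simplicial object $\mathrm{Bar}(\mathscr C(V),\mathscr C(U),\mathscr M)_\bullet\to\mathscr C(V)\otimes_{\mathscr C(U)}\mathscr M$ acquires an extra degeneracy only when one of the two outer arguments is a \emph{free} $\mathscr C(U)$-module; here $\mathscr M$ is an arbitrary left $\mathscr C(U)$-module and $\mathscr C(V)$, viewed as a right $\mathscr C(U)$-module by restriction along $f^*$, is not free either. Dualizability of $\mathscr C(U)$ in $\PrL$ buys you only that each level of the bar preserves limits; it does not manufacture a contracting homotopy, so you cannot commute the geometric realization past limits this way. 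In your primary route, as you already note, there is no mechanism for pushing dualizability from the free rank-one $B$-module to an arbitrary $B$-linear quotient of it; the map $B\to\mathscr C(V)$ gives no such transfer.

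The paper's proof is a single appeal to Lemma~\ref{lem:proper is dualizable}, which does the opposite of what you attempt: instead of reasoning abstractly about adjoints or bar resolutions, it simply \emph{writes down} the duality data. Given $A\in\Alg(\mathscr C)$ with underlying object left dualizable via $e\colon\Check A\otimes A\to\boldsymbol 1$ and $c\colon\boldsymbol 1\to A\otimes\Check A$, the lemma exhibits $\Check A\otimes A$ (a right $A$-module via right multiplication on the second factor) as a left dual of $A\in{}_A\Mod\mathscr C$, with evaluation $(\Check A\otimes A)\otimes_A A\simeq\Check A\otimes A\xrightarrow{e}\boldsymbol 1$ and coevaluation $A\xrightarrow{c\otimes\id_A}A\otimes\Check A\otimes A$; the triangle identities are inherited directly from those for $(e,c)$. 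Applying this in $\mathscr C=\PrL$ with $A=\mathscr C(V)$ supplies exactly the left-dualizability input to Proposition~\ref{prop:dualizable commutes with descent}(1). So the recommended fix is to drop the limit-preservation reformulation entirely and instead produce the candidate dual and its structure maps explicitly, then check the zig-zag identities by hand.
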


\begin{proof} This follows from the more general result Lemma \ref{lem:proper is dualizable}

\end{proof}

\begin{lem}\label{lem:proper is dualizable} Let $A\in\Alg\mathscr C$ be an algebra object in a monoidal quasi-category. Suppose that the underlying object $A\in\mathscr C$ has left duality data
$$
\Check A\otimes A\xrightarrow e\boldsymbol 1,\quad \boldsymbol1\xrightarrow cA\otimes\Check A.
$$
Then the object $A\in{}_A\Mod\mathscr C$ has left duality data
$$
(\Check A\otimes A)\otimes_AA\xrightarrow\simeq\Check A\otimes A\xrightarrow e\boldsymbol1,\quad A\xrightarrow{c\otimes\id_A}A\otimes\Check A\otimes A.
$$

\end{lem}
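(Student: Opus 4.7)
The plan is to verify the two triangle (zigzag) identities that characterize a left duality datum, by reducing each one via the canonical equivalences $M \otimes_A A \simeq M$ to the given triangle identities for the pair $(A, \Check A)$ in $\mathscr C$. The guiding observation is that the proposed duality data is essentially induced from $(c,e)$ by tensoring with $A$: the evaluation $(\Check A \otimes A) \otimes_A A \xrightarrow{\simeq} \Check A \otimes A \xrightarrow{e} \boldsymbol 1$ is obtained by absorbing $A \otimes_A A \simeq A$ into the relative tensor product, while the coevaluation $c \otimes \id_A : A \to A \otimes \Check A \otimes A$ is literally the $\mathscr C$-coevaluation $c$ tensored on the right with $\id_A$.

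First I would fix the ambient formulation: $A$ as a left $A$-module is paired against $\Check A \otimes A$ as a right $A$-module (via the action on the second factor), and one must exhibit the evaluation and coevaluation together with homotopies witnessing the two triangle identities. The coevaluation enhances automatically to a morphism in ${}_A\Mod$ because the module structure is carried on the right $A$-factor, which is untouched by $c$; the evaluation is a well-defined morphism in $\mathscr C$ precisely because the middle $A$'s are identified by the relative tensor product.

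The main calculation is to spell out the two zigzag composites and apply the collapses $M \otimes_A A \simeq M$ and $A \otimes_A N \simeq N$ to reduce each to a composite in $\mathscr C$. The first zigzag, starting at $A \in {}_A\Mod$, unravels to
\[
A \xrightarrow{c \otimes \id_A} A \otimes \Check A \otimes A \xrightarrow{\id_A \otimes e \otimes \id_A} A \otimes \boldsymbol 1 \otimes A \simeq A \otimes A \xrightarrow{\text{mult}} A,
\]
which coincides with the first triangle identity for $(A, \Check A)$ in $\mathscr C$ tensored on the right with $\id_A$, and hence is $\id_A$. The second zigzag, starting at $\Check A \otimes A \in \Mod_A$, similarly unfolds to the second $\mathscr C$-triangle identity $\Check A \xrightarrow{\id_{\Check A} \otimes c} \Check A \otimes A \otimes \Check A \xrightarrow{e \otimes \id_{\Check A}} \Check A$ tensored with $\id_A$, and hence is $\id_{\Check A \otimes A}$.

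The main obstacle is bookkeeping the $\infty$-categorical coherence of the equivalences $M \otimes_A A \simeq M$, since the triangle identities must be exhibited as genuine homotopies in mapping spaces rather than mere isomorphism statements. This is cleanest handled by modelling the relative tensor product via the two-sided bar construction, where $M \otimes_A A$ is presented as the geometric realization of $M \otimes A^{\otimes \bullet} \otimes A$ and the collapse equivalence is the canonical augmentation induced by the right $A$-action on $M$; the two triangles then unfold into coherent homotopies obtained by combining this augmentation with the $\mathscr C$-triangles through the functoriality of the bar construction, as developed in \cite{lurie2014higher}*{\S4.4}.
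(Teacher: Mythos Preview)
Your approach is exactly what the paper intends: its proof is the single sentence ``This can be checked directly,'' and you are carrying out that direct verification of the two triangle identities by collapsing the relative tensor products $M\otimes_A A\simeq M$ and reducing to the triangles for $(A,\Check A)$ in $\mathscr C$. The overall strategy and the handling of the second zigzag are correct.

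There is, however, an error in your displayed first zigzag. The map $\id_A\otimes e\otimes\id_A$ does not typecheck on $A\otimes\Check A\otimes A$, and no multiplication should appear. Unwinding the first triangle in ${}_A\Mod$,
\[
A\;\simeq\;A\otimes_A A\;\xrightarrow{(c\otimes\id_A)\otimes_A\id_A}\;(A\otimes\Check A\otimes A)\otimes_A A\;\simeq\;A\otimes\bigl((\Check A\otimes A)\otimes_A A\bigr)\;\xrightarrow{\id_A\otimes\mathrm{ev}}\;A\otimes\boldsymbol1\;\simeq\;A,
\]
and applying the equivalence $(\Check A\otimes A)\otimes_A A\simeq\Check A\otimes A$ together with $\mathrm{ev}=e\circ(\text{this equivalence})$, the composite becomes exactly
\[
A\;\xrightarrow{c\otimes\id_A}\;A\otimes\Check A\otimes A\;\xrightarrow{\id_A\otimes e}\;A,
\]
which is the first $\mathscr C$-triangle itself, not that triangle tensored with $\id_A$. (By contrast, your description of the second zigzag as the second $\mathscr C$-triangle tensored on the right by $\id_A$ is correct.) Once this is fixed, your argument is the direct check the paper refers to.
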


\begin{proof} This can be checked directly.

\end{proof}

\begin{thm}\label{prop:LTen^desc is a stack} Let $K$ be a quasi-category equipped with a Grothendieck topology $\tau$. Let $\mathscr C^\otimes\to K^\op\times\Assoc^\otimes$ be a coCartesian family of presentable monoidal quasi-categories with $\tau$-descent. Then the following assertions hold.
\begin{enumerate}
 \item The family ${}^\op\LTens^\desc\mathscr C\to K$ of presentable quasi-categories left-tensored over $\mathscr C$ with $\tau$-descent is a $\tau$-stack.
 \item For any object $T\in K$ and associative algebra $A\in\Alg\mathscr C(T)$, the quasi-category $\Mod_A\mathscr C(T)$ left-tensored over $\mathscr C(T)$ of right $A$-modules in $\mathscr C(T)$ has $\tau$-descent.
\end{enumerate} 

\end{thm}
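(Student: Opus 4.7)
The strategy is to deduce Part (1) from the fully faithful right adjoint provided by Corollary \ref{cor:r_coc has RAd}, verifying that it restricts to the descent subcategories; Part (2) will then follow by combining base change for module categories with the $\tau$-descent of $\mathscr C^\otimes$.

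\textbf{Part (1).} For the \v Cech nerve $\Bar U_\bullet$ of a $\tau$-covering with $X=\Bar U_{-1}$, I must check that
$$
\Gamma_{\coCart}(\Bar U_\bullet,\LTens^\desc\mathscr C)\to\Gamma_{\coCart}(U_\bullet,\LTens^\desc\mathscr C)
$$
is an equivalence. Corollary \ref{cor:r_coc has RAd} already supplies a fully faithful right adjoint $\lim$ at the level of $\LTens\mathscr C$; it therefore suffices to show (a) if $\mathscr M\in\LTens^\desc\mathscr C(X)$, then each base change $\mathscr C(U_n)\otimes_{\mathscr C(X)}\mathscr M$ again has $\tau$-descent, and (b) if $\mathscr M_\bullet\in\Gamma_{\coCart}(U_\bullet,\LTens^\desc\mathscr C)$, then $\lim\mathscr M_\bullet\in\LTens\mathscr C(X)$ again has $\tau$-descent. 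Claim (a) is immediate from the definition, since the class of $\tau$-coverings is stable under composition and the condition of $\tau$-descent for $\mathscr M$ is tested against arbitrary $\tau$-coverings over arbitrary objects over $X$. For (b), given another $\tau$-covering $V\to Y$ over $X$ with \v Cech nerve $\Bar V_\bullet$, I form the bisimplicial \v Cech object $\Bar U_\bullet\times_X\Bar V_\bullet$ and carry out a Fubini argument: the compatibility of base change with $\tau$-descent data lets me commute $\mathscr C(V_k)\otimes_{\mathscr C(X)}(-)$ with the outer limit $\lim_n\mathscr M_n$, while the pointwise descent of each $\mathscr M_n$ along $V_\bullet\times_X U_n\to U_n$ collapses the inner limit and completes the equivalence.

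\textbf{Part (2).} Given $A\in\Alg\mathscr C(T)$ and a $\tau$-covering $U\to X$ over $T$ with \v Cech nerve $\Bar U_\bullet$, the base-change formula for module categories provides
$$
\mathscr C(\Bar U_n)\otimes_{\mathscr C(T)}\Mod_A\mathscr C(T)\simeq\Mod_{A|\Bar U_n}\mathscr C(\Bar U_n).
$$
Since $\mathscr C^\otimes$ has $\tau$-descent, $\mathscr C(\Bar U_\bullet)$ is a limit diagram in $\Pr^{\Mon}$ and $A|\Bar U_\bullet$ a limit in $\Alg\mathscr C$; by the limit-preservation property for module categories \cite{lurie2014higher}*{Corollary 4.2.3.3}, the canonical map
$$
\Mod_{A|X}\mathscr C(X)\to\lim\Mod_{A|U_\bullet}\mathscr C(U_\bullet)
$$
is therefore an equivalence, which is exactly the descent condition for $\Mod_A\mathscr C(T)$.

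\textbf{Main obstacle.} The substantive step is the Fubini argument in Part (1)(b): the two limits over $\Bar U_\bullet$ and $\Bar V_\bullet$ must commute, which is valid precisely because the hypothesis that $\mathscr C^\otimes$ has $\tau$-descent encapsulates both the descent of the underlying Cartesian fibration and the compatibility of base change with descent data---the two ingredients that together let the outer base change pass through the inner descent limit.
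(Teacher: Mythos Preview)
Your argument for Part~(1) coincides with the paper's: the paper also invokes Corollary~\ref{cor:r_coc has RAd} for the fully faithful right adjoint, and your step~(b) is exactly the content of Lemma~\ref{lem:limit of LTens desc has desc}, proved by the same Fubini argument over the bisimplicial \v Cech object.

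For Part~(2) you take a genuinely different route. The paper observes that $\Mod_A\mathscr C(T)\in\LMod_{\mathscr C(T)}\PrL$ is left dualizable \cite{lurie2014higher}*{Remark 4.8.4.8} and then applies Proposition~\ref{prop:dualizable commutes with descent}~(2); this is a one-line reduction to an abstract fact about dualizable objects. You instead try to argue directly that the Morita construction commutes with the descent limit: after the base-change identification $\mathscr C(\Bar U_n)\otimes_{\mathscr C(T)}\Mod_A\mathscr C(T)\simeq\Mod_{A|\Bar U_n}\mathscr C(\Bar U_n)$ (which is correct, by \cite{lurie2014higher}*{Theorem 4.8.4.6}), you want $\Mod_{A|X}\mathscr C(X)\simeq\lim_n\Mod_{A|U_n}\mathscr C(U_n)$. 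This is true, but your citation does not establish it: Corollary~4.2.3.3 of \cite{lurie2014higher} says that for a \emph{fixed} algebra $A$ the forgetful functor $\LMod_A(\mathscr M)\to\mathscr M$ creates limits, whereas what you need is that the assignment $(\mathscr C^\otimes,A)\mapsto\Mod_A\mathscr C$ preserves the relevant limit in $\Pr^{\Alg}$. One can justify this (for instance by noting that $\RMod(-)$, being a full subcategory of a section category, commutes with limits of $\infty$-operads, and then passing to the fibre over $A|X$), but that is a separate argument from the one you cite. The paper's dualizability approach avoids this bookkeeping entirely, at the cost of importing the nontrivial input that module categories over presentable monoidal categories are dualizable.
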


\begin{proof} (1) Let $\Delta_+^\op\xrightarrow{\Bar U_\bullet}K$ be the \v Cech nerve of a $\tau$-covering. We need to show that the restriction map
$$
\Gamma_\coC(\Bar U_\bullet^\op,\LTens^\desc\mathscr C)\xrightarrow{r^\desc}\Gamma_\coC(U_\bullet^\op,\LTens^\desc\mathscr C)
$$
is an equivalence. By Corollary \ref{cor:r_coc has RAd}, the restriction map on coCartesian sections
$$
\Gamma_\coC(\Bar U_\bullet^\op,\LTens\mathscr C)\xrightarrow{r_\coC}\Gamma_\coC(U_\bullet^\op,\LTens\mathscr C)
$$
has a fully faithful right adjoint $\lim_\coC$. By Lemma \ref{lem:limit of LTens desc has desc}, this map restricts to a fully faithful right adjoint of $r^\desc$. In other words, for any descent diagram $\mathscr M_\bullet\in\Gamma_\coC(U_\bullet^\op,\LTens^\desc\mathscr C)$, the counit map $r^\desc\lim_\coC\mathscr M_\bullet\to\mathscr M_\bullet$ is an equivalence. Let $\Bar{\mathscr N}_\bullet\in\Gamma_\coC(\Bar U_\bullet^\op,\LTens^\desc\mathscr C)$ be an effective descent diagram. Since $\Bar{\mathscr N}_{-1}$ is a presentable quasi-category left-tensored over $\mathscr C(\Bar U_{-1})$ with descent, by definition the unit map $\Bar{\mathscr N}_\bullet\to\lim_\coC r^\desc\Bar{\mathscr N}_\bullet$ is an equivalence too. This shows that $r^\desc$ is an equivalence of quasi-categories, as claimed.

(2) The presentable quasi-category $\Mod_A\mathscr C(T)$ left-tensored over $\mathscr C(T)$ has a left dual \cite{lurie2014higher}*{Remark 4.8.4.8}. Therefore it has $\tau$-descent by Proposition \ref{prop:dualizable commutes with descent} (2).

\end{proof}

\begin{lem}\label{lem:limit of LTens desc has desc} Let $\mathscr M_\bullet\in\Gamma_\coC(U_\bullet^\op,\LTens^\desc\mathscr C)$ be a descent diagram of presentable quasi-categories left-tensored over $\mathscr C$ with descent. Then its $q$-limit $\Bar{\mathscr M}\in\LTens\mathscr C(\Bar U_{-1})$ also has descent.

\end{lem}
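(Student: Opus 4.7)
The plan is a Fubini-type argument for $\infty$-limits, combining the descent of each $\mathscr M_n$ with the hypothesis that base changes in $q$ commute with $\tau$-descent data. Fix an arbitrary $\tau$-covering $V\to X$ over $\Bar U_{-1}$ with \v Cech nerve $\Bar V_\bullet$; it suffices to construct an equivalence
$$
\mathscr C(X)\otimes_{\mathscr C(\Bar U_{-1})}\Bar{\mathscr M}\simeq\lim_m\mathscr C(V_m)\otimes_{\mathscr C(\Bar U_{-1})}\Bar{\mathscr M}.
$$

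First, since $\Bar{\mathscr M}_\bullet$ is a $q$-limit diagram over $\Bar U_\bullet^\op$, the hypothesis that $\mathscr C^\otimes$ has $\tau$-descent, applied to the map $X\to\Bar U_{-1}$, gives
$$
\mathscr C(X)\otimes_{\mathscr C(\Bar U_{-1})}\Bar{\mathscr M}\simeq\lim_n\mathscr C(X\times_{\Bar U_{-1}}U_n)\otimes_{\mathscr C(U_n)}\mathscr M_n.
$$
Next, for each $n$ the base-changed covering $V\times_{\Bar U_{-1}}U_n\to X\times_{\Bar U_{-1}}U_n$ over $U_n$ has \v Cech nerve $\Bar V_\bullet\times_{\Bar U_{-1}}U_n$, so that $\tau$-descent of $\mathscr M_n\in\LTens^\desc\mathscr C(U_n)$ yields
$$
\mathscr C(X\times_{\Bar U_{-1}}U_n)\otimes_{\mathscr C(U_n)}\mathscr M_n\simeq\lim_m\mathscr C(V_m\times_{\Bar U_{-1}}U_n)\otimes_{\mathscr C(U_n)}\mathscr M_n.
$$
Substituting and swapping the order of limits (since limits commute with limits), and then reapplying the base-change-commutes-with-descent hypothesis at each $V_m\to\Bar U_{-1}$ to identify $\lim_n\mathscr C(V_m\times_{\Bar U_{-1}}U_n)\otimes_{\mathscr C(U_n)}\mathscr M_n\simeq\mathscr C(V_m)\otimes_{\mathscr C(\Bar U_{-1})}\Bar{\mathscr M}$, produces the required equivalence.

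The main technical point to verify will be that the Fubini swap of $\lim_n$ and $\lim_m$ is valid at the level of coCartesian sections over the bisimplicial object $\Bar U_\bullet\times_{\Bar U_{-1}}\Bar V_\bullet$: one needs to view the data as a single $q$-limit diagram in $\LTens\mathscr C$ and check that each of the two iterated partial limits computes the total limit. Given Proposition~\ref{prop:LTen has relative limits} and Corollary~\ref{cor:r_coc has RAd}, which supply enough relative limits and fully faithful right adjoints to the restriction maps, this reduces to standard cofinality manipulations on $\Delta^\op\times\Delta^\op$ together with the observation that the structure maps of the family $\mathscr C^\otimes$ already carry the required coherences by naturality of base change. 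The rest of the argument is a direct assembly of the four displayed equivalences.
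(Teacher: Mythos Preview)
Your proposal is correct and is essentially the same argument as the paper's: both set up the bisimplicial family $\mathscr C(U_n\times_{\Bar U_{-1}}V_m)\otimes_{\mathscr C(\Bar U_{-1})}\Bar{\mathscr M}$ and combine (i) the $q$-limit property of $\Bar{\mathscr M}_\bullet$, (ii) descent of each $\mathscr M_n$, and (iii) the base-change-commutes-with-$\tau$-descent hypothesis, together with a Fubini swap. The paper packages this as a commutative square of unit maps (with $X=\Bar U_{-1}$) rather than a chain of equivalences, but the content is identical; your version even treats the general case of a covering $V\to X$ over $\Bar U_{-1}$, whereas the paper's written proof specializes to $X=\Bar U_{-1}$.
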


\begin{proof} Let's consider the entire $q$-limit diagram $\Bar{\mathscr M}_\bullet\in\Gamma_\coC(\Bar U_\bullet^\op,\LTens\mathscr C)$. Let $X=\Bar U_{-1}$. Let $\Delta_+^\op\xrightarrow{\Bar V_\bullet}K$ be the \v Cech nerve of a $\tau$-covering with $\Bar V_{-1}=X$. We need to show that the unit map
$$
\Bar{\mathscr M}_{-1}\to\lim_{n\ge0}\mathscr C(V_n)\otimes_{\mathscr C(X)}\Bar{\mathscr M}_{-1}
$$
is an equivalence. For $m,n\ge-1$, let $\Bar W_{mn}=U_m\times_XV_n\in K$, $\Bar{\mathscr M}_{mn}=\mathscr C(\Bar W_{mn})\otimes_{\mathscr C(X)}\Bar{\mathscr M}_{-1}$, let
$$
\Bar{\mathscr M_{-1}}\xrightarrow{u^h_{-1}}\lim_{m\ge0}\Bar{\mathscr M}_{m,-1}
\text{ and }
\Bar{\mathscr M_{-1}}\xrightarrow{u^v_{-1}}\lim_{n\ge0}\Bar{\mathscr M}_{-1,n}
$$
denote unit maps, and let 
$$
u^h_n=\mathscr C(\Bar V_n)\otimes_{\mathscr C(X)}u^h_{-1}
\text{ and }
u^v_m=\mathscr C(\Bar U_m)\otimes_{\mathscr C(X)}u^v_{-1}.
$$
Then we have the commutative diagram
\begin{center}

\begin{tikzpicture}[xscale=4,yscale=2]
\node (A) at (0,1) {$\Bar{\mathscr M_{-1}}$};
\node (A') at (0,0) {$\lim_{n\ge0}\Bar{\mathscr M}_{-1,n}$};
\node (B) at (1,1) {$\lim_{m\ge0}\Bar{\mathscr M}_{m,-1}$};
\node (B') at (1,0) {$\lim_{m,n\ge0}\Bar{\mathscr M}_{mn}$.};
\path[->,font=\scriptsize,>=angle 90]
(A) edge node [above] {$u^h_{-1}$} (B)
(A) edge node [right] {$u^v_{-1}$} (A')
(A') edge node [above] {$\lim_{n\ge0}u^h_n$} (B')
(B) edge node [right] {$\lim_{m\ge0}u^v_m$} (B');
\end{tikzpicture}

\end{center}

\end{proof}
Since $\Bar{\mathscr M}_\bullet$ is a limit diagram, $u^h_{-1}$ is an equivalence. Since $\Bar{\mathscr M}_{m,-1}=\mathscr M_m$ has descent for $m\ge0$, $\lim_{m\ge0}u^v_m$ is an equivalence. Since $u^h_{-1}$ is an equivalence, for $n\ge0$, as base changes in $q$ commute with $\tau$-descent data, the map $u^h_n$ is also an equivalence. Therefore, $\lim_{n\ge0}u^h_n$ is an equivalence. All this implies that $u^v_{-1}$ is an equivalence, which is what we needed to prove.

\begin{cor}\label{cor:descent for pointed} The co-family ${}^\op\LTens^\desc_*(\mathscr C)$ of pointed left-tensored quasi-categories with descent satisfies $\tau$-descent too.
 
\end{cor}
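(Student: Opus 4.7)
The strategy is to lift $\tau$-descent for $\LTens^\desc\mathscr C$, which has just been established in Theorem \ref{prop:LTen^desc is a stack}, along the forgetful co-family ${}^\op\LTens_*^\desc\mathscr C \to {}^\op\LTens^\desc\mathscr C$. As recalled in the introduction, this forgetful map is a left fibration, and its fibre over an object $\mathscr M \in \LTens^\desc\mathscr C(U)$ is the underlying Kan complex $\mathscr M^\simeq$. The idea is that once the underlying family of left-tensored quasi-categories satisfies $\tau$-descent, gluing a basepoint amounts to gluing an object in a limit of quasi-categories, which works formally.

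Fix a $\tau$-covering with \v Cech nerve $\Delta_+^\op \xrightarrow{\Bar U_\bullet} K$. The task is to show that the restriction
$$
\Gamma_\coC(\Bar U_\bullet^\op, \LTens_*^\desc\mathscr C) \xrightarrow{r_*^\desc} \Gamma_\coC(U_\bullet^\op, \LTens_*^\desc\mathscr C)
$$
is an equivalence. First, given a coCartesian descent datum $(\mathscr M_\bullet, M_\bullet)$ over $U_\bullet^\op$, Theorem \ref{prop:LTen^desc is a stack} extends the underlying family uniquely to a coCartesian section $\Bar{\mathscr M}_\bullet$ over $\Bar U_\bullet^\op$, with $\Bar{\mathscr M}_{-1}$ identified in $\LTens\mathscr C(\Bar U_{-1})$ with the $q$-limit $\lim_n \mathscr M_n$.

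Second, I would verify that the compatible family of points $M_n \in \mathscr M_n^\simeq$ assembles into a unique point $\Bar M_{-1} \in \Bar{\mathscr M}_{-1}^\simeq$. The $q$-limit used in Theorem \ref{prop:LTen^desc is a stack} is computed as a limit in $\PrL$ (Proposition \ref{prop:LTen has relative limits}), and by \cite{lurie2009higher}*{Theorem 5.5.3.18} such limits are preserved by the forgetful functor $\PrL \to \widehat{\Cat}_\infty$. Passing to maximal sub-Kan-complexes also preserves limits, so
$$
\Bar{\mathscr M}_{-1}^\simeq \simeq \lim_n \mathscr M_n^\simeq,
$$
which furnishes the desired point $\Bar M_{-1}$ and hence the extension $(\Bar{\mathscr M}_\bullet, \Bar M_\bullet)$. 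The same identification shows that any effective descent datum $(\Bar{\mathscr N}_\bullet, \Bar N_\bullet)$ is recovered from its restriction to $U_\bullet^\op$: the underlying part by Theorem \ref{prop:LTen^desc is a stack}, and the point by the same limit identification applied to $\Bar{\mathscr N}_{-1}^\simeq$. Together these give that $r_*^\desc$ is an equivalence.

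The main potential subtlety I anticipate is verifying that the $q$-limit in $\LTens\mathscr C$ coincides with the underlying limit of Kan complexes, which is precisely what legitimises gluing a compatible family of points into a point on the limit; this is handled by the limit-preservation arguments already made in Proposition \ref{prop:LTen has relative limits} and Lemma \ref{lem:base change of presentable left-tensored quasi-category is presentable}. Beyond that, the argument is essentially formal, reducing descent for a left fibration to descent of the base together with a pointwise limit identification on the fibres.
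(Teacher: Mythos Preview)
Your approach is correct but genuinely different from the paper's. The paper does not analyse fibres at all; instead it uses the alternative-join model of the undercategory $(\Pr^{\Mod})^{\mathfrak M/}$, under which a map $K\to\LTens_*^\desc\mathscr C$ is the same as a map $K\times\Delta^1\to\LTens^\desc\mathscr C$ constant at the basepoint on $K\times\{0\}$. A lifting problem $K''\hookrightarrow K'$ against $r_*$ then becomes the lifting problem $(K''\times\Delta^1)\cup(K'\times\Delta^{\{0\}})\hookrightarrow K'\times\Delta^1$ against the unpointed restriction $r$, which is solved because ${}^\op\LTens^\desc\mathscr C$ has $\tau$-descent. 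This is slicker: it never needs to identify the underlying quasi-category of the $q$-limit. Your route is more conceptual---it explains \emph{why} the pointed version descends, namely because points in a limit of categories are limits of points---but it requires the step you flag as subtle: checking that the $q$-limit $\Bar{\mathscr M}_{-1}$ really has underlying quasi-category $\lim_n\mathscr M_n$. This holds because the Cartesian transport used to form the $q$-limit (restriction of scalars) does not change underlying categories, and limits in $\LMod_{\mathscr C(\Bar U_{-1})}\PrL$ are created in $\PrL$; but note that the relevant citation is \cite{lurie2009higher}*{Proposition 5.5.3.13} (limits), not Theorem 5.5.3.18 (colimits), and Lemma \ref{lem:base change of presentable left-tensored quasi-category is presentable} concerns presentability of base change rather than this limit identification.
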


\begin{proof} During this proof, we will use the alternative join \cite{lurie2009higher}*{\S4.2.1} to work with the classifying object $(\Pr^{\Mod})^{\mathfrak M/}$ and thus the subobject $\LTens^*_\desc\mathscr C$ on pointed left-tensored quasi-categories with descent of its pullback $\LTens^*\mathscr C$ along $K^\op\xrightarrow{\mathbf c_{\mathscr C}}\Pr^{\Mon}$. Let $\Delta^+\xrightarrow{U_\bullet^+}K$ be the \v Cech nerve of a $\tau$-covering. We claim that the restriction map
$$
\Gamma_{\Cart}(U_\bullet^+,{}^\op\LTens^*_\desc\mathscr C)\xrightarrow{r_*}\Gamma_{\Cart}(U_\bullet,{}^\op\LTens^*_\desc\mathscr C)
$$
is a trivial fibration. Let $K''\subset K'$ be an inclusion of simplicial sets. The lifting problem
\begin{center}

\begin{tikzpicture}[xscale=4,yscale=2]
\node (A) at (0,1) {$K''$};
\node at (0,0.5) {$\cap$};
\node (A') at (0,0) {$K'$};
\node (B) at (1,1) {$\Gamma_{\Cart}(U_\bullet^+,{}^\op\LTens^*_\desc\mathscr C)$};
\node (B') at (1,0) {$\Gamma_{\Cart}(U_\bullet,{}^\op\LTens^*_\desc\mathscr C)$};
\path[->,font=\scriptsize,>=angle 90]
(A) edge (B)
(A') edge (B')
(A') edge [dashed] (B)
(B) edge node [right] {$r_*$} (B');
\end{tikzpicture}

\end{center}
is equivalent to the lifting problem
\begin{center}

\begin{tikzpicture}[xscale=5,yscale=2]
\node (A) at (0,1) {$(K''\times\Delta^1)\cup(K'\times\Delta^{\{0\}})$};
\node at (0,0.5) {$\cap$};
\node (A') at (0,0) {$K'\times\Delta^1$};
\node (B) at (1,1) {$\Gamma_{\Cart}(U_\bullet^+,{}^\op\LTens^\desc\mathscr C)$};
\node (B') at (1,0) {$\Gamma_{\Cart}(U_\bullet,{}^\op\LTens^\desc\mathscr C),$};
\path[->,font=\scriptsize,>=angle 90]
(A) edge (B)
(A') edge (B')
(A') edge [dashed] (B)
(B) edge node [right] {$r$} (B');
\end{tikzpicture}

\end{center}
which has a solution as ${}^\op\LMod\mathscr C$ has $\tau$-descent, and thus $r$ is a trivial fibration. This proves the claim.
 
\end{proof}

\begin{cor}\label{cor:descent for dgen} The co-family ${}^\op\LTens^\desc_\dgen\mathscr C$ of presentable left-tensored quasi-categories with descent pointed by dualizable generators satisfies $\tau$-descent too.

\end{cor}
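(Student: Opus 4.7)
The plan is to deduce this from Corollary \ref{cor:descent for pointed} by showing that the condition of being a dualizable generator is $\tau$-local for objects of $\LTens^\desc_*\mathscr C$. Given a $\tau$-covering $f\colon U\to X$ with \v Cech nerve $\Bar U_\bullet$ and a descent datum $(\Bar{\mathscr M}_\bullet,\Bar M_\bullet)\in\Gamma_\Cart(\Bar U_\bullet,{}^\op\LTens^\desc_\dgen\mathscr C)$, Corollary \ref{cor:descent for pointed} already produces a pair $(\mathscr M,M)\in\LTens^\desc_*\mathscr C(X)$; what remains is to verify that $M$ is a dualizable generator in $\mathscr M$.

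By the Homotopical Morita Theorem (Lemma \ref{lem:Az alg mod}), this amounts to showing that the canonical comparison map $c\colon\Mod_{\End M}\to\mathscr M$ in $\LTens\mathscr C(X)$---the counit of the adjunction $\Theta_*\dashv\End$ evaluated at $(\mathscr M,M)$---is an equivalence. Both sides lie in $\LTens^\desc\mathscr C(X)$: the quasi-category $\mathscr M$ does by hypothesis, and $\Mod_{\End M}$ does by Theorem \ref{prop:LTen^desc is a stack}(2). Since $\LTens^\desc\mathscr C$ is a $\tau$-stack by Theorem \ref{prop:LTen^desc is a stack}(1), a morphism between its objects is an equivalence if and only if it becomes so on the $\tau$-covering $\Bar U_\bullet$.

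It thus suffices to verify that the formation of $\End M$ and of $\Mod_{(-)}$ commutes with base change along each $\Bar U_n\to X$, so that the restriction $c|\Bar U_n$ is naturally identified with the comparison map $\Mod_{\End\Bar M_n}\to\Bar{\mathscr M}_n$; that latter map is an equivalence by the hypothesis that $\Bar M_n\in\Bar{\mathscr M}_n$ is a dualizable generator. The compatibility in question follows because the pointed Morita functor $\Theta_*$ is built universally as a morphism of coCartesian families over $\Pr^{\Mon}$, so its right adjoint $\End$ satisfies a Beck--Chevalley compatibility with base change along morphisms of $K$, using Lemma \ref{lem:psi is biCartesian} to furnish both adjoints to the restriction functors in the $\LTens$-fibration. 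This Beck--Chevalley verification is the main obstacle; the remainder of the argument is a direct assembly of Corollary \ref{cor:descent for pointed}, Theorem \ref{prop:LTen^desc is a stack}, and the Homotopical Morita Theorem.
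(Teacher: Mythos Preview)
Your overall strategy matches the paper's: reduce to Corollary~\ref{cor:descent for pointed}, then show that the counit $\Mod_{\End M}\to\mathscr M$ becomes an equivalence after base change along the covering, using Theorem~\ref{prop:LTen^desc is a stack} to conclude. The paper phrases the endgame slightly differently (it argues that the counit assembles into a morphism of coCartesian sections over $\Bar U_\bullet^\op$ whose limit is the counit at $\Bar M_{-1}$), but the content is the same.

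The soft spot is your justification of the base-change compatibility. From the fact that $\Theta_*$ preserves coCartesian edges over $\Pr^{\Mon}$ you only obtain a Beck--Chevalley \emph{transformation} $f_!\,\End_{\mathscr C(X)}\Rightarrow\End_{\mathscr C(U)}\,f_!$; you do not obtain for free that it is an equivalence. Lemma~\ref{lem:psi is biCartesian} gives you the left adjoints needed to even state the comparison, but it does not force invertibility. The paper isolates exactly this point as Lemma~\ref{lem:base change of End}, whose proof is a concrete computation using \cite{lurie2014higher}*{Theorem 4.8.4.6} to identify $\mathscr C'\otimes_{\mathscr C}\Mod_{\End M}\mathscr C$ with $\Mod_{\boldsymbol 1_{\mathscr C'}\otimes\End M}\mathscr C'$ and then checking that the pushed-forward action map still exhibits an endomorphism object. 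You should either invoke that lemma directly or reproduce its argument; the abstract Beck--Chevalley reasoning you sketch does not close the gap.
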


\begin{proof} By Corollary \ref{cor:descent for pointed}, the restriction map
$$
\Gamma_\coC(\Bar U_\bullet^\op,\LTens^\desc_*\mathscr C)
\to\Gamma_\coC(U_\bullet^\op,\LTens^\desc_*\mathscr C)
$$
is a trivial Kan fibration. Therefore, it is enough to show that if for an effective descent datum $(\Bar{\mathscr M}_\bullet,\Bar M_\bullet)\in\Gamma_\coC(\Bar U_\bullet^\op,\LTens_*^\desc\mathscr C)$, for all $n\ge0$ the object $M_n\in\mathscr M_n$ is a dualizable generator, then the limit $\Bar M_{-1}\in\Bar{\mathscr M_{-1}}$ is a dualizable generator too. We have a counit map
$$
(\mathscr C,\Mod_{\End M}\mathscr C,\End M)\xrightarrow{v_{(\mathscr C,\mathscr M,M)}=\otimes_{\End M}M}(\mathscr C,\mathscr M,M)
$$
natural in $(\mathscr C,\mathscr M,M)\in\Pr^{\Mod}_{/\mathfrak M}$ \cite{lurie2014higher}*{Theorem 4.8.5.11} giving an edge $v_{(\Bar{\mathscr C}_\bullet,\Bar{\mathscr M}_\bullet,\Bar M_\bullet)}$ in the coCartesian section quasi-category $\Gamma_\coC(\Bar U_\bullet^\op,\LTens^\desc_*\mathscr C)$. We claim that the edge $v_{(\Bar{\mathscr C}_\bullet,\Bar{\mathscr M}_\bullet,\Bar M_\bullet)}$ is actually in $\Gamma_\coC(\Bar U_\bullet^\op,\LTens^\desc_*\mathscr C)$. This follows from Theorem \ref{prop:LTen^desc is a stack} (2) and Lemma \ref{lem:base change of End}. This shows that the map
$$
(\mathscr C(\Bar U_{-1}),\Mod_{\End\Bar M_{-1}}\mathscr C(\Bar U_{-1})
\xrightarrow{v_{(\mathscr C(\Bar U_{-1}),\Bar{\mathscr M}_{-1},\Bar M_{-1})}=\otimes_{\End\Bar M_{-1}}\Bar M_{-1}}
(\mathscr C(\Bar U_{-1}),\Bar{\mathscr M}_{-1},\Bar M_{-1})
$$ is the limit of the maps
$$
(\mathscr C(U_n),\Mod_{\End M_n}\mathscr C(U_n),\End M_n)\xrightarrow{v_{(\mathscr C(U_n),\mathscr M_n,M_n)}=\otimes_{\End M_n}\mathscr C(U_n)}(\mathscr C(U_n),\mathscr M_n,M_n).
$$
Therefore, since the maps $v_{(\mathscr C(U_n),\mathscr M_n,M_n)}$ are equivalences for all $n\ge0$, so is their limit $v_{(\mathscr C(\Bar U_{-1}),\Bar{\mathscr M}_{-1},\Bar M_{-1})}$, as we needed to show.

\end{proof}

\begin{lem}\label{lem:base change of End} Let $\mathscr C^\otimes\xrightarrow f(\mathscr C')^\otimes$ be a morphism of presentable monoidal quasi-categories. Let $\mathscr M^\otimes$ be a presentable quasi-category left-tensored over $\mathscr C^\otimes$. Let $M\in\mathscr M$ be an object. Then the base change
$$
(\mathscr C'\otimes_{\mathscr C}\Mod_{\End M}\mathscr C,\boldsymbol1_{\mathscr C'}\otimes\End M)
\xrightarrow{\mathscr C'\otimes_{\mathscr C}(-\otimes_{\End M}M)}
(\mathscr C'\otimes_{\mathscr C}\mathscr M,\boldsymbol1_{\mathscr C'}\otimes M)
$$
exhibits $\boldsymbol1_{\mathscr C'}\otimes\End M\in\Alg\mathscr C'$ as an endomorphism algebra of $\boldsymbol 1_{\mathscr C'}\otimes M\in\mathscr C'\otimes_{\mathscr C}\mathscr M$.

\end{lem}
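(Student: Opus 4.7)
The aim is to show that the canonical map of algebras
$$
\alpha:\boldsymbol 1_{\mathscr C'}\otimes\End M\longrightarrow\End_{\mathscr C'\otimes_{\mathscr C}\mathscr M}(\boldsymbol 1_{\mathscr C'}\otimes M)
$$
in $\Alg\mathscr C'$, induced by the base-changed action, is an equivalence. The plan is to exploit the adjunction $\Theta_*\dashv\End$ of \cite{lurie2014higher}*{Theorem 4.8.5.11}, in which the pointed Morita functor $\Theta_*$ is fully faithful. Observe first that the map displayed in the statement is precisely the base change along $f$ of the counit $\epsilon_{(\mathscr M,M)}:\Theta_*(\mathscr C,\End M)\to(\mathscr C,\mathscr M,M)$ of the $\mathscr C$-level adjunction; under the $\mathscr C'$-level adjunction, this base-changed counit transposes to $\alpha$.

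The key input is the compatibility of the pointed Morita functor with base change. From \cite{lurie2014higher}*{Proposition 4.8.5.1}, the unpointed $\Theta$ carries $\phi$-coCartesian edges to $\psi$-coCartesian edges over $\Pr^{\Mon}$, yielding a natural equivalence $\mathscr C'\otimes_{\mathscr C}\Mod_A\mathscr C\simeq\Mod_{\boldsymbol 1_{\mathscr C'}\otimes A}\mathscr C'$ in $\LMod_{\mathscr C'}\PrL$. We must upgrade this to a pointed equivalence $\Theta_*(\mathscr C',\boldsymbol 1_{\mathscr C'}\otimes A)\simeq\mathscr C'\otimes_{\mathscr C}\Theta_*(\mathscr C,A)$ in $\Pr^{\Mod}_{\mathfrak M/}$, by checking that the image of the basepoint $A_A\in\Mod_A\mathscr C$ on the right is the canonical basepoint $(\boldsymbol 1_{\mathscr C'}\otimes A)_{\boldsymbol 1_{\mathscr C'}\otimes A}$ on the left. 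This is verified by inspecting the bar-construction model for base change in $\PrL$ (\cite{lurie2014higher}*{Theorem 4.4.2.8}): the unit of the base-changed algebra $\boldsymbol 1_{\mathscr C'}\otimes A$ arises as the image of the unit of $A$ through the $0$-simplex of the bar resolution, which matches the canonical pointed structure.

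Granting the pointed compatibility at $A=\End M$, the adjoint $\alpha$ of the base-changed counit admits the factorization
$$
\boldsymbol 1_{\mathscr C'}\otimes\End M\xrightarrow{\eta}\End\Theta_*(\boldsymbol 1_{\mathscr C'}\otimes\End M)\xrightarrow{\End(\mathscr C'\otimes_{\mathscr C}\epsilon_{(\mathscr M,M)})}\End(\mathscr C'\otimes_{\mathscr C}\mathscr M,\boldsymbol 1_{\mathscr C'}\otimes M),
$$
where $\eta$ is an equivalence by full faithfulness of $\Theta_*$. To verify the second factor is an equivalence, apply Yoneda: for any test $B'\in\Alg\mathscr C'$, use the Homotopical Eilenberg--Watts theorem to describe a $B'$-action on $\boldsymbol 1_{\mathscr C'}\otimes M$ as a pointed $\mathscr C'$-linear colimit-preserving functor out of $\LMod_{B'}\mathscr C'$ into $\mathscr C'\otimes_{\mathscr C}\mathscr M$; via the pointed base-change identification above together with the defining universal property of $\mathscr C'\otimes_{\mathscr C}(-)$ as the left adjoint to restriction of scalars on $\LMod\PrL$, such functors correspond naturally to pointed $\mathscr C$-linear functors out of $\LMod_{f^*B'}\mathscr C$ into $\mathscr M$, i.e., to actions of $f^*B'$ on $M$ in $\mathscr M$. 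By the universal property of $\End M$ and the $(f_!,f^*)$ adjunction on algebras, this matches $\Map_{\Alg\mathscr C'}(B',\boldsymbol 1_{\mathscr C'}\otimes\End M)$ naturally in $B'$, so $\alpha$ is an equivalence.

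The main obstacle is the basepoint-compatibility step: upgrading the unpointed commutation $\Theta(\boldsymbol 1\otimes A)\simeq\mathscr C'\otimes_{\mathscr C}\Theta(A)$ of \cite{lurie2014higher}*{Proposition 4.8.5.1} to the pointed Morita functor $\Theta_*$, together with the coherent matching of action spaces in the Yoneda step. Both reduce to formal manipulations with the bar construction and the adjunctions involved, but tracking units and basepoints through $\infty$-categorical colimits requires care.
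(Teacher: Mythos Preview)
Your overall strategy coincides with the paper's: both rely on \cite{lurie2014higher}*{Proposition 4.8.5.1} (that $\Theta$ carries $\phi$-coCartesian edges to $\psi$-coCartesian edges) to obtain the identification $\mathscr C'\otimes_{\mathscr C}\Mod_{\End M}\mathscr C\simeq\Mod_{\boldsymbol1_{\mathscr C'}\otimes\End M}\mathscr C'$, and your pointed upgrade is essentially what the paper also uses. The paper then finishes more directly: it argues that the coCartesian edge $\mathscr M\to\mathscr C'\otimes_{\mathscr C}\mathscr M$ carries the universal action map $\End M\otimes M\to M$ to an action map exhibiting $\boldsymbol1_{\mathscr C'}\otimes\End M$ as the endomorphism object of $\boldsymbol1_{\mathscr C'}\otimes M$, invoking \cite{lurie2014higher}*{Theorem 4.8.4.6} along the way.

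Your Yoneda verification of the second factor, however, has a genuine gap. You assert that pointed $\mathscr C'$-linear functors $\LMod_{B'}\mathscr C'\to\mathscr C'\otimes_{\mathscr C}\mathscr M$ correspond to pointed $\mathscr C$-linear functors $\LMod_{f^*B'}\mathscr C\to\mathscr M$ ``via the defining universal property of $\mathscr C'\otimes_{\mathscr C}(-)$ as the left adjoint to restriction of scalars.'' But that adjunction reads
\[
\LinFun_{\mathscr C'}(\mathscr C'\otimes_{\mathscr C}\mathscr N,\mathscr Q)\simeq\LinFun_{\mathscr C}(\mathscr N,{}_{\mathscr C}\mathscr Q),
\]
so it removes a base change from the \emph{source}, and replaces the target by its restriction ${}_{\mathscr C}(\mathscr C'\otimes_{\mathscr C}\mathscr M)$, not by $\mathscr M$. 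For a general $B'\in\Alg\mathscr C'$ there is no equivalence $\LMod_{B'}\mathscr C'\simeq\mathscr C'\otimes_{\mathscr C}\LMod_{f^*B'}\mathscr C$: under Proposition 4.8.5.1 the right side is $\LMod_{f(f^*B')}\mathscr C'$, and the counit $f(f^*B')\to B'$ is not an equivalence in general. Thus the passage from $B'$-actions on $\boldsymbol1_{\mathscr C'}\otimes M$ to $f^*B'$-actions on $M$ is unjustified, and the final match with $\Map_{\Alg\mathscr C'}(B',\boldsymbol1_{\mathscr C'}\otimes\End M)$ does not follow. Note too that your factorization $\alpha=\End(\mathscr C'\otimes_{\mathscr C}\epsilon)\circ\eta$ does not by itself finish: $\mathscr C'\otimes_{\mathscr C}\epsilon$ is not an equivalence (unless $M$ is a dualizable generator), so showing that $\End$ of it is an equivalence is exactly the remaining content.
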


\begin{proof} Take a coCartesian edge
$$
\Mod_{\End M}\mathscr C\to\mathscr C'\otimes_{\mathscr C}\Mod_{\End M}\mathscr C
$$
in $\Pr^{\Mod}$ over $f$. Postcomposing it with the natural equivalence $\mathscr C'\otimes_{\mathscr C}\Mod_{\End M}\mathscr C\to\Mod_{\End M}(\mathscr C'_{\mathscr C})$ \cite{lurie2014higher}*{Theorem 4.8.4.6}, we get a coCartesian edge in $\Pr^{\Mod}$ over $f$ of the form
$$
\Mod_{\End M}\mathscr C\to\Mod_{\End M}(\mathscr C'_{\mathscr C}).
$$
Since the Morita functor $\Pr^{\Alg}\xrightarrow\Theta\Pr^{\Mod}$ takes the coCartesian edge $\End M\to\boldsymbol1_{\mathscr C'}$ in $\Pr^{\Alg}$ over $f$ to the coCartesian edge
$$
\Mod_{\End M}\mathscr C\to\Mod_{\boldsymbol1_{\mathscr C'}\otimes\End M}(\mathscr C')
$$
in $\Pr^{\Mod}$ over $f$ \cite{lurie2014higher}*{Proposition 4.8.5.1}, we get an equivalence
$$
\mathscr C'\otimes_{\mathscr C}\Mod_{\End M}\mathscr C\simeq\Mod_{\boldsymbol1_{\mathscr C'}\otimes\End M}\mathscr C'.
$$
Let
$$
\End M\otimes M\to M
$$
be a map in $\mathscr M$ exhibiting $\End M\in\mathscr C$ as an endomorphism object of $M$. Then the coCartesian edge $\mathscr M\to\mathscr C'\otimes_{\mathscr C}\mathscr M$ in $\Pr^{\Mod}$ over $f$ takes it to a map
$$
(\boldsymbol1_{\mathscr C'}\otimes\End M)\otimes(\boldsymbol1_{\mathscr C'}\otimes M)\to(\boldsymbol1_{\mathscr C'}\otimes M)
$$
in $\mathscr C'\otimes_{\mathscr C}\mathscr M$ exhibiting $\boldsymbol1_{\mathscr C'}\otimes\End M\in\mathscr C'$ as an endomorphism object of $\boldsymbol1_{\mathscr C'}\otimes M\in\mathscr C'\otimes_{\mathscr C}\mathscr M$, giving an equivalence
$$
\Mod_{\boldsymbol1_{\mathscr C'}\otimes\End M}\mathscr C'\simeq
\Mod_{\End(\boldsymbol1_{\mathscr C'}\otimes M)}\mathscr C'
$$
and thus concluding the proof.

\end{proof}

\subsection{Homotopical Skolem--Noether Theorem}

\begin{thm}[Homotopical Skolem--Noether Theorem]\label{thm:HSN} Let $K$ be a quasi-category with final object $S$, let $\tau$ be a Grothendieck topology on it. Let $\Cart_{/K}^\tau$ denote the quasi-category of Cartesian fibrations over $K$ with $\tau$-descent. Let $\mathscr X\subseteq\Cart_{/K}^\tau$ denote the full subcategory on right fibrations over $K$ with $\tau$-descent, which is an $\infty$-topos. Let $\mathscr C^\otimes\to\Assoc^\otimes\times K^\op$ be a family of presentable monoidal quasi-categories with $\tau$-descent. Then the following assertions hold:

(1) We have a fibre sequence in $(\Cart_{/K}^\tau)_*$:
$$
({}^\op\mathscr C_\dgen^\simeq,\mathscr O)\xrightarrow{\End}
({}^\op\Az\mathscr C,\mathscr O)\xrightarrow{\Mod}
({}^\op\LTens^{\Az}\mathscr C,\Mod_{\mathscr O})
$$
(2) Let $E\in\mathscr C_\dgen(S)$. Then we have a fibre sequence in $(\Cart_{/K}^\tau)_*$:
$$
({}^\op\Pic\mathscr C,\mathscr O)\xrightarrow{\otimes E}
({}^\op\mathscr C_\dgen^\simeq,E)\xrightarrow{\End}
({}^\op\Az\mathscr C,\End E).
$$
(3) We have a long exact sequence of homotopy sheaves in $\mathrm h\,\mathscr X$:
\begin{multline*}
\dotsb\to\pi_2({}^\op\Az\mathscr C,\End E)
\to\pi_1({}^\op\Pic\mathscr C,\mathscr O)
\to\pi_1({}^\op\mathscr C_\dgen,E)
\to\pi_1({}^\op\Az\mathscr C,\End E)\to\\
\to\pi_0({}^\op\Pic\mathscr C)
\to\pi_0({}^\op\mathscr C_\dgen)
\to\pi_0({}^\op\Az\mathscr C)
\to\pi_0({}^\op\LTens^{\Az}\mathscr C)
\to0.
\end{multline*}
 
\end{thm}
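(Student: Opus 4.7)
The plan is to build the fibre sequence (1) directly from the Homotopical Morita Theorem, to obtain (2) by extending (1) one step to the left and identifying the resulting loop object via the Homotopical Eilenberg--Watts Theorem, and to derive (3) by taking the long exact sequences of homotopy sheaves in $\mathrm h\,\mathscr X$.

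For (1), the Homotopical Morita Theorem (Lemma \ref{lem:Az alg mod}) shows that the pointed module functor restricts to an equivalence $\Az\mathscr C\xrightarrow{\sim}\LTens^{\Az}_\dgen\mathscr C$; under this equivalence the Morita map $\Mod$ is identified with the forgetful map $\LTens^{\Az}_\dgen\mathscr C\to\LTens^{\Az}\mathscr C$, which is the restriction of the left fibration $\Pr^{\Mod}_{\mathfrak M/}\to\Pr^{\Mod}$. Since left fibrations between Cartesian fibrations over $K$ compute fibres pointwise, the fibre at the basepoint $\Mod_{\mathscr O}=\mathscr C$ is the right fibration $\mathscr C^\simeq_\dgen$. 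By Corollary \ref{cor:descent for dgen} all three co-families satisfy $\tau$-descent, so the sequence lives in $(\Cart^\tau_{/K})_*$ and is a fibre sequence therein.

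For (2), extending (1) one step to the left produces a fibre sequence $\Omega_{\mathscr C}({}^\op\LTens^{\Az}\mathscr C)\to({}^\op\mathscr C^\simeq_\dgen,\mathscr O)\xrightarrow{\End}({}^\op\Az\mathscr C,\mathscr O)$. Fibrewise over $U\in K$, the loop object $\Omega_{\mathscr C(U)}\LTens^{\Az}\mathscr C(U)$ is the space of $\mathscr C(U)$-linear autoequivalences of $\mathscr C(U)$, which by the Homotopical Eilenberg--Watts Theorem (the corollary of Theorem 4.8.4.1 of \cite{lurie2014higher}) is equivalent to the space of invertible $\mathscr C(U)$-bimodule objects in $\mathscr C(U)$, namely ${}^\op\Pic\mathscr C(U)$. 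Globalising this identification over $K$ uses Lemma \ref{lem:base change of End} for compatibility with base change and Theorem \ref{prop:LTen^desc is a stack} for $\tau$-descent. A change of basepoint from $\mathscr O$ to $E$ then gives the stated fibre sequence pointed at $E$; the connecting map sends $L\in\Pic\mathscr C$ to the dualizable generator $L\otimes E$, since the autoequivalence $L\otimes{-}$ of $\mathscr C$ carries $E$ to $L\otimes E$ and induces a canonical equivalence $\End E\simeq\End(L\otimes E)$.

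Finally, (3) is obtained by splicing the long exact sequences of homotopy sheaves in $\mathrm h\,\mathscr X$ attached to the fibre sequences (1) and (2); the terminal equality $\pi_0({}^\op\LTens^{\Az}\mathscr C)=*$ is immediate from the definition, as every object of $\LTens^{\Az}\mathscr C$ is by construction $\tau$-locally equivalent to $\mathscr C$, so its sheafified $\pi_0$ is a singleton. The chief technical obstacle in the whole argument is the family-wise identification $\Omega_{\mathscr C}{}^\op\LTens^{\Az}\mathscr C\simeq{}^\op\Pic\mathscr C$ as objects of $\Cart^\tau_{/K}$ rather than merely fibrewise over each $U\in K$: this requires the Eilenberg--Watts equivalence to be upgraded to one compatible with the family structure and $\tau$-descent, which is where the descent hypothesis on $\mathscr C^\otimes$ and the naturality of the Morita and endomorphism constructions are used.
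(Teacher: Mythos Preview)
Your strategy matches the paper's proof closely. For (1) both arguments identify $\Az\mathscr C\xrightarrow{\Mod_*}\LTens^{\Az}_\dgen\mathscr C$ as an equivalence via Lemma~\ref{lem:Az alg mod} and then use that the forgetful map $\LTens^{\Az}_\dgen\mathscr C\to\LTens^{\Az}\mathscr C$ is (the restriction of) a left fibration, so the fibre over $\mathscr C$ is $\mathscr C_\dgen^\simeq$; for (3) both splice the long exact sequences, the paper arguing surjectivity of $\pi_0\Az\mathscr C\to\pi_0\LTens^{\Az}\mathscr C$ directly from $\Az\mathscr C\simeq\LTens^{\Az}_\dgen\mathscr C$ rather than your (equally valid) observation that the sheafified $\pi_0$ of $\LTens^{\Az}\mathscr C$ is a point.

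For (2) the paper takes a slightly more concrete route than your ``extend to the left and change basepoint'' sketch: it writes down the square with vertices $\Pic\mathscr C$, $K$, $\mathscr C_\dgen^\simeq$, $\Az\mathscr C$ at basepoint $E$ directly and proves it is homotopy Cartesian by the pasting lemma together with the Eilenberg--Watts identification $\Omega(\LTens\mathscr C,\mathscr C)\simeq\Pic\mathscr C$. The substantive step---which your last paragraph correctly isolates as the chief obstacle---is producing the homotopy filling this square at the level of Cartesian fibrations over $K$, not merely fibrewise. The paper does this by an explicit lifting argument against the left fibration $\LTens^\desc_*\mathscr C\to\LTens^\desc\mathscr C$, with Eilenberg--Watts supplying the map $\Pic\mathscr C\times\Delta^1\to\LTens^\desc\mathscr C$ downstairs. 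Your citation of Lemma~\ref{lem:base change of End} here is slightly off target: that lemma is used in the proof of Corollary~\ref{cor:descent for dgen} (which you correctly invoke for $\tau$-descent), not in the identification of the connecting map itself.
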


\begin{proof} I) Consider the homotopy commutative diagram
\begin{center}

\begin{tikzpicture}[xscale=4,yscale=2]
\node (A) at (0,1) {$\mathscr C_\dgen^\simeq$};
\node (A') at (0,0) {$K$};
\node at (1,1.4) {$\Uparrow_{v_*}$};
\node at (0.5,0.5) {$\Swarrow_v$};
\node (B) at (1,1) {$\Az\mathscr C$};
\node (B') at (1,0) {$\LTens^{\Az}\mathscr C$};
\node at (1.5,0) {$=$};
\node (C) at (2,1) {$\LTens^{\Az}_\dgen\mathscr C$};
\node (C') at (2,0) {$\LTens^{\Az}\mathscr C$};
\node at (2.5,1) {$\subseteq$};
\node at (2.5,0) {$\subseteq$};
\node (D) at (3,1) {$\LTens_\dgen\mathscr C$};
\node (D') at (3,0) {$\LTens\mathscr C$};
\path[->,font=\scriptsize,>=angle 90]
(A) edge [bend left=60] node [above] {$i_*$} (C)
(A) edge node [above] {$\End$} (B)
(A) edge (A')
(A') edge node [above] {$\mathbf c_{\mathscr C}$} (B')
(B) edge node [above] {$\Mod_*$} (C)
(B) edge node [right] {$\Mod$} (B')
(C) edge (C')
(D) edge (D');
\end{tikzpicture}

\end{center}
where
\begin{itemize}
 \item $i_*$ is the inclusion $C\mapsto(\mathscr C,C)$,
 \item $\Mod_*(A)=(\Mod_A,A)$,
 \item $\mathbf c_{\mathscr C}$ is a Cartesian section with $\mathbf c_{\mathscr C}(S)=\mathscr C_S$, 
 \item $v(C)$ is the equivalence $\Mod_{\End C}\xrightarrow{\otimes_{\End C}C}\mathscr C$ as in Corollary \ref{cor:dgen equiv}, and
 \item $v_*(C)$ is its pointed version $(\Mod_{\End C},\End C)\to(\mathscr C,C)$.
\end{itemize}
The forgetful map $\LTens^\desc_*\mathscr C\to\LTens^\desc\mathscr C$ is a left fibration, thus so is its restriction to connected components $\LTens^{\Az}_\dgen\mathscr C\to\LTens^{\Az}\mathscr C$. Therefore, the square under $i_*$, which is strict Cartesian by construction, is homotopy Cartesian. We claim that $\Az\mathscr C\xrightarrow{\Mod_*}\LTens^{\Az}_\dgen\mathscr C$ is an equivalence. This will imply that the square under $\Mod_*$ is homotopy Cartesian, and thus by the pasting lemma, the square under $\End$ is homotopy Cartesian as well, as we needed to show.

By definition of dualizable generators and Theorem \ref{prop:LTen^desc is a stack} (2), we know that $\Alg\mathscr C\xrightarrow{\Mod_*}\LTens^\desc_\dgen$ is an equivalence. Therefore, it is enough to show that, for any $U\in K$, an algebra $A\in\Alg\mathscr C(U)$ is Azumaya if and only if $\Mod_A\in\LTens^\desc\mathscr C(U)$ is locally trivial. This in turn follows from Lemma \ref{lem:Az alg mod}.

II) Consider now the diagram
\begin{center}

\begin{tikzpicture}[xscale=4,yscale=2]
\node (A) at (0,1) {$\Pic\mathscr C$};
\node (A') at (0,0) {$\mathscr C_\dgen^\simeq$};
\node (B) at (1,1) {$K$};
\node (B') at (1,0) {$\Az\mathscr C,$};
\path[->,font=\scriptsize,>=angle 90]
(A) edge (B)
(A) edge node [right] {$\otimes E$} (A')
(B) edge node [right] {$\mathbf c_{\End E}$} (B')
(A') edge node [above] {$\End$} (B');
\end{tikzpicture}

\end{center}
where $\mathbf c_{\End E}$ is a Cartesian section such that $\mathbf c_{\End E}(S)=\End E$. We claim that the diagram is homotopy Cartesian. By the pasting lemma, and that by the homotopical Eilenberg--Watts theorem, we have $\Omega(\LTens\mathscr C,\mathscr C)=\Pic\mathscr C$, it is enough to show that the square is homotopy commutative. For that, as the map $\Az\mathscr C\xrightarrow{\Mod_*}\LTens^\desc_\dgen\mathscr C$ is an equivalence, it will be enough to show that the diagram
\begin{center}

\begin{tikzpicture}[xscale=4,yscale=2]
\node (A) at (0,1) {$\Pic\mathscr C$};
\node (A') at (0,0) {$\mathscr C_\dgen^\simeq$};
\node (B) at (1,1) {$K$};
\node (B') at (1,0) {$\LTens^\desc_\dgen\mathscr C$};
\node at (0.5,0) {$\subset$};
\path[->,font=\scriptsize,>=angle 90]
(A) edge (B)
(A) edge node [right] {$\otimes E$} (A')
(B) edge node [right] {$\mathbf c_{(\mathscr C,E)}$} (B');
\end{tikzpicture}

\end{center}
is homotopy commutative. That is, we need to supply a map
$$
\Pic\mathscr C\times\Delta^1\xrightarrow f\LTens_\dgen\mathscr C
$$
such that $f|(\Pic\mathscr C\times\Delta^{\{0\}})$ is the constant map $c$ with value $(\mathscr C,E)$, and $f|(\Pic\mathscr C\times\Delta^{\{1\}})$ is the map $L\mapsto(\mathscr C,L\otimes E)$. The homotopical Eilenberg--Watts theorem supplies the map $C\xrightarrow{F\mapsto\otimes F}\Fun_{\mathscr C}(\mathscr C,\mathscr C)$. This gives the map $\mathrm{EW}$ in the lifting problem
\begin{center}

\begin{tikzpicture}[xscale=4,yscale=2]
\node (A) at (0,1) {$\Pic\mathscr C\times\Delta^{\{0\}}$};
\node at (0,0.5) {$\cap$};
\node (A') at (0,0) {$\Pic\mathscr C\times\Delta^1$};
\node (AB) at (0.5,1.5) {$K$};
\node (AB') at (0.5,-0.5) {$\mathscr C\times\Delta^1,$};
\node (B) at (1,1) {$\LTens^\desc_*\mathscr C$};
\node (B') at (1,0) {$\LTens^\desc\mathscr C$};
\path[->,font=\scriptsize,>=angle 90]
(A) edge (B)
(A) edge (AB)
(AB) edge node [above right] {$\mathscr c_{(\mathscr C,E)}$} (B)
(B) edge node [right] {$r$} (B')
(A') edge [dashed] node [below right] {$f$} (B)
(A') edge (B')
(A') edge node [below left] {$\otimes E$} (AB')
(AB') edge node [below right] {$\mathrm{EW}$} (B');
\end{tikzpicture}

\end{center}
where the restriction map $r$ is a left fibration, thus we have a solution $f$, which as $E$ is a dualizable generator, maps into $\LTens^\desc_\dgen\mathscr C$, as required.

III) From the fibre sequences, we readily get a long exact sequence
\begin{multline*}
\dotsb\to\pi_2({}^\op\Az\mathscr C,\End E)
\to\pi_1({}^\op\Pic\mathscr C,\mathscr O)
\to\pi_1({}^\op\mathscr C_\dgen,E)
\to\pi_1({}^\op\Az\mathscr C,\End E)\to\\
\to\pi_0({}^\op\Pic\mathscr C)
\to\pi_0({}^\op\mathscr C_\dgen)
\to\pi_0({}^\op\Az\mathscr C)
\to\pi_0({}^\op\LTens^{\Az}\mathscr C).
\end{multline*}
Since we have $\Az\mathscr C\simeq\LTens^{\Az}_\dgen\mathscr C$, by definition of dualizable generators, we get that the map of homotopy sheaves $\pi_0({}^\op\Az\mathscr C)\to\pi_0({}^\op\LTens^{\Az}\mathscr C)$ is surjective.
 
\end{proof}

\begin{lem}\label{lem:Az alg mod} Let $\mathscr C$ be a monoidal quasi-category, and $A\in\Alg\mathscr C$. Then there exists $E\in\mathscr C_\dgen$ and $A\simeq\End E$ if and only if there exists $\Mod_A\simeq\mathscr C$ in $\LTens\mathscr C$.

\end{lem}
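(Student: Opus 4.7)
The plan is to prove the two implications separately. The forward direction will be an immediate consequence of Corollary \ref{cor:dgen equiv}, while the backward direction will require extracting duality data for $E$ from the inverse of the given equivalence via the Homotopical Eilenberg--Watts Theorem.

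For the forward direction, assuming $E\in\mathscr C_\dgen$ with $A\simeq\End E$, I would invoke Corollary \ref{cor:dgen equiv} to obtain that $\otimes_A E:\Mod_A\to\mathscr C$ and $\otimes E^\vee:\mathscr C\to\Mod_A$ are mutually inverse equivalences. These arise from the $(A,\mathbf 1)$- and $(\mathbf 1,A)$-bimodule structures on $E$ and $E^\vee$ via the construction of \ref{cons:Pr}, hence are $\mathscr C$-linear, and so they witness the required equivalence in $\LTens\mathscr C$.

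For the backward direction, suppose $F:\Mod_A\xrightarrow{\simeq}\mathscr C$ is a $\mathscr C$-linear equivalence with inverse $G$. I would set $E:=F(A)\in\mathscr C$ and $E^\vee:=G(\mathbf 1)\in\Mod_A$, and apply the Homotopical Eilenberg--Watts Theorem to identify $F\simeq(-\otimes_A E)$, where $E$ inherits a canonical $(A,\mathbf 1)$-bimodule structure, and likewise $G\simeq(-\otimes E^\vee)$, where $E^\vee$ is canonically a right $A$-module. Translating the equivalences $FG\simeq\id_{\mathscr C}$ and $GF\simeq\id_{\Mod_A}$ through the composition-corresponds-to-relative-tensor dictionary of HEW then produces equivalences $E^\vee\otimes_A E\simeq\mathbf 1$ in $\mathscr C$ and $E\otimes E^\vee\simeq A$ in ${}_A\BMod_A$. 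Combining these with the algebra unit $\mathbf 1\to A$, I would construct an evaluation $ev:E^\vee\otimes E\to\mathbf 1$ and a coevaluation $coev:\mathbf 1\to E\otimes E^\vee$, verify the triangle identities by unwinding the unit and counit of the adjunction $F\dashv G$, and conclude that $E$ is dualizable with $\End E\simeq E\otimes E^\vee\simeq A$. Essential surjectivity of $\otimes_{\End E}E:\Mod_{\End E}\to\mathscr C$ is then automatic, since under the identification $A\simeq\End E$ this functor coincides with the equivalence $F$; hence $E\in\mathscr C_\dgen$.

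The hardest step will be verifying the triangle identities coherently at the level of quasi-categories, rather than merely producing the evaluation and coevaluation as morphisms. The intended route is to translate them back into the counit and unit of the adjunction $F\dashv G$ via the HEW correspondence, where they become tautological, but threading this coherence through ${}_A\BMod_A$ and composition inside $\LinFun_{\mathscr C}$ requires some bookkeeping.
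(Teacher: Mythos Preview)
Your forward direction matches the paper's exactly. The backward direction is correct but takes a genuinely different route.

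The paper does not construct duality data for $E$ at all. Instead, after writing $\phi\simeq(-\otimes_A E)$ via Eilenberg--Watts, it shows directly that the action map $A\otimes E\to E$ exhibits $A$ as the endomorphism object of $E$, by checking the representability condition $\Map_{\mathscr C}(C,A)\simeq\Map_{\mathscr C}(C\otimes E,E)$. This is done with a small diagram chase: one factors through $\Map_{\Mod_A}(C\otimes A,A)$ using the free--forgetful adjunction for $A$-modules, and then applies the equivalence $\otimes_A E$. No triangle identities appear.

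Your route is to produce $E^\vee$ from the inverse functor, extract $ev$ and $coev$ from the isomorphisms $E^\vee\otimes_A E\simeq\mathbf 1$ and $E\otimes E^\vee\simeq A$, and then verify the triangle identities. This works, and in fact the triangle identities reduce cleanly to the unit axiom for the left $A$-module $E$ and the right $A$-module $E^\vee$ (rather than to the adjunction $F\dashv G$ as such), so the ``hardest step'' you flag is milder than you suggest. The payoff of your approach is that it makes the right-dualizability of $E$, which Definition~\ref{defn:dual} requires for membership in $\mathscr C_\dgen$, completely explicit; the paper's argument establishes $A\simeq\End E$ more quickly but leaves the dualizability of $E$ to the reader (it follows from exactly the inverse-functor argument you give). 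Conversely, the paper's representability argument avoids any coherence bookkeeping and gives $A\simeq\End E$ as an equivalence of algebras immediately, whereas in your approach you should be careful to explain why the bimodule equivalence $E\otimes E^\vee\simeq A$ upgrades to an \emph{algebra} equivalence $\End E\simeq A$.
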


\begin{proof} $\Rightarrow$: Suppose that there exists $E\in\mathscr C_\dgen$ and an equivalence $A\xrightarrow{\phi}\End E$ in $\Alg\mathscr C$. Then we get equivalences in $\LTens\mathscr C$
$$
\Mod_A\xrightarrow{\Mod(\phi)}\Mod_{\End E}\xrightarrow{\otimes_{\End E}E}\mathscr C
$$
as needed.

$\Leftarrow$: Suppose that there exists an equivalence $\Mod_A\xrightarrow\phi\mathscr C$ in $\LTens\mathscr C$. By the homotopical Eilenberg--Watts theorem, $\phi$ is of the form $\otimes_AE$ for some $E\in{}_A\Mod$. It will be enough to show that the action map $A\otimes E\xrightarrow\alpha E$ exhibits $A$ as an endomorphism object of $E\in\mathscr C$. Let $C\in\mathscr C$ and consider the diagram
\begin{center}

\begin{tikzpicture}[xscale=4,yscale=2]
\node (A) at (0,1) {$\Map_{\mathscr C}(C,A)$};
\node (AB) at (1,0) {$\Map_A(C\otimes A,A\otimes A)$};
\node (B) at (1,1) {$\Map_{\mathscr C}(C\otimes E,A\otimes E)$};
\node (BC) at (2,0) {$\Map_A(C\otimes A,A),$};
\node (C) at (2,1) {$\Map_{\mathscr C}(C\otimes E,E)$};
\path[->,font=\scriptsize,>=angle 90]
(A) edge node [above] {$\otimes E$} (B)
(A) edge node [below left] {$\otimes A$} (AB)
(AB) edge node [right] {$\otimes_AE$} (B)
(AB) edge node [above] {$\mu\circ$} (BC)
(B) edge node [above] {$\alpha\circ$} (C)
(BC) edge node [right] {$\otimes_AE$} (C);
\end{tikzpicture}

\end{center}
where $A\otimes A\xrightarrow\mu A$ is the multiplication map. We know that the left triangle is homotopy commutative. We claim that the right square is homotopy commutative. That will conclude the proof as $(\mu\circ)\circ(\otimes A)$ is an equivalence as $\mu$ induces the counit of the induction $\smalladjoints{\mathscr C}{\Mod_A}{\otimes A}{\otimes_AA}$, and $\otimes_AE$ is an equivalence by assumption.

The claim follows from that the diagram
\begin{center}

\begin{tikzpicture}[xscale=4,yscale=2]
\node (A) at (0,1) {${}_A\Mod_A$};
\node (A') at (0,0) {${}_A\Mod$};
\node (B) at (1,1) {$\Mod_A$};
\node (B') at (1,0) {$\mathscr C,$};
\path[->,font=\scriptsize,>=angle 90]
(A) edge node [above] {$\otimes_AE$} (B)
(A) edge (A')
(B) edge (B')
(A') edge node [above] {$\otimes_AE$} (B');
\end{tikzpicture}

\end{center}
where the vertical maps are restriction maps, is commutative, and therefore the functor $\otimes_AE$ takes $A\in{}_A\Mod_A$ to $E\in{}_A\Mod$.

\end{proof}

\section{Applications}

In this section, we list a number of interesting families of monoidal quasi-categories we can apply the Homotopical Skolem--Noether Theorem to. In Subsection 1 we recall the the symmetric monoidal quasi-category of stable presentable quasi-categories \cite{lurie2014higher}*{\S4.8.2} as all the presentable monoidal quasi-categories in our examples are stable. In Subsection 2 we recall the monoidal structure on the underlying quasi-category of a monoidal model category \cite{lurie2014higher}*{\S4.1.7} as that will serve as our main device of getting examples. In Subsection 3 we apply our main result to Algebraic Geometry. This includes proving 1-descent for pre-generalized Azumaya algebras over a quasi-compact and quasi-separated scheme. In Subsection 4 we apply our main result to Derived and Spectral Algebraic Geometry. In Subsection 5 we apply our main result to ind-coherent sheaves and crystals.

\subsection{The symmetric monoidal quasi-category of stable presentable quasi-categories}

\subsubsection{Stable quasi-categories}

\begin{defn} Let $\mathscr C$ be a quasi-category. Then an object $0\in\mathscr C$ is a \emph{zero object} if it is both an initial object and a final object. We say that \emph{the quasi-category $\mathscr C$ is pointed} if it has a zero object. We say that \emph{the quasi-category $\mathscr C$ is stable} if the following assertions hold:
\begin{enumerate}
 \item The quasi-category $\mathscr C$ is pointed.
 \item The quasi-category $\mathscr C$ has finite limits and colimits.
 \item A square
\begin{center}

\begin{tikzpicture}[xscale=2,yscale=2]
\node (A) at (0,1) {$C'$};
\node (A') at (0,0) {$D'$};
\node (B) at (1,1) {$C$};
\node (B') at (1,0) {$D$};
\path[->,font=\scriptsize,>=angle 90]
(A) edge (B)
(A) edge (A')
(B) edge (B')
(A') edge (B');
\end{tikzpicture}

\end{center}
in $\mathscr C$ is a pushout diagram if and only if it is a pullback diagram.

\end{enumerate}

\end{defn}

\begin{defn} Let $\mathscr C$ be a stable quasi-category. For a nonnegative integer $n\ge0$ we let $\mathscr C\xrightarrow{C\mapsto C[n]}\mathscr C$ denote the $n$-th power of the suspension functor $\mathscr C\xrightarrow\Sigma\mathscr C$. For a nonpositive integer $n\le0$ we let $\mathscr C\xrightarrow{C\mapsto C[n]}\mathscr C$ denote the $(-n)$-th power of the loop object functor $\mathscr C\xrightarrow\Omega\mathscr C$. We refer to these functors as \emph{translation functors}.

Consider a diagram
$$
C\xrightarrow{\Bar f}D\xrightarrow{\Bar g}E\xrightarrow{\Bar h}C[1]
$$
in the homotopy category $\Ho\mathscr C$. Then we say that it is a \emph{distinguished triangle} if there exists a diagram
\begin{center}

\begin{tikzpicture}[xscale=2,yscale=2]
\node (A) at (0,1) {$C$};
\node (A') at (0,0) {$0'$};
\node (B) at (1,1) {$D$};
\node (B') at (1,0) {$E$};
\node (C) at (2,1) {$0$};
\node (C') at (2,0) {$F$};
\path[->,font=\scriptsize,>=angle 90]
(A) edge node [above] {$f$} (B)
(A) edge (A')
(B) edge (C)
(B) edge node [right] {$g$} (B')
(C) edge (C')
(A') edge (B')
(B') edge node [above] {$h$} (C');
\end{tikzpicture}

\end{center}
in $\mathscr C$ such that the following assertions hold:
\begin{enumerate}
 \item The squares are pushout diagrams in $\mathscr C$.
 \item The object $0,0'\in\mathscr C$ are zero objects.
 \item The map $f$ represents $\Bar f$ and the map $g$ represents $\Bar g$.
 \item Since $F$ is a suspension of $C$, there exists an equivalence $F\xrightarrow{h'}C[1]$ well-defined up to homotopy. Then the composite $h'\circ h$ represents $\Bar h$.
\end{enumerate}

\end{defn}

\begin{thm}\label{thm:stable gives triangulated}\cite{lurie2014higher}*{Theorem 1.1.2.14} Let $\mathscr C$ be a stable quasi-category. Then the homotopy category $\Ho\mathscr C$ can be endowed with a triangulated category structure as follows:
\begin{enumerate}
 \item Since $\mathscr C$ is pointed, for objects $C,D\in\mathscr C$ the zero map $C\to0\to D$ is a natural base point for the mapping space $\Map(C,D)$.
 \item The isomorphism $\pi_0\Map(C,D)\cong\pi_2(\Sigma^2C,D)$ equips the set $\Hom_{\Ho\mathscr C}(C,D)\cong\pi_0\Map_{\mathscr C}(C,D)$ with an abelian group structure.
 \item We have the translation functors and distinguished triangles defined above.
\end{enumerate}

\end{thm}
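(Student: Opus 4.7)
The plan is to verify the four axioms (TR1)--(TR4) of a triangulated category on $\mathrm{Ho}\,\mathscr C$, working throughout with biCartesian squares in $\mathscr C$. First I would show that the loop and suspension functors $\Omega,\Sigma\colon\mathscr C\to\mathscr C$ are inverse autoequivalences: applying the biCartesian condition to the pushout square defining $\Sigma C$ and the pullback square defining $\Omega\Sigma C$ forces the unit and counit of the adjunction $\Sigma\dashv\Omega$ to be equivalences. Consequently, each translation functor $[n]$ descends to an autoequivalence of $\mathrm{Ho}\,\mathscr C$. Since $\Map_{\mathscr C}(C,D)\simeq\Omega^{2}\Map_{\mathscr C}(C[-2],D)$ is naturally a double loop space, $\mathrm{Hom}_{\mathrm{Ho}\,\mathscr C}(C,D)=\pi_{0}\Map_{\mathscr C}(C,D)$ acquires an abelian group structure, bilinear in composition, and agreeing with the one described in the statement.

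For (TR1), given a morphism $f\colon C\to D$ in $\mathscr C$, I would assemble the defining two adjacent pushout squares with zero objects $0,0'$ on the diagonal corners: the left square realizes the middle vertex of the triangle as the cofiber $E$ of $f$, while the outer rectangle, being a pushout by the pasting lemma for pushouts, exhibits the last vertex as $C[1]$. Reading off the boundary yields a distinguished triangle on $f$. The trivial triangle on $\mathrm{id}_{C}$ and closure under isomorphism in $\mathrm{Ho}\,\mathscr C$ are immediate from the definition. For (TR2), the rotation axiom, one extends the grid by a further adjacent pushout square on the right, which by exactly the same argument produces the rotated triangle $D\to E\to C[1]\to D[1]$; the canonical sign $-f[1]$ on the last map arises from comparing the two natural identifications of this new final vertex with $D[1]$ induced by swapping the orientation of the suspension square. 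For (TR3), given a commutative square between the leading edges of two distinguished triangles, the universal property of pushouts in $\mathscr C$ lifts it to a morphism of the defining two-step grids, and the induced map on last vertices supplies the desired third vertical arrow after passing to $\mathrm{Ho}\,\mathscr C$.

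The principal obstacle is (TR4), the octahedral axiom for composable maps $C\xrightarrow{f}D\xrightarrow{g}E$. I would construct a single coherent grid of pushout squares in $\mathscr C$ whose cofibers at the intermediate stages simultaneously realize the three distinguished triangles on $f$, $g$, and $g\circ f$, and whose terminal column is forced by iterated pasting of pushouts to form a fourth distinguished triangle relating $\mathrm{cofib}(f)$, $\mathrm{cofib}(g)$, and $\mathrm{cofib}(g\circ f)$. All choices in this construction are made up to contractible ambiguity in $\mathscr C$ and therefore descend unambiguously to $\mathrm{Ho}\,\mathscr C$. The delicate point is to check that the connecting map in this fourth triangle agrees with the composite of the canonical boundary maps already produced by (TR1); this reduces, via (TR3) applied to an identity square, to the uniqueness up to (non-canonical) isomorphism of distinguished triangles built on a fixed morphism, which in turn follows from the universal property of the pushout in $\mathscr C$.
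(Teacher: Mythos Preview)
The paper does not supply its own proof of this statement: it is quoted as \cite{lurie2014higher}*{Theorem 1.1.2.14} and used as a black box. Your proposal is correct and is essentially the argument given in Lurie's reference, namely verifying (TR1)--(TR4) by assembling and pasting biCartesian squares in $\mathscr C$; so there is nothing to compare against within the paper itself.
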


\begin{defn} Let $\mathscr C$ be a stable quasi-category. Then we say that \emph{$\mathscr C$ is compactly generated} if the triangulated category $\Ho\mathscr C$ is compactly generated.

\end{defn}

\subsubsection{Spectrum objects and the quasi-category of spectra}

\begin{defn} Let $\mathscr C\xrightarrow F\mathscr D$ be a functor between quasi-categories.
\begin{enumerate}
 \item Suppose that $\mathscr C$ has pushouts. Then we say that \emph{$F$ is excisive} if it takes pushout diagrams to pullback diagrams.
 \item Suppose that $\mathscr C$ has a final object $*\in\mathscr C$. Then we say that \emph{$F$ is reduced} if $F(*)\in\mathscr D$ is a final object.
\end{enumerate}
Suppose that $\mathscr C$ has pushouts and a final object. Then we denote by $\Exc_*(\mathscr C,\mathscr D)\subseteq\Fun(\mathscr C,\mathscr D)$ the full subcategory on excisive reduced functors.

Let $\mathscr S^\fin\subseteq\mathscr S$ denote the smallest full subcategory that contains the final object $*\in\mathscr S$ and it is closed under small colimits. Let $\mathscr S^\fin_*\subseteq\mathscr S_*$ denote the full subcategory on pointed objects of $\mathscr S^\fin$. Let $\mathscr C$ be a quasi-category that has finite limits. Then the \emph{quasi-category of spectrum objects of $\mathscr C$} is $\Sp(\mathscr C)=\Exc_*(\mathscr S^\fin,\mathscr C)$. The \emph{quasi-category of spectra} is $\Sp=\Sp(\mathscr S)$. We let $\Sp(\mathscr C)\xrightarrow{\Omega^\infty}\mathscr C$ denote the functor given by substitution at $S^0\in\mathscr S^\fin_*$.

\end{defn}

\begin{prop}\label{prop:Sigma infinity}\cite{lurie2014higher}*{Proposition 1.4.4.4 and Corollary 1.4.4.5} Let $\mathscr C$ be a presentable quasi-category. Then the following assertions hold:
\begin{enumerate}
 \item The quasi-category $\Sp(\mathscr C)$ of spectrum objects of $\mathscr C$ is presentable.
 \item The functor $\Sp(\mathscr C)\xrightarrow{\Omega^\infty}\mathscr C$ has a left adjoint $\Sigma^\infty_+$.
 \item Let $\mathscr D$ be a presentable stable quasi-category. Then the precomposition map
 $$
 \LFun(\Sp(\mathscr C),\mathscr D)\xrightarrow{\circ\Sigma^\infty_+}\LFun(\mathscr C,\mathscr D)
 $$
 is an equivalence.
\end{enumerate}

\end{prop}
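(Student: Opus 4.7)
The plan is to prove the three parts in sequence, each time exploiting the defining description $\Sp(\mathscr C)=\Exc_*(\mathscr S^\fin_*,\mathscr C)\subseteq\Fun(\mathscr S^\fin_*,\mathscr C)$ as a full subcategory carved out by two finite-limit preservation conditions (reducedness, which asks that $F(*)$ be terminal, and excisiveness, which asks that each pushout square be sent to a pullback square).

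For (1), I would first note that $\Fun(\mathscr S^\fin_*,\mathscr C)$ is presentable because $\mathscr S^\fin_*$ is essentially small and $\mathscr C$ is presentable. Both defining conditions of $\Sp(\mathscr C)$ are instances of preservation of a prescribed finite diagram-limit; each such condition defines an accessible full subcategory of $\Fun(\mathscr S^\fin_*,\mathscr C)$ closed under small limits (a standard fact, since each is cut out by requiring a natural comparison map to an equalizer/pullback to be an equivalence, and equivalences form an accessible class closed under limits). Intersecting over the (essentially small) collection of pushout squares in $\mathscr S^\fin_*$ together with the reducedness condition keeps us inside an accessible subcategory closed under small limits, hence presentable.

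For (2), the functor $\Omega^\infty$ is the restriction of evaluation at $S^0\in\mathscr S^\fin_*$ to $\Sp(\mathscr C)$, so it preserves all small limits. To apply the adjoint functor theorem and obtain $\Sigma^\infty_+$, it remains to check accessibility: since reducedness and excisiveness are finite-limit conditions and $\mathscr C$ is presentable (so filtered colimits in $\mathscr C$ commute with finite limits), the inclusion $\Sp(\mathscr C)\hookrightarrow\Fun(\mathscr S^\fin_*,\mathscr C)$ preserves sufficiently filtered colimits, and therefore so does $\Omega^\infty$. The presentable adjoint functor theorem then yields the left adjoint $\Sigma^\infty_+$.

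For (3), the cleanest approach is to identify $\Sp(\mathscr C)$ with the tensor product $\mathscr C\otimes\Sp$ in $\PrL$. I would first prove the absolute universal property $\LFun(\Sp,\mathscr D)\simeq\mathscr D$ for any presentable stable $\mathscr D$, using that $\mathscr S^\fin_*$ is freely generated under finite colimits by $S^0$ and that, over a stable target, finite limits and finite colimits agree so reduced excisive functors are uniquely determined by their value at $S^0$. Then using that $\Sp\simeq\Sp\otimes\mathscr S_*$ and the symmetric monoidal closed structure on $\PrL$, one obtains $\Sp(\mathscr C)\simeq\mathscr C\otimes\Sp$, and consequently
$$
\LFun(\Sp(\mathscr C),\mathscr D)\simeq\LFun(\mathscr C\otimes\Sp,\mathscr D)\simeq\LFun(\mathscr C,\LFun(\Sp,\mathscr D))\simeq\LFun(\mathscr C,\mathscr D),
$$
with the equivalence implemented by precomposition with $\Sigma^\infty_+$. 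The main obstacle is establishing the tensor-product identification $\Sp(\mathscr C)\simeq\mathscr C\otimes\Sp$: this requires showing that the stabilization functor $\mathscr C\mapsto\Sp(\mathscr C)$ commutes with the monoidal structure on $\PrL$, which in turn rests on the fact that $\Exc_*$ behaves well with respect to the Lurie tensor product. Once this is in hand, the universal property in the statement is a formal consequence of internal-hom manipulations and the absolute case $\mathscr C=\mathscr S$.
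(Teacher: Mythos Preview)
The paper does not supply a proof of this proposition: it is stated with a citation to Lurie's \emph{Higher Algebra}, Proposition 1.4.4.4 and Corollary 1.4.4.5, and immediately followed by the next definition. So there is no in-paper argument to compare against; the natural comparison is to Lurie's cited proof.

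Your treatments of (1) and (2) are correct and match the standard line: $\Sp(\mathscr C)$ is an accessible, limit-closed full subcategory of the presentable functor quasi-category $\Fun(\mathscr S^\fin_*,\mathscr C)$, and $\Omega^\infty$ is accessible and limit-preserving between presentable quasi-categories, so the adjoint functor theorem produces $\Sigma^\infty_+$.

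Your route to (3), however, has two genuine problems. First, your sketch of the absolute case argues that reduced excisive functors $\mathscr S^\fin_*\to\mathscr D$ are determined by their value at $S^0$; this establishes $\Exc_*(\mathscr S^\fin_*,\mathscr D)\simeq\mathscr D$, i.e.\ $\Sp(\mathscr D)\simeq\mathscr D$, which is not the same statement as $\LFun(\Sp,\mathscr D)\simeq\mathscr D$. You would still need to pass from colimit-preserving functors out of $\Sp$ to reduced excisive functors out of $\mathscr S^\fin_*$, and your sketch does not do this. Second, you reduce the general case to the identification $\Sp(\mathscr C)\simeq\mathscr C\otimes\Sp$, which you flag as the main obstacle but do not resolve. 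In the paper's logical order this identification is Proposition~\ref{prop:Sp tensor}, stated \emph{after} the present result, and the paper then uses part (3) of the present result to prove Corollary~\ref{cor:PrSt} (that $(\Sp,S)$ is idempotent). Since the idempotence of $\Sp$ is what underlies the tensor-product description, your argument as written risks circularity within the paper's development.

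Lurie's direct argument avoids all of this. One uses the tower description $\Sp(\mathscr C)\simeq\varprojlim(\cdots\to\mathscr C_*\xrightarrow{\Omega}\mathscr C_*)$, which is a colimit along $\Sigma$ in $\PrL$. For stable presentable $\mathscr D$ one then has
\[
\LFun(\Sp(\mathscr C),\mathscr D)\;\simeq\;\varprojlim\LFun(\mathscr C_*,\mathscr D)\;\simeq\;\LFun(\mathscr C_*,\mathscr D)\;\simeq\;\LFun(\mathscr C,\mathscr D),
\]
the middle equivalence because the transition maps become postcomposition with $\Omega$ on $\mathscr D$, an equivalence by stability, and the last because $\mathscr D$ is pointed. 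This yields (3) with no tensor-product input and keeps the paper's subsequent deductions (Proposition~\ref{prop:Sp tensor} and Corollary~\ref{cor:PrSt}) non-circular.
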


\begin{defn} The \emph{sphere spectrum} is $S=\Sigma^\infty_+(*)\in\Sp$. 

\end{defn}

\subsubsection{Idempotent objects and the symmetric monoidal structure on the quasi-category $\PrSt$ of presentable stable quasi-categories}

\begin{defn} Let $\mathscr C$ be a monoidal quasi-category. Then an \emph{idempotent object in $\mathscr C$} is a morphism $\boldsymbol1\xrightarrow cC$ such that the maps in $\mathscr C$:
$$
C\simeq C\otimes\boldsymbol1\xrightarrow{\id_C\otimes c}C\otimes C\text{ and }C\simeq\boldsymbol1\otimes C\xrightarrow{c\otimes\id_C}C\otimes C
$$
are equivalences.

\end{defn}

\begin{prop}\label{prop:idempotent object}\cite{lurie2014higher}*{Proposition 4.8.2.4} Let $\mathscr C$ be a monoidal quasi-category and $\boldsymbol1\xrightarrow cC$. Then the following assertions are equivalent:
\begin{enumerate}
 \item The map $c$ is an idempotent object of $\mathscr C$.
 \item Consider the endofunctor $\mathscr C\xrightarrow{C\otimes}\mathscr C$. Then the natural transformation $\id_C\to(C\otimes)$ induced by $c$ exhibits $(C\otimes)$ as a localization functor.
\end{enumerate}

\end{prop}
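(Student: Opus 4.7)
The plan is to prove both implications by analyzing the two natural transformations $L\eta,\eta L\colon L\to L\circ L$, where $L=C\otimes-$ and $\eta\colon\id_{\mathscr C}\to L$ is the natural transformation whose component at $X$ is
$$X\xrightarrow{\simeq}\boldsymbol1\otimes X\xrightarrow{c\otimes\id_X}C\otimes X.$$
I would first recall the standard characterization (see \cite{lurie2009higher}*{Proposition 5.2.7.4}) that a functor $L$ equipped with a natural transformation $\eta\colon\id\to L$ is a localization functor if and only if $L\eta$ and $\eta L$ are both equivalences of functors $L\to L\circ L$ (this automatically forces them to be homotopic, as the space of sections of an equivalence is contractible). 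This reduces the problem to a direct computation involving the tensor structure.

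Next I would unwind what $L\eta$ and $\eta L$ mean in the monoidal setting. Using the monoidal coherence data of $\mathscr C^\otimes\to\Assoc^\otimes$, the component of $L\eta$ at $X\in\mathscr C$ is the map $C\otimes X\to C\otimes C\otimes X$ given by $\id_C\otimes c\otimes\id_X$ (up to associator and unitor), while the component of $\eta L$ at $X$ is $c\otimes\id_C\otimes\id_X$. Since the endofunctor $(-)\otimes X$ preserves equivalences, these natural transformations are equivalences for every $X\in\mathscr C$ if and only if they are equivalences for $X=\boldsymbol1$, i.e.\ if and only if both of the maps
$$C\simeq C\otimes\boldsymbol1\xrightarrow{\id_C\otimes c}C\otimes C,\qquad C\simeq\boldsymbol1\otimes C\xrightarrow{c\otimes\id_C}C\otimes C$$
are equivalences in $\mathscr C$. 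This is precisely the idempotency condition on $c$.

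For the implication $(1)\Rightarrow(2)$, the above computation shows that $L\eta$ and $\eta L$ are levelwise equivalences, and one invokes the characterization recalled in the first paragraph. For $(2)\Rightarrow(1)$, one evaluates at $X=\boldsymbol1$ to recover the idempotency of $c$. The main obstacle I anticipate is making the identifications of the natural transformations $L\eta$ and $\eta L$ fully coherent at the quasi-categorical level, since one has to extract these from the coCartesian lifts in $\mathscr C^\otimes$ against the active maps $\langle3\rangle\to\langle1\rangle$ in $\Assoc^\otimes$. This is done by using the straightening/unstraightening of the monoidal structure $\mathscr C^\otimes\to\Assoc^\otimes$ to produce a coherent diagram $\Delta^1\times\Delta^1\to\Fun(\mathscr C,\mathscr C)$ witnessing the equivalence $L\eta\simeq\eta L$ from the contractible space of choices provided by the idempotency data, exactly as in \cite{lurie2014higher}*{\S4.8.2}.
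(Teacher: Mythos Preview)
The paper does not supply its own proof of this proposition; it simply cites \cite{lurie2014higher}*{Proposition 4.8.2.4} and moves on. So there is nothing in the paper to compare your argument against directly.

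That said, your proposal is correct and is essentially the argument one finds in Lurie. The reduction via \cite{lurie2009higher}*{Proposition 5.2.7.4} to checking that $L\eta$ and $\eta L$ are equivalences is the right move, and your identification of their components as $(\id_C\otimes c)\otimes\id_X$ and $(c\otimes\id_C)\otimes\id_X$ is accurate. The observation that these are equivalences for all $X$ if and only if they are for $X=\boldsymbol1$ (since $(-)\otimes X$ preserves and reflects nothing beyond equivalences here) closes the loop cleanly. Your caveat about coherence is honest but not a genuine obstacle: the relevant identifications are forced by the essential uniqueness of coCartesian lifts, exactly as you indicate.
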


\begin{prop}\cite{lurie2014higher}*{Proposition 4.8.2.7} Let $\mathscr C$ be a symmetric monoidal quasi-category. Let $\boldsymbol1\xrightarrow cC$ be an idempotent object of $\mathscr C$. Let $L$ denote the endofunctor $\mathscr C\xrightarrow{C\otimes}\mathscr C$. Let $L\mathscr C^\otimes\subseteq\mathscr C^\otimes$ be the full subcategory on objects of the form $C_1\oplus\dotsb\oplus C_n$ where each $C_i$ is in $L\mathscr C$. Then the composite $L\mathscr C^\otimes\to\mathscr C^\otimes\to\Fin_*$ of the inclusion map and the structure map gives $L\mathscr C^\otimes$ a symmetric monoidal structure.

\end{prop}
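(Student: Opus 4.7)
The plan is to verify the two axioms of a symmetric monoidal quasi-category for the composite $q: L\mathscr C^\otimes \hookrightarrow \mathscr C^\otimes \xrightarrow{p} \Fin_*$. The Segal condition is immediate: $L\mathscr C^\otimes_{\langle n\rangle}$ is by definition the preimage of $(L\mathscr C)^{\times n} \subseteq \mathscr C^{\times n}$ under the Segal equivalence $\mathscr C^\otimes_{\langle n\rangle} \xrightarrow{\simeq} \mathscr C^{\times n}$, so restriction to full subcategories yields the required equivalence $L\mathscr C^\otimes_{\langle n\rangle} \xrightarrow{\simeq} (L\mathscr C)^{\times n}$. The substance lies in producing $q$-coCartesian lifts.

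My strategy is to exhibit $L\mathscr C^\otimes \subseteq \mathscr C^\otimes$ as a reflective subcategory relative to $\Fin_*$. By Proposition \ref{prop:idempotent object}, the endofunctor $L = C \otimes (-)$ is a localization of $\mathscr C$ onto $L\mathscr C$, with unit $u_X = c \otimes \id_X: X \to LX$. Applying $L$ Segal-componentwise would produce an endofunctor $L^\otimes: \mathscr C^\otimes \to L\mathscr C^\otimes$ over $\Fin_*$, together with a natural transformation $u^\otimes: \id \Rightarrow L^\otimes$ exhibiting $L\mathscr C^\otimes$ as the essential image of a pointwise localization. Given a morphism $\langle m\rangle \xrightarrow{f} \langle n\rangle$ in $\Fin_*$ and an object $D \in L\mathscr C^\otimes_{\langle m\rangle}$, I would pick a $p$-coCartesian lift $D \xrightarrow{\tilde f} D'$ in $\mathscr C^\otimes$ and postcompose with the unit $u^\otimes_{D'}: D' \to L^\otimes D'$; the composite $D \to L^\otimes D'$ is the candidate $q$-coCartesian lift. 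To verify, for any $E \in L\mathscr C^\otimes_{\langle n'\rangle}$ over an extension $g$ of $f$ in $\Fin_*$, the restriction map on mapping spaces factors as
\begin{equation*}
\Map_{L\mathscr C^\otimes}^{g}(L^\otimes D', E) \xrightarrow{\simeq} \Map_{\mathscr C^\otimes}^{g}(D', E) \xrightarrow{\simeq} \Map_{\mathscr C^\otimes}^{g\circ f}(D, E) = \Map_{L\mathscr C^\otimes}^{g\circ f}(D, E),
\end{equation*}
where the first equivalence uses that $E$ is local, the second is the $p$-coCartesian property of $\tilde f$, and the last equality is fullness of the inclusion.

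The main obstacle will be constructing the relative localization $L^\otimes$ and its unit $u^\otimes$ coherently over $\Fin_*$. Concretely, this requires that for any active edge $\langle m\rangle \xrightarrow{f} \langle n\rangle$ in $\Fin_*$ and any $X_1, \ldots, X_m \in \mathscr C$, the canonical comparison
\begin{equation*}
L\Bigl(\bigotimes_{i \in f^{-1}(j)} X_i\Bigr) \xrightarrow{\simeq} \bigotimes_{i \in f^{-1}(j)} L X_i, \quad j \in \langle n\rangle^\circ,
\end{equation*}
be an equivalence. For nonempty fibres of size $k$ this reduces, via iterated idempotence $C \otimes C \xrightarrow{\simeq} C$ and symmetry of $\otimes$, to the identity $C \otimes \bigotimes X_i \simeq C^{\otimes k} \otimes \bigotimes X_i$; for empty fibres both sides compute $C = L\boldsymbol 1$. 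These pointwise comparisons then assemble into the coherent data of $L^\otimes$ and $u^\otimes$, and the resulting relative localization produces the coCartesian lifts described above. This is precisely where idempotence of $c$ enters the proof.
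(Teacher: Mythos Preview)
The paper does not supply its own proof of this proposition; it is simply quoted from \cite{lurie2014higher}*{Proposition 4.8.2.7}, where the argument is in turn reduced to the general compatibility criterion \cite{lurie2014higher}*{Proposition 2.2.1.9}. Your approach is the correct one and matches that reference: verify the Segal condition directly, then manufacture $q$-coCartesian lifts by postcomposing $p$-coCartesian lifts with the fibrewise localization, checking the universal property via the factorization of mapping spaces you wrote down.

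One comment on presentation: you phrase the core step as assembling the pointwise comparisons into a coherent endofunctor $L^\otimes:\mathscr C^\otimes\to\mathscr C^\otimes$ with unit $u^\otimes$. That is more than you need, and proving such coherence from scratch is nontrivial. The argument only requires, for each fixed $D'\in\mathscr C^\otimes_{\langle n\rangle}$, a single morphism $D'\to L^\otimes D'$ over $\id_{\langle n\rangle}$ obtained by transporting the componentwise units $u_{D'_i}$ across the Segal equivalence $\mathscr C^\otimes_{\langle n\rangle}\simeq\mathscr C^{\times n}$. Your computation $C\otimes\bigotimes_iX_i\simeq C^{\otimes k}\otimes\bigotimes_iX_i\simeq\bigotimes_iC\otimes X_i$ then shows that $\bigotimes_iD'_i\to\bigotimes_iLD'_i$ is an $L$-equivalence, which is exactly the hypothesis of \cite{lurie2014higher}*{Proposition 2.2.1.9} and exactly what your mapping-space verification uses. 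So drop the global $L^\otimes$ and work object by object; the rest of your argument goes through unchanged.
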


\begin{defn} Let $A\in\CAlg\mathscr C$ be a commutative algebra object in a symmetric monoidal quasi-category. Then it is \emph{idempotent} if the multiplication map $A\otimes A\to A$ is an equivalence. Let $\CAlg^\idem\mathscr C\subseteq\CAlg\mathscr C$ denote the full subcategory on idempotent commutative algebra objects.

\end{defn}

\begin{prop}\cite{lurie2014higher}*{Proposition 4.8.2.9}\label{prop:idempotent object and commutative algebra} Let $\mathscr C^\otimes$ be a symmetric monoidal quasi-category. Then the composite of canonical maps
$$
\CAlg^\idem\mathscr C\to\CAlg\mathscr C\simeq\CAlg(\mathscr C)_{\boldsymbol1/}\to\mathscr C_{\boldsymbol1/}
$$
is fully faithful with essential image the full subcategory on idempotent objects.

\end{prop}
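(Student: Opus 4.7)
The plan is to identify both sides of the composite with the (large) poset of smashing localizations of $\mathscr C$, exploiting Proposition \ref{prop:idempotent object} on one side and \cite{lurie2014higher}*{Proposition 4.8.2.7} on the other. Since $\boldsymbol 1\in\CAlg\mathscr C$ is initial, the middle equivalence $\CAlg\mathscr C\simeq\CAlg(\mathscr C)_{\boldsymbol 1/}$ is automatic, so it suffices to analyze the forgetful functor from $\CAlg(\mathscr C)_{\boldsymbol 1/}$ to $\mathscr C_{\boldsymbol 1/}$ restricted to idempotent algebras.

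For the essential image, I start with $A\in\CAlg^\idem\mathscr C$ with unit $\eta_A\colon\boldsymbol 1\to A$ and verify that $\eta_A$ is idempotent: the composite $A\otimes\boldsymbol 1\xrightarrow{A\otimes\eta_A}A\otimes A\xrightarrow{\mu_A}A$ is the right unitor, an equivalence, and $\mu_A$ is an equivalence by the idempotent hypothesis, so $A\otimes\eta_A$ is too; symmetrically for $\eta_A\otimes A$. Conversely, given an idempotent object $c\colon\boldsymbol 1\to C$, \cite{lurie2014higher}*{Proposition 4.8.2.7} equips $L\mathscr C^\otimes$ (where $L=C\otimes-$) with a symmetric monoidal structure whose unit is $C$. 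Since the unit of any symmetric monoidal quasi-category is the initial commutative algebra object, $C$ acquires a canonical commutative algebra structure in $L\mathscr C$; transporting along the fully faithful inclusion $L\mathscr C\hookrightarrow\mathscr C$ gives a commutative algebra on $C$ in $\mathscr C$ whose multiplication is the unitor of $L\mathscr C^\otimes$ at its unit, hence an equivalence, so the algebra is idempotent.

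For fully faithfulness, let $A,B\in\CAlg^\idem\mathscr C$. By Proposition \ref{prop:idempotent object}, $L_A=A\otimes-$ is a localization whose local objects are those $X$ for which $X\to A\otimes X$ is an equivalence. The mapping space $\Map_{\mathscr C_{\boldsymbol 1/}}(\eta_A,\eta_B)$ is then the fibre of $\Map_{\mathscr C}(A,B)\to\Map_{\mathscr C}(\boldsymbol 1,B)$ over $\eta_B$, which by the universal property of $L_A$ is contractible when $B$ is $L_A$-local and empty otherwise. The crucial claim is that $\Map_{\CAlg^\idem\mathscr C}(A,B)$ admits the identical description. I would establish this by producing an equivalence $\CAlg(L_A\mathscr C)\simeq\CAlg(\mathscr C)_{A/}$ using that $A$ is the unit of $L_A\mathscr C^\otimes$ (hence initial in $\CAlg(L_A\mathscr C)$), which collapses $\Map_{\CAlg\mathscr C}(A,B)$ to a single point precisely when $B$ is $L_A$-local.

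The main obstacle is the last identification $\CAlg(L_A\mathscr C)\simeq\CAlg(\mathscr C)_{A/}$: the symmetric monoidal inclusion $L_A\mathscr C^\otimes\hookrightarrow\mathscr C^\otimes$ does not directly yield an equivalence on commutative algebras. The key input is that for an idempotent algebra $A$, any commutative algebra structure on an $L_A$-local object factors uniquely through $L_A$, because the localization is smashing and the multiplication on an $L_A$-local object is forced by its $A$-local structure. With this identification in hand, both fully faithfulness and the description of the essential image follow immediately from the contractibility/emptiness dichotomy of the preceding paragraph.
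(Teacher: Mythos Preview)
The paper does not prove this proposition; it is stated with a bare citation to \cite{lurie2014higher}*{Proposition 4.8.2.9}, so there is no in-paper argument to compare your proposal against.

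Your outline is correct and is essentially Lurie's own argument. Two small points are worth tightening. First, the inclusion $L_A\mathscr C\hookrightarrow\mathscr C$ is only \emph{lax} symmetric monoidal, not symmetric monoidal as you wrote; this is harmless, since lax monoidal functors still carry commutative algebras to commutative algebras, and that is exactly what you need to transport the unit algebra $C\in\CAlg(L_A\mathscr C)$ back into $\CAlg(\mathscr C)$. Second, your claim that $\Map_{\mathscr C_{\boldsymbol 1/}}(\eta_A,\eta_B)$ is empty when $B$ is not $L_A$-local deserves a sentence of justification: if a pointed map $f\colon A\to B$ exists, then $(\eta_B\otimes B)^{-1}\circ(f\otimes B)$ is a retraction of $\eta_A\otimes B\colon B\to A\otimes B$, exhibiting $B$ as a retract of the $L_A$-local object $A\otimes B$ and hence itself $L_A$-local. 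With these two clarifications your argument goes through; the identification $\CAlg(L_A\mathscr C)\simeq\CAlg(\mathscr C)_{A/}$ that you flag as the main obstacle follows by the same retract reasoning (every $A$-algebra in $\mathscr C$ has $L_A$-local underlying object) together with the fact that $A$, being the unit of $L_A\mathscr C^\otimes$, is initial in $\CAlg(L_A\mathscr C)$.
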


\begin{defn} Let $(\mathscr C,C)$ be a pair of a presentable quasi-category $\mathscr C$ and an object $C\in\mathscr C$. We say that \emph{the pair $(\mathscr C,C)$ is idempotent} if the colimit-preserving map $\mathscr S\xrightarrow F\mathscr C$ such that $F(*)=C$ is an idempotent object of $\PrL$. In this case, Proposition \ref{prop:idempotent object and commutative algebra} equips $\mathscr C$ with a symmetric monoidal structure with $C$ the unit object.

\end{defn}

\begin{prop}\label{prop:Sp tensor}\cite{lurie2014higher}*{Example 4.8.1.23} Let $\mathscr C\in\PrL$ be a presentable quasi-category. Then we have an equivalence of presentable quasi-categories $\Sp\otimes\mathscr C\simeq\Sp(\mathscr C)$.

\end{prop}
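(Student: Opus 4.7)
The plan is to identify both $\Sp \otimes \mathscr C$ and $\Sp(\mathscr C)$ with the free presentable stable quasi-category under $\mathscr C$ in $\PrL$, and conclude via Yoneda in $\PrSt$.

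First, I would establish that for every stable presentable $\mathscr D$ one has $\LFun(\Sp, \mathscr D) \simeq \mathscr D$. This follows from Proposition~\ref{prop:Sigma infinity}(3) applied with $\mathscr C = \mathscr S$: precomposition with $\Sigma^\infty_+$ gives $\LFun(\Sp, \mathscr D) \simeq \LFun(\mathscr S, \mathscr D) \simeq \mathscr D$, the last equivalence being the familiar fact that $\mathscr S$ is freely generated under colimits by the point, so that evaluation at the point identifies colimit-preserving functors out of $\mathscr S$ with their values. Combining with the tensor--hom adjunction in $\PrL$ (whose internal hom on presentable quasi-categories is $\LFun$), for any presentable $\mathscr C$ and stable presentable $\mathscr D$ one obtains
\begin{equation*}
\LFun(\Sp \otimes \mathscr C, \mathscr D) \;\simeq\; \LFun(\mathscr C, \LFun(\Sp, \mathscr D)) \;\simeq\; \LFun(\mathscr C, \mathscr D) \;\simeq\; \LFun(\Sp(\mathscr C), \mathscr D),
\end{equation*}
where the final equivalence is another instance of Proposition~\ref{prop:Sigma infinity}(3). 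Thus $\Sp \otimes \mathscr C$ and $\Sp(\mathscr C)$ corepresent the same functor $\PrSt^{\op} \to \mathscr S$, namely $\mathscr D \mapsto \LFun(\mathscr C, \mathscr D)$.

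To invoke Yoneda inside $\PrSt$, I still need to see that $\Sp \otimes \mathscr C$ is itself stable. The cleanest route is to first specialize the displayed chain to $\mathscr C = \Sp$ to obtain $\LFun(\Sp \otimes \Sp, \mathscr D) \simeq \LFun(\Sp, \mathscr D)$ naturally in stable $\mathscr D$, and then extend this comparison to all presentable $\mathscr D$ (the universal property of $\Sp$ controls the outer hom), yielding $\Sp \otimes \Sp \simeq \Sp$. By Proposition~\ref{prop:idempotent object}, tensoring with $\Sp$ is therefore a localization functor on $\PrL$, and by Proposition~\ref{prop:idempotent object and commutative algebra} the unit map $\Sigma^\infty_+ \colon \mathscr S \to \Sp$ promotes $\Sp$ to an idempotent commutative algebra object of $\PrL$. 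One then identifies the essential image of this localization with $\PrSt$, after which Yoneda in $\PrSt$ yields the natural equivalence $\Sp \otimes \mathscr C \simeq \Sp(\mathscr C)$.

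The main obstacle is the identification of the essential image of $\Sp \otimes (-)$ with $\PrSt$: one must verify that admitting a compatible $\Sp$-module structure in $\PrL$ is equivalent to stability. This requires care to avoid circularity, since the universal property of $\Sp$ that we are leveraging is phrased in terms of maps \emph{into} stable targets; in practice one argues by showing that the unit map $\mathscr C \to \Sp \otimes \mathscr C$ forces the suspension--loop adjunction on $\Sp \otimes \mathscr C$ to be an equivalence. Every other step is a direct manipulation of universal properties combined with the propositions already in hand.
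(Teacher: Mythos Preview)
The paper does not supply its own proof of this proposition; it is simply cited from \cite{lurie2014higher}*{Example 4.8.1.23}, where Lurie computes directly using the description of $\Sp$ as the colimit in $\PrL$ of the tower $\mathscr S_*\xrightarrow{\Sigma}\mathscr S_*\xrightarrow{\Sigma}\dotsb$ and the identification $\mathscr S_*\otimes\mathscr C\simeq\mathscr C_*$. Your corepresentability approach is a legitimate alternative and the outline is sound, but one step deserves more care: when you ``extend this comparison to all presentable $\mathscr D$'' to conclude $\Sp\otimes\Sp\simeq\Sp$, you are implicitly using that $\LFun(\Sp,\mathscr D)$ is stable for \emph{every} presentable $\mathscr D$, so that Proposition~\ref{prop:Sigma infinity}(3) applies to the outer hom. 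This is true (for instance because $\LFun(\Sp,\mathscr D)\simeq\RFun(\mathscr D,\Sp)^\op$ and the latter sits inside the stable $\Fun(\mathscr D,\Sp)$ as a full subcategory closed under finite limits and colimits), but it is not the statement of Proposition~\ref{prop:Sigma infinity}(3) and should be stated rather than absorbed into a parenthetical.

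Note also a structural point: in the paper, Corollary~\ref{cor:PrSt} (that $(\Sp,S)$ is idempotent) is \emph{deduced from} the present proposition. Your argument runs in the opposite direction, establishing idempotence of $\Sp$ as an intermediate step toward $\Sp\otimes\mathscr C\simeq\Sp(\mathscr C)$. Inserted as-is, your proof would render the paper's proof of Corollary~\ref{cor:PrSt} redundant; this is not a mathematical error, but you should be aware that the dependency is inverted relative to the surrounding text.
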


\begin{cor}\label{cor:PrSt} The pair $(\Sp,S)$ is idempotent.

\end{cor}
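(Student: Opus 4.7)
The plan is to apply the characterization of idempotent objects given in Proposition \ref{prop:idempotent object}. By the definition of an idempotent pair, we must show that the colimit-preserving functor $F : \mathscr{S} \to \Sp$ determined by $F(*) = S$ is an idempotent object of $\PrL$; this $F$ is precisely $\Sigma^\infty_+$, by the universal property of $\mathscr{S}$ as the free presentable quasi-category on one generator together with Proposition \ref{prop:Sigma infinity}(2). Equivalently, by Proposition \ref{prop:idempotent object}(2), it suffices to verify that the natural transformation $\id_{\PrL} \to (\Sp \otimes -)$ induced by $F$ exhibits $\Sp \otimes (-)$ as a localization endofunctor of $\PrL$.

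First I would rewrite $\Sp \otimes -$ via Proposition \ref{prop:Sp tensor} as the stabilization functor $\mathscr{C} \mapsto \Sp(\mathscr{C})$, and check that under this identification the natural transformation induced by $F$ is, componentwise, the unit $\Sigma^\infty_+ : \mathscr{C} \to \Sp(\mathscr{C})$. This follows from naturality in $\mathscr{C}$ of the equivalence $\Sp \otimes \mathscr{C} \simeq \Sp(\mathscr{C})$: tensoring $F : \mathscr{S} \to \Sp$ on the right with $\id_\mathscr{C}$ and then applying Proposition \ref{prop:Sp tensor} recovers, at $\mathscr{C}$, the map $\mathscr{C} \simeq \mathscr{S} \otimes \mathscr{C} \to \Sp \otimes \mathscr{C} \simeq \Sp(\mathscr{C})$ that is universal among colimit-preserving functors from $\mathscr{C}$ into stable presentable quasi-categories, i.e.\ $\Sigma^\infty_+$.

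Next I would invoke Proposition \ref{prop:Sigma infinity}(3), which says precisely that for every presentable stable $\mathscr{D}$, precomposition with $\Sigma^\infty_+$ gives an equivalence $\LFun(\Sp(\mathscr{C}),\mathscr{D}) \simeq \LFun(\mathscr{C},\mathscr{D})$. In other words, $\Sp(-) : \PrL \to \PrSt$ is left adjoint to the inclusion $\PrSt \hookrightarrow \PrL$, with unit $\Sigma^\infty_+$. Since the inclusion $\PrSt \hookrightarrow \PrL$ is fully faithful, the composite endofunctor $\Sp(-) : \PrL \to \PrL$ is a localization functor in the sense of Proposition \ref{prop:idempotent object}(2). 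Combining this with the identification of the preceding paragraph verifies the hypothesis of Proposition \ref{prop:idempotent object}, so $F$ is an idempotent object of $\PrL$ and $(\Sp, S)$ is an idempotent pair.

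The only step that requires genuine care is the identification in the second paragraph, namely checking that the natural transformation $\id_{\PrL} \to (\Sp \otimes -)$ arising from $F$ coincides, naturally in $\mathscr{C}$, with $\Sigma^\infty_+$ under Proposition \ref{prop:Sp tensor}; this amounts to a compatibility between the symmetric monoidal structure on $\PrL$ and the stabilization construction, which can be pinned down by evaluating both transformations at the unit $\mathbf{1}_{\mathscr{S}^\fin_*} = S^0$ of $\mathscr{S}^\fin_*$. Once this compatibility is established, the rest of the argument is an immediate consequence of the universal property of $\Sigma^\infty_+$.
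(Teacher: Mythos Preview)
Your proposal is correct and follows essentially the same approach as the paper: both reduce via Proposition \ref{prop:idempotent object} to showing that $\Sp\otimes(-)$ is a localization endofunctor of $\PrL$, identify it with $\Sp(-)$ via Proposition \ref{prop:Sp tensor}, and then invoke Proposition \ref{prop:Sigma infinity}(3). You are in fact more careful than the paper about one point the paper leaves implicit, namely that the natural transformation $\id_{\PrL}\to\Sp\otimes(-)$ coming from tensoring with $F=\Sigma^\infty_+$ agrees, under the identification of Proposition \ref{prop:Sp tensor}, with the componentwise unit $\Sigma^\infty_+:\mathscr C\to\Sp(\mathscr C)$.
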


\begin{proof} By Proposition \ref{prop:idempotent object} it is enough to show that the endofunctor $\PrL\xrightarrow{L(\mathscr C)=\Sp\otimes\mathscr C}\PrL$ is a localization functor. By the Proposition it is equivalent to the endofunctor $\PrL\xrightarrow{L'(\mathscr C)=\Sp(\mathscr C)}\PrL$. By Proposition \ref{prop:Sigma infinity} (3) the map $\mathscr S\xrightarrow{\Sigma^\infty_+}\Sp$ induces a natural transformation $\id_{\PrL}\to L'$ which exhibits $L'$ as a localization functor.
 
\end{proof}

\begin{prop}\label{prop:stable iff Sp}\cite{lurie2014higher}*{1.4.2.21} Let $\mathscr C$ be a quasi-category. Then it is stable if and only if the map $\Sp\mathscr C\xrightarrow{\Omega^\infty}\mathscr C$ is an equivalence.

\end{prop}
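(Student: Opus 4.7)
The plan is to establish both directions of the equivalence. The easier direction ($\Leftarrow$) relies on the fact that $\Sp(\mathscr C)$ is intrinsically stable whenever $\mathscr C$ has finite limits: one verifies that it has a zero object (the zero functor), admits finite limits (computed pointwise and preserved by $\Exc_*$), and — the crucial point — that its loop endofunctor $\Omega_{\Sp(\mathscr C)} F = \Omega \circ F$ is an equivalence. The quasi-inverse is essentially $F \mapsto F \circ \Sigma$: excisiveness applied to the pushout $X \to * \to \Sigma X$ yields natural equivalences $F(X) \simeq \Omega F(\Sigma X)$ in $\mathscr C$, which exhibits $\Omega_{\Sp(\mathscr C)}$ as invertible. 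A pointed quasi-category with finite limits on which $\Omega$ is an equivalence is stable. Consequently, if $\Omega^\infty$ is an equivalence, then $\mathscr C$ inherits stability from $\Sp(\mathscr C)$.

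For the main direction ($\Rightarrow$), assume $\mathscr C$ is stable. First observe that $\mathscr C$ is pointed and its loop functor $\Omega$ is an equivalence: for any $X \in \mathscr C$, the object $\Omega X$ is simultaneously the pullback $* \times_X *$ and — by the stability axiom equating pullback and pushout squares — the pushout $* \sqcup_X *$, so $\Omega$ admits $\Sigma$ as a quasi-inverse. Next, I would exhibit $\Omega^\infty$ as an equivalence by supplying a quasi-inverse using the description
$$
\Sp(\mathscr C) \;\simeq\; \lim\bigl(\dotsb \xrightarrow{\Omega} \mathscr C \xrightarrow{\Omega} \mathscr C \xrightarrow{\Omega} \mathscr C\bigr).
$$
This comes from the observation that a reduced excisive functor $F \colon \mathscr S^{\fin} \to \mathscr C$ factors through $\mathscr S^{\fin}_*$ and is determined by the sequence of values $F(S^n)$ together with the structural equivalences $F(S^n) \simeq \Omega F(S^{n+1})$ obtained by applying excisiveness to the pushout $S^n \to * \to S^{n+1}$ in $\mathscr S^{\fin}_*$. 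Since $\Omega$ is an equivalence on the stable $\mathscr C$, every transition map in this tower is an equivalence, so each projection out of the limit is an equivalence; the projection to the $0$-th coordinate is precisely $\Omega^\infty$, finishing the proof.

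The main obstacle is justifying the limit description $\Sp(\mathscr C) \simeq \lim(\dotsb \xrightarrow{\Omega} \mathscr C \xrightarrow{\Omega} \mathscr C)$, which does not itself require stability and is a structural fact about excisive calculus over pointed finite spaces: reduced excisive functors on $\mathscr S^{\fin}$ are freely generated by their value on $S^0$ together with compatible delooping data. The forward map (producing the tower from an $F$) is immediate from excisiveness; the reverse (reconstructing $F$ from a coherent tower) relies on the fact that $\mathscr S^{\fin}_*$ is generated under finite colimits by the spheres $S^n$ and that reduced excisive functors extend uniquely along such colimits by sending them to the corresponding finite limits in $\mathscr C$. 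Once this limit description is in hand, both implications of the proposition follow essentially formally from the two observations that $\Sp(\mathscr C)$ is always stable and that stability is equivalent to invertibility of $\Omega$ on a pointed quasi-category with finite limits.
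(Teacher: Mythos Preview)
The paper does not supply its own proof of this proposition: it is stated with a citation to \cite{lurie2014higher}*{1.4.2.21} and no argument is given. There is therefore nothing in the paper to compare your proposal against.

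That said, your outline is the standard one and matches Lurie's treatment: the backward implication uses that $\Sp(\mathscr C)$ is always stable (via invertibility of its loop functor), and the forward implication uses the identification of $\Sp(\mathscr C)$ with the inverse limit of the tower $\dotsb\xrightarrow{\Omega}\mathscr C\xrightarrow{\Omega}\mathscr C$, from which $\Omega^\infty$ is the projection to the bottom stage. One small remark: the statement as written in the paper omits the standing hypothesis that $\mathscr C$ has finite limits (needed for $\Sp(\mathscr C)$ to be defined and for the argument to go through); your proof correctly assumes this implicitly.
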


\begin{defn} The \emph{quasi-category of presentable stable quasi-categories} is the full subcategory $\PrSt\subseteq\PrL$ on presentable stable quasi-categories. By Propositions \ref{prop:Sp tensor} and \ref{prop:stable iff Sp} it is the essential image of the localization functor $\PrL\xrightarrow{L(\mathscr C)=\Sp\otimes\mathscr C}\PrL$. By Corollary \ref{cor:PrSt} it admits a symmetric model structure such that the following assertions hold:
\begin{enumerate}
 \item The quasi-category $\Sp$ of spectra is a unit object of the symmetric monoidal quasi-category $(\PrSt)^\otimes$.
 \item The natural inclusion map $(\PrSt)^\otimes\to(\PrL)^\otimes$ is symmetric monoidal.
\end{enumerate}
By Proposition \ref{prop:idempotent object and commutative algebra} the quasi-category $\Sp$ of spectra admits a symmetric monoidal structure for which the sphere spectrum $S$ is a unit object. We refer to the tensor product as the \emph{smash product of spectra}.

\end{defn}

\begin{prop}\label{prop:compactly generated is dualizable}\cite{lurie2018spectral}*{Proposition D.7.2.3} Let $\mathscr C$ be a compactly generated stable quasi-category. Then it is a dualizable object of the symmetric monoidal quasi-category $\PrSt$.

\end{prop}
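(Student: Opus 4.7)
The plan is to reduce the claim to a statement about small stable idempotent-complete $\infty$-categories, where duality becomes essentially formal. Since $\mathscr C$ is compactly generated and stable, the inclusion $\mathscr C^\omega \hookrightarrow \mathscr C$ of the full subcategory of compact objects extends to an equivalence $\Ind(\mathscr C^\omega) \xrightarrow{\simeq} \mathscr C$, and $\mathscr C^\omega$ is a small stable idempotent-complete $\infty$-category. I would then form the candidate dual
\[
\mathscr C^\vee := \Ind\bigl((\mathscr C^\omega)^{\op}\bigr),
\]
which is again a compactly generated stable $\infty$-category.

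Next, I would recall the symmetric monoidal refinement of $\Ind$: the functor $\Ind$ determines a symmetric monoidal equivalence between the $\infty$-category $\Cat^{\perf}_\infty$ of small stable idempotent-complete $\infty$-categories (with exact functors, and Lurie's tensor product) and the full subcategory of $\PrSt$ on compactly generated stable $\infty$-categories with morphisms the colimit- and compactness-preserving functors; under this equivalence one has $\Ind(\mathscr A \otimes \mathscr B) \simeq \Ind(\mathscr A)\otimes \Ind(\mathscr B)$ and $\Ind(\Sp^\omega) \simeq \Sp$. It therefore suffices to exhibit $\mathscr C^\omega$ as dualizable in $\Cat^{\perf}_\infty$ with dual $(\mathscr C^\omega)^{\op}$. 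For this, I would take as evaluation the mapping spectrum functor
\[
\ev\colon (\mathscr C^\omega)^{\op} \otimes \mathscr C^\omega \longrightarrow \Sp^\omega, \qquad (X,Y)\longmapsto \mathrm{map}_{\mathscr C^\omega}(X,Y),
\]
which lands in compact spectra because $X$ is compact and $Y$ is dualizable-looking on the nose (more precisely, the mapping spectrum of two compact objects in a stable compactly generated $\infty$-category is a finite spectrum up to the usual care). The coevaluation
\[
\coev\colon \Sp^\omega \longrightarrow \mathscr C^\omega \otimes (\mathscr C^\omega)^{\op}
\]
would be constructed as the exact functor extending $S \mapsto \id_{\mathscr C^\omega}$, viewing the identity bimodule as an element of the universal target under the bimodule description of the tensor product in $\Cat^{\perf}_\infty$.

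With these two maps in hand, verifying the triangle identities reduces, by $\Ind$-completion and because both sides of each composite are exact between compactly generated stable $\infty$-categories, to checking that the composites
\[
\mathscr C^\omega \xrightarrow{\coev \otimes \id} \mathscr C^\omega \otimes (\mathscr C^\omega)^{\op} \otimes \mathscr C^\omega \xrightarrow{\id \otimes \ev} \mathscr C^\omega
\]
and its opposite-side partner are naturally equivalent to the identity. By a Yoneda-type argument, this can be tested object-by-object on compact generators; on a compact object $X$, the identity is exhibited by the tautology that mapping out of $X$ and then evaluating on $X$ recovers $X$ itself.

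The main obstacle will be the bookkeeping: making precise the symmetric monoidal equivalence between compactly generated stable $\infty$-categories and $\Cat^{\perf}_\infty$ (and checking it is compatible with units), and giving the coevaluation $\coev$ its rigorous definition via the universal property of the tensor product as a category of bilinear (exact in each variable) functors, so that identifying the composites with the identity is genuinely a consequence of the Yoneda lemma in the enriched setting. Everything else is formal manipulation, and the statement follows once these compatibilities are in place.
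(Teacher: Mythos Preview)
The paper does not give its own proof of this statement; it simply cites \cite{lurie2018spectral}*{Proposition D.7.2.3}. So the comparison is really with Lurie's argument, which does work directly in $\PrSt$ with the candidate dual $\Ind((\mathscr C^\omega)^{\op})$, exactly as you begin.

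There is, however, a genuine gap in your reduction. You try to reduce to showing that $\mathscr C^\omega$ is dualizable in $\Cat_\infty^{\perf}$, and for the evaluation you assert that the mapping spectrum $\mathrm{map}_{\mathscr C^\omega}(X,Y)$ between compact objects lies in $\Sp^\omega$. This is false in general: take $\mathscr C=\Mod_R$ for an $\mathbf E_1$-ring spectrum $R$; then $R$ is compact in $\Mod_R$ and $\mathrm{map}(R,R)\simeq R$, which is almost never a finite spectrum. In fact, objects of $\Cat_\infty^{\perf}$ are typically \emph{not} dualizable there; dualizability in $\Cat_\infty^{\perf}$ is the conjunction of smoothness and properness, a strong condition. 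So the step ``it therefore suffices to exhibit $\mathscr C^\omega$ as dualizable in $\Cat_\infty^{\perf}$'' is a sufficient condition you cannot verify.

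The fix is to drop the reduction and run your construction directly in $\PrSt$, where the unit is $\Sp$ rather than $\Sp^\omega$. The bi-exact mapping-spectrum functor $(\mathscr C^\omega)^{\op}\times\mathscr C^\omega\to\Sp$ induces, via $\Ind$ and the identification $\Ind(\mathscr C^\omega)^{\op}\otimes\Ind(\mathscr C^\omega)\simeq\Ind((\mathscr C^\omega)^{\op}\otimes\mathscr C^\omega)$, a colimit-preserving evaluation $\mathscr C^\vee\otimes\mathscr C\to\Sp$. The coevaluation $\Sp\to\mathscr C\otimes\mathscr C^\vee$ is then the colimit-preserving functor sending $S$ to the object corresponding to the Yoneda embedding under $\mathscr C\otimes\mathscr C^\vee\simeq\Fun^{\mathrm{ex}}(\mathscr C^\omega,\Ind(\mathscr C^\omega))$; crucially, this target is an $\Ind$-category, so there is no finiteness obstruction. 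The triangle identities are then checked by the Yoneda argument you sketch. This is essentially Lurie's proof; your outline becomes correct once you stop insisting the evaluation factor through $\Sp^\omega$.
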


\begin{rem} Since the natural inclusion map $\PrSt\to\PrL$ is symmetric monoidal, if $\mathscr C\in\PrSt$ is dualizable, then it is also a dualizable object of $\PrL$.

\end{rem}

\subsection{The underlying quasi-category of a monoidal model category}

Let $X$ be a quasi-category. A \emph{system} in $X$ is a collection $W\subseteq X_1$ of morphisms which contains all equivalences, and it is stable under homotopy and composition. The collection of all systems on $X$ forms a poset $\Sys X$. Therefore, we get a map $\Cat_\infty^\op\xrightarrow{X\mapsto N\Sys X}\Cat_\infty$, which classifies a Cartesian fibration $\WCat_\infty\xrightarrow q\Cat_\infty$.
The objects of the quasi-category $\WCat_\infty$ are pairs $(X,W)$ where $X$ is a quasi-category, and $W\in\Sys X$ is a system on $X$. A mapping space $\Map_{\WCat_\infty}((X,W),(X',W'))$ can be identified with the full subcategory of the mapping space $\Map_{\Cat_\infty}(X,X')$ on functors $X\xrightarrow fX'$ such that $f(W)\subseteq f(W')$.

The Cartesian fibration $q$ is the forgetful map $(X,W)\mapsto X$. It admits a section $\Cat_\infty\xrightarrow G\WCat_\infty$ sending a quasi-category $X$ to the pair $(X,W)$, where $W\subseteq X_1$ is the collection of weak equivalences in $X$. The functor $G$ admits a left adjoint $\WCat_\infty\xrightarrow{F(C,W)=C[W^{-1}]}\Cat_\infty$, which moreover commutes with finite products \cite{lurie2014higher}*{Proposition 4.1.3.2}. We get the following universal property.

\begin{prop} Let $(X,W)\in\WCat_\infty$ and let $(X,W)\xrightarrow uGX[W^{-1}]$ denote the unit map. Then for any quasi-category $Y$, the precomposition by $u$ map
$$
\Fun(X[W^{-1}],Y)\xrightarrow{\circ u}\Fun(X,Y)
$$
is fully faithful with essential image the collection of functors $X\xrightarrow fY$ such that $f$ takes all morphisms in $W$ to equivalences in $Y$.

\end{prop}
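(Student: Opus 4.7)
The plan is to deduce the $\Fun$-level universal property of $X[W^{-1}]$ from the $\Map$-level one afforded by the adjunction $F\dashv G$, using crucially that $F$ preserves finite products.

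First I would verify that the precomposition map lands in the claimed subcategory. Since $u$ is the unit of the adjunction $F\dashv G$ at $(X,W)$, it is a morphism $(X,W)\to GF(X,W)=(X[W^{-1}],\mathrm{eq})$ in $\WCat_\infty$, and hence by definition of a morphism in $\WCat_\infty$ sends $W$ into the system of equivalences of $X[W^{-1}]$. Consequently for any $g\in\Fun(X[W^{-1}],Y)$, the composite $g\circ u$ sends $W$ to equivalences in $Y$. Let $\Fun^{W\to\mathrm{eq}}(X,Y)\subseteq\Fun(X,Y)$ denote the full subcategory on such functors. It then suffices to show that $\circ u\colon\Fun(X[W^{-1}],Y)\to\Fun^{W\to\mathrm{eq}}(X,Y)$ is an equivalence of quasi-categories, and by Yoneda it is enough to check this on mapping spaces out of arbitrary simplicial sets $K$.

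Next I would compute $\Map(K,\Fun(X[W^{-1}],Y))$ via a sequence of natural equivalences. By the exponential adjunction $\Map(K,\Fun(X[W^{-1}],Y))\simeq\Map(K\times X[W^{-1}],Y)$. Regarding $K$ as the object $(K,\mathrm{eq})\in\WCat_\infty$, which satisfies $F(K,\mathrm{eq})=K$, and using that $F$ preserves finite products, one obtains $K\times X[W^{-1}]\simeq F((K,\mathrm{eq})\times(X,W))$. The adjunction $F\dashv G$ then yields
\[
\Map(K\times X[W^{-1}],Y)\simeq\Map_{\WCat_\infty}((K,\mathrm{eq})\times(X,W),GY).
\]
Unwinding the definition, the right-hand side is the full sub-Kan-complex of $\Map(K\times X,Y)\simeq\Map(K,\Fun(X,Y))$ on those functors $f\colon K\times X\to Y$ sending the product system on $(K,\mathrm{eq})\times(X,W)$ to equivalences. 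As a system, this product system is generated by the edges $(\mathrm{id}_k,w)$ for vertices $k\in K$ and $w\in W$: any other relevant edge $(\sigma,\tau)$ with $\sigma$ an equivalence in $K$ and $\tau\in W$ factors in $K\times X$ as $(\mathrm{id}_{k_1},\tau)\circ(\sigma,\mathrm{id}_{x_0})$, where the second factor is already an equivalence in $K\times X$. Hence the condition on $f$ is equivalent to requiring, for each vertex $k\in K$, that the restriction $f(k,-)\colon X\to Y$ sends $W$ to equivalences in $Y$, which is in turn the condition that $f\colon K\to\Fun(X,Y)$ factors through the full subcategory $\Fun^{W\to\mathrm{eq}}(X,Y)$.

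This chain of equivalences gives $\Map(K,\Fun(X[W^{-1}],Y))\simeq\Map(K,\Fun^{W\to\mathrm{eq}}(X,Y))$ naturally in $K$; the Yoneda lemma then promotes this to the desired equivalence of quasi-categories $\Fun(X[W^{-1}],Y)\simeq\Fun^{W\to\mathrm{eq}}(X,Y)$, proving full faithfulness and identifying the essential image simultaneously. The main technical hurdle will be the careful bookkeeping of the product system on $(K,\mathrm{eq})\times(X,W)$ and the verification that testing against the generators $(\mathrm{id}_k,w)$ suffices; this rests on the closure of a system under composition and homotopy, together with the fact that all other candidate generators are already equivalences in $K\times X$.
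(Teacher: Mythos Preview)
Your argument is correct and takes a genuinely different route from the paper's. The paper first extracts the $\Map$-level statement directly from the adjunction $F\dashv G$ (using that $G$ is fully faithful), reads off the essential image from the identification $\Map_{\Cat_\infty}(Z,Y)\simeq\Fun(Z,Y)^\simeq$, and then invokes \cite{lurie2009higher}*{Proposition 3.1.3.3} as a black box to obtain full faithfulness at the $\Fun$-level. You instead prove the $\Fun$-level equivalence in one stroke by testing against all $K$ and exploiting that $F$ commutes with finite products (a fact the paper records but does not use in its own proof of this proposition). Your approach is more self-contained and makes transparent \emph{why} the $\Map$-level statement upgrades to $\Fun$: it is precisely the product-preservation of $F$ that lets you absorb the test object $K$ into the localization. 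The paper's approach is shorter on the page but outsources the key step to a citation; yours trades that citation for a direct computation with the product system, which is the honest content of the upgrade.

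Two minor points worth tightening. First, $F(K,\mathrm{eq})\simeq K$ rather than $=K$; this follows from the counit $FG\to\id$ being an equivalence since $G$ is fully faithful. Second, you should note explicitly that your chain of equivalences is compatible with the map $\circ u$ (not merely that source and target are abstractly equivalent), so that Yoneda really identifies $\circ u$ with an equivalence; this is routine once you observe that the adjunction equivalence $\Map(F(-),Y)\simeq\Map_{\WCat_\infty}(-,GY)$ is implemented by precomposition with the unit.
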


\begin{proof} Since the right adjoint $G$ is fully faithful, the precomposition by $u$ map
$$
\Map_{\Cat_\infty}(X[W^{-1}],Y)\xrightarrow{\circ u}\Map_{\Cat_\infty}(X,Y)
$$
is fully faithful with essential image the collection of functors $X\xrightarrow fY$ such that $f$ takes all morphisms in $W$ to equivalences in $Y$. Since for a quasi-category $Z$ the mapping space $\Map_{\Cat_\infty}(Z,Y)$ is equivalent to the largest Kan complex in the functor quasi-category $\Fun(Z,Y)$, we get the statement about the essential image. Fully faithfulness follows from \cite{lurie2009higher}*{Proposition 3.1.3.3}.
 
\end{proof}

Let $\mathbf A$ be a model category. Then the collection $W$ of weak equivalences between cofibrant objects gives a system in the nerve $N\mathbf A^c$ of the full subcategory on cofibrant objects. The \emph{underlying quasi-category of $\mathbf A$} is the localization $N\mathbf A^c[W^{-1}]$.

Suppose that the model structure on $\mathbf A$ is monoidal. Then there is an induced model structure on the full subcategory $\mathbf A^c$ on cofibrant objects. We get a monoidal structure on the nerve $N\mathbf A^c$, which in turn is classified by a monoid object in $M'\in\Mon_{\Assoc}\Cat_\infty$. Since the model structure on $\mathbf A$ is monoidal, the weak equivalences between cofibrant objects are preserved by tensor product, therefore the monoid object can be lifted to $M\in\Mon_{\Assoc}\WCat_\infty$. Since the left adjoint $F$ preserves finite products, it preserves monoid objects. Therefore, the composite $FM$ is a monoid object $FM\in\Mon_{\Assoc}\Cat_\infty$. This equips the underlying quasi-category $N\mathbf A^c[W^{-1}]$ with a monoidal structure. We refer to this as the \emph{underlying monoidal quasi-category of the monoidal model category $\mathbf A$}. It comes equipped with a monoidal functor $N(\mathbf A^c)^\otimes\xrightarrow uN\mathbf A^c[W^{-1}]^\otimes$ which satisfies the following universal property: for every monoidal quasi-category $\mathscr D^\otimes$, the precomposition by $u$ functor
$$
\Fun^\otimes(N\mathbf A^c[W^{-1}]^\otimes,\mathscr D^\otimes)\xrightarrow{\circ u}\Fun^\otimes(N(\mathbf A^c)^\otimes,\mathscr D^\otimes)
$$
is fully faithful with essential image the full subcategory on monoidal functors $N(\mathbf A^c)^\otimes\xrightarrow{f^\otimes}\mathscr D^\otimes$ which take morphisms in $W$ to equivalences in $\mathscr D$ \cite{lurie2014higher}*{Proposition 4.1.7.4}.

\begin{cons}[The underlying presheaf of monoidal quasi-categories of a presheaf of monoidal categories with a system, and its extension via gluing] \label{cons:underlying presheaf} (1) Let $K$ be a category, and let 
$$
K\xrightarrow{k\mapsto(\mathbf C(k)^\otimes,W(k))}(\Mon_{\Assoc}\WCat_\infty)^\op
$$
be a presheaf of monoidal categories with systems. Postcomposition with the opposite of the underlying quasi-category functor $\Mon_{\Assoc}\WCat_\infty\xrightarrow F\Mon_{\Assoc}\Cat_\infty$ yields a presheaf of monoidal quasi-categories 
$$
K\xrightarrow{k\mapsto\mathbf C(k)^\otimes[W(k)^{-1}]}(\Mon_{\Assoc}\Cat_\infty)^\op=(\Cat_\infty^{\Mon})^\op.
$$

(2) This functor can be extended to a colimit-preserving functor $\mathscr P(K)\xrightarrow{\mathscr C^\otimes}(\Cat_\infty^{\Mon})^\op$ in a way that is unique up to homotopy \cite{lurie2009higher}*{Theorem 5.1.5.6}. This means the following. Take a presheaf $X\in\mathscr P(K)$. Then we have $X\simeq\hocolim_{i\in I} h_{k_i}$ for some diagram $I\xrightarrow{k_i}K$ over some small simplicial set $I$ \cite{lurie2009higher}*{Corollary 5.1.5.8}. The presheaf $\mathscr C^\otimes$ satisfies
$$
\mathscr C(X)^\otimes=\holim_{i\in I}\mathscr C(k_i)^\otimes
$$
in $\Cat_\infty^{\Mon}$.

\end{cons}

\subsection{Algebraic Geometry}

\begin{cons} Let $S$ be a scheme. We will now apply Construction \ref{cons:underlying presheaf} to construct the coCartesian family $\QC_S^\otimes$ of the monoidal quasi-categories of unbounded complexes of quasicoherent sheaves on the opposite of the quasi-category $\St_S$ of $\infty$-stacks on the big fppf site $S_\fppf$. 

The quasi-category $\St_S$ is the localization at fppf-local equivalences of the quasi-category $\mathscr P(\Aff_S)$ of presheaves of spaces on the category $\Aff_S$ on affine $S$-schemes. For an affine $S$-scheme $\Spec A=T\in\Aff_S$, the category $\mathbf C(A)$ of unbounded complexes of $A$-modules can be equipped by the projective model structure, which is a combinatorial and monoidal model structure \cite{lurie2014higher}*{Propositions 7.1.2.8 and 7.1.2.11}. Moreover, for a morphism of $S$-algebras $A\to B$ and a quasi-isomorphism $f$ of dg-projective complexes of $A$-modules, the pullback $f\otimes_AB$ is the derived pullback, and therefore it is also a quasi-isomorphism. This shows that we get a functor
$$
(\Aff_S)^\op\xrightarrow{A\mapsto[\mathbf C(A)_\dgproj^\otimes,\qis],\,(A\to B)\mapsto\otimes_AB}\Mon_{\Assoc}\WCat_\infty.
$$
Since the projective model structure is combinatorial and monoidal, applying Construction \ref{cons:underlying presheaf} we get a functor $\mathscr P(\Aff_S)\xrightarrow{\QC_S^\otimes}\Pr^{\Mon}$ satisfying the following properties.

(1) Let $T=\Spec A$ be an affine $S$-scheme. Then we have $\QC_S(A)^\otimes\simeq\mathbf C(A)_\dgproj[\qis^{-1}]^\otimes$.

(2) Let $T\in\St_S$ be an $\infty$-stack over $S$. Then it is some homotopy colimit $T=\hocolim T_i$ of affine $S$-schemes. We have $\QC_S(T)^\otimes=\holim\QC_S(T_i)^\otimes$ in the quasi-category $\Cat_\infty^{\Mon}$ of monoidal quasi-categories.

\end{cons}

\begin{prop}\label{prop:C for AG} Let $S$ be a quasi-compact and quasi-separated scheme. Then the coCartesian family of presentable monoidal quasi-categories $\QC_S^\otimes\xrightarrow p\St_S^\op\times\Assoc^\otimes$ has fppf descent.

\end{prop}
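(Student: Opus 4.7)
The plan is to verify the two defining conditions for fppf descent of the family $\QC_S^\otimes$: first, that the underlying Cartesian fibration ${}^\op\QC_S\to\St_S$ has fppf descent; and second, that base changes in the family commute with fppf descent data.

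For the first condition, the core input is Lurie's faithfully flat descent theorem, which states that the functor $A\mapsto\mathcal D(A)^\otimes$ from ordinary commutative rings to the quasi-category $\Pr^{\Mon}$ of presentable monoidal quasi-categories satisfies fpqc, hence fppf, descent; see \cite{lurie2014higher}*{\S 7.2.2}. Since $\QC_S^\otimes$ is obtained from the affine presheaf $A\mapsto\mathbf C(A)^\otimes_{\dgproj}[\mathrm{qis}^{-1}]$ by the colimit-preserving extension in Construction \ref{cons:underlying presheaf}, descent on $\St_S$ reduces formally to descent on affines: given an fppf covering $U\to X$ with \v Cech nerve $\Bar U_\bullet$, I would refine $X$ and each $U_n$ by affine atlases compatibly and invoke the affine descent result level by level. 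The quasi-compactness and quasi-separatedness of $S$ enters here to ensure that the atlases can be chosen with enough finiteness to make the comparison of limits unambiguous.

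For the second condition, I would appeal to Corollary \ref{cor:dualizable commutes with descent}, which, combined with the first condition, reduces the problem to showing that $\QC_S(U)\in\PrL$ is dualizable for every $U\in\St_S$. When $U=\Spec A$ is affine, $\QC_S(U)\simeq\mathcal D(A)$ is stable and compactly generated by the structure sheaf, hence dualizable by Proposition \ref{prop:compactly generated is dualizable}. For general $U\in\St_S$, one writes $\QC_S(U)$ as a limit in $\PrL$ of such compactly generated quasi-categories along an affine presentation of $U$, and uses that the full subcategory of dualizable objects of $\PrL$ is closed under small limits. Putting these pieces together yields the hypotheses of Corollary \ref{cor:dualizable commutes with descent}, which delivers (ii) and completes the proof.

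The main obstacle is the dualizability claim for non-affine $U\in\St_S$. Compact generation of $\mathcal D(A)$ does not descend naively along the large limits that define $\QC_S(U)$ for a general stack, so one must work with the strictly weaker, but more robust, notion of dualizability and verify its stability under the relevant limits in $\PrL$. If a sufficiently general closure principle of this kind is unavailable in the form needed, a fall-back is to restrict attention to a suitable subsite of $\St_S$ consisting of qcqs algebraic stacks, where dualizability follows from classical compact generation; the desired descent over all of $\St_S$ can then be recovered from condition (i) by formal manipulations with Kan extensions.
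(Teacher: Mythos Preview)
Your overall strategy matches the paper's: verify that the underlying fibration has fppf descent (via Lurie's faithfully flat descent theorem), then use compact generation $\Rightarrow$ dualizability (Proposition~\ref{prop:compactly generated is dualizable}) together with Corollary~\ref{cor:dualizable commutes with descent} to obtain the second condition. The paper's proof is exactly this, only terser: it cites Bondal--Van den Bergh for compact generation, Lurie for fppf descent, and then invokes Proposition~\ref{prop:compactly generated is dualizable} and Corollary~\ref{cor:dualizable commutes with descent} in one line.

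Where you diverge from the paper is in attempting to establish dualizability of $\QC_S(U)$ for \emph{every} $U\in\St_S$. The paper does not do this; it works directly with quasi-compact quasi-separated schemes, for which Bondal--Van den Bergh gives compact generation and hence dualizability. Your first proposed fix---that dualizable objects of $\PrL$ are closed under small limits---is not a standard fact and is in general false: dualizability is preserved by retracts and tensor products, but an inverse limit of compactly generated (hence dualizable) presentable categories need not be dualizable. So that route does not go through. Your fallback, restricting to a subsite of qcqs objects where compact generation is available, is the correct move and is essentially what the paper does implicitly via the Bondal--Van den Bergh citation.

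A smaller point: your explanation of where the qcqs hypothesis on $S$ enters (``to ensure that the atlases can be chosen with enough finiteness'' in the descent argument) misplaces its role. The qcqs hypothesis is what makes Bondal--Van den Bergh applicable, i.e.\ it is needed for compact generation of $\QC$ and hence for dualizability, not for the descent step itself.
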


\begin{proof} The Cartesian fibration of unbounded complexes of quasi-coherent sheaves is compactly generated \cite{bondal2003generators}*{Theorem 3.1.1} satisfies fppf descent \cite{lurie2011spectral}*{Corollary 6.13}, \cite{lurie2014higher}*{Theorem 7.1.2.13}. Therefore, Proposition \ref{prop:compactly generated is dualizable} and Corollary \ref{cor:dualizable commutes with descent} show that $p$ has fppf descent.

\end{proof}

\begin{cor}[Homotopical Skolem--Noether Theorem for schemes]\label{cor:Skolem--Noether for schemes} Let $S$ be a quasi-compact and quasi-separated scheme. Let $\Cart_S^\fppf$ denote the quasi-category of Cartesian fibrations on $\St_S$ which satisfy fppf descent.

(1) Let ${}^\op\TPerf_S:={}^\op(\QC_S)_\dgen$ denote the Cartesian fibration of totally supported perfect complexes on $S$, ${}^\op\Deraz_S:={}^\op\Az\QC_S$ the Cartesian fibration of derived Azumaya algebras on $S$ and ${}^\op\Dg^{\Az}_S:={}^\op\LTens^{\Az}\QC_S$ the Cartesian fibration of locally trivial presentable quasi-categories left-tensored over $\QC_S^\otimes$. Then the sequence in $(\Cart_S^\fppf)_*$:
$$
({}^\op\TPerf_S,\mathscr O)\xrightarrow{\End}({}^\op\Deraz_S,\mathscr O)\xrightarrow{\Mod}({}^\op\Dg^{\Az}_S,\mathscr D)
$$
is a homotopy fibre sequence.

(2) Let $E\in\TPerf(S)$ be a totally supported perfect complex on $S$. Then the sequence in $(\Cart_S^\fppf)_*$:
$$
(\B\mathbf G_m\times\mathbf Z,\mathscr O)\xrightarrow{\otimes E}({}^\op\TPerf_S,E)\xrightarrow{\End}({}^\op\Deraz_S,\End E)
$$
is a homotopy fibre sequence.

(3) We have isomorphisms of sheaves of groups
$$
\pi_i\Omega({}^\op\TPerf_S,E)\cong\pi_i\Omega({}^\op {\Deraz_S},\REnd E)
$$
for $i>0$, a short exact sequence of sheaves of groups
$$
1\to\mathbf G_m\xrightarrow{a\mapsto a\cdot}\Aut_{\Perf}E\xrightarrow{\Ad}\Aut_{\Deraz}(\REnd E)\to1,
$$
and an exact sequence of pointed sheaves of sets
$$
*\to\pi_0(\B\mathbf G_m\times\mathbf Z)\xrightarrow{\otimes E}\pi_0\TPerf_S\xrightarrow{\REnd}\pi_0\Deraz_S\xrightarrow{\Mod}\pi_0\Dg^{\Az}_S\to*.
$$

\end{cor}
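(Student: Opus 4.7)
The strategy is to deduce the corollary from the general Homotopical Skolem--Noether Theorem \ref{thm:HSN} applied to $K=\St_S$ with the fppf topology and $\mathscr C^\otimes=\QC_S^\otimes$, and then extract part (3) from the resulting long exact sequence after two identifications and one local injectivity argument.

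First I would verify the hypotheses of Theorem \ref{thm:HSN}. Proposition \ref{prop:C for AG} establishes that $\QC_S^\otimes$ has fppf descent whenever $S$ is quasi-compact and quasi-separated, so the theorem applies with final object $S\in\St_S$. To match the notation in the corollary, two identifications are needed: $(\QC_S)_\dgen\simeq\TPerf_S$ and $\Pic\QC_S\simeq\B\mathbf G_m\times\mathbf Z$. The first follows because a complex is dualizable in $\QC$ iff it is perfect, and a perfect complex is a generator iff it is totally supported (standard; the $\dgen$ condition of Definition \ref{defn:dual} unwinds to these two properties via the Homotopical Morita Theorem \ref{lem:Az alg mod}). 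The second was sketched in the introduction: the Homotopical Eilenberg--Watts Theorem identifies $\Pic\QC_S\simeq\Aut_{\Dg}\QC_S$, and invertibility of a complex implies it is Zariski-locally a shift of a line bundle, giving the product decomposition into $\B\mathbf G_m$ (line bundles) and the discrete $\mathbf Z$ (shift). With these identifications in hand, parts (1) and (2) of the corollary are direct specializations of Theorem \ref{thm:HSN}(1) and (2).

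For part (3), I would start from the long exact sequence produced by Theorem \ref{thm:HSN}(3) applied to the chosen totally supported perfect complex $E\in\TPerf(S)$:
\begin{multline*}
\dotsb\to\pi_2({}^\op\Deraz_S,\REnd E)\to\pi_1({}^\op\Pic\QC_S,\mathscr O)\to\pi_1({}^\op\TPerf_S,E)\to\pi_1({}^\op\Deraz_S,\REnd E)\to\\
\to\pi_0({}^\op\Pic\QC_S)\to\pi_0({}^\op\TPerf_S)\to\pi_0({}^\op\Deraz_S)\to\pi_0({}^\op\Dg^{\Az}_S)=*.
\end{multline*}
Using the identification $\Pic\QC_S\simeq\B\mathbf G_m\times\mathbf Z$, the homotopy sheaves on the left become $\pi_0=\mathbf Z$, $\pi_1=\mathbf G_m$, and $\pi_i=0$ for $i\ge 2$. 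To split this into the advertised isomorphisms for $i\ge 2$ and the short exact sequence at $\pi_1$, it suffices to show that the multiplication map $\mathbf G_m\to\pi_1({}^\op\TPerf_S,E)=\Aut_{\Perf}(E)$ is injective as a map of sheaves on $\St_S^\fppf$.

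The main obstacle is exactly this injectivity of $\mathbf G_m\hookrightarrow\Aut_{\Perf}(E)$. I would argue pointwise: total support of $E$ means that over any affine $U\to S$ and any point $x\in U$ there is some cohomology sheaf $\mathscr H^i(E)$ whose stalk at $x$ is nonzero. A local section $a\in\mathbf G_m(U)$ that becomes homotopic to $\id_E$ in $\Perf(U)$ must act as multiplication by $a$ on each $\mathscr H^i(E)$; evaluating at $x$ and using the nonvanishing stalk forces $a_x=1$, so $a=1$ on a neighborhood, whence on $U$. Once this injectivity is established, the long exact sequence breaks up by the usual diagram chase into the isomorphisms $\pi_i\Omega({}^\op\TPerf_S,E)\cong\pi_i\Omega({}^\op\Deraz_S,\REnd E)$ for $i\ge 1$ (using vanishing of $\pi_{\ge 2}(\B\mathbf G_m\times\mathbf Z)$), the short exact sequence in degree one (the Derived Skolem--Noether statement), and the $\pi_0$-exact sequence at the bottom (with the $*$ on the right supplied by the surjectivity assertion already included in Theorem \ref{thm:HSN}).
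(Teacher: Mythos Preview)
Your approach is essentially the same as the paper's: apply Theorem \ref{thm:HSN} via Proposition \ref{prop:C for AG}, identify $(\QC_S)_\dgen$ with $\TPerf_S$ and $\Pic\QC_S$ with $\B\mathbf G_m\times\mathbf Z$, and then read off part (3) from the long exact sequence using the vanishing of $\pi_{\ge2}(\B\mathbf G_m\times\mathbf Z)$ together with injectivity of $\mathbf G_m\to\Aut_{\Perf}E$.

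There is, however, one missing step. To get both the surjectivity on the right of the short exact sequence
\[
1\to\mathbf G_m\to\Aut_{\Perf}E\to\Aut_{\Deraz}(\REnd E)\to1
\]
and the $*$ on the left of the $\pi_0$ exact sequence, you also need the map $\mathbf Z=\pi_0(\B\mathbf G_m\times\mathbf Z)\xrightarrow{\otimes E}\pi_0\TPerf_S$ to be injective; otherwise the connecting map $\pi_1({}^\op\Deraz_S,\REnd E)\to\pi_0(\B\mathbf G_m\times\mathbf Z)$ could be nonzero. The paper checks this explicitly: $\pi_0\B\mathbf G_m$ is trivial, so one must only see that $E\simeq E[n]$ forces $n=0$, which follows because a nonzero perfect complex has bounded nontrivial cohomology. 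You should add this sentence; with it, your argument matches the paper's.
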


\begin{proof} By Proposition \ref{prop:C for AG}, the family $\QC_S^\otimes$ satisfies the assumptions of Theorem \ref{thm:HSN}. Let $T$ be an $S$-scheme and $E\in\Perf(T)$ a perfect complex on $T$. Then $T$ is a generator if and only if it is totally supported \cite{thomason1997classification}*{Lemma 3.14}, \cite{lurie2014higher}*{Corollary 1.4.4.2}. Since being a dualizable generator is a local property, this shows that the full subcategories $\TPerf_S$ and $(\QC_S)_\dgen$ of $\QC_S$ agree. Moreover, $E$ is an invertible element of $\QC(T)$ if and only if $E$ is of the form $\mathscr L[n]$ for an invertible sheaf $\mathscr L$ and an integer $n\in\mathbf Z$. This shows that the inclusion $\B\mathbf G_m\times\mathbf Z\to{}^\op\Pic\QC_S$ is an equivalence. We have $\pi_i(\B\mathbf G_m\times\mathbf Z)=0$ for $i>1$, and if $E$ is totally supported, then the map $\mathbf G_m\xrightarrow{a\mapsto a\cdot}\Aut_{\Perf}E$ is injective. Finally, the map $\pi_0(\B\mathbf G_m\times\mathbf Z)\xrightarrow{\otimes E}\pi_0\TPerf_S$ is injective, because the sheaf $\pi_0\B\mathbf G_m$ is trivial, and for an integer $n\in\mathbf Z$, if we have $E\simeq E[n]$, then we get $n=0$. 

\end{proof}

\begin{rem} Let $E=\mathscr O_S^{\oplus n}$. Then $E$ is totally supported. The short exact sequence
$$
1\to\mathbf G_m\to\Aut E\to\Aut\End E\to1
$$
is the one in the classical Skolem--Noether Theorem \cite{giraud1971cohomologie}*{V, Lemme 4.1}:
$$
1\to\mathbf G_m\to\GL_n\to\PGL_n\to1.
$$
 
\end{rem}

\begin{rem} Let's show how this result implies Lieblich's Derived Skolem--Noether Theorem \cite{lieblich2009compactified}*{Theorem 5.1.5}. Let $T$ be an $S$-scheme, and $E,F$ two nonzero perfect complexes on $T$. The \emph{annihilator $\Ann E$} of $E$ is the kernel of the scalar multiplication map $\mathscr O_T\to\End E$. It is the ideal sheaf of the support of $E$. Note that this shows $\Supp E=\Supp\REnd E$. We let $\mathscr O_E=\mathscr O_T/\Ann E$. 

We need to show that there exists a unique integer $n\in\mathbf Z$ such that the map of sheaves
$$
\pi_0\Isom_{\Perf(T)}(E[n],F)\to\Isom_{\Deraz(T)}(\REnd E,\REnd F)
$$
is surjective, with each fibre being an $\mathscr O_E^\times$-torsor, which in case $E=F$ is split.

I) Suppose first that $E$ is totally supported. Take a zigzag of weak equivalences of algebras $\phi:\REnd E\simeq\REnd F$. Then it determines a $T$-point of the homotopy fibre product and thus we get a dashed arrow in the following diagram:
\begin{center}

\begin{tikzpicture}[scale=2]
\node (T) at (-1, 2) {$T$};
\node (A) at (0,1) {$\B\mathbf G_m\times\mathbf Z$};
\node (A') at (0,0) {$T$};
\node (B) at (1,1) {$\TPerf_T$};
\node (B') at (1,0) {$\Deraz_T.$};
\node at (.5,.5) {$\lrcorner^{\mathrm h}$};
\path[->,font=\scriptsize,>=angle 90]
(T) edge [bend left] node [above right] {$F$} (B)
(T) edge [bend right] (A')
(T) edge [dashed] node [above right] {$\mathscr L[n]$} (A)
(A) edge node [above] {$\otimes E$} (B)
(A) edge (A')
(B) edge node [right] {$\REnd$} (B')
(A') edge node [above] {$\REnd E$} (B');
\end{tikzpicture}

\end{center}
That is, the equivalence of algebras $\phi$ is homotopical to the image by $\REnd$ of an equivalence of perfect complexes $E\otimes\mathscr L[n]\simeq F$. We get a preimage of the required form if we restrict to a trivializing cover of the invertible sheaf $\mathscr L$. The unicity of $n$ follows from the injectivity of the map
$$
\B\mathbf G_m\times\mathbf Z\to\pi_0\TPerf_T.
$$
The rest of the statement follows from the short exact sequence
$$
1\to\mathbf G_m\to\pi_0\Aut_{\Perf_T}(E)\to\pi_0\Aut_{\Deraz_T}(\REnd E)\to1.
$$

II) In the general case, we can push forward from $\Supp E=\Supp\REnd E=\Supp\REnd F=\Supp F$.

\end{rem}

\begin{app}\label{app:generalized Azumaya}
Let $X\xrightarrow fS$ be a proper and smooth morphism of algebraic spaces. In \cite{lieblich2009compactified}, Lieblich compactifies the stack $f_*B\PGL_n$ of families of principal $\PGL_n$-bundles the following way. Using the version of the Skolem--Noether theorem
$$
1\to\boldsymbol\mu_n\to\SL_n\to\PGL_n\to1,
$$
we get that the natural map $B\SL_n\fatslash\boldsymbol\mu_n\to B\PGL_n$ is an equivalence. Here, $B\SL_n\fatslash\boldsymbol\mu_n$ is the \emph{rigidification}, that is the target of the universal morphism $B\SL_n\to B\SL_n\fatslash\boldsymbol\mu_n$ which is invariant with respect to the $\boldsymbol\mu_n$-action on $B\SL_n$ given by scalar multiplication \cite{abramovich2003twisted}*{\S5.1}. Let $\mathscr T^{\mathscr O}_{X/S}(n)$ denote the stack of totally supported sheaves with trivialized determinant and rank $n$ at every maximal point. Then one can show that the stack $f_*(\mathscr T^{\mathscr O}_{X/S}(n)\fatslash\boldsymbol\mu_n)$ is a quasi-proper Artin stack \cite{lieblich2009compactified}*{Lemma 4.2.2} such that the natural map $f_*B\PGL_n\to f_*(\mathscr T^{\mathscr O}_{X/S}(n)\fatslash\boldsymbol\mu_n)$ is an open immersion \cite{lieblich2009compactified}*{Lemma 4.2.3}.

To give another description of the objects classified by $f_*(\mathscr T^{\mathscr O}_{X/S}(n)\fatslash\boldsymbol\mu_n)$, Lieblich introduces the notion of \emph{pre-generalized Azumaya algebras}. These are perfect algebra objects $A$ of the derived category $D(X)=\Ho\mathscr D(X)$ such that there exists a covering $U\to X$ and a totally supported perfect sheaf $F$ on $U$ such that $A|U\simeq\REnd(F)$. Then he considers the category fibred in groupoids $\mathscr{PR}$ of pre-generalized algebras, where the isomorphisms are the weak algebra isomorphisms of $\Alg D(X)$. Since working in this truncated setting he can't keep track of all the higher descent data, he needs to make the stack of \emph{generalized Azumaya algebras} $\mathscr G$ the stackification of $\mathscr{PR}$. Therefore, although he  can show that the objects of $\mathscr G$ are the weak algebras of the form $\mathbf R\pi_*\REnd(F)$ where $\mathscr X\xrightarrow\pi X$ is a $\mathbf G_m$-gerbe and $F$ is a totally supported perfect $\mathscr X$-twisted sheaf \cite{lieblich2009compactified}*{Proposition 5.2.1.12}, he can only give a somewhat implicit description of the isomorphisms in $\mathscr G$ \cite{lieblich2009compactified}. The derived Skolem--Noether theorem he proves \cite{lieblich2009compactified}*{Theorem 5.1.5} can be viewed as the 1-truncation of our result. It implies that the natural map $\mathscr T^{\mathscr O}_X(n)\xrightarrow{\REnd}\mathscr G_X$ induces an equivalence $\mathscr T^{\mathscr O}_X(n)\simeq\mathscr G_X$.

He shows that stackification is not needed in case $X\xrightarrow fS$ is a smooth projective relative surface \cite{lieblich2009compactified}*{Proposition 6.4.1}. Our result implies that this holds in general.
\end{app}

\begin{prop}\label{prop:pre-generalized Azumaya is stack}
Let $X$ be a quasi-compact and quasi-separated scheme. Then the category fibred in groupoids of pre-generalized Azumaya algebras $\mathscr{PR}_X$ is a 1-stack.
\end{prop}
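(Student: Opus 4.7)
The plan is to identify $\mathscr{PR}_X$ with the pointwise homotopy $1$-truncation of the interior of the derived Azumaya $\infty$-stack, i.e.~to show $\mathscr{PR}_X(T) \simeq \tau_{\leq 1}\Deraz_X(T)^\simeq$ for all $T$, and then to use the Homotopical Skolem--Noether Theorem to verify this pointwise $1$-truncation is already a $1$-stack. By Proposition~\ref{prop:C for AG} and Theorem~\ref{prop:LTen^desc is a stack} applied to the family $\QC_X^\otimes$, the $\infty$-stack $\Deraz_X$ satisfies fppf descent on $\St_X$. Unwinding definitions, an object of $\mathscr{PR}_X(T)$ is precisely an iso-class in $\pi_0\Deraz_X(T)^\simeq$ and a morphism is a $\pi_0$-class in an isomorphism mapping space of $\Deraz_X(T)$. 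The isomorphism-presheaf axiom is then immediate from Theorem~\ref{prop:LTen^desc is a stack}: for $A,B\in\mathscr{PR}_X(T)$, the space $\Isom_{\Deraz_X}(A,B)$ is an fppf sheaf of Kan complexes, whose sheaf of connected components coincides pointwise with $\Isom_{\mathscr{PR}_X}(A,B)$.

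The main work is effective $1$-descent for objects. Given an fppf $1$-cocycle $(A_i,\phi_{ij})$ along a cover $U\to X$, I would lift it through successive stages to an $\infty$-cocycle in $\Deraz_X$; this then descends, by the $\infty$-stack property, to a global object, which is pre-generalized Azumaya since this condition is local on the base. The obstructions to choosing compatible higher homotopies on $n$-fold overlaps lie in \v Cech cohomology of the homotopy sheaves $\pi_n\Aut_{\Deraz_X}(A)$ for $n\geq 1$. The crucial input from the Homotopical Skolem--Noether Theorem (Corollary~\ref{cor:Skolem--Noether for schemes}~(3)) is the collection of isomorphisms $\pi_n\Aut_{\Deraz_X}(\REnd E) \cong \pi_n\Aut_{\TPerf_X}(E)$ for $n\geq 1$, together with the short exact sequence at $n=0$: these translate the obstructions to the perfect-complex side, where they are controlled by quasi-coherent $\mathscr{Ext}^{-n}(E,E)$-sheaves and can be handled step by step after refinement of the cover.

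The main obstacle, and where the Homotopical Skolem--Noether is essential, is the inductive obstruction-killing argument across all $n\geq 1$ simultaneously: at each stage one must translate the obstruction via the $\pi_n$-isomorphism from Skolem--Noether, resolve it on the perfect-complex side, and ensure the resolution is compatible with the choices made at earlier stages. Conceptually this explains the gap between our result and Lieblich's: the derived $1$-categorical framework of \cite{lieblich2009compactified} sees only the $n=0$ part via the classical Skolem--Noether, so higher coherences are invisible and $1$-stackification is unavoidable in general. The higher-$n$ content of the Homotopical Skolem--Noether supplies exactly the missing higher coherences, rendering stackification unnecessary for arbitrary quasi-compact quasi-separated $X$.
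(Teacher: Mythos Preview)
There is a genuine gap, and it stems from a misidentification at the outset. You write that ``an object of $\mathscr{PR}_X(T)$ is precisely an iso-class in $\pi_0\Deraz_X(T)^\simeq$'' and propose to show $\mathscr{PR}_X(T)\simeq\tau_{\le1}\Deraz_X(T)^\simeq$. But Lieblich's pre-generalized Azumaya algebras are those locally quasi-isomorphic to $\REnd(F)$ for $F$ a totally supported perfect coherent \emph{sheaf}, not a perfect \emph{complex}. So $\mathscr{PR}_X$ is a proper full sub-prestack of $\Deraz_X$, not its $1$-truncation. For a general object $\REnd E$ of $\Deraz_X$ with $E$ a genuine complex, the sheaves $\pi_n\Aut_{\Deraz_X}(\REnd E)\cong\mathscr Ext^{-n}(E,E)$ are typically nonzero, so $\Deraz_X^\simeq$ is not $1$-truncated and your obstruction-killing plan (``handled step by step after refinement of the cover'') cannot go through as stated: refining a cover does not make a nonvanishing quasi-coherent sheaf vanish.

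Once you correct the identification, the obstruction theory becomes unnecessary, and this is exactly what the paper does. Write $\mathscr{PR}^\infty_X\subset\Deraz_X$ for the full $\infty$-substack on pre-generalized Azumaya algebras. For $A\in\mathscr{PR}^\infty_X(T)$, Lieblich's \cite{lieblich2009compactified}*{Lemma 5.2.1.1} gives a $\mathbf G_m$-gerbe $\mathscr X\to T$ and an $\mathscr X$-twisted totally supported \emph{sheaf} $F$ with $A\simeq\REnd F$. The Homotopical Skolem--Noether fibre sequence $\B\mathbf G_m\to\B\Aut F\to\B\Aut A$ (a local statement, hence valid for twisted $F$) then yields a long exact sequence of homotopy sheaves. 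Since $F$ is a sheaf, $\pi_i\B\Aut F=0$ for $i\ge2$ (negative self-Exts of a sheaf vanish), and since $F$ is totally supported the scalar map $\mathbf G_m\to\Aut F$ is injective; chasing the sequence gives $\pi_i\B\Aut A=0$ for all $i\ge2$. Thus $\mathscr{PR}^\infty_X$ has $1$-truncated values and, being already an $\infty$-stack, is a $1$-stack equivalent to $\mathscr{PR}_X$. The point is not that higher obstructions can be killed, but that the sheaf hypothesis in Lieblich's definition forces them to vanish identically.
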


\begin{proof}
Let $\mathscr{PR}^\infty_X\subset\Alg\Perf_X$ denote the full substack of pre-generalized Azumaya algebras. We claim that $\mathscr{PR}^\infty_X$ is a 1-stack. This will show that $\mathscr{PR}^\infty_X\simeq\mathscr{PR}_X$ is a 1-stack.

Let $A$ be a pre-generalized Azumaya algebra on $X$. We need to show that $B\Aut A\subset\mathscr{PR}_X$ is 1-truncated. Letting $\mathscr X=\mathscr X(A)\in B^2\mathbf G_m(X)$ denote the gerbe of trivializations of $A$, there exists an $\mathscr X$-twisted totally supported perfect sheaf $F$ such that $A\simeq\REnd(F)$ \cite{lieblich2009compactified}*{Lemma 5.2.1.1}. Consider the sequence of canonical maps
$$
B\mathbf G_m\to B\Aut F\xrightarrow{\REnd}B\Aut A.
$$
Since being a fibration sequence of pointed $\infty$-stacks is local, and the twisted sheaf $F$ is locally isomorphic to a sheaf, our result implies that this is a fibration sequence. Therefore, we have an exact sequence
$$
\pi_2B\Aut F\to\pi_2B\Aut A\to\pi_1B\mathbf G_m\to\pi_1B\Aut F.
$$
As $B\Aut F$ is a 1-stack, we have $\pi_2B\Aut F=0$. Moreover, the map $\pi_1B\mathbf G_m\to\pi_1B\Aut F$ is the scalar action $\mathbf G_m\to\Aut F$, which is injective. These two facts imply $\pi_2B\Aut A=0$. For $i>2$, we have an exact sequence
$$
0=\pi_iB\Aut F\to\pi_iB\Aut A\to\pi_iB\mathbf G_m=0,
$$
thus $\pi_iB\Aut A=0$. The claim is proven.

\end{proof}

\subsection{Homotopical Algebraic Geometry}

\subsubsection{Derived homotopical algebraic context}

\begin{defn}

Let $k$ be a commutative ring. Then the category $\bfMod_k^\Delta$ of simplicial $k$-modules admits a symmetric monoidal model structure as follows \cite{goerss2009simplicial}*{II, Example 6.2}:
\begin{enumerate}
 \item Tensor product is defined levelwise: we have $(A\otimes_kB)_n=A_n\otimes_kB_n$.
 \item The forgetful functor $\bfMod_k^\Delta\xrightarrow U\Set_\Delta$ is a right Quillen adjoint where $\Set_\Delta$ is equipped with the Quillen model structure.
\end{enumerate}
We shall call this the \emph{Quillen model structure on simplicial $k$-modules}. We get an induced model structure on the category $\bfCAlg_k^\Delta$ of simplicial commutative $k$-algebras.

The \emph{quasi-category $\CAlg_k^\Delta$ of simplicial commutative $k$-algebras} is the localization $\bfCAlg_k^\Delta[\weq^{-1}]$ of the category of simplicial commutative $k$-algebras at the system of weak equivalences. The \emph{quasi-category $\DAff_k$ of derived affine schemes over $k$} is the opposite quasi-category $(\CAlg_k^\Delta)^\op$.

\end{defn}

Let $A$ be a commutative dg $k$-algebra. Then the category $\bfMod_A^\dg$ of dg $A$-modules admits a symmetric monoidal model structure as follows \cite{barthel2014six}*{Theorem 3.3}:
\begin{enumerate}
 \item Tensor product is induced by the tensor product on complexes.
 \item A morphism of complexes is a weak equivalence if and only if it is a quasi-isomorphism.
 \item A morphism of complexes is a fibration if and only if it is a degree-wise surjection.

\end{enumerate}
We shall call this the \emph{projective model structure on dg $A$-modules}. A dg $k$-module $M\in\bfMod_k^\dg$ is cofibrant if and only if it is \emph{dg-projective} \cite{hovey2002cotorsion}*{Example 3.3}, that is:
\begin{enumerate}
 \item The $k$-modules $M_n$ are projective and
 \item For all exact complexes $E\in\bfMod_k^\dg$, the Hom complex $\BHom(M,E)$ is also exact.
\end{enumerate}
A bounded below complex of projective $k$-modules is dg-projective \cite{hovey1999model}*{Lemma 2.3.6}.

Let $M$ be a simplicial $k$-module. Then its \emph{Moore complex} is the dg $k$-module
$$
(CM)_n=M_n,\quad d=\sum_{i=0}^n(-1)^id_i.
$$
We denote by $DM\le CM$ the subcomplex of degenerate simplices. The \emph{normalized Moore complex} is the dg $k$-module
$$
NM=CM/DM.
$$
Let $N$ be another simplicial $k$-module. Then the \emph{shuffle map}
$$
CM\otimes_kCN\xrightarrow{\nabla}C(M\otimes_kN)
$$
takes $m\otimes n\in CM_p\otimes_kNC_q$ to
$$
\nabla(m\otimes n)=\sum_{(\mu,\nu)}\sign(\mu,\nu)(s_\nu m)\otimes(s_\mu n)
$$
where the summation is over \emph{$(p,q)$-shuffles}, that is permutations 
$$
(\mu,\nu)=(\mu_1\,\dotsc\mu_p,\nu_1,\dotsc,\nu_q)\in\Sigma_{p+q}
$$
where we have
$$
\mu_1<\dotsb<\mu_p\text{ and }\nu_1<\dotsb<\nu_q
$$
and the associated degeneracy maps are
$$
s_\mu=s_{\mu_p}\dotsb s_{\mu_1}\text{ and }s_\nu=s_{\nu_q}\dotsb s_{\nu_1}.
$$
Then the normalized Moore complex functor
$$
\bfMod_k^\Delta\xrightarrow N\bfMod_k^\dg
$$
and the shuffle map give a lax monoidal right Quillen equivalence which is moreover lax symmetric mononoidal \cite{schwede2003equivalences}*{\S4.2}. Let $A\in\CAlg_k^\Delta$ be a commutative simplicial $k$-algebra. We denote by $\pi_\bullet A$ the graded $k$-algebra induced by the commutative dg-algebra $NA$.

\begin{cons} Let $\bfMod_k^\dg\xrightarrow P\bfMod_k^\dg$ denote a dg-projective replacement functor. Consider the functor
$$
\DAff_k^\op\xrightarrow{A\mapsto((\bfMod_{PNA}^\dg)^c,\qis)}_{}\Mon_{\Assoc}\WCat_\infty.
$$
By Construction \ref{cons:underlying presheaf} we get a functor $\mathscr P(\DAff_k)\xrightarrow{QC}\Cat_\infty^{\Mon}$.

\end{cons}

\subsubsection{Spectral homotopical algebraic context}

\begin{defn} An \emph{$\mathbf E_\infty$-ring} is a commutative algebra object of the symmetric monoidal quasi-category $\Sp$ of spectra. The \emph{quasi-category $\SAff$ of affine spectral schemes} is the opposite quasi-category $(\CAlg\Sp)^\op$ of the quasi-category of $\mathbf E_\infty$-rings. Let $R\in\CAlg\Sp$ be an $\mathbf E_\infty$-ring. Then we denote by $\Spec R\in\SAff$ the object corresponding to $R$. We denote by $\SAff_R$ the overcategory $\SAff_{/\Spec R}$.

\end{defn}

\begin{defn} An \emph{$\mathbf E_1$-ring} is an algebra object of the symmetric monoidal quasi-category $\Sp$ of spectra. Let $R\in\Alg\Sp$ be an $\mathbf E_1$-ring. For an integer $n\in\mathbf Z$, let $\pi_nR=\pi_0\Map_{\Sp}(S[n],R)$. These objects admit a natural abelian group structure by Theorem \ref{thm:stable gives triangulated}. Since the smash product on spectra commutes with colimits in each variable, we get equivalences $S[n+m]\xrightarrow\alpha S[n]\otimes S[m]$. Let $R\otimes R\xrightarrow\mu R$ be the multiplication map. We get maps
$$
\Map_{\Sp}(S[n],R)\times\Map_{\Sp}(S[m],R)\to\Map_{\Sp}(S[n]\otimes S[m],R\otimes R)\xrightarrow{\mu\circ\ \circ\alpha}\Map_{\Sp}(S[n+m],R)
$$
endowing $\pi_\bullet R=\oplus_n\pi_nR$ with a graded ring structure.

\end{defn}

\begin{prop}\cite{lurie2014higher}*{Lemma 1.1.2.10} Let 
\begin{center}

\begin{tikzpicture}[xscale=2,yscale=2]
\node (A) at (0,1) {$C$};
\node (A') at (0,0) {$0'$};
\node (B) at (1,1) {$0$};
\node (B') at (1,0) {$D$};
\path[->,font=\scriptsize,>=angle 90]
(A) edge node [above] {$f$} (B)
(A) edge node [right] {$g$} (A')
(B) edge (B')
(A') edge (B');
\end{tikzpicture}

\end{center}
be a diagram in a stable quasi-category $\mathscr C$ representing an element $\theta\in\Hom_{\Ho\Sp}(C[1],D)$. Then the inverse $-\theta\in\Hom_{\Ho\Sp}(C[1],D)$ is represented by the transposed diagram
\begin{center}

\begin{tikzpicture}[xscale=2,yscale=2]
\node (A) at (0,1) {$C$};
\node (A') at (0,0) {$0$};
\node (B) at (1,1) {$0'$};
\node (B') at (1,0) {$D.$};
\path[->,font=\scriptsize,>=angle 90]
(A) edge node [above] {$g$} (B)
(A) edge node [right] {$f$} (A')
(B) edge (B')
(A') edge (B');
\end{tikzpicture}

\end{center}

\end{prop}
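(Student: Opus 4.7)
The plan is to identify $\Hom_{\Ho \mathscr C}(C[1], D)$ with the fundamental group $\pi_1(\Map_{\mathscr C}(C, D), 0)$ of the mapping space based at the zero morphism, and then recognize the transposition of the square as the loop-reversal operation.

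Since $C[1] = \Sigma C$ may be computed as the pushout of $0 \leftarrow C \to 0'$, I would first establish the natural equivalences
\[
\Map_{\mathscr C}(C[1], D) \simeq \Map_{\mathscr C}(0, D) \times_{\Map_{\mathscr C}(C, D)} \Map_{\mathscr C}(0', D) \simeq \Omega_{0} \Map_{\mathscr C}(C, D),
\]
the second equivalence holding because the mapping spaces out of the zero objects are contractible. Under this identification, the original pushout square, viewed as a $\Delta^1 \times \Delta^1$-diagram with corner $D$, produces a loop $\gamma$ based at the zero map: $\gamma$ is obtained by concatenating the null-homotopy $C \xrightarrow{f} 0 \to D$ with the reverse of $C \xrightarrow{g} 0' \to D$, and its class in $\pi_1$ is exactly the element $\theta$.

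The transposed square corresponds to the automorphism of $\Delta^1 \times \Delta^1$ that swaps the two factors, equivalently to swapping the roles of the two zero objects in the pushout presentation of $\Sigma C$. Under the identification above, this precisely reverses the orientation of $\gamma$. Since Theorem \ref{thm:stable gives triangulated} equips $\Hom_{\Ho \mathscr C}(C[1], D)$ with its abelian group structure via $\pi_1$ of the mapping space, loop reversal translates to additive inversion, yielding $-\theta$.

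The main obstacle will be verifying that this swap-of-factors operation on $\Delta^1 \times \Delta^1$ genuinely induces loop reversal rather than some other self-equivalence of $\Omega_0 \Map_{\mathscr C}(C, D)$. I would handle this by choosing an explicit model for $\Sigma C$ as the coequalizer of two copies of the cone on $C$, tracking the evident $\mathbf Z/2$-symmetry that swaps the two cones, and checking that its action on $\Sigma C$ is homotopic to $-\id_{\Sigma C}$. This last assertion is a standard feature of any stable quasi-category and reflects that $\pi_0 \Map_{\mathscr C}(C[1], D) \simeq \pi_1 \Map_{\mathscr C}(C, D)$ inherits its abelian group structure from the infinite loop space structure inherent to stability; composing the pushout comparison map of the original square with this antipode then delivers the sign change $\theta \mapsto -\theta$.
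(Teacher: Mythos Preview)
The paper does not supply its own proof of this proposition; it simply records the statement with a citation to \cite{lurie2014higher}*{Lemma 1.1.2.10} and moves on. So there is no in-paper argument to compare against, only Lurie's.

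Your outline is correct and is in substance the same argument Lurie gives: identify $\Map_{\mathscr C}(\Sigma C,D)$ with $\Omega_0\Map_{\mathscr C}(C,D)$ via the pushout description of $\Sigma C$, observe that a commuting square of the indicated shape is exactly the data of a loop at the zero map, and note that swapping the two factors of $\Delta^1\times\Delta^1$ reverses the orientation of that loop, hence sends $\theta$ to $-\theta$ in $\pi_1$. The one place where you over-engineer is the last paragraph: you do not need to model $\Sigma C$ as a coequalizer of two cones or invoke any antipode on an infinite loop space. The swap automorphism of $\Delta^1\times\Delta^1$ literally interchanges the two contractible factors in the fiber product $\Map_{\mathscr C}(0,D)\times_{\Map_{\mathscr C}(C,D)}\Map_{\mathscr C}(0',D)$, and that interchange is, on the nose, the loop-inversion map $\Omega X\to\Omega X$ for any pointed space $X$. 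No further stability input is required beyond the existence of the zero object making the base point canonical.
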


\begin{cor} Let $R\in\CAlg\Sp$ be an $\mathbf E_\infty$-ring. Then the graded ring $\pi_\bullet R$ is graded commutative.

\end{cor}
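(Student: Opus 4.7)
The plan is to deduce graded commutativity from two ingredients: the homotopy-commutativity $\mu\simeq\mu\circ\sigma_{R,R}$ of the multiplication $\mu:R\otimes R\to R$ supplied by the $\mathbf E_\infty$-structure, and the Koszul sign $\rho_{n,m}=(-1)^{nm}$ for the symmetry of the smash product on sphere shifts. The preceding proposition will be used precisely to establish the Koszul sign, while the $\mathbf E_\infty$-structure supplies the rest.

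I will first set up the computation. Given $x\in\pi_nR$ and $y\in\pi_mR$ represented by $\tilde x:S[n]\to R$ and $\tilde y:S[m]\to R$, the products $xy,\,yx\in\pi_{n+m}R$ are the composites
\[
S[n+m]\xrightarrow{\alpha_{n,m}}S[n]\otimes S[m]\xrightarrow{\tilde x\otimes\tilde y}R\otimes R\xrightarrow{\mu}R
\]
and the analogous composite with the factors swapped, where $\alpha_{n,m}:S[n+m]\xrightarrow\sim S[n]\otimes S[m]$ is the canonical equivalence coming from the compatibility of the smash product with colimits in each variable. Naturality of the symmetry gives $(\tilde y\otimes\tilde x)\circ\sigma_{S[n],S[m]}=\sigma_{R,R}\circ(\tilde x\otimes\tilde y)$, and combined with $\mu\simeq\mu\circ\sigma_{R,R}$ this expresses $yx$ as $xy$ post-composed with the autoequivalence $\rho_{n,m}:=\alpha_{m,n}^{-1}\circ\sigma_{S[n],S[m]}\circ\alpha_{n,m}\in\pi_0\End S[n+m]\cong\pi_0S\cong\mathbf Z$. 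The problem reduces to computing $\rho_{n,m}$.

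For the base case $\rho_{1,1}=-1$, I will apply the preceding proposition to the pushout square in $\Sp$ exhibiting $\id_{S[1]}\in\pi_0\End S[1]$, smashed with $S[1]$ on one side: the resulting pushout square exhibits one canonical equivalence $S[1]\otimes S[1]\simeq S[2]$, while its transpose exhibits the equivalence obtained from the former by precomposition with $\sigma_{S[1],S[1]}$. The proposition asserts that these two elements of $\pi_0\End S[2]\cong\mathbf Z$ are negatives of each other, forcing $\rho_{1,1}=-1$. The general case will follow by induction via the hexagon identity in the symmetric monoidal structure on $\Sp$: one writes $\sigma_{S[n+1],S[m]}=(\sigma_{S[1],S[m]}\otimes\id_{S[n]})\circ(\id_{S[1]}\otimes\sigma_{S[n],S[m]})$, and a dual identity on the other coordinate, so $\rho_{n,m}$ unwinds to a product of $nm$ copies of $\rho_{1,1}$, giving $(-1)^{nm}$.

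The main obstacle will be bookkeeping the coherence data implicit in the $\mathbf E_\infty$-structure on $R$ and in the symmetric monoidal structure on $\Sp$: strictly speaking one must check that the two homotopies invoked above ($\mu\simeq\mu\circ\sigma_{R,R}$ and naturality of $\sigma$) compose to give the asserted homotopy between $yx$ and $\rho_{n,m}\cdot xy$. However, once the equality $yx=(-1)^{nm}xy$ is reduced to a comparison in the abelian group $\pi_0\End S[n+m]\cong\mathbf Z$, the verification takes place entirely in the triangulated homotopy category $\Ho\Sp$ supplied by Theorem \ref{thm:stable gives triangulated}, where no higher coherence intervenes, and the sign computation above suffices.
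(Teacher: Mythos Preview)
Your overall strategy is correct and is what the paper intends: the statement is given as a bare Corollary with no proof, and the intended argument is precisely to use the $\mathbf E_\infty$-commutativity of $\mu$ together with naturality of $\sigma$ to reduce $yx$ versus $xy$ to the autoequivalence $\rho_{n,m}$ of $S[n+m]$, then compute $\rho_{1,1}=-1$ via the preceding Proposition and propagate by the hexagon.

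There is, however, a genuine gap in your base-case step. You assert that the transpose of $Q_R:=Q\otimes S[1]$ (where $Q$ is the suspension square for $S$) ``exhibits the equivalence obtained from the former by precomposition with $\sigma_{S[1],S[1]}$''. But transposition commutes with post-composition by a functor, so $Q_R^t=Q^t\otimes S[1]$; since $Q^t$ represents $-\id_{S[1]}$, the square $Q_R^t$ represents $(-\id_{S[1]}\otimes\id_{S[1]})\circ\theta_R=-\theta_R$, which is exactly what the Proposition applied to $Q_R$ already says. Nothing here identifies the transpose with $\sigma\circ\theta_R$: that identification \emph{is} the equality $\rho_{1,1}=-1$ you are trying to establish, so the argument as written is circular.

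The missing link is a comparison between the \emph{two} smashed squares $Q_R=Q\otimes S[1]$ and $Q_L=S[1]\otimes Q$. Naturality of $\sigma$ together with the unit coherence $\sigma_{S,S[1]}\simeq\id_{S[1]}$ gives $\theta_L=\sigma\circ\theta_R$, so what remains is $\theta_L=-\theta_R$. This does not follow from transposing $Q_R$ alone; one must exhibit $S[1]\otimes S[1]$ as the colimit of the $(\Lambda^2_0)^{\times2}$-diagram with $S$ at the initial vertex and zeros elsewhere, so that $\sigma$ is induced by swapping the two $\Lambda^2_0$-factors, and then check that after taking the colimit in one factor this swap becomes the transposition of a single suspension square for $S[1]$, to which the Proposition applies. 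Alternatively, and more directly, one may invoke that under $\Sigma^\infty$ the symmetry is the stabilization of the swap on $S^1\wedge S^1=S^2$ in $\mathscr S_*$, which has degree $-1$.
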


\begin{defn} Let $X\in \Sp$ be a spectrum. Then we say it is \emph{discrete} if we have $\pi_nX=0$ for all $n\ne0$. The functor $\Sp\xrightarrow{\pi_0}\Ab$ restricts to an equivalence on the full subcategory $\Disc\Sp\subseteq\Sp$ of discrete spectra \cite{lurie2014higher}*{Proposition 1.4.3.6 (3)}. Since the map $\mathscr S^\fin_*\to\mathscr S$ with constant value the point is a zero object and the smash product $\Sp\otimes\Sp\to\Sp$ respects colimits in each variable, the equivalence $\Disc\Sp\xrightarrow{\pi_0}\Ab$ is symmetric monoidal. In particular, we get a natural embedding $\CAlg\Ab\to\CAlg\Sp$ of the category of discrete commutative rings into the quasi-category of $\mathbf E_\infty$-rings.

\end{defn}

\begin{rem} Note that the unit object of $\CAlg\Sp$ is the sphere spectrum $S$ which is not discrete. In particular, this is not $\mathbf Z$ and thus $\SAff\simeq\SAff_S\not\simeq\SAff_{\mathbf Z}$.

\end{rem}

\begin{cons}\label{cons:QC} Since the smash product on $\Sp$ gives a symmetric monoidal structure, for an $\mathbf E_\infty$-ring $R$, relative tensor product equips the module category $\Mod_R$ with a symmetric monoidal structure. Moreover the underlying quasi-category $\Mod_R$ is stable \cite{lurie2014higher}*{Corollary 7.1.1.5} Therefore the Morita functor can be enhanced to give a functor $\CAlg\Sp\xrightarrow{R\mapsto\Mod_R^\otimes}\Mon_{\CAlg}\PrSt$ \cite{lurie2014higher}*{Corollary 4.8.5.22}. This can be extended to a colimit-preserving functor $\mathscr P(\SAff)\xrightarrow{\QC}(\Mon_{\CAlg}\PrSt)^\op$.

\end{cons}

\subsubsection{Homotopical Skolem--Noether theorem in homotopical algebraic geometry}

In this subsubsection, scheme will mean either a derived or spectral scheme and stack will mean either a derived or spectral stack. Perfect stacks, introduced in \cite{benzvi2010integral}, constitute a broad class of stacks to which we can apply Theorem \ref{thm:HSN}.

\begin{defn} Let $A\xrightarrow fB$ be a morphism of commutative simplicial or $\mathbf E_\infty$ rings. Then we say that it is a \emph{flat morphism} if the following conditions hold:
\begin{enumerate}
 \item The morphism $f$ induces an isomorphism of graded rings
 $$
 \pi_0B\otimes_{\pi_0A}\pi_\bullet A\to\pi_\bullet B
 $$
 \item The morphism $\pi_0(A)\xrightarrow{\pi_0f}\pi_0(B)$ is a flat morphism of rings.
\end{enumerate}
A flat morphism is an \emph{\'etale morphism} if moreover the morphism $\pi_0(A)\to\pi_0(B)$ is an \'etale morphism of rings.

Let $\{A\xrightarrow{f_i}A_i:i\in I\}$ be a collection of morphisms of commutative simplicial or $\mathbf E_\infty$ rings. Then we say that it is a \emph{flat covering} if there exists a finite subset $J\subseteq I$ such that the following conditions hold:
\begin{enumerate}
 \item For each index $j\in J$, the morphism $f_j$ is a flat morphism.
 \item The morphism
 $$
 \pi_0(A)\xrightarrow{\oplus_j\pi_0(f_j)}\oplus_k\pi_0(A_j)
 $$
 is faithfully flat.
\end{enumerate}
A flat covering is an \emph{\'etale covering} if there exists a finite subset $J\subseteq I$ such that moreover for each $j\in J$ the morphism $f_j$ is an \'etale morphisms.

\end{defn}

These definitions define the \emph{fpqc} resp.~ \emph{\'etale topology}. Thus we can talk about \emph{fpqc resp.~\'etale stacks}. One can show that $QC$ satisfies fpqc descent both in the derived \cite{toen2007moduli}*{\S3.1} and spectral \cite{lurie2011quasi}*{Proposition 2.7.14} context. We let $S$ be an affine scheme and $\St_S$ the quasi-category of \'etale stacks.

\begin{cor}[Homotopical Skolem--Noether theorem for derived and spectral algebraic geometry]\label{thm:HSN for DAG} Let $S$ be an affine scheme. Let $\Cart_S^\fpqc$ denote the quasi-category of Cartesian fibrations on $\St_S$ which satisfy fpqc descent.

(1) Let ${}^\op(\Perf^\simeq_\gen)_S={}^\op(\QC_S)_\dgen^\simeq$ denote the right fibration of perfect generator complexes on $S$, ${}^\op\Deraz_S:={}^\op\Az\QC_S$ the Cartesian fibration of derived Azumaya algebras on $S$ and ${}^\op\Dg^{\Az}_S:={}^\op\LTens^{\Az}\QC_S$ the Cartesian fibration of locally trivial presentable quasi-categories left-tensored over $\QC_S^\otimes$. Then the sequence in $(\Cart_S^\fpqc)_*$:
$$
({}^\op(\Perf^\simeq_\gen)_S,\mathscr O)\xrightarrow{\End}({}^\op\Deraz_S,\mathscr O)\xrightarrow{\Mod}({}^\op\Dg^{\Az}_S,\mathscr D)
$$
is a homotopy fibre sequence.

(2) Let $E\in\Perf_\gen(S)$ be a perfect generator complex on $S$. Then the sequence in $(\Cart_S^\fpqc)_*$:
$$
({}^\op\Pic\QC_S,\mathscr O)\xrightarrow{\otimes E}({}^\op(\Perf^\simeq_\gen)_S,E)\xrightarrow{\End}({}^\op\Deraz_S,\End E)
$$
is a homotopy fibre sequence.

\end{cor}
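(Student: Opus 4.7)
The plan is to verify the hypotheses of Theorem \ref{thm:HSN} for the site $K=\St_S$ equipped with the fpqc topology and the family $\mathscr C^\otimes=\QC_S^\otimes$ of presentable monoidal quasi-categories constructed in Construction \ref{cons:QC} (and its derived analogue). Once that is done, parts (1) and (2) of the statement are immediate specializations of parts (1) and (2) of the main theorem: the first identifies ${}^\op\mathscr C_\dgen^\simeq$ with ${}^\op(\Perf^\simeq_\gen)_S$, ${}^\op\Az\mathscr C$ with ${}^\op\Deraz_S$, and ${}^\op\LTens^{\Az}\mathscr C$ with ${}^\op\Dg^{\Az}_S$; the second is exactly the pointed fibre sequence with $E\in\mathscr C_\dgen(S)=\Perf_\gen(S)$.

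The verification that $\QC_S^\otimes$ has fpqc descent has two ingredients, which I would establish separately using Corollary \ref{cor:dualizable commutes with descent}. First, the underlying Cartesian fibration ${}^\op\QC_S\to\St_S$ has fpqc descent: in the derived setting this is Toen's descent theorem for unbounded complexes of quasi-coherent modules, and in the spectral setting it is Lurie's descent theorem for quasi-coherent module spectra. Second, for each derived or spectral affine $U=\Spec A\in\SAff_S$, the underlying presentable stable quasi-category $\QC(U)\simeq\Mod_A$ is compactly generated by $A$ itself, hence by Proposition \ref{prop:compactly generated is dualizable} it is a dualizable object of $\PrSt$, and via the symmetric monoidal inclusion $\PrSt\hookrightarrow\PrL$ (see Subsection 4.1) it is also dualizable as an object of $\PrL$. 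Corollary \ref{cor:dualizable commutes with descent} then yields that base changes in $q$ commute with fpqc-descent data, so that all the assumptions of Theorem \ref{thm:HSN} are met.

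With descent in hand, Theorem \ref{thm:HSN}(1) directly gives the fibre sequence
$$
({}^\op(\Perf^\simeq_\gen)_S,\mathscr O)\xrightarrow{\End}({}^\op\Deraz_S,\mathscr O)\xrightarrow{\Mod}({}^\op\Dg^{\Az}_S,\mathscr D)
$$
in $(\Cart_S^\fpqc)_*$, and Theorem \ref{thm:HSN}(2) applied to $E\in\Perf_\gen(S)$ gives the pointed fibre sequence of (2). I would conclude by remarking that in the derived/spectral setting one no longer has the splitting of the long exact sequence available in the classical case (because the higher homotopy of $\Pic\QC_S$ no longer vanishes beyond level one), which is why the statement records only the two fibre sequences and not the analogues of isomorphisms on higher homotopy.

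The only real point of substance is the dualizability of $\Mod_A$ in $\PrL$ together with the assembled descent statement; everything else is extraction from Theorem \ref{thm:HSN}. The main obstacle, therefore, is packaging the descent verification uniformly for both the derived and spectral settings, so that Corollary \ref{cor:dualizable commutes with descent} can be invoked once and applied to both cases with the same formalism.
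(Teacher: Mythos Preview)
Your proposal is correct and follows essentially the same route as the paper: verify that $\QC_S^\otimes$ has fpqc descent by combining fpqc descent for $\QC$ with compact generation (hence dualizability in $\PrL$) of $\QC(T)$, then invoke Theorem~\ref{thm:HSN}. The paper carries out the compact-generation step for qcqs schemes $T$ (citing \cite{benzvi2010integral}) rather than only for affines, and cites Proposition~\ref{prop:dualizable commutes with descent} where you cite its Corollary~\ref{cor:dualizable commutes with descent}, but these are cosmetic differences.
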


\begin{proof} Let $T$ be a qcqs scheme. Then $\QC(T)$ is compactly generated \cite{benzvi2010integral}*{Proposition 3.19}, thus dualizable by Proposition \ref{prop:compactly generated is dualizable}. Therefore by Proposition \ref{prop:dualizable commutes with descent} the family of monoidal quasi-categories $\QC^\otimes$ has fpqc descent, so we can apply Theorem \ref{thm:HSN}.

\end{proof}

\begin{rem}\label{rem:DgAz is B2Gm x BZ} In case we are in the derived or the connective spectral case, we have $\Dg_S^{\Az}\simeq\B^2\mathbf G_m\times\B\mathbf Z$ \cite{toen2012derived}*{Corollary 2.12}, \cite{antieau2014brauer}*{Corollary 7.10}.

\end{rem}

\begin{rem} For $\mathbf E_\infty$-rings this result has previously appeared in \cite{gepner2016brauer}*{Propotision 5.15}. Moreover, in \cite{gepner2016brauer}*{Theorem 3.15} they show how the long exact sequence splits for algebraic Azumaya algebras, that is derived Azumaya algebras for which the associated graded algebras are also Azumaya.

\end{rem}

\subsection{Ind-coherent sheaves and crystals} In this subsection, we apply the Homotopical Skolem--Noether Theorem to the co-families of symmetric monoidal quasi-categories $\Ind\Coh$ and $\Crys^r$, which we introduce following \cite{IndCoh}, \cite{drinfeld2013finiteness}, \cite{grCrystals} and \cite{grDAGI}. Let $k$ be a field of characteristic 0.

\subsubsection{Finiteness conditions on prestacks}

\begin{defn} The quasi-categories of $\mathbf E_\infty$- and commutative dg $k$-algebras are equivalent \cite{lurie2014higher}*{Proposition 7.1.4.11}. We let $\CAlg_k=\CAlg\mathscr D(k)$. Following this equivalence, we will refer to elements of $\CAlg_k$ as commutative dg $k$-algebras and for $i\in\mathbf Z$ and $A\in\CAlg_k$ we will write $H^iA=\pi_{-i}A$. We will say that a commutative dg $k$-algebra $A$ is \emph{connective} if we have $H^iA=0$ for $i>0$. We let $\CAlg_k^{\le0}\le\CAlg_k$ denote the full subcategory of connective commutative dg $k$-algebras. Then the quasi-category $\CAlg_k^\Delta$ of simplicial commutative $k$-algebras is equivalent to $\CAlg_k^{\le0}$ \cite{lurie2018spectral}*{Proposition 25.1.2.2}. The \emph{quasi-category of (derived) affine schemes} is $\Aff=\Aff_k=(\CAlg_k^{\le0})^\op$. A connective commutative dg $k$-algebra $A$ corresponds to the affine scheme $\Spec A\in\Aff$. The \emph{quasi-category of prestacks} is $\PreStk=\PreStk_k=\mathscr P(\Aff_k)$.

\end{defn}

\begin{defn} Let $S=\Spec A$ be an affine scheme and $n\in\mathbf Z_{\ge0}$. Then we say that $S$ is \emph{$n$-coconnective} if we have $H^iA=0$ for $i<-n$. We denote by $\Aff_{\le n}\subseteq\Aff$ the full subcategory on $n$-coconnective affine schemes. In particular, we say that $S$ is a \emph{classical affine scheme} if it is 0-coconnective. We let $\Aff_\cl=\Aff_{\le0}$. We say that $S$ is \emph{eventually coconnective} if it is $n$-coconnective for some $n\ge0$. We let $\Aff_{<\infty}=\cup_{n\ge0}\Aff_{\le n}$. We say that $S$ is \emph{almost of finite type} if the following assertions hold:
\begin{enumerate}
 \item The commutative $k$-algebra $H^0A$ is finitely generated.
 \item For any $i\in\mathbf Z_{\ge0}$, the $H^0A$-module $H^iA$ is finitely generated.
\end{enumerate}
We denote by $\Aff_{\aft}\subseteq\Aff$ the full subcategory on affine scheme almost of finite type. We say that $S$ is \emph{of finite type} if the following assertions hold:
\begin{enumerate}
 \item The affine scheme $S$ is eventually coconnective.
 \item The affine scheme $S$ is almost of finite type.
\end{enumerate}
We let $\Aff_\ft=\Aff_{<\infty}\cap\Aff_{\aft}$.

\end{defn}

\begin{defn} Take $n\in\mathbf Z_{\ge0}$. Let ${}_{\le n}\PreStk=\mathscr P({}_{\le n}\Aff)$. Let $\mathscr Y$ be a prestack. Then we denote its restriction to ${}_{\le n}\Aff^\op$ by ${}_{\le n}\mathscr Y$. The left Kan extension functor ${}_{\le n}\PreStk\xrightarrow{\LKE}\PreStk$ is a fully faithful left adjoint to the restriction functor. We say that $\mathscr Y$ is \emph{$n$-coconnective} if it is in the essential image of $\LKE$. We let $\tau_{\le n}\mathscr Y=\LKE({}_{\le n}\mathscr Y)$.
\begin{center}

\begin{tikzpicture}[xscale=3,yscale=2]
\node (A) at (0,0) {$\PreStk$};
\node (B) at (1,0) {${}_{\le n}\PreStk$};
\node at (1.5,0) {$\perp$};
\node (C) at (2,0) {$\PreStk$};
\path[->,font=\scriptsize,>=angle 90]
(B) edge [bend left] node [above] {$\LKE$} (C)
(A) edge [bend left=60] node [above] {$\tau_{\le n}$} (C)
(C) edge [bend left] node [below] {$\res$} (B)
(A) edge node [above] {$\res$} (B);
\end{tikzpicture}

\end{center}
In particular, we say that $\mathscr Y$ is \emph{classical} if it is 0-coconnective. We let ${}_\cl\PreStk={}_{\le0}\PreStk$, ${}_\cl\mathscr Y={}_{\le0}\mathscr Y$ and $\tau_\cl\mathscr Y=\tau_{\le0}\mathscr Y$. We say that $\mathscr Y$ is \emph{eventually coconnective} if it is $n$-coconnective for some $n\ge0$. We let $\PreStk_{<\infty}=\cup_{n\ge0}\PreStk_{\le n}$. We say that $\mathscr Y$ is \emph{convergent} if it is the right Kan extension of its restriction to $\Aff_{<\infty}^\op$. We say that $\mathscr Y$ is \emph{locally almost of finite type} if it is the right Kan extension along the inclusion $\Aff_{<\infty}^\op\subset\Aff^\op$ of the left Kan extension along the inclusion $\Aff_{\ft}^\op\subset\Aff_{<\infty}^\op$ of its restriction to $\Aff_\ft^\op$. We let $\PreStk_\laft\subset\PreStk$ denote the full subcategory on prestacks locally almost of finite type. We say that $\mathscr Y$ is \emph{of finite type} if it is the left Kan extension of its restriction to $\Aff_\ft^\op$. We let $\PreStk_\lft\subset\PreStk$ denote the full subcategory on prestacks locally of finite type.

\end{defn}

\subsubsection{Open embeddings and proper morphisms between (derived) schemes}

\begin{defn} Let $\mathscr X\xrightarrow f\mathscr Y$ be a morphism of prestacks. Then we say that $f$ is \emph{affine schematic} if for all affine schemes $S$ and morphisms of prestacks $S\to\mathscr Y$ the fibre product $\mathscr X\times_{\mathscr Y}S$ is representable by an affine scheme.

Suppose that $f$ is affine schematic. We say that $f$ is \emph{flat (resp.~\'etale, Zariski, an open embedding)} if for all $S\in\Aff_{/\mathscr Y}$ the morphism of affine schemes $\mathscr X\times_{\mathscr Y}S\to S$ is flat (resp.~\'etale, Zariski, an open embedding). We say that $f$ is a \emph{closed embedding} if for all $S\in\Aff_{/\mathscr Y}$ the morphism of classical affine schemes ${}_\cl(\mathscr X\times_{\mathscr Y}S)\to {}_\cl S$ is a closed embedding.

Let $(\mathscr X_i\xrightarrow{f_i}\mathscr Y:i\in I)$ be a collection of affine schematic morphisms of prestacks. Then we say that $(f_i)_{i\in I}$ is a \emph{covering} if for all $S\in\Aff_{/\mathscr Y}$ the collection of morphisms of classical affine schemes ${}_\cl(\mathscr X\times_{\mathscr Y}S)\to {}_\cl S$ is a covering.

\end{defn}

\begin{defn} Let $\mathscr Y$ be a prestack. Then we say that it is a \emph{stack} if it satisfies \'etale descent. We denote by $\Stk\subset\PreStk$ the full subcategory on stacks.

\end{defn}

\begin{defn} Let $Z$ be a stack. Then we say that it is a \emph{(derived) scheme} if it satisfies the following assumptions:
\begin{enumerate}
 \item The diagonal map $Z\xrightarrow\Delta Z\times Z$ is a closed embedding.
 \item There exists a covering $(S_i\xrightarrow{f_i}Z:i\in I)$ of open embeddings of affine schemes.
\end{enumerate}
We refer to a collection $(f_i:i\in I)$ such as in (2) as an \emph{affine atlas}. We denote by $\Sch\subset\Stk$ the full subcategory on schemes.

Let $Z$ be a scheme. We say that it is \emph{quasi-compact} if the classical scheme ${}_\cl Z$ is quasi-compact. Suppose that $Z$ is quasi-compact. Then we say that it is \emph{almost of finite type} if it is locally almost of finite type. We denote by $\Sch_\aft\subset\Sch$ the full subcategory on schemes almost of finite type.

Let $X\xrightarrow fY$ be a morphism of schemes almost of finite type. We say that $f$ is \emph{proper} if the morphism of classical schemes ${}_\cl X\xrightarrow{{}_\cl f}{}_\cl Y$ is proper.

\end{defn}

\subsubsection{Ind-coherent sheaves}

\begin{defn} Let $\mathscr C$ be a quasi-category. Then we say that $\mathscr C$ is \emph{filtered} if for all finite simplicial sets $K$ every map $K\to\mathscr C$ has an extension along the inclusion $K\to K^\vartriangleright$.

Let $L$ be a simplicial set. Then we say that $L$ is \emph{filtered} if there exists a filtered quasi-category $\mathscr C$ and a categorical equivalence $L\to\mathscr C$.

Let $\mathscr C$ be a filtered quasi-category and $\mathscr C\xrightarrow F\mathscr D$ a functor between quasi-categories. Then we say that $F$ is \emph{continuous} if it commutes with filtered colimits. We denote by $\Fun_\cont(\mathscr C,\mathscr D)\subseteq\Fun(\mathscr C,\mathscr D)$ the full subcategory on continuous functors.

\end{defn}

\begin{prop}\cite{lurie2009higher}*{Proposition 5.3.5.12} Let $\mathscr C$ be a small quasi-category. Let $\Ind\mathscr C\subseteq\mathscr P(\mathscr C)$ denote the full subcategory on presheaves $\mathscr C^\op\to\mathscr S$ which classify right fibrations $\Tilde{\mathscr C}\to\mathscr C$ such that the quasi-category $\Tilde{\mathscr C}$ is filtered. Then the Yoneda embedding $\mathscr C\to\mathscr P(\mathscr C)$ factors through the inclusion $\Ind\mathscr C\to\mathscr P(\mathscr C)$. Let $\mathscr D$ be a quasi-category such that it has filtered colimits. Then the precomposition with the Yoneda embedding map
$$
\Fun_\cont(\Ind\mathscr C,\mathscr D)\to\Fun(\mathscr C,\mathscr D)
$$
is an equivalence.

\end{prop}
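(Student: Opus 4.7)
The plan is to follow the three-part architecture of Lurie's HTT 5.3.5.12. First I would verify that the Yoneda embedding factors through $\Ind\mathscr C$: for each $C \in \mathscr C$, the representable $h_C = \Map_{\mathscr C}(-,C)$ is classified by the right fibration $\mathscr C_{/C} \to \mathscr C$ whose total space has the terminal vertex $\id_C$; any quasi-category with a final object is filtered, so $h_C \in \Ind\mathscr C$ as required.

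Next I would characterize $\Ind\mathscr C$ as the smallest full subcategory of $\mathscr P(\mathscr C)$ containing the representables and closed under small filtered colimits in $\mathscr P(\mathscr C)$. For any $X \in \Ind\mathscr C$ classifying a right fibration $\tilde{\mathscr C} \to \mathscr C$ with $\tilde{\mathscr C}$ filtered, the canonical diagram over $\tilde{\mathscr C}$ sending $(\tilde C \to X)$ to $h_{\tilde C}$ exhibits
$$
X \simeq \colim_{\tilde{\mathscr C}} h_{\tilde C}
$$
as a filtered colimit of representables in $\mathscr P(\mathscr C)$. Conversely, a filtered colimit of right fibrations with filtered total spaces is again classified by a right fibration with filtered total space, since a filtered colimit of filtered quasi-categories is filtered.

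For the universal property, given $F : \mathscr C \to \mathscr D$ with $\mathscr D$ admitting filtered colimits, I would construct its extension $\Tilde F : \Ind\mathscr C \to \mathscr D$ as the left Kan extension along the corestricted Yoneda map $y : \mathscr C \to \Ind\mathscr C$. The crucial point is that, for each $X \in \Ind\mathscr C$, the slice quasi-category $\mathscr C \times_{\Ind\mathscr C} (\Ind\mathscr C)_{/X}$ is equivalent to the filtered total space $\tilde{\mathscr C}$ classifying $X$, so the pointwise formula $\Tilde F(X) = \colim_{\tilde{\mathscr C}} F(\tilde C)$ is well-defined in $\mathscr D$. The unit $F \to \Tilde F \circ y$ is an equivalence because representables compute their own Kan extensions (using that $\mathscr C_{/C}$ has a terminal object). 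Continuity of $\Tilde F$ with respect to filtered colimits in $\Ind\mathscr C$ reduces to an interchange of filtered colimits in $\mathscr D$. The other direction of the equivalence uses that a continuous functor $G : \Ind\mathscr C \to \mathscr D$ is determined up to equivalence by its restriction to the representables, because by the second step every object of $\Ind\mathscr C$ is canonically the filtered colimit of the representables indexing it.

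The main obstacle I expect is verifying the continuity of $\Tilde F$ rigorously. In quasi-categorical terms this is a Fubini-type statement: given a filtered diagram $I \to \Ind\mathscr C$, $i \mapsto X_i$, one forms the two-variable colimit over $I \times_{\Ind\mathscr C} \mathscr C$ and must argue by cofinality that it computes both $\Tilde F(\colim_i X_i)$ and $\colim_i \Tilde F(X_i)$. This requires care in identifying slice fibrations and establishing a cofinality result for the natural map between the relevant indexing quasi-categories — this is the technical heart of the argument and is handled cleanly using the general theory of left Kan extensions from HTT, Chapter 4, together with the characterization of filtered quasi-categories in terms of extending finite diagrams.
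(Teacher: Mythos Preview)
The paper does not supply its own proof of this proposition; it simply cites \cite{lurie2009higher}*{Proposition 5.3.5.12} as a known result. Your proposal correctly outlines the standard argument from that reference (Yoneda factors through $\Ind\mathscr C$ because slices have terminal objects, every ind-object is a filtered colimit of representables, and the extension is via left Kan extension with continuity coming from a Fubini/cofinality argument), so there is nothing to compare---your sketch is in line with the source the paper defers to.
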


\begin{defn} Let $\mathscr C$ be a quasi-category. We call $\Ind\mathscr C$ the \emph{ind-completion} of $\mathscr C$.

\end{defn}

\begin{defn} Let $X$ be a scheme almost of finite type. Then we denote by $\Coh X\subset\QC X$ the full subcategory on complexes with bounded coherent cohomology. The inclusion map $\Coh X\to\QC X$ induces a map $\Ind\Coh X\xrightarrow{\Psi_X}\QC X$. The map $\QC X\otimes\QC X\xrightarrow{E\boxtimes F=(\pr_0^*E)\otimes(\pr_1^*F)}\QC(X\times X)$ induces a map $\Ind\Coh X\otimes\Ind\Coh X\xrightarrow\boxtimes\Ind\Coh(X\times X)$. Let $X\xrightarrow fY$ be a morphism of schemes almost of finite type. Then the direct image functor $\QC X\xrightarrow{f_*}\QC Y$ induces a morphism $\Ind\Coh X\xrightarrow{f_*}\Ind\Coh Y$.

\end{defn}

\begin{thm}\label{thm:IndCoh} There exists a coCartesian family of presentable monoidal quasi-categories $\Ind\Coh^\otimes\xrightarrow p\PreStk_\laft^\op\times\Assoc^\otimes$ with the following properties:
\begin{enumerate}
 \item The family $p$ is classified by the right Kan extension of the map $\Sch_\aft^\op\to\Pr^{\Mon}$ classifying its restriction.
 \item Let $X\xrightarrow fY$ be a morphism of schemes almost of finite type. Then the Cartesian map $\Ind\Coh Y\xrightarrow{f^!}\Ind\Coh X$ is a composite $f_1^*f_2^!$ where
 \begin{enumerate}
  \item $X\xrightarrow{f_1}\Bar X\xrightarrow{f_2}Y$ is a decomposition of $f$ by an open embedding followed by a proper morphism.
  \item The map $f_1^*$ is a left adjoint of $(f_1)_*$.
  \item The map $f_2^!$ is a right adjoint of $(f_2)_*$ \cite{grDAGI}*{II, Theorem 5.2.1.4}.
 \end{enumerate}
 \item The underlying Cartesian fibration ${}^\op\Ind\Coh\to\Sch_\aft$ has fppf descent \cite{grDAGI}*{II, Corollary 5.3.3.7}.
 \item The monoidal structure \cite{grDAGI}*{II, Theorem 5.4.1.2} can be given as the composite
 $$
 \otimes^!:\Ind\Coh X\otimes\Ind\Coh X\xrightarrow\boxtimes\Ind\Coh(X\times X)\xrightarrow{\Delta^!}\Ind\Coh X.
 $$
 \item Let $X$ be a scheme almost of finite type. Then the presentable quasi-category $\Ind\Coh X$ is self-dual \cite{grDAGI}*{II, Theorem 5.4.2.5}.
\end{enumerate}

\end{thm}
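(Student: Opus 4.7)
The plan is to compile and package results from Gaitsgory--Rozenblyum \cite{grDAGI} into the precise coCartesian family form required for Theorem~\ref{thm:HSN}. Since items (2)--(5) are individually cited to explicit statements in \cite{grDAGI}, the actual work is fibrational: organizing the pieces into a coCartesian family of presentable monoidal quasi-categories over $\PreStk_\laft^\op \times \Assoc^\otimes$ and extending from $\Sch_\aft$ to $\PreStk_\laft$.

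First I would construct $\Ind\Coh$ on $\Sch_\aft$. On each scheme $X \in \Sch_\aft$, set $\Ind\Coh(X) = \Ind(\Coh(X))$, the ind-completion of the full subcategory of $\QC(X)$ on complexes with bounded coherent cohomology. To define $!$-pullback along an arbitrary morphism $f : X \to Y$ in $\Sch_\aft$, I would invoke Nagata compactification to factor $f = f_2 \circ f_1$ with $f_1$ an open embedding and $f_2$ proper, and then set $f^! = f_1^* \circ f_2^!$ as in item (2). The fact that this is independent of the factorization and assembles into a Cartesian fibration ${}^\op\Ind\Coh \to \Sch_\aft$ is \cite{grDAGI}*{II, Theorem 5.2.1.4}. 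With this in hand, the monoidal enhancement comes fibrewise from the composite $\boxtimes$ followed by $\Delta^!$, and \cite{grDAGI}*{II, Theorem 5.4.1.2} promotes it to a presentable monoidal structure natural in $!$-pullback; together these produce the desired coCartesian family $\Ind\Coh^\otimes \to \Sch_\aft^\op \times \Assoc^\otimes$. Items (3) and (5) are then cited verbatim from \cite{grDAGI}.

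Finally, for item (1), I would right Kan extend the classifying map $\Sch_\aft^\op \to \Pr^{\Mon}$ of the family along the inclusion $\Sch_\aft^\op \to \PreStk_\laft^\op$. This is well-defined because $\Pr^{\Mon}$ admits small limits --- it inherits them from $\PrL$ \cite{lurie2009higher}*{Proposition 5.5.3.13} compatibly with the symmetric monoidal structure described in \S\ref{ss:tensor products of quasi-categories} --- so the right Kan extension lands where we want and is given, for each $\mathscr Y \in \PreStk_\laft$, as the limit of $\Ind\Coh^\otimes$ over the slice diagram $(\Aff_\ft)_{/\mathscr Y} \to \Sch_\aft$. The only genuine obstacle in the argument --- the construction of $!$-pullback for arbitrary morphisms in $\Sch_\aft$ together with all coherent compatibilities --- is entirely subsumed by the Gaitsgory--Rozenblyum machinery, so the remainder of the proof is bookkeeping: extracting from their formalism the exact shape of coCartesian fibration needed to feed into the Homotopical Skolem--Noether Theorem in the corollaries that follow.
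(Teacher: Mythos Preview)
Your proposal is correct and in fact more detailed than what the paper itself provides: the paper gives no separate proof for this theorem, treating it as a compilation statement with inline citations to \cite{grDAGI} for each of items (2)--(5), and item (1) is simply the definition of how $\Ind\Coh$ is extended to prestacks. Your account of how to assemble the pieces into a coCartesian family and right Kan extend is exactly the intended reading of those citations.
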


\begin{defn} Let $\mathscr Y\xrightarrow p\Spec k$ be a stack locally almost of finite type. The unit object of the symmetric monoidal quasi-category $\Ind\Coh^{\otimes}(\mathscr Y)$ is the \emph{dualizing complex} $\omega_{\mathscr Y}:=p^!(k)$. The map $\Ind\Coh(\mathscr Y)\xrightarrow{\Psi_{\mathscr Y}}\QCoh(\mathscr Y)$ admits a symmetric monoidal left adjoint $\QCoh(\mathscr Y)\xrightarrow{\Upsilon_{\mathscr Y}}\Ind\Coh(\mathscr Y)$ \cite{grDAGI}*{II, \S 6.3.3}, which restricts to an equivalence on dualizable objects \cite{grDAGI}*{II, Lemma 6.3.3.7}. Let $F=E\otimes\omega_{\mathscr Y}\in\Ind\Coh(\mathscr Y)$ be a dualizable object. Then its \emph{support} $\Supp(F)$ is $\Supp(E)$. We let $\TCoh\subseteq\Coh$ denote the full substack on totally supported coherent complexes and we let $\Az\Ind\Coh\subseteq\Alg\Ind\Coh^{\otimes^!}$ denote the full substack on algebra objects locally equivalent to endomorphism algebras of totally supported coherent complexes. 

\end{defn}

\begin{cor}[Homotopical Skolem--Noether Theorem for $\Ind\Coh$]\label{thm:HSN for IndCoh} The following assertions hold:
\begin{enumerate}
 \item Then the following is a fibre sequence in $\Stk_{\lft}$:
 $$
 {}^\op\TCoh^\simeq\xrightarrow{\End}\Az\Ind\Coh^\simeq\xrightarrow{\Mod}{}^\op\LTens^{\Az}\Ind\Coh^\simeq\simeq(\B^2\mathbf G_m\times\B\mathbf Z).
 $$
 \item Let $\mathscr Y$ be a stack locally of finite type and $E\in\TCoh(\mathscr Y)$ a totally supported complex with bounded coherent cohomology sheaves on $\mathscr Y$. Then the following is a fibre sequence in $(\Stk_{\lft})_{/\mathscr Y}$:
 $$
 ((\B\mathbf G_m)\times\mathbf Z)_{\mathscr Y}\simeq{}^\op\Pic\Ind\Coh_{\mathscr Y}\xrightarrow{\otimes E}{}^\op\TCoh_\mathscr Y^\simeq\xrightarrow{\End}{}^\op\Az\Ind\Coh_\mathscr Y^\simeq.
 $$
\end{enumerate}
 
\end{cor}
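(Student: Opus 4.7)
The plan is to apply the Homotopical Skolem--Noether Theorem (Theorem \ref{thm:HSN}) to the coCartesian family $\Ind\Coh^{\otimes^!}\to\Assoc^\otimes\times\Sch_\aft^\op$ of presentable monoidal quasi-categories, equipped with the fppf topology on $\Sch_\aft$, and then propagate the resulting fibre sequence to $\Stk_\lft$ via the right Kan extension procedure of Theorem \ref{thm:IndCoh}(1). The two assertions in the corollary are then the restatements of parts (1) and (2) of Theorem \ref{thm:HSN} after identifying the various building blocks ($\mathscr C_\dgen^\simeq$, $\Pic\mathscr C$, $\LTens^{\Az}\mathscr C$) with the $\Ind\Coh$-theoretic objects appearing in the corollary.

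The first step is to verify the descent hypothesis on $\Ind\Coh^{\otimes^!}$. Theorem \ref{thm:IndCoh}(3) supplies fppf-descent for the underlying Cartesian fibration ${}^\op\Ind\Coh\to\Sch_\aft$. By Theorem \ref{thm:IndCoh}(5), each $\Ind\Coh(X)$ is self-dual and in particular dualizable in $\PrSt$, hence in $\PrL$. Corollary \ref{cor:dualizable commutes with descent} therefore yields that base changes in $\Ind\Coh^{\otimes^!}$ commute with fppf-descent data, so the family has fppf-descent in the sense required by Theorem \ref{thm:HSN}. This makes the machinery of Section 4 available.

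Next I would carry out the identifications. The symmetric monoidal left adjoint $\Upsilon_{\mathscr Y}\colon\QCoh(\mathscr Y)^{\otimes^L}\to\Ind\Coh(\mathscr Y)^{\otimes^!}$ restricts to an equivalence on dualizable objects, hence on invertible objects, giving $\Pic\Ind\Coh\simeq\Pic\QCoh\simeq\B\mathbf G_m\times\mathbf Z$ in $\Stk_\lft$. By the Homotopical Eilenberg--Watts Theorem the prestack $\LTens^{\Az}\Ind\Coh^\simeq$ of locally trivial presentable left-tensored quasi-categories is the classifying prestack of $\Pic\Ind\Coh$-torsors, so $\LTens^{\Az}\Ind\Coh^\simeq\simeq\B\Pic\Ind\Coh\simeq\B^2\mathbf G_m\times\B\mathbf Z$. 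These identifications plug directly into Theorem \ref{thm:HSN}(1)--(2); the relative version in (2) is obtained by pulling the absolute fibre sequence back along $\mathscr Y\to\Spec k$.

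The main technical obstacle is the identification $\TCoh^\simeq\simeq(\Ind\Coh)_\dgen^\simeq$. On one hand, via $\Upsilon$ the dualizable objects of $\Ind\Coh^{\otimes^!}$ correspond to perfect complexes in $\QCoh$, and the generation condition is preserved, so dualizable generators of $\Ind\Coh^{\otimes^!}$ correspond to totally supported perfect complexes. One must then argue, using that $\omega_{\mathscr Y}$ is coherent on $\Sch_\aft$ (so $\Upsilon$ lands in $\Coh$) and that the generation criterion $\mathscr O\hookrightarrow\End$ passes through $\Upsilon$, that this correspondence lands inside, and exhausts, the totally supported coherent complexes of the corollary statement; correspondingly, $\Az\Ind\Coh$ as defined in the statement coincides with the general $\Az$-subprestack of Theorem \ref{thm:HSN}. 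Once this identification is established, parts (1) and (2) follow immediately from Theorem \ref{thm:HSN}(1)--(2) combined with the identifications of the previous paragraph.
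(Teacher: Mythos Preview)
Your proposal is correct and follows essentially the same approach as the paper: verify the descent hypothesis of Theorem \ref{thm:HSN} for $\Ind\Coh^{\otimes^!}$ using self-duality (Theorem \ref{thm:IndCoh}(5)) together with Corollary \ref{cor:dualizable commutes with descent}, and then identify the output via the symmetric monoidal equivalence $\Upsilon$ on dualizable objects. The paper's proof is terser---it cites Theorem \ref{thm:IndCoh} wholesale for the hypothesis check and records the key identification that $\omega_X$ is compact (hence dualizable $\Rightarrow$ compact $\Rightarrow$ coherent) via \cite{IndCoh}*{Corollaries 1.2.6 and 9.6.4}, which is your ``$\omega_{\mathscr Y}$ is coherent so $\Upsilon$ lands in $\Coh$'' made precise---but the structure is the same.
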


\begin{proof} By Theorem \ref{thm:IndCoh}, we can apply the abstract Homotopical Skolem--Noether Theorem, Theorem \ref{thm:HSN} to the co-family $\Ind\Coh^{\otimes^!}$. The rest of the Proof consists of identifying the output of the Homotopical Skolem--Noether Theorem.

Let $X\xrightarrow p\Spec k$ be an affine scheme of finite type. Then by Remark \ref{rem:DgAz is B2Gm x BZ} we have $(\B\mathbf G_m\times\mathbf Z)(X)\simeq\Pic\QCoh(X)$. The equivalence on dualizable objects $\QCoh(X)^d\xrightarrow{\Upsilon_X(E)=E\otimes\omega_X}\Ind\Coh(X)^d$ restricts to an equivalence $\Pic\QCoh(X)\simeq\Pic\Ind\Coh(X)$. The full subcategory $\Coh(X)\subseteq\Ind\Coh(X)$ is precisely the full subcategory of compact objects \cite{IndCoh}*{Corollary 1.2.6}. Moreover, the dualizing complex $\omega_X\in\Ind\Coh(X)$ is compact \cite{IndCoh}*{Corollary 9.6.4}, therefore every dualizable object of $\Ind\Coh^{\otimes^!}$ is compact. By definition, a coherent complex $E\otimes\omega_X\in\Coh X$ is totally supported if and only if the perfect complex $E\in\Perf X$ is totally supported.

\end{proof}

\begin{rem} Let $\mathscr Y$ be a stack locally of finite type. Note that via the equivalence $\Perf(\mathscr Y)^\otimes\xrightarrow{\Upsilon_{\mathscr Y}}\Coh(\mathscr Y)^{\otimes^!}$ the result of Corollary \ref{thm:HSN for IndCoh} is equivalent to Corollary \ref{thm:HSN for DAG} for Derived Algebraic Geometry. But it might offer a new look at derived Azumaya algebras, in particular in cases where $\omega_{\mathscr Y}\in\Ind\Coh(\mathscr Y)$ is compact but $\mathscr O_{\mathscr Y}\in\QC(\mathscr Y)$ is not.

\end{rem}

\subsubsection{Crystals and D-modules}

\begin{defn} Let $S$ be an affine scheme. The we say that it is a \emph{reduced scheme} if it is a reduced classical scheme. We denote by ${}_\red\Aff\subset{}_\cl\Aff$ the full subcategory on reduced affine schemes and we let ${}_\red\PreStk=\mathscr P({}_\red\Aff)$.

The \emph{de Rham functor} is the composite $\dR:\PreStk\xrightarrow\res{}_\red\PreStk\xrightarrow\RKE\PreStk$ of the restriction and the right Kan extension functors. Let $\mathscr Y\in\PreStk$ be a prestack. Then the \emph{de Rham prestack $\mathscr Y_{\dR}$} of $\mathscr Y$ is the image $\dR(\mathscr Y)$.

\end{defn}

\begin{prop}\cite{grCrystals}*{Proposition 1.3.3} Let $\mathscr Y\in\PreStk_\laft$ be a prestack locally almost of finite type. Then the de Rham prestack $\mathscr Y_{\dR}$ is a classical prestack locally almost of finite type.

\end{prop}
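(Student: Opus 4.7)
The plan is to first establish the formula $\mathscr Y_\dR(S)\simeq\mathscr Y(S_\red)$ for every $S\in\Aff$, and then to verify the three defining properties of $\PreStk_\laft$ from this. The formula comes from observing that the nilreduction functor $(-)_\red\colon\Aff\to{}_\red\Aff$ is right adjoint to the inclusion ${}_\red\Aff\hookrightarrow\Aff$: for any $A\in\CAlg_k^{\le0}$ and any reduced classical $k$-algebra $B$, any map $A\to B$ in $\CAlg$ factors uniquely through the nilreduction $(H^0A)_\red$ of its classical truncation. Hence $S_\red\to S$ is terminal in $({}_\red\Aff)_{/S}$, and the right Kan extension defining $\mathscr Y_\dR$ collapses to the single term $\mathscr Y(S_\red)$.

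For classicality, I would show directly that the counit map $\LKE(\res\mathscr Y_\dR)(S)=\colim_{(T,S\to T),\,T\in\Aff_\cl}\mathscr Y(T_\red)$ recovers $\mathscr Y(S_\red)$; this uses a cofinality argument identical in spirit to the one needed for the laft step below. Convergence is immediate from the identity $(\tau_{\le n}S)_\red=S_\red$: the nilradical of $H^0\mathscr O_S$ depends only on the classical truncation, hence on each $\tau_{\le n}S$, so the tower $\{\mathscr Y_\dR(\tau_{\le n}S)\}$ is essentially constant and its limit agrees with $\mathscr Y_\dR(S)$.

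The substantive step is the local-finite-type condition: for $S\in\Aff_{<\infty}$ one needs $\mathscr Y_\dR(S)\simeq\colim_{(T,S\to T)\in(\Aff_\ft)^{S/}}\mathscr Y_\dR(T)$. Writing $S_\red=\Spec R$, the reduced classical $k$-algebra $R$ is the filtered colimit of its finitely generated reduced $k$-subalgebras $R_\alpha$, giving $S_\red\simeq\lim_\alpha\Spec R_\alpha$ in $\Aff$ with each $\Spec R_\alpha\in{}_\red\Aff_\ft\subseteq\Aff_\ft$. The laft hypothesis on $\mathscr Y$, applied to the classical (hence eventually coconnective) $S_\red$, yields
\[
\mathscr Y(S_\red)\simeq\colim_\alpha\mathscr Y(\Spec R_\alpha)\simeq\colim_\alpha\mathscr Y_\dR(\Spec R_\alpha),
\]
the second equivalence because each $\Spec R_\alpha$ is already reduced. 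A cofinality argument for the functor $(T,S\to T)\mapsto(T_\red,S_\red\to T_\red)$ from $(\Aff_\ft)^{S/}$ to the indexing category of the $R_\alpha$'s then identifies this with the required colimit.

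The main obstacle I anticipate is this final cofinality step. The delicacy is that $S$ does not map canonically to $S_\red$ in $\Aff$, so one cannot simply lift maps from $S_\red$ back to $S$, and the naive comparison of slice categories fails. A clean route is to observe that for any $(T,S\to T)\in(\Aff_\ft)^{S/}$ the induced map $S_\red\to T_\red$ factors through some $\Spec R_\alpha$ by the ind-presentation of $S_\red$; using the identity $\mathscr Y_\dR(T)=\mathscr Y_\dR(T_\red)$ to absorb the non-uniqueness of any choice of lift, this reduces the required cofinality to that of the filtered system $\{R_\alpha\}$ in $R$.
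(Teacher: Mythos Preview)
The paper does not prove this proposition; it merely cites \cite{grCrystals}*{Proposition 1.3.3}. So there is no in-paper proof to compare against, and your sketch is being assessed on its own merits.

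Your argument is essentially correct and follows the standard line. The formula $\mathscr Y_{\dR}(S)\simeq\mathscr Y(S_\red)$ is right, for the reason you give. Two remarks that would streamline the write-up. First, classicality is immediate once you observe that the left Kan extension from ${}_\cl\Aff^\op$ is computed by precomposition with $\tau_\cl$ (because $\tau_\cl$ is left adjoint to the inclusion ${}_\cl\Aff\hookrightarrow\Aff$); since $S_\red=(\tau_\cl S)_\red$, you get $\mathscr Y_{\dR}(S)\simeq\mathscr Y_{\dR}(\tau_\cl S)$ directly, with no separate cofinality argument needed. Convergence then follows from classicality. Second, for the laft step, the cofinality worry you flag can be dissolved systematically rather than by an ad hoc factorisation: the functors $\tau_\cl\colon\Aff_\ft\to\Aff_{\cl,\ft}$ and $(-)_\red\colon\Aff_{\cl,\ft}\to{}_\red\Aff_\ft$ are left adjoints (to the respective inclusions), hence induce cofinal functors on the relevant coslice categories. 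Chaining these reduces the colimit over $(\Aff_\ft)^{S/}$ to the colimit over $({}_\red\Aff_\ft)^{S_\red/}$, which is exactly where the laft hypothesis on $\mathscr Y$ applies. This replaces your final paragraph with a one-line appeal to ``left adjoints are cofinal'' and avoids having to lift maps back from $S_\red$ to $S$.
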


\begin{defn} The \emph{left crystals functor} is the composite $\Crys^l:\PreStk\xrightarrow{\dR}\PreStk\xrightarrow{\QC}\Pr^{\Mon}$. The \emph{right crystals functor} is the composite $\Crys^r:\PreStk_\laft\xrightarrow{\dR}\PreStk_\laft\xrightarrow{\Ind\Coh}\Pr^{\Mon}$. They are related by a symmetric monoidal natural equivalence $(\Crys^l|\PreStk_\laft)\xrightarrow\Upsilon\Crys^r$ \cite{grDAGI}*{II, \S6.3.2}, \cite{grCrystals}*{Proposition 2.4.4}.

\end{defn}

\begin{rem} Let $X$ be a classical scheme of finite type. Then there exist equivalences $\Crys^lX\simeq\DMod^lX$ and $\Crys^rX\simeq\DMod^rX$ \cite{grCrystals}*{\S5.5} where $\DMod^lX$ (resp.~$\DMod^rX$) are the stable quasi-categories of left (resp.~right) D-modules on $X$.

\end{rem}

\begin{cor}[Homotopical Skolem--Noether Theorem for $\Crys$]\label{thm:HSN for Crys} The following assertions hold:
\begin{enumerate}
 \item Then the following is a fibre sequence in $\Stk_{\lft}$:
 $$
 {}^\op(\Crys_{\Coh}^{r,\gen})^\simeq\xrightarrow{\End}(\Az\Crys^r)^\simeq\xrightarrow{\Mod}{}^\op\LTens^{\Az}(\Az\Crys^r)^\simeq\simeq(\B^2\mathbf G_m\times\B\mathbf Z)_{\dR}.
 $$
 \item Let $\mathscr Y$ be a stack locally of finite type and $E\in\Crys_{\Coh}^{r,\dgen}(\mathscr Y)$ a generator complex with bounded coherent cohomology sheaves on $\mathscr Y$. Then the following is a fibre sequence in $(\Stk_{\lft})_{/\mathscr Y_{\dR}}$:
 $$
 ((\B\mathbf G_m)\times\mathbf Z)_{\mathscr Y_{\dR}}\simeq{}^\op\Pic\Crys^r_{\mathscr Y}\xrightarrow{\otimes E}{}^\op(\Crys_{\Coh}^{r,\gen})_\mathscr Y^\simeq\xrightarrow{\End}{}^\op(\Az\Crys^r)_\mathscr Y^\simeq.
 $$
\end{enumerate}
 
\end{cor}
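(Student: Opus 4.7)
The plan is to deduce the statement by transporting Corollary \ref{thm:HSN for IndCoh} along the de Rham functor $\PreStk_\laft \xrightarrow{\dR} \PreStk_\laft$, exploiting the definitional identity $\Crys^r = \Ind\Coh \circ \dR$. Concretely, for each $\mathscr Y \in \PreStk_\laft$ we have a symmetric monoidal equivalence $(\Crys^r)^{\otimes^!}(\mathscr Y) \simeq \Ind\Coh^{\otimes^!}(\mathscr Y_{\dR})$, and this promotes to an equivalence of coCartesian families $(\Crys^r)^{\otimes^!} \simeq \dR^* \Ind\Coh^{\otimes^!}$ over $\PreStk_\laft^\op \times \Assoc^\otimes$. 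Under this identification, every notion appearing in our main theorem (dualizability, generators, Azumaya-ness, descent) for $\Crys^r$ on $\mathscr Y$ reduces to the corresponding notion for $\Ind\Coh$ on $\mathscr Y_{\dR}$.

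First I would verify that $(\Crys^r)^{\otimes^!}$ is a coCartesian family of presentable monoidal quasi-categories with $\tau$-descent for the \'etale (or fppf) topology inherited from $\PreStk_\laft$. Since $\dR$ sends \'etale covers to \'etale covers of the associated de Rham prestacks (indeed the formation $\mathscr Y \mapsto \mathscr Y_{\dR}$ is compatible with colimits on the site, and it is insensitive to nilpotent thickenings), the descent established for $\Ind\Coh^{\otimes^!}$ in Theorem \ref{thm:IndCoh}(3) pulls back to descent for $(\Crys^r)^{\otimes^!}$. Combined with the fact that $\Ind\Coh(\mathscr Y_{\dR})$ is compactly generated and hence dualizable in $\PrSt$, Corollary \ref{cor:dualizable commutes with descent} verifies the hypothesis that base changes commute with $\tau$-descent data, so Theorem \ref{thm:HSN} applies.

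Next I would identify each piece of the fibre sequence produced by Theorem \ref{thm:HSN}. Under $\Crys^r(\mathscr Y) = \Ind\Coh(\mathscr Y_{\dR})$ the prestack $(\Crys^r_{\Coh}^{,\gen})^\simeq_{\mathscr Y}$ corresponds to $\TCoh^\simeq_{\mathscr Y_{\dR}}$ (using that dualizable generators in $\Ind\Coh^{\otimes^!}$ are precisely the totally supported coherent complexes, as in the proof of Corollary \ref{thm:HSN for IndCoh}), and $\Az\Crys^r$ corresponds to $\Az\Ind\Coh$ on $\mathscr Y_{\dR}$. The identification $\LTens^{\Az}(\Crys^r)^\simeq \simeq (\B^2\mathbf G_m \times \B\mathbf Z)_{\dR}$ then follows from the analogous identification $\LTens^{\Az}\Ind\Coh^\simeq \simeq \B^2\mathbf G_m \times \B\mathbf Z$ on $\mathscr Y_{\dR}$ together with the fact that for any stack $\mathscr Z$ the presheaf $\Hom(\Underl,\mathscr Z) \circ \dR$ is represented by $\mathscr Z_{\dR}$. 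The Picard identification in (2) proceeds identically: on $\mathscr Y_{\dR}$ we have $\Pic\Ind\Coh \simeq \B\mathbf G_m \times \mathbf Z$, and pulling back along $\mathscr Y \to \mathscr Y_{\dR}$ (equivalently, restricting the right Kan extension) yields $(\B\mathbf G_m \times \mathbf Z)_{\mathscr Y_{\dR}}$.

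The main obstacle I expect is not the formal transport itself but the bookkeeping around the Grothendieck topology on $\PreStk_\laft$ relative to the de Rham functor: one needs to check carefully that a $\tau$-cover $(U_i \to \mathscr Y)$ gives a cover $((U_i)_{\dR} \to \mathscr Y_{\dR})$ in the sense required to apply descent for $\Ind\Coh^{\otimes^!}$ over $\mathscr Y_{\dR}$, and that the resulting $q$-limit diagrams match. Since $\dR$ commutes with finite limits among affine schemes almost of finite type and since \'etale covers of $\mathscr Y_{\dR}$ are controlled by \'etale covers of $\mathscr Y$ (infinitesimal descent is automatic because $\dR$ factors through reduced schemes), this bookkeeping goes through, and the two fibre sequences in (1) and (2) follow directly from Theorem \ref{thm:HSN}(1)-(2) applied to $(\Crys^r)^{\otimes^!}$.
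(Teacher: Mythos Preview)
Your approach---transport along $\dR$---is a valid alternative, but more roundabout than the paper's. The paper does not reduce to Corollary~\ref{thm:HSN for IndCoh}; it verifies the hypotheses of Theorem~\ref{thm:HSN} directly for $(\Crys^r)^{\otimes^!}$, citing fppf descent for $\Crys^r$ from \cite{grCrystals}*{Corollary 3.2.4} and compact generation of $\Crys^r(X)$ for $X$ a scheme locally almost of finite type from \cite{grCrystals}*{Corollary 3.3.3} (hence dualizability via Proposition~\ref{prop:compactly generated is dualizable}, so Corollary~\ref{cor:dualizable commutes with descent} applies). This sidesteps precisely the obstacle you flag: Theorem~\ref{thm:IndCoh}(3) is descent for $\Ind\Coh$ over $\Sch_\aft$, but the \v Cech nerves $(U_\bullet)_{\dR}\to X_{\dR}$ you need are not diagrams in $\Sch_\aft$, and establishing that $\Ind\Coh$ has descent along them is essentially the content of \cite{grCrystals}*{Corollary 3.2.4}---so your route does not actually bypass that input. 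Similarly, your asserted compact generation of $\Ind\Coh(\mathscr Y_{\dR})$ is exactly \cite{grCrystals}*{Corollary 3.3.3} rephrased; it does not follow from Theorem~\ref{thm:IndCoh}, since $\mathscr Y_{\dR}$ is not a scheme. For the identification of dualizable objects and of the Picard term, the paper argues through the symmetric monoidal equivalence $\Crys^l\xrightarrow{\Upsilon}\Crys^r$ together with the description of compact objects in $\Crys^r$ as D-modules with coherent underlying complex \cite{drinfeld2013finiteness}*{\S5.1.17}, giving $\Pic\Crys^r(X)\simeq\Pic\Crys^l(X)\simeq(\B\mathbf G_m\times\mathbf Z)(X)$; this is the same content as your argument, viewed from the $\QC$ side rather than the $\Ind\Coh$ side. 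In short: both routes need the crystal-specific inputs from \cite{grCrystals}, and once those are granted the paper's direct application of Theorem~\ref{thm:HSN} is shorter than pulling the $\Ind\Coh$ statement back along~$\dR$.
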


\begin{proof} The underlying Cartesian fibration of $\Crys^r$ satisfies fppf descent \cite{grCrystals}*{Corollary 3.2.4}. For a scheme $X$ locally almost of finite type, the stable quasi-category $\Crys^r(X)$ is compactly generated \cite{grCrystals}*{Corollary 3.3.3}, therefore it is dualizable \cite{lurie2018spectral}*{Proposition D.7.2.3}. This shows by Proposition \ref{prop:dualizable commutes with descent} that we can apply the abstract Homotopical Skolem--Noether Theorem. To finish, we need to identify its output.

Let $X$ be a scheme of finite type. Then just as in Corollary \ref{thm:HSN for IndCoh}, the dualizable objects in $\Crys^r(X)$ coincide with the compact objects, which in turn coincide with the D-modules with coherent underlying complex \cite{drinfeld2013finiteness}*{\S 5.1.17}. Moreover the equivalence $\Crys^l(X)\to\Crys^r(X)$ restricts to the equivalence $((\B\mathbf G_m)\times\mathbf Z)(X)\simeq\Pic\Crys^l(X)\simeq\Pic\Crys^r(X)$.

\end{proof}

\begin{defn} Let $\id_{\PreStk}\xrightarrow u\dR$ denote the unit of the adjunction $\smalladjoints{\PreStk}{{}_\red\PreStk}{\res}{\RKE}$. Let $\mathscr Y$ be a prestack. Then a \emph{twisting} on $\mathscr Y$ is a $\mathbf G_m$-gerbe $T$ on $\mathscr Y_{\dR}$ equipped with a trivialization of the pullback $T|\mathscr Y$ along $u$.

Let $T$ be a twisting on $\mathscr Y$. Then via the symmetric monoidal structure on $\QC$, the $\mathbf G_m$-gerbe $T$ acts on the quasi-category $\Crys^l(\mathscr Y)$ of left crystals. Therefore, we can form the quasi-category $\Crys^{T,l}(\mathscr Y)$ of \emph{$T$-twisted left crystals} on $\mathscr Y$.

\end{defn}

\begin{cor}\label{cor:twisted crystals as deraz} Let $\mathscr Y$ be a prestack and $T$ a twisting on $\mathscr Y$. Then the quasi-category ${}^\dgen\Crys^{T,l}(\mathscr Y)$ of $T$-twisted left crystals on $\mathscr Y$ that are dualizable generators is equivalent to the quasi-category $\Deraz^T(\mathscr Y_{\dR})$ of derived Azumaya algebras on $\mathscr Y_{\dR}$ with Brauer class $T$.

\end{cor}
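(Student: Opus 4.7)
My plan is to deduce this equivalence directly from the refined Morita equivalence of Lemma \ref{lem:Az alg mod} applied to $\mathscr C = \Crys^l(\mathscr Y) = \QC(\mathscr Y_{\dR})$, together with the identification of locally trivial module categories over $\Crys^l$ as classified by Brauer classes on $\mathscr Y_{\dR}$. Concretely, I will take the fibre of the forgetful left fibration $\LTens_\dgen^{\Az}\Crys^l \to \LTens^{\Az}\Crys^l$ over the point $T$, and match both resulting descriptions with the two sides of the corollary.

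First, applying Lemma \ref{lem:Az alg mod} to $\Crys^l$ (equivalently, via the symmetric monoidal equivalence $\Upsilon \colon \Crys^l \simeq \Crys^r$, to $\Crys^r$), the pointed Morita functor restricts to an equivalence
\[
\Az\Crys^l \xrightarrow{\ \Mod_*\ } \LTens_\dgen^{\Az}\Crys^l
\]
sending a derived Azumaya algebra $A$ on $\mathscr Y_{\dR}$ to the pair $(\Mod_A, A_A)$. The forgetful map $\LTens_\dgen^{\Az}\Crys^l \to \LTens^{\Az}\Crys^l$ is a left fibration whose fibre over a locally trivial presentable left-tensored quasi-category $\mathscr M$ is its space $\mathscr M_\dgen^\simeq$ of dualizable generators. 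By the Homotopical Eilenberg--Watts Theorem together with the identification $\Pic\Crys^l \simeq (\mathbf G_m \times \mathbf Z)_{\dR}$ coming from the proof of Corollary \ref{thm:HSN for Crys}, the stack $\LTens^{\Az}\Crys^l$ classifies $\B^2\mathbf G_m \times \B\mathbf Z$-valued cocycles on $\mathscr Y_{\dR}$, and the locally trivial module category corresponding to the twisting $T$ is precisely the $T$-twisted crystal category $\Crys^{T,l}(\mathscr Y)$.

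Now take the fibre of the composite $\Mod_*$ followed by the projection $\LTens_\dgen^{\Az}\Crys^l \to \LTens^{\Az}\Crys^l$ at the point classifying $T$ (trivially completed by $0 \in \B\mathbf Z$). On the Azumaya side, the fibre at $T$ is by definition $\Deraz^T(\mathscr Y_{\dR})$. On the dualizable generator side, the fibre at $\Crys^{T,l}(\mathscr Y)$ is the space of dualizable generators in the $T$-twisted category, which by definition is ${}^\dgen\Crys^{T,l}(\mathscr Y)$. Since taking fibres along a Morita equivalence yields an equivalence of fibres, we obtain the desired equivalence ${}^\dgen\Crys^{T,l}(\mathscr Y) \simeq \Deraz^T(\mathscr Y_{\dR})$.

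The main obstacle is the identification of the locally trivial module category classified by the point $T \in (\B^2\mathbf G_m)_{\dR}(\mathscr Y)$ as the $T$-twisted crystal category $\Crys^{T,l}(\mathscr Y)$. This requires unpacking the equivalence $\LTens^{\Az}\Crys^l \simeq \B\Pic\Crys^l$ from the Homotopical Eilenberg--Watts Theorem to show that tensoring with the $\mathbf G_m$-gerbe representing $T$ inside $\Crys^l$ produces the same category as the twisted crystal category of Definition preceding the corollary; everything else in the argument is a mechanical unpacking of the definitions and the refined Morita equivalence.
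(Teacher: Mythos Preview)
Your argument is correct and is essentially the same as the paper's, only spelled out in more detail. The paper's proof is a single sentence citing Corollary~\ref{thm:HSN for DAG}: since by definition $\Crys^l(\mathscr Y)=\QC(\mathscr Y_{\dR})$, one simply applies the Homotopical Skolem--Noether Theorem for $\QC$ over the base $\mathscr Y_{\dR}$ and reads off the fibre over the Brauer class $T$, exactly as you do via the equivalence $\Az\simeq\LTens_\dgen^{\Az}$ and the left fibration $\LTens_\dgen^{\Az}\to\LTens^{\Az}$. The identification you flag as the ``main obstacle'' --- that the locally trivial module category with class $T$ is the twisted category $\Crys^{T,l}(\mathscr Y)$ --- is taken as definitional in the paper (the twisted category is built from the $\mathbf G_m$-gerbe action on $\QC$, which is precisely what the Eilenberg--Watts identification $\LTens^{\Az}\QC\simeq\B\Pic\QC$ encodes), so there is no real gap.
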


\begin{proof} This is a formal consequence of the Homotopical Skolem--Noether Theorem for Derived Algebraic Geometry, Corollary \ref{thm:HSN for DAG}.

\end{proof}

\begin{bibdiv}
 \begin{biblist}

 \bib{abramovich2003twisted}{article}{
   author={Abramovich, Dan},
   author={Corti, Alessio},
   author={Vistoli, Angelo},
   title={Twisted bundles and admissible covers},
   note={Special issue in honor of Steven L. Kleiman},
   journal={Comm. Algebra},
   volume={31},
   date={2003},
   number={8},
   pages={3547--3618},
}

\bib{antieau2014brauer}{article}{
 author = {Antieau, Benjamin},
 author = {Gepner, David},
 title = {Brauer groups and \'etale cohomology in derived algebraic geometry},
 journal = {Geometry \& Topology},
 year = {2014},
 number = {2},
 pages = {1149--1244},
}

\bib{barthel2014six}{article}{
    author = {Barthel, Tobias},
    author = {May, Jon P.},
    author = {Riehl, Emily},
    title = {Six model structures for DG-modules over DGAs: model category theory in homological action},
    journal = {New York Journal of Mathematics},
    volume = {20},
    date = {2014},
    pages = {1077--1159}
}

\bib{benzvi2010integral}{article}{
    author = {Ben-Zvi, David},
    author = {Francis, John},
    author = {Nadler, David},
    title = {Integral transforms and Drinfeld centers in derived algebraic geometry},
    journal = {Journal of the American Mathematical Society},
    volume = {23},
    year = {2010},
    number = {4},
    pages = {909--966},
    label = {BFN10},
}

\bib{bondal2003generators}{article}{
    AUTHOR = {Bondal, Alexei},
    author = {{Van den Bergh}, Michel},
    TITLE = {Generators and representability of functors in commutative and noncommutative geometry},
    JOURNAL = {Moscow Mathematical Journal},
    VOLUME = {3},
    YEAR = {2003},
    NUMBER = {1},
    PAGES = {1--36},
    label = {BB03},
}

\bib{deJong2003result}{article}{
  title={A result of Gabber},
  author={de Jong, Aise Johan},
  eprint={http://www.math.columbia.edu/~dejong/papers/2-gabber.pdf},
}

\bib{dhillon2018stack}{article}{
 author={Dhillon, Ajneet},
 author={Zs\'amboki, P\'al},
 title={On the $\infty$-stack of complexes over a scheme},
 note = {arXiv:1801.06701 [math.AG]},
 year = {2018}
}

\bib{dhillon2020twisted}{article}{
 author={Dhillon, Ajneet},
 author={Zs\'amboki, P\'al},
 title={Twisted forms of perfect complexes and Hilbert 90},
 note = {To appear in Indiana University Mathematics Journal. arXiv:1901.06816 [math.AG]},
 year = {2022}
}

\bib{drinfeld2013finiteness}{article}{
 author = {Drinfeld, Vladimir},
 author = {Gaitsgory, Dennis},
 title = {On some finiteness questions for algebraic stacks},
 journal = {Geometric and Functional Analysis},
 pages = {149--294},
 year = {2013},
 volume = {23}
}

\bib{edidin2001brauer}{article}{
  title={Brauer groups and quotient stacks},
  author={Edidin, Dan},
  author={Hassett, Brendan},
  author={Kresch, Andrew},
  author={Vistoli, Angelo},
  journal={American Journal of Mathematics},
  pages={761--777},
  year={2001},
  publisher={JSTOR}
}

\bib{gepner2016brauer}{article}{
 author = {Gepner, David},
 author = {Lawson, Tyler},
 title = {Brauer groups and Galois cohomology of commutative ring spectra},
 note = {arXiv:1607.01118 [math.AT]},
 year = {2016},
}

\bib{IndCoh}{article} {
	AUTHOR = {Gaitsgory, Dennis},
	TITLE = {ind-coherent sheaves},
	JOURNAL = {Moscow Mathematical Journal},
	VOLUME = {13},
	YEAR = {2013},
	NUMBER = {3},
	PAGES = {399--528, 553},
}

\bib{grCrystals}{article}{
	AUTHOR = {Gaitsgory, Dennis}, 
	author = {Rozenblyum, Nick},
	TITLE = {Crystals and {D}-modules},
	JOURNAL = {Pure and Applied Mathematics Quarterly},
	VOLUME = {10},
	YEAR = {2014},
	NUMBER = {1},
	PAGES = {57--154},
}

\bib{grDAGI}{book} {
	AUTHOR = {Gaitsgory, Dennis},
	author =  {Rozenblyum, Nick},
	TITLE = {A study in derived algebraic geometry. {V}ol. {I}.
		{C}orrespondences and duality},
	SERIES = {Mathematical Surveys and Monographs},
	VOLUME = {221},
	PUBLISHER = {American Mathematical Society, Providence, RI},
	YEAR = {2017},
}

\bib{giraud1971cohomologie}{book}{
   author={Giraud, Jean},
   title={Cohomologie non ab\'elienne},
   publisher={Springer-Verlag, Berlin-New York},
   date={1971},
   pages={ix+467},
}

\bib{goerss2009simplicial}{book}{
    author = {Goerss, Paul G.},
    author = {Jardine, John},
    title = {Simplicial homotopy theory},
    series = {Modern Birkh\"auser classics},
    date = {2009},
    publisher = {Birkh\"auser Basel},
    pages = {XVI, 510},
}

\bib{hovey1999model}{book}{
    author = {Hovey, Mark},
    title = {Model categories},
    publisher = {American Mathematical Society},
    series = {Mathematical surveys and monographs},
    volume = {63},
    date = {1999},
    pages = {209},
}

\bib{hovey2002cotorsion}{article}{
    author = {Hovey, Mark},
    year = {2002},
    pages = {553-592},
    title = {Cotorsion pairs, model category structures, and representation theory},
    volume = {241},
    journal = {Math. Z.},
}

\bib{lieblich2009compactified}{article}{
   author={Lieblich, Max},
   title={Compactified moduli of projective bundles},
   journal={Algebra \& Number Theory},
   volume={3},
   date={2009},
   number={6},
   pages={653--695},
}

\bib{lurie2009higher}{book}{
   author={Lurie, Jacob},
   title={Higher topos theory},
   series={Annals of Mathematics Studies},
   volume={170},
   publisher={Princeton University Press, Princeton, NJ},
   date={2009},
   pages={xviii+925},
   eprint={http://www.math.harvard.edu/~lurie/papers/highertopoi.pdf},
}

\bib{lurie2011spectral}{article}{
    author={Lurie, Jacob},
    title={DAG VII: Spectral schemes},
    date={2011},
    eprint={http://www.math.harvard.edu/~lurie/papers/DAG-VII.pdf},
}

\bib{lurie2011quasi}{article}{
    author={Lurie, Jacob},
    title={DAG VIII: Quasi-coherent sheaves},
    date={2011},
    eprint={http://www.math.harvard.edu/~lurie/papers/DAG-VIII.pdf},
}

\bib{lurie2014higher}{book}{
    author={Lurie, Jacob},
    title={Higher algebra},
    eprint={http://www.math.harvard.edu/~lurie/papers/higheralgebra.pdf},
    date={2016},
}

\bib{lurie2018spectral}{book}{
    author = {Lurie, Jacob},
    title = {Spectral algebraic geometry},    
    eprint={http://www.math.harvard.edu/~lurie/papers/SAG.pdf},
    date={2018},
}

\bib{schwede2003equivalences}{article}{
    author = {Schwede, Stefan},
    author = {Shipley, Brooke},
    title = {Equivalences of monoidal model categories},
    journal = {Algebraic and geometric topology},
    volume = {3},
    number = {1},
    year = {2003},
    pages = {287--334},
}

\bib{thomason1997classification}{article}{
    title={The classification of triangulated subcategories},
    volume={105},
    number={1},
    journal={Compositio Mathematica},
    publisher={London Mathematical Society},
    author={Thomason, Robert W.},
    year={1997},
    pages={1--27},
}

\bib{toen2007moduli}{article}{
   author={To\"{e}n, Bertrand},
   author={Vaqui\'{e}, Michel},
   title={Moduli of objects in dg-categories},
   language={English, with English and French summaries},
   journal={Ann. Sci. \'{E}cole Norm. Sup. (4)},
   volume={40},
   date={2007},
   number={3},
   pages={387--444},
}

\bib{toen2012derived}{article}{
   author={To\"{e}n, Bertrand},
   title={Derived Azumaya algebras and generators for twisted derived
   categories},
   journal={Invent. Math.},
   volume={189},
   date={2012},
   number={3},
}
 \end{biblist}

\end{bibdiv}

\end{document}